\documentclass[11pt]{article}
\usepackage[margin=1in]{geometry}
\usepackage{amsmath,amsfonts,amssymb,amsthm}
\usepackage{graphicx}
\usepackage{enumerate}
\usepackage{bbm}
\usepackage{verbatim}
\usepackage{hyperref,color}
\usepackage[capitalize,nameinlink]{cleveref}
\usepackage[dvipsnames]{xcolor}
\hypersetup{
	colorlinks=true,
	pdfpagemode=UseNone,
	citecolor=OliveGreen,
	linkcolor=NavyBlue,
	urlcolor=Magenta,
	pdfstartview=FitW
}
\usepackage{appendix}
\usepackage{makecell}
\usepackage{tablefootnote}
\crefname{appsec}{Appendix}{Appendices}
\usepackage{tikz}
\usepackage{pgfplots}
\usepackage{xifthen}
\usepackage{subcaption}
\usepackage{hhline}

\theoremstyle{plain}
\newtheorem{theorem}{Theorem}[section]
\newtheorem{proposition}[theorem]{Proposition}
\newtheorem{lemma}[theorem]{Lemma}
\newtheorem{corollary}[theorem]{Corollary}

\newtheorem{fact}[theorem]{Fact}

\theoremstyle{definition}
\newtheorem{definition}[theorem]{Definition}
\newtheorem{example}[theorem]{Example}

\newtheorem*{assumption*}{Assumption}

\theoremstyle{remark}
\newtheorem{remark}[theorem]{Remark}

\crefname{lemma}{Lemma}{Lemmas}
\crefname{theorem}{Theorem}{Theorems}
\crefname{definition}{Definition}{Definitions}
\crefname{fact}{Fact}{Facts}
\crefname{claim}{Claim}{Claims}
\crefname{proposition}{Proposition}{Propositions}
\crefname{equation}{Eq.}{Eqs.}

\usetikzlibrary{arrows.meta,positioning}

\newcommand{\dif}{\,\mathrm{d}}

\newcommand{\ind}{\*1}

\newcommand{\norm}[1]{\left\lVert #1 \right\rVert}

\newcommand{\grad}{\nabla}

\newcommand{\diag}{\mathrm{diag}}

\newcommand{\sgn}{\mathrm{sgn}}

\newcommand{\eps}{\varepsilon}

\newcommand{\Z}{\mathbb{Z}}

\newcommand{\R}{\mathbb{R}}

\newcommand{\TV}[2]{d_{\mathrm{TV}}\left({#1},\,{#2}\right)}
\newcommand{\e}{\mathrm{e}}
\renewcommand{\epsilon}{\varepsilon}

\newcommand{\set}[1]{\left\{#1\right\}}
\newcommand{\tuple}[1]{\left(#1\right)} 
\newcommand{\inner}[2]{\left\langle #1,#2\right\rangle}
\newcommand{\tp}{\tuple}

\newcommand{\abs}[1]{\left\vert#1\right\vert}

\def\*#1{\boldsymbol{#1}} 
\def\+#1{\mathcal{#1}} 
\def\-#1{\mathrm{#1}} 
\def\=#1{\mathbb{#1}} 
\def\!#1{\mathfrak{#1}} 

\def\oPr{\mathrm{Pr}}
\renewcommand{\Pr}[2][]{ \ifthenelse{\isempty{#1}}
  {\oPr\left[#2\right]}
  {\oPr_{#1}\left[#2\right]} } 

\def\oE{\mathbb{E}}
\newcommand{\E}[2][]{ \ifthenelse{\isempty{#1}}
  {\oE\left[#2\right]}
  {\oE_{#1}\left[#2\right]} }

\def\oVar{\mathrm{Var}}
\newcommand{\Var}[2][]{ \ifthenelse{\isempty{#1}}
  {\oVar\left(#2\right)}
  {\oVar_{#1}\left(#2\right)} }

\def\oEnt{\mathrm{Ent}}
\newcommand{\Ent}[2][]{ \ifthenelse{\isempty{#1}}
  {\oEnt\left[#2\right]}
  {\oEnt_{#1}\left[#2\right]} }

\newcommand{\PhiEnt}[2][]{ \ifthenelse{\isempty{#1}}
  {\oEnt^\phi\left[#2\right]}
  {\oEnt^\phi_{#1}\left[#2\right]} }

\newcommand{\mathsc}[1]{{\normalfont\textsc{#1}}}

\newcommand{\Sch}{\mathrm{Sch}}
\newcommand{\LSI}{\mathsc{lsi}}
\newcommand{\MLSI}{\mathsc{mlsi}}
\newcommand{\BE}{\mathsc{be}}
\newcommand{\RBE}{{\mathrm{r}\mathsc{be}}}
\newcommand{\WGE}{\mathsc{wge}}
\newcommand{\SGE}{\mathsc{sge}}
\newcommand{\CSGE}{\mathsc{csge}}

\newcommand{\Inf}{\mathsf{Inf}}
\newcommand{\SurfArea}{\mathsf{SA}}
\newcommand{\SqInf}{\mathsf{SqInf}}
\newcommand{\cube}{\{\pm 1\}^n}

\title{Discrete Isoperimetric Inequalities via Curvature}

\author{
Zejia Chen\thanks{
    School of Computer Science, Georgia Institute of Technology, Atlanta, GA, USA.\\
    Email: \texttt{\{zchen3091,chenzongchen\}@gatech.edu}, \texttt{normalvra@gmail.com}
}
\and
Zongchen Chen\footnotemark[1]
\and
Xinyuan Zhang\footnotemark[1]
}
\date{\today}

\pgfplotsset{compat=1.18} 
\begin{document}

\maketitle

\begin{abstract}
Isoperimetric inequalities on the Boolean hypercube play a fundamental role in the analysis of Boolean functions. These inequalities have been established primarily for the uniform measure and for biased product measures, often through Fourier-analytic or inductive arguments. We develop a curvature-based semigroup framework to establish isoperimetric inequalities for measures with weakly dependent coordinates. In particular, we show that a Dobrushin-type condition, together with marginal boundedness of the distribution, implies the local Bobkov inequality, Talagrand's $L^1$--$L^2$ and variance--surface-area inequalities, the Kahn--Kalai--Linial inequality, and the Eldan--Gross inequality. Our results apply to zero-field Ising models with interaction matrix $J$ throughout the Dobrushin uniqueness regime $\norm{J}_1<1$. The framework builds on discrete Bakry--\'Emery theory and gradient estimates and can also be extended to measures on Hamming slices or hypergrids.
\bigskip

\end{abstract}

\thispagestyle{empty}

\newpage 
\thispagestyle{empty}

\begingroup
\small
\tableofcontents
\endgroup

\newpage

\setcounter{page}{1}

\newpage
\section{Introduction}

Boolean functions arise naturally in probability theory, combinatorics, and theoretical computer science. A Boolean function \(f:\{\pm 1\}^n \to \{\pm 1\}\) can, for example, encode a voting rule or a graph property. Isoperimetric inequalities are fundamental tools in the analysis of Boolean functions: they relate global quantities, such as variance, to boundary measures that quantify sensitivity to changes in individual input coordinates. These inequalities play an important role in sharp-threshold theorems, junta approximation, noise sensitivity, percolation, social choice, hardness of approximation, and the geometry of product spaces; see~\cite{ODonnell14} for further discussion.

Most previous work presents isoperimetric inequalities on the Boolean hypercube equipped with the uniform measure. Extensions are known for general product measures and for structured spaces such as the hypergrid $[m]^n$ or the $k$-slice $\binom{[n]}{k}$.
Although uniform distributions are natural and cover many applications, many probabilistic and algorithmic models require non-uniform or non-product measures, including Gibbs measures of Markov random fields and graphical models that allow local dependence. 
Recent work has studied learning Boolean functions under such non-uniform measures \cite{CGMV26,FYYZ26}, as well as voting and coalition formation when voters' preferences are correlated \cite{KoehlerLifshitzMinzerMossel23}. Influence inequalities for dependent measures have also been used to establish sharp-threshold and phase-transition results for dependent percolation and spin systems, including the random-cluster, Ising, and Potts models \cite{GrahamGrimmett06,GG11,DuminilCopinRaoufiTassion19}. The purpose of this paper is to establish new isoperimetric inequalities beyond the uniform or product setting.

To introduce our main contributions, we first review basic definitions related to Boolean functions and classical isoperimetric inequalities under the uniform measure. We focus on four key inequalities: the Kahn--Kalai--Linial (KKL) inequality \cite{KKL88}, Talagrand's \( L^1 \)--\( L^2 \) inequality \cite{Talagrand94}, Talagrand's variance--surface-area inequality \cite{Talagrand93}, and the Eldan--Gross inequality \cite{EldanGross22}. Although closely related, these four inequalities measure the boundary of a Boolean function in different ways.

\subsection{Influence and boundary on the hypercube}
\label{subsec:inf-SA}

Let $\mu$ be a probability measure on $\cube$. In the classical setting, $\mu$ is the uniform measure, but the definitions in this section extend to any distribution. 

For $x=(x_1,\ldots,x_n)\in\cube$ and $i\in[n]$, let $x^i$ denote the point obtained by flipping the $i$th coordinate of $x$. 
Let $f: \cube \to \{\pm 1\}$ be a Boolean-valued function.
The expectation of $f$ is $\E[\mu]{f} = \sum_{x \in \cube} \mu(x) f(x)$, and the variance is $\Var[\mu]{f} = \E[\mu]{f^2} - (\E[\mu]{f})^2$.
The pointwise \emph{sensitivity} $s(f)$ of a Boolean function $f$ is
\[
    s(f)(x) :=\sum_{i=1}^n \ind{\{f(x^i)\neq f(x)\}},
\]
namely, $s(f)(x)$ counts the number of pivotal coordinates that change the function value when flipped. 
We define the following standard notation:
\begin{enumerate}[(1)]
    \item The \emph{influence of the $i$-th coordinate} $\Inf_{\mu,i}(f)$ is the probability that coordinate $i$ is pivotal:
    \begin{align*}
        \Inf_{\mu,i}(f)
        := \Pr[\mu]{f(x^i)\neq f(x)};
    \end{align*}

    \item The \emph{total influence} or \emph{average edge boundary} $\Inf_\mu(f)$ is defined as
    \begin{align*}
        \Inf_\mu(f)
        := \sum_{i=1}^n \Inf_{\mu,i}(f) = \sum_{i=1}^n \Pr[\mu]{f(x^i)\neq f(x)} = \E[\mu]{s(f)};
    \end{align*}

    \item The \emph{(Boolean) surface area} or \emph{Talagrand boundary} $\SurfArea_\mu(f)$ is the square-root sensitivity boundary, defined by
    \begin{align*}
        \SurfArea_\mu(f)
        := \E[\mu]{\sqrt{s(f)}};
    \end{align*}

    \item The \emph{total squared influence} $\SqInf_\mu(f)$ is the sum of squared influences:
    \begin{align*}
        \SqInf_\mu(f)
        := \sum_{i=1}^n \Inf_{\mu,i}(f)^2 = \sum_{i=1}^n
            \Pr[\mu]{f(x^i)\neq f(x)}^2.
    \end{align*}
\end{enumerate}


\subsection{The classical uniform-measure inequalities}

Throughout this subsection, $\mu$ denotes the uniform probability measure on $\cube$. We summarize the key isoperimetric inequalities studied in this work.

\paragraph{Poincaré inequality.} The Poincaré inequality gives the baseline comparison between the variance and the total influence. 
For any Boolean function $f: \cube \to \{\pm 1\}$,
\begin{equation}\label{eq:poincare-B}
    \Var[\mu]{f}
    \leq \Inf_\mu(f).
\end{equation}
The Poincaré inequality also holds for real-valued functions, and equality is attained by dictators in the Boolean case.
It asserts that a function with nontrivial variance has a nontrivial edge boundary, but by itself guarantees only a coordinate with influence of order $\Var{f}/n$.

\paragraph{Kahn--Kalai--Linial (KKL) inequality \cite{KKL88}.}
The KKL theorem provides the decisive logarithmic improvement. 
There is a universal constant $C>0$ such that every Boolean-valued $f: \cube \to \{\pm 1\}$ satisfies
\begin{equation}\label{eq:kkl}
    \max_{i\in[n]} \,\Inf_{\mu,i}(f)
    \geq C\,\frac{\log n}{n} \,\Var[\mu]{f}.
\end{equation}
In particular, it identifies a coordinate whose influence improves on the Poincaré scale by a logarithmic factor.
This order of magnitude is optimal, as witnessed by the Tribes construction of Ben-Or and Linial \cite{BenOrLinial90}. KKL is not merely a refinement of \eqref{eq:poincare-B}: the diverging factor $\log n$ is what drives many sharp-threshold and coalition results.

\paragraph{Talagrand's $L^1$--$L^2$ inequality \cite{Talagrand94}.}
Talagrand's $L^1$--$L^2$ inequality provides a stronger statement. 
For every Boolean $f: \cube \to \{\pm 1\}$, it holds
\begin{equation}\label{eq:tal-influence}
    C\, \Var[\mu]{f}
    \leq \sum_{i=1}^n
    \frac{\Inf_{\mu,i}(f)}{1+\log(1/\Inf_{\mu,i}(f))},
\end{equation}
where a summand is zero if $\Inf_{\mu,i}(f) = 0$ by continuity, and $C>0$ is a universal constant. 
Like the Poincaré inequality, the functional $L^1$--$L^2$ inequality holds for general real-valued functions and compares $\Var[\mu]{f}$ with an $L^1$--$L^2$ interpolation of the coordinate derivatives.
Talagrand's $L^1$--$L^2$ inequality \eqref{eq:tal-influence} implies the KKL inequality \eqref{eq:kkl} by comparing each summand with the maximum influence. Analytically, the gain in the denominator comes from hypercontractivity of the heat semigroup.

\paragraph{Talagrand's variance--surface-area inequality \cite{Talagrand93}.}
Talagrand's second inequality is geometrically different: it relates $\SurfArea_\mu(f)$ to $\Var[\mu]{f}$ with the appropriate logarithmic correction. In the terminology of this paper, it is a variance--surface-area inequality: 
for any Boolean function $f:\cube\to\{\pm 1\}$, 
\begin{equation}\label{eq:tal-surface}
    \SurfArea_\mu(f)
    \geq C\,\Var[\mu]{f}
    \sqrt{\log\left(\frac{\e}{\Var[\mu]{f}}\right)},
\end{equation}
where $C>0$ is universal and the right-hand side is zero if $\Var[\mu]{f} = 0$ (i.e., $f$ is constant).
Bobkov~\cite{Bob97} established a Gaussian-profile functional inequality on the uniform cube, whose specialization to indicator functions yields a surface-area bound of the above form.
Here $\SurfArea_\mu(f)=\E[\mu]{\sqrt{s(f)}}$ gives less weight to vertices of very high sensitivity than the ordinary edge boundary $\Inf_\mu(f) = \E[\mu]{s(f)}$. This makes \eqref{eq:tal-surface} effective for functions such as majority, for which the edge boundary is concentrated on a thin layer of vertices with many incident bichromatic edges. 
By contrast, KKL \eqref{eq:kkl} is sharp for Tribes. Neither viewpoint alone fully captures both examples.

\paragraph{Eldan--Gross inequality \cite{EldanGross22}.}
Talagrand asked for a single inequality that reconciles the KKL inequality \eqref{eq:kkl} with the variance--surface-area inequality \eqref{eq:tal-surface} \cite{Talagrand97}. 
Eldan and Gross resolved this question using pathwise stochastic analysis \cite{EldanGross22}. 
Their inequality states that every Boolean $f:\cube\to\{\pm 1\}$ satisfies
\begin{equation}\label{eq:eldan-gross}
    \SurfArea_\mu(f)
    \geq C\,\Var[\mu]{f}
    \sqrt{\log\left(2+\frac{\e}{\SqInf_\mu(f)}\right)},
\end{equation}
where $C>0$ is a universal constant.
The quantity $\SqInf_\mu(f)$ reflects the concentration of the boundary among coordinate directions, distinguishing a boundary spread thinly over many coordinates from one concentrated in a few directions.
This is the strongest geometric statement since it preserves the square-root sensitivity boundary while recovering the KKL bound through the squared influences.
Subsequent work gave alternative proofs based on Fourier analysis and semigroup methods \cite{RvH-remarks,EldanKindlerLifshitzMinzer25,BIM23,IvanisviliZhang26}.

\subsection{Main result}

In this work, we extend all four inequalities to non-product measures on the Boolean hypercube with weak dependence among the coordinates.
Previous work \cite{KoehlerLifshitzMinzerMossel23} pursued this direction for Markov random fields on bounded-degree graphs; in contrast, our setting is more general.

For simplicity, we present our results here under a Dobrushin-type condition \cite{Dobrushin68}, an intuitive and common way to characterize weak dependence of variables. The assumptions in our formal results are more general and are implied by the Dobrushin condition stated here.

For an arbitrary distribution $\mu$ fully supported on $\cube$, its associated \emph{Dobrushin influence matrix} $R = R(\mu) \in [0,1]^{n \times n}$ is defined as follows:
\begin{align}
    R_{ij} &:= \max_{x \in \cube} \TV{\mu_i(\cdot \mid X_{-i} = x_{-i})}{\mu_i(\cdot \mid X_{-i} = x^j_{-i})}, \quad \forall i,j \in [n],\; i \neq j; \nonumber \\
    R_{ii} &:= 0, \quad \forall i \in [n].
\end{align}
Here, the conditioning $X_{-i} = x_{-i}$ means all coordinates other than $i$ are given by $x$, and $X_{-i} = x^j_{-i}$ means the same conditioning but with the $j$-th coordinate flipped.
Thus, $R_{ij}$ characterizes the influence on the $i$-th coordinate in total variation distance when the $j$-th coordinate is flipped.
Observe that $R$ is a non-negative square matrix that is not necessarily symmetric.

The Dobrushin matrix quantifies the dependence among the random variables.
Informally, the ``smaller'' $R$ is, typically measured under an appropriate matrix norm, the weaker the dependence among the variables.
For example, if $\mu$ is a product measure with independent coordinates, then $R = 0$. 
In particular, under the \emph{Dobrushin uniqueness condition} $\norm{R} < 1$, where $\norm{\cdot}$ is any consistent matrix norm, the Gibbs sampler (also known as Glauber dynamics) mixes rapidly \cite{Hayes06,DGJ09}. Moreover, one can establish a log-Sobolev inequality, hypercontractivity, and concentration of measure \cite{marton2019logarithmic,SS20}.

\begin{theorem}
    \label{thm:informal}
    Let $b \in (0,1]$, $\delta \in (0,1)$, and $C_0 > 0$ be constants.
    Let $\mu$ be a distribution fully supported on $\cube$ satisfying the following conditions: 
    \begin{enumerate}[(i)]
        \item (Marginal boundedness) For every $i \in [n]$ and $x \in \cube$,
        \begin{align*}
            b \le \sqrt{\frac{\mu(x^i)}{\mu(x)}} \le \frac{1}{b};
        \end{align*}

        \item (Dobrushin condition) The Dobrushin matrix $R$ of $\mu$ satisfies $\norm{R}_1 \le C_0$ and $\norm{R}_2 \le 1-\delta$.
    \end{enumerate}
    Then, for any Boolean $f: \cube \to \{\pm 1\}$, all of the following isoperimetric inequalities hold, where $C>0$ depends only on $b$, $\delta$ and $C_0$, but not on $n$.
    \begin{enumerate}[(1)]
        \item KKL inequality:
        \begin{equation}\label{eq:inf-KKL}
            \max_{i\in[n]} \,\Inf_{\mu,i}(f)
            \geq C\, \frac{\log n}{n} \,\Var[\mu]{f};
        \end{equation}

        \item Talagrand's $L^1$--$L^2$ inequality:
        \begin{equation}\label{eq:inf-L1-L2}
            C\,\Var[\mu]{f}
            \leq \sum_{i=1}^n
            \frac{\Inf_{\mu,i}(f)}{1+\log(1/\Inf_{\mu,i}(f))};
        \end{equation}

        \item Talagrand's variance--surface-area inequality:
        \begin{equation}\label{eq:inf-var-SA}
            \SurfArea_\mu(f)
            \geq C\,\Var[\mu]{f}
            \sqrt{\log\left(\frac{\e}{\Var[\mu]{f}}\right)};
        \end{equation}

        \item Eldan--Gross inequality:
        \begin{equation}\label{eq:inf-EG}
            \SurfArea_\mu(f)
            \geq C\,\Var[\mu]{f}
            \sqrt{\log\left(1+\frac{\e}{\SqInf_\mu(f)}\right)}.
        \end{equation}
    \end{enumerate}
\end{theorem}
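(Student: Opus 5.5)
The plan is to run the semigroup argument that proves these inequalities on the uniform cube, with the heat semigroup replaced by the Glauber dynamics semigroup $P_t = e^{tL}$ of $\mu$. Its Dirichlet form is
\[
\mathcal{E}(f,f)=\tfrac12\sum_{x,i}\mu(x)\,c_i(x)\,(f(x^i)-f(x))^2 ,
\]
with $c_i(x)=\mu(x^i)/(\mu(x)+\mu(x^i))$. Marginal boundedness (i) makes the rates $c_i$ lie in $[b^2/(1+b^2),\,1/(1+b^2)]$. Hence every discrete gradient quantity under this dynamics is comparable, up to constants depending only on $b$, to the corresponding influence, surface-area or squared-influence quantity defined in \cref{subsec:inf-SA}.

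The first step is a curvature estimate, namely a gradient bound of the form
\[
\norm{\nabla P_t f}\le e^{-\kappa t}\,P_t\norm{\nabla f}
\]
in a suitable coordinatewise norm, with $\kappa$ depending on $\delta$. The natural route is Dobrushin coupling. Under the Glauber step, the vector $v_i(t)=\sup|\partial_i P_t f|$ satisfies a differential inequality
\[
\tfrac{d}{dt}v \le -(I-R^{\top})v ,
\]
possibly with rate weights. Then $\norm{R}_2\le 1-\delta$ gives $\ell^2$ contraction at rate $\delta$, which yields a weak gradient estimate (\WGE) for the square-function $\sqrt{\sum_i(\partial_iP_tf)^2}$. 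The bound $\norm{R}_1\le C_0$ controls how the $L^1$ mass of partial derivatives spreads, so that $\sum_i\E{|\partial_iP_tf|}\lesssim e^{C_0t}\sum_i\E{|\partial_i f|}$ on short times. From these I would derive three ingredients. The first is a Poincaré inequality and a log-Sobolev inequality, hence hypercontractivity $\norm{P_tf}_2\le\norm{f}_{1+e^{-ct}}$; this is a known consequence of Dobrushin uniqueness \cite{marton2019logarithmic}. The second is a coordinatewise hypercontractive bound $\norm{P_t\partial_i f}_2\lesssim\norm{\partial_i f}_{p(t)}$ together with the commutation estimate $\partial_iP_t\approx P_t\partial_i$ up to an error controlled by $R$. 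The third is a reverse Bakry--\'Emery / local Bobkov inequality, $\sqrt{\Gamma(P_tf)}\lesssim t^{-1/2}$ for $|f|\le1$.

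With these in hand, each inequality follows the known template.

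(2) Write $\Var{f}=\int_0^\infty 2\mathcal{E}(P_tf,P_tf)\,dt$. Split the integral at a time $T$ of order $1$. For $t\le T$, bound each coordinate contribution $\E{(\partial_iP_tf)^2}$ by hypercontractivity by $\norm{\partial_if}_{1+e^{-2ct}}^2\approx\Inf_i^{2/(1+e^{-2ct})}$, where $\partial_if\in\{0,\pm2\}$. Integrating in $t$ produces the factor $1/(1+\log(1/\Inf_i))$. The tail $t>T$ is absorbed using exponential decay from the spectral gap.

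(1) KKL follows from (2) by bounding each denominator using $\max_i\Inf_i$, exactly as on the uniform cube.

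(3) and (4) follow the Bobkov/Eldan--Gross semigroup proofs in \cite{BIM23,IvanisviliZhang26}. Write $\Var{f}\lesssim\E{f(f-P_tf)}+\E{(P_tf)^2}-\E{f}^2$ and express $f-P_tf=-\int_0^t LP_sf\,ds$. Use the reverse Bakry--\'Emery bound $\sqrt{\Gamma(P_sf)}\lesssim s^{-1/2}$ together with the fact that $|f|=1$ to obtain
\[
\E{f(f-P_tf)}\lesssim\sqrt{t}\,\SurfArea(f).
\]
For (3), combine this with $\E{(P_tf)^2}-\E{f}^2\le e^{-ct}\Var{f}$, or hypercontractively with $\Var{f}^{1+\tanh}$, and optimize $t\approx\log(\e/\Var{f})$. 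For (4), instead bound the tail term through the $L^1$--$L^2$ mechanism of (2) applied to $P_tf$, which brings in $\SqInf$.

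The main obstacle is the gradient commutation step. On a product measure $\partial_iP_t=P_t\partial_i$ holds exactly. Here, flipping $j$ perturbs coordinate $i$'s conditional law by $R_{ij}$, so I must prove a matrix-valued contraction $|\partial P_tf|\le e^{-t(I-R)}P_t|\partial f|$ in the pointwise $L^1$ sense, with constants uniform in $n$. The square-root (surface-area) version is needed for (3) and (4) and has to combine the $\ell^2$ bound with the $\ell^1$ bound. The first thing I would try is a coupling of two Glauber chains started at $x$ and $x^i$, using the Dobrushin coupling to track the disagreement vector; this should give the $\ell^1$ estimate directly, with the $\ell^2$ version obtained from Schur/Cauchy--Schwarz interpolation between $\norm{R}_1$ and $\norm{R}_2$.
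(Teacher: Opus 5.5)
There is a genuine gap where you locate the ``main obstacle'': the pointwise gradient estimate is never established, and the form you propose is false. Take $\partial_if(x)=f(x^i)-f(x)$ and the heat-bath generator $Lf=\sum_i c_i\,\partial_if$. Both sides of $|\partial P_tf|\le \e^{-t(I-R)}P_t|\partial f|$ agree at $t=0$, so differentiating at $t=0^+$ forces, wherever $\partial_if(x)\neq0$,
\[
\sgn(\partial_if(x))\,\partial_iLf(x)\le -|\partial_if(x)|+\sum_j R_{ij}|\partial_jf(x)|+L|\partial_if|(x).
\]
Now take $n=2$, $\mu(x)\propto\e^{\beta x_1x_2}$ with $\beta>0$, $x=(+,+)$, $f(x)=0$, $f(x^1)=f(x^2)=\eps$, $f(x^{12})=1$, and $i=1$. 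The flip rate is $p=\e^{-\beta}/(2\cosh\beta)<\frac12$ at aligned states and $1-p$ at anti-aligned ones. So the left side equals $(1-p)-2\eps$ and the right side equals $p+O(\eps)$, a contradiction as $\eps\downarrow0$. Replacing $I-R$ by any other matrix changes only $O(\eps)$ terms.

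The cause is structural. The coefficient of $f(x^{ij})$ is $c_j(x^i)$ in $\partial_iLf(x)$ but $c_j(x)$ in $L|\partial_if|(x)$, and under dependence these differ. Rate weights $\alpha_i(x)\partial_i$ do not rescue heat-bath dynamics either. The same test forces $\alpha_i(y)c_j(y^i)\le c_j(y)\alpha_i(y^j)$ at every $y$. Multiplying these around the square $x,x^j,x^i,x^{ij}$ forces $c_j(x)c_j(x^j)=c_j(x^i)c_j(x^{ij})$, i.e.\ equal conditional variances of $x_j$ given $x_i=\pm1$. This fails for $\mu(x)\propto\e^{\beta x_1x_2+hx_j}$ with $\beta h\neq0$.

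A Dobrushin coupling cannot produce the estimate. It bounds $|\partial_iP_tf|(x)$ by derivatives of $f$ at hybrid configurations between the two chains, whose law is not $P_t(x,\cdot)$. So it gives contraction of $\sup|\partial_iP_tf|$, but not the $P_t$-dominated bound that hypercontractivity needs in (1), (2), (4). Nor does it give the pointwise $\Gamma(P_tf)\le\e^{-2\rho t}P_t\Gamma(f)$ that your reverse-Poincar\'e route to (3) needs; given that bound, the route would be fine. You should also not expect a positive rate tied to $\delta$. For two spins, the optimal constant of the paper's weighted coordinatewise estimate is $2\e^{-\beta}-\e^{\beta}<0$ for $\beta>\frac12\log2$, although $\norm{R}_2=\tanh\beta<1$.

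The missing idea is to change the dynamics and the gradient together. The paper uses the rates $q^\star_i(x)=\frac12\sqrt{\mu(x^i)/\mu(x)}$ and the weighted derivative $\delta^\star_if(x)=q^\star_i(x)(f(x^i)-f(x))$. For these, the square property $q^\star_i(x)q^\star_j(x^i)=q^\star_j(x)q^\star_i(x^j)$ is exactly the compatibility above. The second-order terms then cancel, leaving the first-order identity $(L\nabla^\star-\nabla^\star L)f=(\diag\{M\mathbf{1}\}-M^\top)\nabla^\star f$ and an absolute-value analogue. Differentiating $\norm{P_{\tau+s}|\nabla^\star|P_{t-s}f}_2^2$ and bounding $|M|$ entrywise through $R$ and $b$ gives the coordinatewise estimate for all $\tau\in[0,\infty]$. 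Its constant is $n$-independent but possibly negative: $\rho\ge b-\frac{(1+b^2)^2}{4b^3}(\norm{R}_1+\norm{R}_2)$. All positivity comes from the log-Sobolev inequality, with time integrals cut at $T=1/(\rho_{\LSI}+\max\{-\rho,0\})$.

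Three smaller points. Step (2) needs only the summed bound $\sum_i(\delta^\star_iP_tf)^2\le\e^{-2\rho t}\sum_i(P_t|\delta^\star_if|)^2$, not per-coordinate commutation. Your sketch of (4) omits the contraction $J_w(P_sf)\le\e^{-2\rho s}J_w(f)$ of squared influences (the case $\tau=\infty$); this is what lets the Ivanisvili--Zhang argument control the squared influences of $P_sf$ by those of $f$. In (3) the right time scale is $t\approx1/\log(\e/\Var{f})$, not $\log(\e/\Var{f})$.
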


As a concrete example, we consider the zero-field Ising model on $n$ spins. Let $J \in \R^{n \times n}$ be a symmetric interaction matrix. The Gibbs distribution $\mu_J$ for the Ising model assigns each spin configuration $x \in \cube$ the probability mass
\begin{align}\label{eq:Ising}
    \mu_J(x) := \frac{1}{Z_J} \exp\left( \frac{1}{2} x^\top J x \right),
\end{align}
where $Z_J := \sum_{x \in \cube} \exp(\frac{1}{2} x^\top J x)$ is the partition function that normalizes the distribution.
An entrywise comparison gives $\max\set{\norm{R}_1,\norm{R}_2} \le \norm{J}_1$, where $R$ denotes the Dobrushin matrix. 
Moreover, $\norm{J}_1$ controls the marginal boundedness since it corresponds to the largest effective field resulting from pinning vertices.
Therefore, the above inequalities hold when $\norm{J}_1 < 1$.

\begin{corollary}
    \label{cor:informal-ising}
    Let $J \in \R^{n \times n}$ be a symmetric interaction matrix for a zero-field Ising model. If $\norm{J}_1 < 1$, then the KKL inequality \eqref{eq:inf-KKL}, Talagrand's $L^1$--$L^2$ inequality \eqref{eq:inf-L1-L2}, Talagrand's variance--surface-area inequality \eqref{eq:inf-var-SA}, and the Eldan--Gross inequality \eqref{eq:inf-EG} all hold.
\end{corollary}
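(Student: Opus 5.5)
The plan is to reduce \cref{cor:informal-ising} to \cref{thm:informal} by checking its two hypotheses for $\mu_J$ with explicit constants that depend only on $\norm{J}_1$. Set $\beta := \norm{J}_1 < 1$ and note that $\mu_J$ is fully supported. Since $x^\top J x$ sees the diagonal of $J$ only through the constant $\sum_i J_{ii}$, we may assume $J_{ii}=0$ without changing $\mu_J$; this does not increase $\norm{J}_1$. The conditional law of spin $i$ given $x_{-i}$ is then
\[
\mu_i(x_i \mid x_{-i}) \propto \exp\big(x_i h_i(x)\big), \qquad h_i(x) = \sum_{j\neq i} J_{ij}x_j .
\]
We have $|h_i(x)| \le \sum_j |J_{ij}| \le \beta$, because $J$ is symmetric and so its maximal row sum equals its maximal column sum.

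\emph{Marginal boundedness.} Flipping coordinate $i$ gives $\mu(x^i)/\mu(x) = \exp(-2x_i h_i(x))$. This ratio lies in $[e^{-2\beta}, e^{2\beta}]$, so condition (i) holds with $b = e^{-\beta} \ge e^{-1}$.

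\emph{Dobrushin matrix.} For $i \neq j$, flipping $x_j$ changes $h_i$ by $2J_{ij}x_j$, so the two fields $h$ and $h'$ differ by $2|J_{ij}|$. The conditional law of $x_i$ has mean $\tanh(h)$, so the total variation distance between the two conditionals is
\[
\tfrac12\,|\tanh h - \tanh h'| \le \tfrac12 \cdot 2|J_{ij}| = |J_{ij}|,
\]
using that $\tanh$ is $1$-Lipschitz. Hence $R_{ij} \le |J_{ij}|$ entrywise. Because $R$ is nonnegative and dominated entrywise by $|J|$, we get $\norm{R}_1 \le \norm{|J|}_1 = \beta$. For the spectral norm, monotonicity of the Perron root for nonnegative matrices gives $\norm{R}_2 \le \norm{|J|}_2$. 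Since $|J|$ is symmetric, $\norm{|J|}_2 = \rho(|J|) \le \norm{|J|}_1 = \beta$. So condition (ii) holds with $C_0 = 1$ and $\delta = 1-\beta$.

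Applying \cref{thm:informal} with $b=e^{-\beta}$, $\delta=1-\beta$, $C_0=1$ then yields all four inequalities \eqref{eq:inf-KKL}--\eqref{eq:inf-EG}, with $C$ depending only on $\norm{J}_1$.

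The one step needing care is the spectral-norm bound. Entrywise domination $0 \le R \le |J|$ does not by itself bound $\norm{R}_2 = \sqrt{\rho(R^\top R)}$ through symmetry, since $R$ need not be symmetric. The clean route is $\norm{R}_2^2 = \rho(R^\top R) \le \rho(|J|^\top|J|) = \norm{|J|}_2^2$, which holds because $0 \le R^\top R \le |J|^\top |J|$ entrywise and the Perron root is monotone. Alternatively, one can use $\norm{R}_2 \le \sqrt{\norm{R}_1\norm{R}_\infty}$ together with $\norm{R}_\infty \le \norm{|J|}_\infty = \beta$. Everything else is a direct calculation.
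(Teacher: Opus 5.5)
Your proof is correct and follows the paper's own route (the proof of \Cref{thm:Ising}). You take $b=e^{-\norm{J}_1}$ from $w_i^\star(x)=\exp(-x_i h_i(x))$, bound $R\le\abs{J}$ entrywise via the conditional $\tanh$ formula, deduce $\max\{\norm{R}_1,\norm{R}_2\}\le\norm{J}_1<1$, and invoke the Dobrushin-to-isoperimetry theorem. The differences are cosmetic: you use the $1$-Lipschitz property of $\tanh$ where the paper computes the sharper bound $\tanh\abs{J_{ij}}$, and you spell out $\norm{R}_2\le\norm{\abs{J}}_2$ for the non-symmetric $R$ via Perron-root monotonicity applied to $R^\top R\le\abs{J}^\top\abs{J}$, a step the paper leaves implicit.
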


Our results generalize the classical inequalities under the uniform measure, allowing for bias and dependence in the underlying measure. 
The $L^1$--$L^2$ and KKL statements should be compared with those of \cite{KoehlerLifshitzMinzerMossel23} for mixing measures. The main result of \cite{KoehlerLifshitzMinzerMossel23} applies to sparse Markov random fields, specifically to undirected graphical models or spin systems such as the Ising model.
In addition to marginal boundedness, their result applies under the original Dobrushin uniqueness condition $\norm{R}_1 < 1$ together with a ``bounded-degree'' assumption. For the Ising model, this means that $\norm{J}_1 < 1$ and that the underlying graph, corresponding to the support of $J$, has bounded maximum degree. 
In contrast, our result applies in the same $\norm{J}_1<1$ regime but imposes no degree bound. Furthermore, our main theorem \Cref{thm:informal} applies beyond Markov random fields to any distribution on weakly dependent random variables.
Our results also hold beyond the Dobrushin regime, for example for bounded-degree Ising models in the tree-uniqueness regime, where the $L^1$--$L^2$ and KKL inequalities were previously established in~\cite{KoehlerLifshitzMinzerMossel23}.

\Cref{thm:informal,cor:informal-ising} also have applications to voting with correlated preferences. We refer to~\cite[Sections~2.2 and~3.3]{KoehlerLifshitzMinzerMossel23} for these applications, as the same proofs apply.

Our results also extend beyond the Boolean hypercube. In particular, all four results admit counterparts on the hypergrid $[m]^n$ and the Hamming slice $\binom{[n]}{k}$ with suitable definitions of influences; the corresponding measure assumptions and constants depend on the alphabet size and local geometry. To keep the main narrative focused on $\cube$, we do not state these extensions in this introduction; their precise formulations and proofs appear later in \Cref{sec:Schreier}. This extension is consistent with earlier appearances of KKL inequalities on $\mathbb Z_m^n$ and other finite product or Schreier spaces \cite{ODonnellWimmer13,ODonnellWimmerSharp13,CorderoErausquinLedoux12}, while covering the non-uniform setting addressed here.

\paragraph{Disclosure of AI assistance and author contributions.}
At an early stage of this project, the authors developed a direct semigroup proof of the local Bobkov inequality~\cite{BakryLedoux96} for the uniform measure on the Boolean hypercube. This inequality is a key ingredient in the approach to the Eldan--Gross inequality of~\cite{IvanisviliZhang26}, where the discrete-cube case is instead treated by induction on the dimension~\cite{Bob97}. The authors then used ChatGPT 5.5 Pro to explore extensions of the semigroup argument to the uniform measure on the $k$-slice and to Ising measures at extremely high temperature. This exploration produced several preliminary, problem-specific arguments. Drawing on these examples and on earlier work such as~\cite{BGL-book,CorderoErausquinLedoux12}, the authors identified curvature as a unifying framework for the hypotheses underlying these inequalities, thereby systematizing the arguments and clarifying the connection between techniques from sampling and Boolean analysis. ChatGPT 5.5, 5.6, and 6 were also used during the preparation of the manuscript to assist with checking calculations, exploring proof steps, and improving the exposition. The authors take full responsibility for the content of the paper.

\paragraph{Acknowledgements.}
Zongchen Chen thanks Shiping Liu for introducing him to various notions of discrete curvature and for pointing him to relevant references.

\paragraph{Paper organization.}
In \Cref{sec:curvature}, we illustrate our approach through a variant of Talagrand's $L^1$--$L^2$ inequality on the Boolean hypercube, showing how curvature and semigroup methods yield isoperimetric inequalities.
In \Cref{sec:preliminaries}, we present the necessary preliminaries.
In \Cref{sec:BE-GE}, we recall the Bakry--\'Emery and reinforced Bakry--\'Emery conditions and explain their connections with gradient estimates.
In \Cref{sec:boolean}, we combine these conditions with the log-Sobolev inequality to prove isoperimetric inequalities.
In \Cref{sec:GE}, we establish Dobrushin-type sufficient conditions for curvature bounds and gradient estimates on the Boolean hypercube and apply the resulting inequalities to the Ising model.
In \Cref{sec:Schreier}, we extend this framework to Schreier graphs and derive isoperimetric inequalities for the $k$-slice and hypergrids.

\section{Our Approach: Curvature}\label{sec:curvature}

Many approaches to isoperimetric inequalities on the discrete Boolean cube draw on tools such as Fourier analysis, hypercontractivity, martingale methods, induction, random restrictions, and heat-semigroup techniques. Extending these approaches to general measures, especially non-product measures, presents immediate difficulties. For a general measure $\mu$, the standard parity functions $\chi_S(x)=\prod_{i\in S}x_i$ need not form an orthonormal basis, the associated semigroup need not act diagonally in this basis, and the coordinate-flip operators need not be reversible with respect to~$\mu$. At the operator level, these difficulties are further reflected in the general failure of coordinate derivatives to commute with the associated semigroup.

We establish isoperimetric inequalities through a purely semigroup approach.
We show that these inequalities can be deduced from a curvature condition and the hypercontractivity of the associated semigroup. More precisely, our method uses discrete versions of the Bakry--\'Emery curvature-dimension conditions and gradient estimates that are well developed in continuous space. Because establishing such a curvature condition on the Boolean hypercube is nontrivial, we introduce a weighted version and reduce its verification to local algebraic properties of the measure.

At a high level, our proof is a novel extension and combination of the approaches in \cite{CorderoErausquinLedoux12,IvanisviliZhang26} for establishing isoperimetric inequalities and those in \cite{CKLP22} for deducing curvature conditions. Here, we give a brief discussion. A more detailed comparison is presented in \Cref{subsec:related-work}.

Cordero-Erausquin and Ledoux \cite{CorderoErausquinLedoux12} introduced a curvature-type condition and derived Talagrand's $L^1$--$L^2$ inequality from it. However, their curvature assumption is strong in its original form; therefore, they apply the results mainly to product measures.
Ivanisvili and Zhang \cite{IvanisviliZhang26} established the Eldan--Gross inequality in continuous space for the product of measures satisfying the Bakry--\'Emery curvature-dimension condition. They also proved the Eldan--Gross inequality on the Boolean hypercube under a product measure, but their proof relies on independence and induction rather than utilizing the curvature.
Our approach takes inspiration from both works \cite{CorderoErausquinLedoux12,IvanisviliZhang26}: even though they do not directly use curvature on the Boolean hypercube, their approaches strongly suggest its use. 
Cushing, Kamtue, Liu, and Peyerimhoff reduced the Bakry--\'Emery curvature on arbitrary graphs to a local minimal eigenvalue problem \cite{CKLP22}. We focus on measures on the hypercube and introduce appropriate choices for the semigroup and the reweighting to control the curvature.

\subsection{A running example and our proof approach}
In this section, we illustrate the key ingredients of our approach by sketching a proof of an alternative version of Talagrand's $L^1$--$L^2$ inequality.

\begin{theorem}[Alternative $L^1$--$L^2$ inequality]
\label{thm:alt-L1-L2}
    Let $\mu$ be a distribution fully supported on $\cube$ satisfying the marginal boundedness condition and the Dobrushin condition stated in \Cref{thm:informal}. 
    Then, for any Boolean $f: \cube \to \{\pm 1\}$,
    \begin{align}\label{eq:alt-L1-L2}
        \Var[\mu]{f} \le C \frac{\Inf_\mu(f)}{1 + \log (\Inf_\mu(f)/\SurfArea_\mu(f)^2)}.
    \end{align}
    Here, $C>0$ is a constant independent of $n$.
\end{theorem}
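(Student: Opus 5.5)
I will run the standard semigroup interpolation between $f$ and its mean. Let $P_t = e^{tL}$ be the Glauber (Gibbs sampler) semigroup reversible with respect to $\mu$, with Dirichlet form $\mathcal{E}(g,g)=\frac12\sum_i \E[\mu]{c_i(x)(g(x^i)-g(x))^2}$. The weights $c_i$ are bounded above and below in terms of $b$ by marginal boundedness. Write $\Gamma g=\frac12\sum_i c_i (\nabla_i g)^2$, where $\nabla_i g(x)=g(x^i)-g(x)$. Then
\begin{align*}
\Var[\mu]{f}=2\int_0^\infty \E[\mu]{\Gamma(P_t f)}\dif t .
\end{align*}
I split the integral at a time $T$ to be optimized. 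On $[T,\infty)$, the Poincaré/spectral-gap inequality gives exponential decay: $\E[\mu]{\Gamma P_t f}\le e^{-2\rho t}\mathcal{E}(f,f)$. The spectral gap $\rho>0$ follows from the Dobrushin condition $\norm{R}_2\le 1-\delta$. Since $\mathcal{E}(f,f)\asymp \Inf_\mu(f)$ for Boolean $f$, this tail contributes $O(e^{-2\rho T}\Inf_\mu(f))$. On $[0,T]$, I bound $\E[\mu]{\Gamma P_tf}\le \Inf_\mu(f)$ trivially, which gives $O(T\,\Inf_\mu(f))$. This naive split alone yields only Poincaré, so the gain must come from a better bound for intermediate $t$.

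The key is a curvature/gradient estimate that replaces the trivial bound for $t\ge t_0$ with one of order $\SurfArea_\mu(f)^2/t$. Concretely, I would prove a weighted Bakry--\'Emery condition $\mathrm{BE}(K,\infty)$ under a reweighted gradient $\Gamma^w$ comparable to $\Gamma$. This is checked locally, in the spirit of \cite{CKLP22}, using the Dobrushin bounds $\norm{R}_1\le C_0$ and $\norm{R}_2\le1-\delta$. From it I would derive the reverse (local Poincaré-type) gradient estimate
\begin{align*}
\Gamma(P_t f)\le \frac{C}{t}\,\bigl(P_t(f^2)-(P_tf)^2\bigr)\le \frac{C}{t}.
\end{align*}
I would also derive the $L^1$ gradient bound $\sqrt{\Gamma P_t f}\le C' P_t\sqrt{\Gamma f}$, the $L^1$ commutation from reinforced Bakry--\'Emery. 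Interpolating the two, I write $\Gamma P_tf=\sqrt{\Gamma P_tf}\cdot\sqrt{\Gamma P_tf}$ and bound one factor by $\sqrt{C/t}$ and the other by $C'P_t\sqrt{\Gamma f}$. Taking expectations and using invariance of $\mu$ gives
\begin{align*}
\E[\mu]{\Gamma P_t f}\le C''\,t^{-1/2}\,\E[\mu]{\sqrt{s(f)}}=C''t^{-1/2}\SurfArea_\mu(f).
\end{align*}
This $t^{-1/2}$ rate is too weak for the stated bound, so it needs refinement. Instead I use hypercontractivity, which follows from the log-Sobolev inequality valid under the Dobrushin condition \cite{marton2019logarithmic}. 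Applying hypercontractivity to $\E[\mu]{\Gamma P_tf}$, via the $L^1$ commutation above, gives
\begin{align*}
\E[\mu]{\Gamma P_tf}\lesssim \sum_i\norm{P_{t}\nabla_i f}_2^2\le \sum_i \norm{\nabla_i f}_{p(t)}^2,
\end{align*}
with $p(t)=1+e^{-ct}<2$. This is at most $\E[\mu]{s(f)}^{2/p}$-type quantities, and a convexity argument converts it to $\Inf_\mu(f)^{1-\theta}\SurfArea_\mu(f)^{2\theta}$ with $\theta\asymp t$ for small $t$.

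Integrating the resulting bound $\Inf_\mu(f)\,(\SurfArea_\mu(f)^2/\Inf_\mu(f))^{ct}$ over $[0,T]$ gives
\begin{align*}
\Inf_\mu(f)\cdot\frac{1}{1+\log(\Inf_\mu(f)/\SurfArea_\mu(f)^2)}.
\end{align*}
Here $\SurfArea_\mu^2\le \Inf_\mu$ by Cauchy--Schwarz, so the logarithm is nonnegative. The tail contributes $e^{-2\rho T}\Inf_\mu(f)$, so choosing $T$ of constant order finishes the argument.

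The main obstacle is the commutation step. Under non-product $\mu$, $\nabla_i$ does not commute with $P_t$, and a bound of the form $\norm{\nabla_i P_t f}\lesssim P_t|\nabla_i f|$ summed over $i$ must come from the curvature. I would try to prove it from a matrix-valued gradient estimate $|\nabla P_tf|\le e^{t(R-I)}P_t|\nabla f|$ coordinatewise, as in path coupling. Then $\norm{R}_2\le1-\delta$ controls the $\ell^2$ sum, and $\norm{R}_1\le C_0$ keeps the constants uniform.
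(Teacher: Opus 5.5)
Your outline (the variance identity, a gradient estimate that moves the derivative past $P_t$ with $P_t\abs{\nabla f}$ on the right, then hypercontractivity and H\"older with $\theta_t=\tanh(\rho t)$) is the paper's outline. However, the step that carries all the content is left unproved, and the route you propose for it fails as set up.

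\textbf{The commutation step fails without the right weights.} With Gibbs-sampler rates $c_j$ and raw differences $\nabla_i g=g(x^i)-g(x)$, differentiating $\abs{\nabla P_tf}\le e^{tK}P_t\abs{\nabla f}$ at $t=0$ requires, pointwise,
$L\abs{\nabla_i g}-\sgn(\nabla_i g)\nabla_i Lg\ge-(K\abs{\nabla g})_i$.
\begin{itemize}
\item For $j\ne i$, the left side contains $c_j(x)\abs{a}-c_j(x^i)\sgn(\nabla_ig(x))\,a$ with $a=\nabla_i g(x^j)$. When the signs agree, this equals $-(c_j(x^i)-c_j(x))\abs{a}$.
\item This is a second-order quantity, free relative to $\abs{\nabla g}(x)$, and it does not vanish unless $\mu$ is a product measure.
\item So no constant matrix $K$ works, in particular not $R-I$. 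The same mechanism produces the $-\infty$ entry for unweighted CSGE in the one-edge Ising table.
\item Path coupling cannot supply the estimate either. It contracts sup-norm oscillations $\max_x\abs{\nabla_i f(x)}$, because the coupled chains visit configurations not distributed as $P_t(x,\cdot)$. It never yields $P_t\abs{\nabla_j f}(x)$ on the right.
\item Likewise, a BE bound does not give your $L^1$ commutation. In discrete spaces you need the strictly stronger rBE.
\end{itemize}

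\textbf{The missing idea} is the canonical choice of rates and weights: $q_i^\star=\frac12\sqrt{\mu(x^i)/\mu(x)}$ with weighted derivatives $\delta_i^\star g=q_i^\star(x)(g(x^i)-g(x))$.
\begin{itemize}
\item The square property $q_i^\star(x)q_j^\star(x^i)=q_j^\star(x)q_i^\star(x^j)$ matches the two coefficients of $g(x^{ij})$.
\item The bad term then becomes $q_j^\star(x)q_i^\star(x^j)\bigl(\abs{a}-\sgn(\nabla_ig(x))\,a\bigr)\ge0$, which is Lemma~\ref{lem:abs-commutator}.
\item The remaining first-order matrices are bounded using $D^\star\ge bI$ and a $b$-dependent multiple of $R^\top$. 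This is where $\norm{R}_1$ and $\norm{R}_2$ enter.
\end{itemize}
Granting that lemma, your matrix-valued estimate does hold, in the form $\abs{\nabla^\star P_tf}\le e^{-tA}P_t\abs{\nabla^\star f}$ with $A=D^\star-\diag\{M^\star\*1\}-(M^\star)^\top$. Since $A$ has nonpositive off-diagonal entries, $e^{-tA}\ge0$ entrywise, and taking $\ell^2$ norms recovers the paper's CSGE. But the weighting is indispensable, and the diagonal is not $1$. You would also need to transfer Marton's log-Sobolev inequality from the Gibbs sampler to these rates by comparing Dirichlet forms.

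\textbf{Your tail bookkeeping loses the logarithm.} Bounding $2\int_T^\infty\E[\mu]{\Gamma(P_tf)}\dif t$ by $\rho^{-1}e^{-2\rho T}\Inf_\mu(f)$ with $T$ of constant order leaves a term of order $\Inf_\mu(f)$ with no $1/(1+\log)$ factor. Use instead the variance form of the spectral gap, $\Var[\mu]{P_Tf}\le e^{-2\lambda T}\Var[\mu]{f}$, and absorb it into the left side:
$(1-e^{-2\lambda T})\Var[\mu]{f}\le 2\int_0^T\E[\mu]{\Gamma(P_tf)}\dif t$.
Take $T$ of order $(\rho_{\LSI}+\kappa)^{-1}$, where $\kappa$ is the negative part of the gradient-estimate constant. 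The $[0,T]$ integral then gives the bound by itself, so the reverse local Poincar\'e / $t^{-1/2}$ detour is unnecessary.
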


Our strategy for establishing \Cref{thm:alt-L1-L2}, as well as all the inequalities in \Cref{thm:informal}, consists of two steps.
\begin{enumerate}[\bfseries Step 1.]
    \item Identify an appropriate curvature condition for the measure $\mu$ and its associated Markov processes from which the target isoperimetric inequality can be derived. 

    \item Choose suitable Markov semigroups and weighting schemes under which the desired curvature conditions hold.
\end{enumerate}

In Step 1, we use discrete versions of the Bakry--\'Emery curvature-dimension conditions that have been studied extensively in continuous space, and we give black-box reductions to these conditions, together with hypercontractivity.
In Step 2, we introduce a canonical Markov process with suitable weights and verify the curvature properties for this particular choice.

\subsection{Semigroup and hypercontractivity}

Although the statement of \Cref{thm:alt-L1-L2} does not involve a Markov process, its proof requires a suitable hypercontractive semigroup.

Let $\mu$ be a distribution fully supported on $\cube$. 
We define a \emph{canonical} continuous-time Markov process with semigroup $(P_t)_{t\ge 0}$ and generator $L$. Specifically, $P_t = e^{t L}$ for $t \ge 0$, and, for any function $f: \Omega \to \R$,
\begin{align*}
    Lf(x) = \sum_{i=1}^n q_i(x) (f(x^i)-f(x)),
\end{align*}
where $q_i(\cdot)$ denotes the transition rate for flipping the $i$-th coordinate and is given by
\begin{align*}
    q_i(x) = \frac{1}{2} \sqrt{\frac{\mu(x^i)}{\mu(x)}}, \quad \forall x \in \cube.
\end{align*}
This semigroup is reversible with respect to $\mu$ because the detailed balance identity $\mu(x) q_i(x) = \mu(x^i) q_i(x^i)$ holds for each $i \in [n]$ and $x \in \cube$. The significance of this canonical choice will become clear later.

As in previous works, we require the heat semigroup $(P_t)_{t\ge 0}$ to be hypercontractive. Hypercontractivity is equivalent to a log-Sobolev inequality and plays a key role in the analysis of mixing times for Markov processes.
For a real $p > 0$ and a function $f: \cube \to \R$, the $L^p(\mu)$-norm of $f$ is defined as
\begin{align*}
    \norm{f}_{p,\mu} := \left( \E[\mu]{|f(x)|^p} \right)^{1/p}.
\end{align*}
For a constant $\rho >0$, \emph{hypercontractivity} means that, for any $1\leq p \leq q \leq \infty$ satisfying $\frac{q-1}{p-1}\leq \exp(2\rho t)$ and every $f: \cube \to \R$, the following holds:
\begin{align}\label{eq:hypercontractivity-intro}
    \norm{P_t f}_{q,\mu} \leq \norm{f}_{p,\mu}.
\end{align}

The following lemma is a direct consequence of hypercontractivity.
\begin{lemma}\label{lem:hypercontractivity}
    Let $t \ge 0$. For any function $f: \cube \to \R$,
    \begin{align*}
        \norm{P_t f}_{2,\mu} \le \norm{f}_{1,\mu}^{\theta_t} \norm{f}_{2,\mu}^{1-\theta_t},
    \end{align*}
    where $\theta_t = \tanh (\rho t)$.
\end{lemma}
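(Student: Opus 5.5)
The plan is to apply hypercontractivity \eqref{eq:hypercontractivity-intro} with $p = 1 + e^{-2\rho t}$ and $q = 2$, and then interpolate $\norm{f}_{p,\mu}$ between the $L^1$ and $L^2$ norms via Hölder (log-convexity of $L^p$ norms). The exponent condition is $\frac{q-1}{p-1} = e^{2\rho t}$, which holds with equality for this choice of $p$. Since $p \in (1,2]$ for every $t \ge 0$, hypercontractivity gives directly
\begin{align*}
    \norm{P_t f}_{2,\mu} \le \norm{f}_{p,\mu}.
\end{align*}
The case $t = 0$ is trivial ($p = 2$ and $\theta_0 = 0$), so I assume $t > 0$.

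Next I write $\frac1p = \frac{\theta}{1} + \frac{1-\theta}{2}$. Hölder's inequality applied to $|f|^p = |f|^{\theta p}\,|f|^{(1-\theta)p}$ with conjugate exponents $\frac{1}{\theta p}$ and $\frac{2}{(1-\theta)p}$ gives
\begin{align*}
    \norm{f}_{p,\mu} \le \norm{f}_{1,\mu}^{\theta}\,\norm{f}_{2,\mu}^{1-\theta}.
\end{align*}
Solving the defining relation gives $\theta = \frac{2}{p} - 1 = \frac{2-p}{p}$.

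Substituting $p = 1+e^{-2\rho t}$, I compute
\begin{align*}
    \theta = \frac{1-e^{-2\rho t}}{1+e^{-2\rho t}} = \tanh(\rho t) = \theta_t.
\end{align*}
Combining the two displays proves the lemma.

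There is no real obstacle here. The only points to check are that the chosen $p$ lies in $[1,2]$, so that both hypercontractivity and the interpolation apply, and that the resulting exponent simplifies exactly to $\tanh(\rho t)$, which the computation above confirms.
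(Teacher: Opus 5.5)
Your proof is correct and follows the paper's own argument: hypercontractivity with $p_t = 1+e^{-2\rho t}$ and $q=2$, followed by H\"older interpolation of $\norm{f}_{p_t,\mu}$ between the $L^1$ and $L^2$ norms with $\theta_t = \frac{2}{p_t}-1 = \tanh(\rho t)$. You also verify the H\"older exponents and treat the case $t=0$ separately, which the paper's sketch leaves implicit.
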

\begin{proof}[Proof sketch]
    Let $p_t = 1 + \e^{-2\rho t}$. Then $\theta_t + \frac{1-\theta_t}{2} = \frac{1}{p_t}$, and we deduce that
    \begin{align}
        \norm{P_t f}_{2,\mu} 
        &\le \norm{f}_{p_t} \tag{Hypercontractivity} \\
        &\le \norm{f}_{1}^{\theta_t} \norm{f}_2^{(1-\theta_t)}, \tag{H\"older's inequality}
    \end{align}
    as claimed.
\end{proof}

\subsection{Bakry--\'Emery theory and gradient estimates}
We now introduce the basic notions of Bakry--\'Emery theory and the associated gradient estimate conditions. For our purposes, we focus on the Boolean hypercube $\cube$, although these notions extend to a general finite state space and a reversible Markov semigroup. 

The \textit{carr\'e du champ operator} $\Gamma$ is defined, for any functions $f,g:\cube \to \R$, by
\begin{align*}
    \Gamma(f,g) := \frac 12 (L(fg)-gLf-fLg), \quad \Gamma(f) := \Gamma(f,f).
\end{align*}
A straightforward calculation yields
\begin{align*}
    \Gamma(f,g)(x) = \frac 12\sum_{i=1}^n q_i(x)(f(x^i) - f(x))(g(x^i) - g(x)),
    \quad \Gamma(f)(x) = \frac 12\sum_{i=1}^n q_i(x)(f(x^i) - f(x))^2.
\end{align*}
The following versions of the discrete derivatives and gradient provide an intuitive representation of the carr\'e du champ operator.
The \textit{discrete gradient} of a function $f: \Omega \to \mathbb{R}$ is defined as the vector 
\begin{align*}
    \nabla f := (\delta_1 f, \ldots, \delta_n f),\quad \text{where} \quad \delta_i f := \sqrt{\frac 12 q_i(x)}(f(x^i) - f(x)).
\end{align*}
It follows that $\Gamma(f,g) = \inner{\nabla f}{\nabla g}$ and $\Gamma(f) = \norm{\nabla f}_2^2$.

The \textit{iterated carr\'e du champ operator} $\Gamma_2$ is defined, for any function $f:\Omega \to \R$, by
\begin{align*}
    \Gamma_2(f) := \frac 12 L\Gamma(f) - \Gamma(f,Lf).
\end{align*}

Let $\rho \in \R$ be a constant. We now introduce the principal definitions of Bakry--\'Emery criteria and the associated gradient-estimate properties:
\begin{itemize}
    \item \emph{Bakry--\'Emery (BE) condition:} For every function $f: \cube \to \R$, it holds
    \begin{align*}
        \Gamma_2(f) \geq \rho \cdot \Gamma(f);
    \end{align*}

    \item \emph{Reinforced Bakry--\'Emery (rBE) condition:} For every function $f: \cube \to \R$, it holds
    \begin{align}
    \label{eq:rBE-intro}
        \Gamma_2(f) - \Gamma(\sqrt{\Gamma(f)}) \ge \rho \cdot \Gamma(f);
    \end{align}

    \item \emph{Weak Gradient Estimate (WGE):} For every function $f:\cube \to \R$ and $t\geq 0$, it holds
    \begin{align}
        \Gamma(P_t f) \leq e^{-2\rho t} P_t(\Gamma(f)) \quad\iff\quad \norm{\nabla P_t f}^2_2 \leq e^{-2\rho t} P_t(\norm{\nabla f}^2_2);
    \end{align}

    \item \emph{Strong Gradient Estimate (SGE):} For every function $f: \cube \to \R$ and $t\geq 0$, it holds
    \begin{align}\label{eq:SGE-intro}
        \sqrt{\Gamma(P_t f)} \leq e^{-\rho t} P_t \sqrt{\Gamma(f)} \quad\iff\quad 
        \norm{\nabla P_t f}_2 \leq e^{-\rho t} P_t(\norm{\nabla f}_2).
    \end{align}
\end{itemize}

One crucial motivation and application of the Bakry--\'Emery theory lies in the analysis of the convergence of the Markov process to equilibrium. Specifically, to bound the mixing time, one aims to study the decay of the variance functional $\Var[\mu]{P_t f}$ as time $t$ grows for a test function $f$. A direct calculation yields
\begin{align}\label{eq:dif-Var}
    \frac{\dif}{\dif t} \Var[\mu]{P_t f} = - 2 \E[\mu]{\Gamma(P_t f)}
    \qquad 
    \text{and}
    \qquad
    \frac{\dif}{\dif t} \E[\mu]{\Gamma(P_t f)} = - 2 \E[\mu]{\Gamma_2(P_t f)}.
\end{align}
Therefore, the Bakry--\'Emery condition gives $\frac{\dif^2}{\dif t^2} \Var[\mu]{P_t f} \ge -2\rho \cdot \frac{\dif}{\dif t} \Var[\mu]{P_t f}$, implying an exponential decay of the variance. 

The following proposition establishes the equivalence between curvature conditions and gradient estimates and is central to Bakry--\'Emery theory.

\begin{proposition}\label{prop:intro-BE-GE}
    Fix $\rho \in \R$. Then: (1) BE $\Leftrightarrow$ WGE; (2) rBE $\Leftrightarrow$ SGE.
\end{proposition}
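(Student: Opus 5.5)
The plan is the standard Bakry--\'Emery interpolation argument, adapted to the finite state space $\cube$ where $L$ is a bounded operator and every function is admissible, so all differentiations are justified. Fix $f$, $t>0$, and for $s\in[0,t]$ consider the interpolating functional
\[
\Lambda(s) := e^{-2\rho s} P_s\big(\Gamma(P_{t-s} f)\big) \quad\text{(for BE/WGE)},\qquad
\Psi(s) := e^{-\rho s} P_s\big(\sqrt{\Gamma(P_{t-s} f)}\big) \quad\text{(for rBE/SGE)}.
\]
Then $\Lambda(0)=\Gamma(P_tf)$ and $\Lambda(t)=e^{-2\rho t}P_t\Gamma(f)$, so WGE is exactly $\Lambda(0)\le\Lambda(t)$. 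Likewise SGE is $\Psi(0)\le\Psi(t)$.

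\textbf{Direction (1), BE $\Rightarrow$ WGE.} Write $g=P_{t-s}f$. Using $\partial_s g=-Lg$ and the identity $\partial_s P_s h = P_s L h$, we get
\[
\Lambda'(s)=e^{-2\rho s}P_s\big(L\Gamma(g)-2\Gamma(g,Lg)-2\rho\Gamma(g)\big)=2e^{-2\rho s}P_s\big(\Gamma_2(g)-\rho\Gamma(g)\big)\ge 0 .
\]
Here we use the positivity of $P_s$. So $\Lambda$ is nondecreasing, which gives WGE.

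\textbf{Direction (1), WGE $\Rightarrow$ BE.} Expand both sides of WGE at small $t$ and read off the $t$-coefficient. We have $\Gamma(P_tf)=\Gamma(f)+2t\Gamma(f,Lf)+O(t^2)$ and $e^{-2\rho t}P_t\Gamma(f)=\Gamma(f)+t(L\Gamma(f)-2\rho\Gamma(f))+O(t^2)$. Comparing linear terms at each point $x$ gives $\Gamma_2(f)\ge\rho\Gamma(f)$. Since the state space is finite, the expansions are exact power series and there is no regularity issue.

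\textbf{Direction (2), rBE $\Rightarrow$ SGE.} This is where the care goes, because of the square root. To avoid the points where $\Gamma(g)=0$, replace $\sqrt{\Gamma(g)}$ by $\sqrt{\Gamma(g)+\eps}$ and let $\eps\to0$ at the end. Write $h=\sqrt{\Gamma(g)+\eps}$. The computation is
\[
\Psi_\eps'(s)=e^{-\rho s}P_s\Big(Lh-\frac{\Gamma(g,Lg)}{h}-\rho h\Big).
\]
From $h^2=\Gamma(g)+\eps$ we get $L(h^2)=2hLh+2\Gamma(h)$, hence
\[
Lh=\frac{L\Gamma(g)-2\Gamma(h)}{2h}.
\]
Substituting, the integrand becomes
\[
\frac{\Gamma_2(g)-\Gamma(h)-\rho h^2}{h}=\frac{\Gamma_2(g)-\Gamma(h)-\rho\Gamma(g)-\rho\eps}{h}.
\]
It remains to compare $\Gamma(h)$ with $\Gamma(\sqrt{\Gamma(g)})$. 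The map $u\mapsto\sqrt{u+\eps}$ is $1$-Lipschitz with respect to $\sqrt u$, since $|\sqrt{a+\eps}-\sqrt{b+\eps}|\le|\sqrt a-\sqrt b|$, so $\Gamma(h)\le\Gamma(\sqrt{\Gamma(g)})$ pointwise. Applying rBE, the integrand is then at least $-\rho\eps/h\ge-|\rho|\sqrt\eps$. Integrating over $[0,t]$ and sending $\eps\to0$ gives $\Psi(0)\le\Psi(t)$, which is SGE.

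\textbf{Direction (2), SGE $\Rightarrow$ rBE.} Again differentiate at $t=0$. At points where $\Gamma(f)(x)>0$, the first-order expansion of $\sqrt{\Gamma(P_tf)}$ is $\sqrt{\Gamma(f)}+t\,\Gamma(f,Lf)/\sqrt{\Gamma(f)}$. The right side expands as $\sqrt{\Gamma(f)}+t\big(L\sqrt{\Gamma(f)}-\rho\sqrt{\Gamma(f)}\big)$. Using $L\Gamma(f)=2\sqrt{\Gamma(f)}\,L\sqrt{\Gamma(f)}+2\Gamma(\sqrt{\Gamma(f)})$ and multiplying by $\sqrt{\Gamma(f)}$ yields rBE at $x$.

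At points where $\Gamma(f)(x)=0$, rBE reads $\Gamma_2(f)(x)\ge\Gamma(\sqrt{\Gamma f})(x)$. Here $\Gamma_2(f)(x)=\tfrac12L\Gamma(f)(x)=\tfrac12\sum_i q_i\,\Gamma(f)(x^i)$, which equals $\Gamma(\sqrt{\Gamma f})(x)$ exactly, so equality holds. I expect this degenerate case, together with the $\eps$-regularization in the forward direction, to be the main obstacle. The nonlocality of the discrete $\Gamma$ means the usual chain rule fails, and only the inequality $\Gamma(\phi(u))\le\Gamma(\sqrt u)$ for $\phi=\sqrt{\cdot+\eps}$ is available. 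That is exactly what is needed, because rBE is formulated with $\Gamma(\sqrt{\Gamma f})$ rather than a chain-rule expression.
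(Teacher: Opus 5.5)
Your proof is correct and follows the same interpolation argument as the paper. You differentiate $P_s\Gamma(P_{t-s}f)$ (resp.\ $P_s\sqrt{\Gamma(P_{t-s}f)}$), use the chain rule $\sqrt{u}\,L\sqrt{u}=\tfrac12 Lu-\Gamma(\sqrt{u})$ to identify the derivative with $\Gamma_2-\rho\Gamma$ (resp.\ the rBE defect divided by $\sqrt{\Gamma}$), and get the converses by differentiating at $t=0$. Your version is slightly more careful than the paper's in two places: you justify the $\varepsilon$-regularization via $\Gamma(\sqrt{\Gamma(g)+\varepsilon})\le\Gamma(\sqrt{\Gamma(g)})$, and you check separately that rBE holds with equality wherever $\Gamma(f)(x)=0$ (there $\Gamma(f,Lf)(x)=0$ by Cauchy--Schwarz), a case the paper's $t=0$ differentiation passes over.
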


\Cref{prop:intro-BE-GE} is a fundamental result whose proof can be found, for example, in the survey \cite{Salez-survey} or the thesis \cite{Kamtue-thesis}.
We sketch the proof of the implication BE $\Rightarrow$ WGE, which is based on the \emph{interpolation argument} from continuous space \cite{BGL-book}.
\begin{proof}[Proof sketch of BE $\Rightarrow$ WGE]
    Let $F_s := e^{-2\rho s} P_s \Gamma(P_{t-s} f)$. Our goal is to show $F_0 \le F_t$.
    Differentiating with respect to $s$ yields
    \begin{align*}
        \frac{\dif}{\dif s} F_s 
        &= e^{-2\rho s}\left( - 2 \rho \cdot P_s \Gamma(P_{t-s} f) + \frac{\dif}{\dif s} P_s \Gamma(P_{t-s} f) \right) \\
        &= e^{-2\rho s}\left( - 2 \rho \cdot P_s \Gamma(P_{t-s} f) + P_s \left( L + \frac{\dif}{\dif s} \right) \Gamma(P_{t-s} f) \right) \tag{$\frac{\dif}{\dif s} P_s g_s = P_s L g_s + P_s (\frac{\dif}{\dif s} g_s)$} \\
        &= 2 e^{-2\rho s} P_s \left( - \rho \cdot \Gamma(P_{t-s} f) + \frac{1}{2} L \Gamma(P_{t-s} f) - \Gamma(P_{t-s} f, L P_{t-s} f) \right) \tag{$\frac{\dif}{\dif s} \Gamma(h_s) = 2 \Gamma(h_s, \frac{\dif}{\dif s} h_s)$} \\
        &= 2 e^{-2\rho s} P_s \left( - \rho \cdot \Gamma(P_{t-s} f) + \Gamma_2(P_{t-s} f) \right) \ge 0,
    \end{align*}
    and hence $F_0 \le F_t$.
\end{proof}

It is not hard to see that rBE and SGE are stronger than BE and WGE and imply them.
For diffusion processes in continuous space, such as Langevin diffusion, the converse implication also holds by the diffusion chain rule. Thus, BE $\Leftrightarrow$ rBE $\Leftrightarrow$ WGE $\Leftrightarrow$ SGE. These equivalences are a striking feature of the elegant Bakry--\'Emery theory.
In discrete space, however, rBE and SGE are typically strictly stronger than BE and WGE; see \Cref{sec:different-rates-weights}.

\subsection{From curvature conditions to isoperimetric inequalities}

We show that the $L^1$--$L^2$ inequality \eqref{eq:alt-L1-L2} follows from two properties of the underlying measure and its associated semigroup: 
(1) the hypercontractivity, or equivalently, the log-Sobolev inequality of the semigroup; and
(2) the reinforced Bakry--\'Emery (rBE) condition, or equivalently, the strong gradient estimate (SGE) property.

\begin{theorem}[\cite{CorderoErausquinLedoux12}]
\label{thm:intro-L1-L2}
    Suppose the measure $\mu$ and the associated semigroup $(P_t)_{t \ge 0}$ satisfy both hypercontractivity \eqref{eq:hypercontractivity-intro} and rBE \eqref{eq:rBE-intro}. Then the $L^1$--$L^2$ inequality \eqref{eq:alt-L1-L2} holds.
\end{theorem}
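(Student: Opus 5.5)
The plan is to write the variance as a time integral of the Dirichlet form along the semigroup. On the short-time part, the strong gradient estimate moves the problem to the function $g:=\sqrt{\Gamma(f)}$. Hypercontractivity (through \Cref{lem:hypercontractivity}) then interpolates between $\norm{g}_{1,\mu}$, which is comparable to $\SurfArea_\mu(f)$, and $\norm{g}_{2,\mu}^2$, which is comparable to $\Inf_\mu(f)$. I write $\rho_{\mathrm{BE}}>0$ for the rBE constant and $\rho_{\mathrm{HC}}>0$ for the hypercontractivity constant. Both are assumed positive; this is implicit in the statement.

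\textbf{Step 1 (splitting the variance).} By \eqref{eq:dif-Var}, for any $T>0$,
\begin{align*}
\Var[\mu]{f} = 2\int_0^T \E[\mu]{\Gamma(P_t f)}\dif t + \Var[\mu]{P_T f}.
\end{align*}
rBE implies BE. Hence \eqref{eq:dif-Var} gives $\frac{\dif}{\dif t}\E[\mu]{\Gamma(P_tf)} \le -2\rho_{\mathrm{BE}}\E[\mu]{\Gamma(P_tf)}$, and integrating this yields $\Var[\mu]{P_Tf}\le e^{-2\rho_{\mathrm{BE}} T}\Var[\mu]{f}$. With $T=1/\rho_{\mathrm{BE}}$ we get
\begin{align*}
(1-e^{-2})\Var[\mu]{f}\le 2\int_0^T\E[\mu]{\Gamma(P_tf)}\dif t .
\end{align*}

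\textbf{Step 2 (SGE plus hypercontractivity).} By \Cref{prop:intro-BE-GE}, rBE gives SGE \eqref{eq:SGE-intro}:
\begin{align*}
\sqrt{\Gamma(P_tf)}\le e^{-\rho_{\mathrm{BE}} t}P_t g.
\end{align*}
Squaring, taking expectations and applying \Cref{lem:hypercontractivity} to $g$ gives
\begin{align*}
\E[\mu]{\Gamma(P_tf)}\le \norm{P_tg}_{2,\mu}^2\le \norm{g}_{1,\mu}^{2\theta_t}\norm{g}_{2,\mu}^{2-2\theta_t}=\norm{g}_{2,\mu}^2\, r^{\theta_t},
\qquad r:=\frac{\norm{g}_{1,\mu}^2}{\norm{g}_{2,\mu}^2}\le 1,
\end{align*}
where $\theta_t=\tanh(\rho_{\mathrm{HC}}t)$.

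\textbf{Step 3 (identifying the norms).} For Boolean $f$ we have $(f(x^i)-f(x))^2=4\cdot\ind\{f(x^i)\ne f(x)\}$, so
\begin{align*}
\Gamma(f)(x)=2\sum_i q_i(x)\,\ind\{f(x^i)\neq f(x)\}.
\end{align*}
Under marginal boundedness, $q_i\in[b/2,1/(2b)]$. This gives $\norm{g}_{2,\mu}^2\asymp \Inf_\mu(f)$ and $\norm{g}_{1,\mu}\asymp\SurfArea_\mu(f)$, with constants depending only on $b$. Consequently $r\asymp \SurfArea_\mu(f)^2/\Inf_\mu(f)$, and this ratio is at most $1$ since $\sqrt{s}\le s$ for integer $s\ge 0$ (so $\SurfArea_\mu(f)\le\Inf_\mu(f)$), and $\SurfArea_\mu(f)\ge\Inf_\mu(f)$ as well.

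This is the one place where the bare hypotheses of the theorem (hypercontractivity and rBE) are not quite enough: one needs the rates $q_i$ to be bounded above and below. I would state this explicitly, as it holds in the setting of \Cref{thm:alt-L1-L2}. Alternatively, one can simply read \eqref{eq:alt-L1-L2} with $\Inf$ and $\SurfArea$ replaced by $\norm{g}_2^2$ and $\norm{g}_1$.

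\textbf{Step 4 (the time integral).} It remains to bound $\int_0^T r^{\tanh(\rho_{\mathrm{HC}}t)}\dif t$. On $[0,T]$ we have $\tanh(\rho_{\mathrm{HC}}t)\ge c\,\rho_{\mathrm{HC}}t$ with $c=\tanh(\rho_{\mathrm{HC}}T)/(\rho_{\mathrm{HC}}T)$, by concavity of $\tanh$. Therefore
\begin{align*}
\int_0^T r^{\theta_t}\dif t\le \min\left\{T,\ \frac{1}{c\rho_{\mathrm{HC}}\log(1/r)}\right\}\lesssim \frac{1}{1+\log(1/r)}.
\end{align*}
Combining Steps 1--4 gives $\Var[\mu]{f}\le C\,\Inf_\mu(f)/(1+\log(1/r))$. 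Since $r$ and $\SurfArea_\mu(f)^2/\Inf_\mu(f)$ agree up to constants, the logarithms differ by at most an additive constant. That constant is absorbed by the $1+{}$ in the denominator at the cost of enlarging $C$. The main obstacle is Step 2, which needs SGE in its strong ($L^1$-type) form; weak BE would only control $P_t\Gamma(f)$, losing the surface-area term. The rest is bookkeeping of constants.
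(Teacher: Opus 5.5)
Your outline follows the paper's proof sketch step for step: variance as $2\int_0^T\E[\mu]{\Gamma(P_tf)}\dif t$, SGE to pass to $g=\sqrt{\Gamma(f)}$, \Cref{lem:hypercontractivity}, then the bound on $\int_0^T r^{\theta_t}\dif t$. It has a genuine gap, however: the restriction $\rho_{\mathrm{BE}}>0$ is not implicit in the statement.

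\textbf{Why the restriction matters.} The rBE condition \eqref{eq:rBE-intro} is formulated for arbitrary $\rho\in\R$. The paper stresses that only a dimension-free lower bound on the rBE/SGE constant is needed, and that this constant may be negative. This is exactly the regime the theorem is used in. The constant $b-\frac{(1+b^2)^2}{4b^3}(\norm{R}_2+\norm{R}_1)$ from \Cref{lem:Dobrushin_upper_bound_CSGE} is negative unless $\norm{R}_1+\norm{R}_2<4b^4/(1+b^2)^2$, so it is generally negative under the hypotheses of \Cref{thm:informal}. Likewise, for the one-edge Ising model of \Cref{sec:different-rates-weights}, the optimal canonically weighted constant $2e^{-\beta}-e^{\beta}$ is negative once $\beta>\beta_c$.

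\textbf{Where your argument breaks.} When $\rho_{\mathrm{BE}}\le 0$, both places where you use positivity fail.
In Step 1, BE gives no spectral gap, so you have no control of $\Var[\mu]{P_Tf}$, and the choice $T=1/\rho_{\mathrm{BE}}$ is meaningless.
In Step 2, you silently dropped the SGE factor $e^{-2\rho_{\mathrm{BE}}t}$. That factor is now at least $1$, and it is unbounded in $t$ when $\rho_{\mathrm{BE}}<0$.

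\textbf{The missing idea.} Take the spectral gap from hypercontractivity rather than from curvature. Hypercontractivity with constant $\rho_{\mathrm{HC}}$ is equivalent to the log-Sobolev inequality, which implies the Poincar\'e inequality with the same constant. Hence $(1-e^{-2\rho_{\mathrm{HC}}T})\Var[\mu]{f}\le 2\int_0^T\E[\mu]{\Gamma(P_tf)}\dif t$ for every $T>0$, with no sign condition on the curvature. Now choose $T=1/(\rho_{\mathrm{HC}}+\kappa)$ with $\kappa=\max\set{-\rho_{\mathrm{BE}},0}$. On $[0,T]$ you then have:
\begin{itemize}
\item $e^{-2\rho_{\mathrm{BE}}t}\le e^{2}$;
\item $\tanh(\rho_{\mathrm{HC}}t)\ge\tanh(1)\,\rho_{\mathrm{HC}}t$;
\item $1-e^{-2\rho_{\mathrm{HC}}T}\gtrsim\rho_{\mathrm{HC}}T$.
\end{itemize}
With these, your Step 4 goes through and gives \eqref{eq:alt-L1-L2} with $C$ of order $(\rho_{\mathrm{HC}}+\kappa)/\rho_{\mathrm{HC}}^2$, up to $b$-dependent factors. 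This is how the paper proceeds (see the choice of $T$ in the sketch and in \Cref{lem:l1_l2_talagrand_weak}).

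\textbf{A minor slip in Step 3.} The claim $\SurfArea_\mu(f)\ge\Inf_\mu(f)$ is false: for majority, $\SurfArea_\mu=O(1)$ while $\Inf_\mu\asymp\sqrt n$. Also, $\sqrt s\le s$ only gives $\SurfArea_\mu\le\Inf_\mu$. What you actually need is $\SurfArea_\mu(f)^2\le\Inf_\mu(f)$, which is Jensen. In any case, $r\le1$ already follows from Cauchy--Schwarz, so this does not affect the argument.
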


\begin{proof}[Proof sketch]
    Let $\rho \in \mathbb{R}$ denote the constant for SGE, and $\rho_0 > 0$ denote the log-Sobolev constant. 
    We first consider a real-valued function $f: \cube \to \R$ and later specialize to the Boolean-valued case.
    We begin with an integration formula for the variance, which follows from \eqref{eq:dif-Var}:
    \begin{align*}
        (1-\e^{-2 \rho_{0} T}) \Var[\mu]{f} \le \Var[\mu]{f} - \Var[\mu]{P_T f} = 2 \int_0^{T} \E[\mu]{\Gamma(P_t f)} \dif t.
    \end{align*}
    We then deduce from SGE and \Cref{lem:hypercontractivity} that
    \begin{align*}
        (1-\e^{-2\rho_{0}T}) \Var[\mu]{f} 
        &\le 2 \int_0^{T} \E[\mu]{\left( \sqrt{\Gamma(P_t f)} \right)^2} \dif t \\
        &\le 2 \int_0^{T} \e^{-2 \rho t} \cdot \E[\mu]{\tp{P_t \sqrt{\Gamma f}}^2} \dif t \tag{SGE} \\
        &= 2 \int_0^{T} \e^{-2 \rho t} \cdot \norm{P_t \sqrt{\Gamma f}}_{2,\mu}^2 \dif t \\ 
        &\le 2 \int_0^{T} \e^{-2 \rho t} \cdot \norm{\sqrt{\Gamma f}}_{1,\mu}^{2 \theta_t} \norm{\sqrt{\Gamma f}}_{2,\mu}^{2 (1-\theta_t)} \dif t \tag{\Cref{lem:hypercontractivity}, $\theta_t = \tanh (\rho_0 t)$} \\
        &= 2 \norm{\sqrt{\Gamma f}}_{2,\mu}^{2} \int_0^{T} \e^{-2 \rho t} \cdot \tp{\frac{\norm{\sqrt{\Gamma f}}_{1,\mu}^2}{\norm{\sqrt{\Gamma f}}_{2,\mu}^2}}^{\theta_t} \dif t.
    \end{align*}
    Choose $T = \tp{\rho_{0} + \max \set{-\rho,0}}^{-1}$.
    A suitable upper bound on this integral, whose technical proof is omitted from this sketch, gives
    \begin{align*}
        \Var[\mu]{f} \lesssim \frac{\norm{\sqrt{\Gamma f}}_{2,\mu}^2}{1+\log \tp{ \norm{\sqrt{\Gamma f}}_{2,\mu}^2 / \norm{\sqrt{\Gamma f}}_{1,\mu}^2 }}.
    \end{align*}
    This inequality can be viewed as an $L^1$--$L^2$ inequality for real $f$. Finally, for Boolean $f$, one simply observes that, if the transition rates $\{q_i: \Omega \mapsto \R_+\}_{i\in[n]}$ are bounded, 
    \begin{align*}
        \Inf_\mu(f) \approx \norm{\sqrt{\Gamma f}}_{2,\mu}^2
        \quad \text{and} \quad
        \SurfArea_\mu(f) \approx \norm{\sqrt{\Gamma f}}_{1,\mu}
    \end{align*}
    and hence \eqref{eq:alt-L1-L2} follows.
\end{proof}

The proofs of the alternative $L^1$--$L^2$ inequality in \Cref{thm:intro-L1-L2} and the original $L^1$--$L^2$ inequality in \Cref{thm:informal} follow the same strategy developed by \cite{CorderoErausquinLedoux12}. We point out that we actually require a weighted version of rBE in \Cref{thm:intro-L1-L2}; this will be discussed in the next subsection. However, the main steps of the proof remain the same as in \cite{CorderoErausquinLedoux12} even with the weights. We believe our main contribution lies in reorganizing the proof and \emph{identifying the precise curvature condition} it requires. In particular, for the original $L^1$--$L^2$ inequality in \Cref{thm:informal}, we introduce a coordinate-wise version of SGE that is the essential property needed for the proof.
By contrast, \cite{CorderoErausquinLedoux12} imposes a structural assumption that appears feasible mainly for product measures.

\subsection{Establishing curvature conditions via reweighting}
By \Cref{thm:intro-L1-L2}, establishing \Cref{thm:alt-L1-L2} reduces to establishing both hypercontractivity and rBE. Hypercontractivity follows from the Dobrushin condition, via a result in the study of Markov chain mixing times \cite{marton2019logarithmic}. The main obstacle is therefore to prove rBE, or equivalently, the SGE property. To apply \Cref{thm:intro-L1-L2}, we need only a dimension-free lower bound on $\rho_\RBE$ or $\rho_\SGE$; these constants may be negative but must remain bounded below uniformly in $n$.

Proving rBE is challenging in general, and we cannot establish it in its original form. Instead, we consider weighted versions of the carr\'e du champ operator and the discrete gradient, and then define and prove corresponding weighted versions of rBE and SGE.
Specifically, for each $i \in [n]$ and $x \in \cube$, we define transition weights
\begin{align*}
    w_i(x) = 2 q_i(x) = \sqrt{\frac{\mu(x^i)}{\mu(x)}}.
\end{align*}
We then define the weighted carr\'e du champ operator and the weighted discrete gradient, respectively, by
\begin{align*}
    &\Gamma_w(f)(x) := \frac 12\sum_{i=1}^n w_i(x) q_i(x)(f(x^i) - f(x))^2
    = \sum_{i=1}^n q_i(x)^2(f(x^i) - f(x))^2;\\
    &\nabla_w f := (\delta_{w,1} f, \ldots, \delta_{w,n} f),
    \text{ where } \delta_{w,i} f := \sqrt{\frac 12 w_i(x) q_i(x)}(f(x^i) - f(x)) = q_i(x) (f(x^i) - f(x)).
\end{align*}
As before, $\Gamma_w(f) = \norm{\nabla_w f}_2^2$.
We define the weighted version of the iterated carr\'e du champ operator $\Gamma_{2;w}$ analogously.
The rBE condition and SGE property then take the following weighted forms: for every function $f: \cube \to \R$,
\begin{align}
    &\Gamma_{2;w}(f) - \Gamma(\sqrt{\Gamma_w(f)}) \ge \rho \cdot \Gamma_w(f); \tag{rBE} \\
    &\norm{\nabla_w P_t f}_2 \leq e^{-\rho t} P_t(\norm{\nabla_w f}_2). \tag{SGE}
\end{align}
The equivalence rBE $\Leftrightarrow$ SGE continues to hold, as in \Cref{prop:intro-BE-GE}.
More importantly, this choice of weights has the following two key properties.

\begin{itemize}
    \item \Cref{thm:intro-L1-L2} continues to hold in the weighted setting, provided only that the weights are bounded above and below; marginal boundedness of the measure guarantees this property.
    \item We can establish the weighted versions of rBE and SGE using a useful \emph{commutator identity} (\Cref{lem:commutator}) for the weighted gradient $\grad_w$ and the generator $L$: for every function $f$,
    \begin{align}\label{eq:com-id}
        (L\grad_w - \grad_w L) f = H \grad_w f,
    \end{align}
    where $L \grad_w$ denotes the entrywise application of $L$ and $H: \cube \to \R^{n \times n}$ is a matrix field that can be written down explicitly. 
    Thus, the commutator $L\grad_w - \grad_w L$ acts pointwise and linearly on the gradient $\grad_w f$. This commutator identity plays a crucial role in our derivation of SGE and its coordinate-wise generalization on the discrete hypercube.
\end{itemize}

The commutator identity \eqref{eq:com-id} has a familiar analog in Bakry--\'Emery theory for Langevin diffusion with generator $L=\Delta-\langle\nabla V,\nabla\rangle$ and invariant measure $\mu(\dif x)\propto e^{-V(x)}\dif x$. In this setting, $H=\nabla^2V$ is the Hessian field of the potential $V$, and the commutator identity yields the corresponding Bochner identity.

\subsection{Comparison with previous work}
\label{subsec:related-work}

Our approach draws insights from three previous works: coordinate-wise gradient estimates for the $L^1$--$L^2$ inequality \cite{CorderoErausquinLedoux12}, the local Bobkov route to the surface-area inequality \cite{IvanisviliZhang26}, and local matrix criteria for discrete Bakry--\'Emery curvature \cite{CKLP22}.

\paragraph{Comparison with \cite{CorderoErausquinLedoux12}.}
Cordero-Erausquin and Ledoux \cite{CorderoErausquinLedoux12} developed a hypercontractive semigroup proof of the KKL inequality and Talagrand's $L^1$--$L^2$ inequality based on coordinate-wise gradient estimates. An absolute-gradient formulation on the hypercube is that for every $i \in [n]$, every function $f: \cube \to \R$, and every $t\geq 0$,
\begin{align}\label{eq:CL-condition}
    \abs{\delta_i P_t f} \le e^{-\rho t} P_t \abs{\delta_i f}.
\end{align}
Summing the squares of \eqref{eq:CL-condition} over $i$ gives the weaker condition:
\begin{align}\label{eq:CL-condition-sum}
    \norm{\abs{\grad} P_t f}_2 \le e^{-\rho t} \norm{P_t \abs{\grad} f}_2.
\end{align}
Thus, while \eqref{eq:CL-condition} seems only to hold for product measures, \eqref{eq:CL-condition-sum} is a much more suitable condition for non-product measures. In our proof, we use a weighted version of \eqref{eq:CL-condition-sum} (see \Cref{def:CSGE}), which is the key ingredient for both the $L^1$--$L^2$ and the Eldan--Gross inequality. Furthermore, our gradient estimates require weighting so that they can hold under weak dependence.

\paragraph{Comparison with \cite{IvanisviliZhang26}.}
Ivanisvili and Zhang \cite{IvanisviliZhang26} gave an alternative semigroup proof of the Eldan--Gross inequality based on a \emph{local Bobkov inequality}. They treated both biased product measures on the Boolean hypercube and products of diffuse Markov triples satisfying the Bakry--\'Emery curvature-dimension condition. In the diffuse setting, the local Bobkov inequality follows from the curvature condition \cite{BGL-book}.
On the discrete Boolean hypercube, they obtain the required inequality by exploiting coordinate independence and applying the product-measure isoperimetric inequality of Bobkov and G\"otze~\cite{BobkovGotze99}.
In contrast, we derive a discrete local Bobkov inequality directly from gradient estimates without relying on coordinate independence.
We then follow the overall strategy of~\cite{IvanisviliZhang26} to establish the Eldan--Gross inequality.
The technical novelty lies in using appropriate curvature conditions to control the dependence among coordinates systematically.

\paragraph{Comparison with \cite{CKLP22}.}
Cushing, Kamtue, Liu, and Peyerimhoff \cite{CKLP22} showed that discrete Bakry--\'Emery curvature on a weighted graph can be formulated as a local matrix eigenvalue problem. We adapt this viewpoint to the Boolean hypercube by choosing canonical reversible transition rates and compatible gradient weights. A commutator identity (\Cref{lem:commutator}) then reduces the control of the curvature to bounding the minimum eigenvalue of explicit local matrices. Finally, such local matrices are further bounded entrywise by the Dobrushin influence matrix, and hence a Dobrushin-type condition suffices to certify a constant bound on the discrete curvature.

\subsection{Other related works}

\paragraph{Influence inequalities.}
Classical KKL- and Talagrand-type inequalities were extended from the uniform cube to biased and general product spaces, certain continuous product measures, and symmetric non-product spaces such as slices and Schreier graphs \cite{Talagrand94,FriedgutKalai96,BKKKL92,GrimmettJansonNorris16,KellerMosselSen12,CorderoErausquinLedoux12,ODonnellWimmer13,ODonnellWimmerSharp13}.
More recently, Hopkins~\cite{Hop26} developed a local-to-global approach to KKL-type theorems on simplicial complexes, with applications to high-dimensional expanders.
For dependent measures, monotonicity-based methods yielded sharp-threshold and phase-transition results for increasing events \cite{GrahamGrimmett06,DuminilCopinRaoufiTassion19}. The closest predecessor to our influence inequalities is \cite{KoehlerLifshitzMinzerMossel23}, which treats mixing bounded-degree Markov random fields. Our results instead apply to arbitrary measures satisfying a Dobrushin-type weak-dependence condition and additionally yield surface-area inequalities.

\paragraph{Surface-area inequalities.}
Talagrand introduced the square-root inequality for the one-sided boundary on biased cubes~\cite{Talagrand93}. Bobkov~\cite{Bob97} established a Gaussian-profile functional inequality on the uniform cube. Bobkov and G\"otze~\cite{BobkovGotze99} developed corresponding inequalities for biased cubes and general product spaces.
Eldan and Gross proved the influence-sensitive two-sided inequality on the uniform cube \cite{EldanGross22}. Subsequent work gave alternative proofs and refinements on the uniform cube, as well as extensions to biased cubes and diffuse product settings \cite{RvH-remarks,EldanKindlerLifshitzMinzer25,BIM23,IvanisviliZhang26}. More recently, the sharp one-sided inequality on the uniform cube was established in \cite{DurcikIvanisviliRoosXie26}. Ivanisvili, Xie, and Zhang strengthened the Eldan--Gross inequality on the uniform cube by replacing the surface area with a nonlocal Beckmann boundary \cite{IvanisviliXieZhang26}. These results retain product structure, whereas the present work allows weak dependence and non-product spaces.

\paragraph{Bakry--\'Emery theory.}
Our analytic framework belongs to the Bakry--\'Emery $\Gamma$-calculus \cite{BakryEmery85,BGL-book}; see \cite{Kamtue-thesis,Salez-survey} for discrete perspectives. Relevant developments on finite state space include local matrix formulations of Bakry--\'Emery curvature and adapted $\Gamma$-calculus \cite{CKLP22,KLMP23}. Related work studies weakly interacting chains through entropic curvature and derives discrete curvature bounds from coupling methods \cite{EHMT17,Pedrotti25}. The present paper uses Bakry--\'Emery curvature; for other transport-based notions of curvature, such as Ollivier curvature or Erbar--Maas entropic curvature, we refer to the works \cite{Ollivier09,ErbarMaas12,KLMP24}, the book \cite{NR-book}, and the thesis \cite{Kamtue-thesis}.

\section{Preliminaries}
\label{sec:preliminaries}
\subsection{\texorpdfstring{$\Gamma$}{Γ}-calculus on the Boolean hypercube}
Let $\Omega = \{\pm 1\}^n$ be the Boolean hypercube.
For any $x \in \Omega$, we use $x^i$ to denote the vector obtained by flipping the $i$-th coordinate of $x$ and $x^{ij} := (x^i)^j$.
Given the transition rates $\{q_i: \Omega \mapsto \R_+\}_{i\in[n]}$, we define the operator $L$ on functions $f:\Omega \to \R$ as follows:
\begin{align*}
    \forall x \in \Omega,\quad Lf(x) = \sum_{i=1}^n q_i(x)(f(x^i) - f(x)).
\end{align*}
The \textit{continuous-time Markov semigroup} $(P_t)_{t\geq 0}$ is given by $P_t = \e^{t L}$.
Let $\mu$ be the \textit{stationary distribution} of the Markov semigroup $(P_t)_{t\geq 0}$.
Throughout the paper, we assume that $L$ is \textit{reversible} with respect to $\mu$, that is, for any $x\in \Omega$ and $i\in [n]$, it holds
$$\mu(x)q_i(x) = \mu(x^i)q_i(x^i).$$

\begin{definition}[Weighted carr\'e du champ operator $\Gamma_w$]
    Given the weight functions $w_i: \Omega \to \R_+$ for every $i\in [n]$, the \textit{weighted carr\'e du champ operator} $\Gamma_w$ is defined, for any functions $f,g:\Omega \to \R$, by
    \begin{align*}
        \Gamma_w(f,g) := \frac 12 (L_w(fg)-gL_wf-fL_wg),
    \end{align*}
    where $L_w$ is the operator defined by the reweighted transition rates $\{w_iq_i\}_{i\in[n]}$ (for which $\mu$ is not necessarily stationary). 
    We write $\Gamma_w(f) := \Gamma_w(f,f)$ for simplicity.
    By a straightforward calculation, for any $x \in \Omega$, it holds
    \begin{align*}
       \Gamma_w(f,g)(x) = \frac 12\sum_{i=1}^n w_i(x)q_i(x)(f(x^i) - f(x))(g(x^i) - g(x)).
    \end{align*}
    We also use $\Gamma_{w,i}$ to denote the contribution of the $i$-th coordinate to $\Gamma_w$, that is, 
    \begin{align*}
        \forall x \in \Omega,\quad \Gamma_{w,i}(f,g)(x) := \frac 12 w_i(x)q_i(x)(f(x^i) - f(x))(g(x^i) - g(x)).
    \end{align*}
    We write $\Gamma_{w,i}(f) := \Gamma_{w,i}(f,f)$ for simplicity.
    Note that $\Gamma_w(f) = \sum_{i=1}^n \Gamma_{w,i}(f)$.
\end{definition}

We omit the subscript $w$ when $w_i \equiv 1$ for all $i \in [n]$.

It is also convenient to use the weighted discrete gradient to represent the carr\'e du champ operator.

\begin{definition}[Weighted discrete gradient of $f$]\label{def:weighted-gradient}
    The \textit{weighted discrete gradient} of a function $f: \Omega \to \mathbb{R}$ is defined as the vector
    \begin{align*}
        \nabla_w f := (\delta_{w,1} f, \ldots, \delta_{w,n} f),\quad \text{where} \quad \delta_{w,i} f := \sqrt{\frac 12w_i(x)q_i(x)}(f(x^i) - f(x)).
    \end{align*}
    The \textit{absolute weighted discrete gradient} is the entrywise absolute value of $\nabla_w f$, denoted by $|\nabla_w| f := (|\delta_{w,1}| f, \ldots, |\delta_{w,n}| f)$.
\end{definition}

\begin{remark}
    We note that there are other common ways to define the discrete derivatives, such as
    \begin{align*}
        \delta_{w,i} f := \sqrt{\frac 12w_i(x)q_i(x)} \left( f(x^{i\gets +}) - f(x^{i \gets -}) \right)
        \qquad\text{or}\qquad
        \delta_{w,i} f := \sqrt{\frac 12w_i(x)q_i(x)} (f(x) - f(x^i)).
    \end{align*}
    Each convention can be used to derive the main results of the paper.
    We choose the current definition because it can be interpreted as the generator restricted to the $i$-th coordinate, which is convenient for deriving curvature bounds later in \Cref{sec:GE}.
\end{remark}

\begin{fact}
    We have that $\Gamma_{w,i}(f,g) = (\delta_{w,i} f) (\delta_{w,i} g)$ and $\Gamma_w(f,g) = \inner{\nabla_w f}{\nabla_w g}$.
    In particular, $\Gamma_{w,i}(f) = (\delta_{w,i} f)^2$ and $\Gamma_w(f) = \norm{\nabla_w f}_2^2$.
\end{fact}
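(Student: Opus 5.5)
The plan is to expand both sides directly from the coordinate formulas already computed for $\Gamma_w$. First, expand $L_w(fg)(x) - g(x)L_wf(x) - f(x)L_wg(x)$ coordinate by coordinate. Write $a_i := f(x^i)-f(x)$ and $b_i := g(x^i)-g(x)$, and set $r_i := w_i(x)q_i(x)$. The $i$-th term of $L_w(fg)$ is then $r_i\bigl(f(x^i)g(x^i)-f(x)g(x)\bigr)$.

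Next, use the elementary identity
\[
f(x^i)g(x^i)-f(x)g(x) = a_i b_i + g(x)a_i + f(x)b_i.
\]
Subtracting $g(x)\,r_i a_i$ and $f(x)\,r_i b_i$ leaves exactly $r_i a_i b_i$. This gives the displayed formula
\[
\Gamma_w(f,g)(x)=\tfrac12\sum_i w_i(x)q_i(x)\,a_i b_i,
\]
and its $i$-th summand is by definition $\Gamma_{w,i}(f,g)(x)$.

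Then compare with the gradient. Since $\delta_{w,i}f(x)=\sqrt{\tfrac12 w_i(x)q_i(x)}\,a_i$ and $\delta_{w,i}g(x)=\sqrt{\tfrac12 w_i(x)q_i(x)}\,b_i$, their product is $\tfrac12 w_i(x)q_i(x)\,a_ib_i$. This equals $\Gamma_{w,i}(f,g)(x)$ pointwise. The square root is well defined because the rates and weights are nonnegative.

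Summing over $i$ gives $\Gamma_w(f,g)=\inner{\nabla_w f}{\nabla_w g}$. Specializing to $g=f$ yields $\Gamma_{w,i}(f)=(\delta_{w,i}f)^2$ and $\Gamma_w(f)=\norm{\nabla_w f}_2^2$.

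There is no real obstacle here; the only point needing care is the bookkeeping in the product identity for $f(x^i)g(x^i)$. It also matters that the same nonnegative square-root factor multiplies both $a_i$ and $b_i$, so no sign issues arise.
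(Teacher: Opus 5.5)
Your proposal is correct and matches the paper's reasoning. The paper gives no separate proof: it obtains the coordinate formula for $\Gamma_w$ ``by a straightforward calculation'' from $L_w$, and the Fact then follows directly from the definitions of $\Gamma_{w,i}$ and $\delta_{w,i}$, exactly as you verify, with your product expansion of $f(x^i)g(x^i)-f(x)g(x)$ supplying the omitted calculation.
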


\begin{definition}[Iterated weighted carr\'e du champ operator $\Gamma_{2;w}$]
    The \textit{iterated weighted carr\'e du champ operator} $\Gamma_{2;w}$ is defined, for any function $f:\Omega \to \R$, by
    \begin{align*}
        \Gamma_{2;w}(f) := \frac 12 L\Gamma_w(f) - \Gamma_w(f,Lf).
    \end{align*}
\end{definition}

Discrete spaces like the Boolean hypercube lack the diffusion chain rule available in continuous spaces. However, it was observed in \cite{BHLLMY15,FS18} that a specific chain rule does exist, as stated in the lemma below. This will help in calculations later on.

\begin{lemma}[\cite{BHLLMY15,FS18}, Chain rule]
\label{lem:chain-rule}
    For any function $f: \Omega \to \R$, it holds
    \begin{align*}
        \sqrt{f} L \sqrt{f} = \frac{1}{2} L f - \Gamma(\sqrt{f}).
    \end{align*}
\end{lemma}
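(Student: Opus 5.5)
The identity is a direct rewriting of the definition of the carr\'e du champ operator $\Gamma$. No summation over coordinates or property of $L$ beyond its definition is needed. Implicitly $f \ge 0$, so that $\sqrt{f}$ is a well-defined real function on $\Omega$; I will state this assumption at the outset.

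I will set $g := \sqrt{f}$, so that $g^2 = f$ pointwise. I then apply the definition $\Gamma(g,h) = \frac12\bigl(L(gh) - hLg - gLh\bigr)$ with $h = g$, which gives
\begin{align*}
    \Gamma(g) = \frac12 L(g^2) - g\,Lg .
\end{align*}
Rearranging yields $g\,Lg = \frac12 L(g^2) - \Gamma(g)$. Substituting $g^2 = f$ and $g = \sqrt f$ gives $\sqrt f\, L\sqrt f = \frac12 Lf - \Gamma(\sqrt f)$, which is the claim.

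As a sanity check, I would also verify the identity pointwise from the explicit formulas. At $x$, the left side is $\sum_i q_i(x)\bigl(\sqrt{f(x)}\sqrt{f(x^i)} - f(x)\bigr)$. The right side is $\sum_i q_i(x)\bigl[\frac12(f(x^i)-f(x)) - \frac12(\sqrt{f(x^i)}-\sqrt{f(x)})^2\bigr]$. Expanding the square shows the bracket equals $\sqrt{f(x)f(x^i)} - f(x)$, so the two sides agree term by term.

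There is no genuine obstacle here. The only point to note is that the identity uses the unweighted $\Gamma$ associated with $L$ itself: the bilinear definition of $\Gamma$ matches the quadratic expansion exactly for this operator. I would remark that it does not hold with $\Gamma_w$ unless $w \equiv 1$.
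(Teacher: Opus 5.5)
Your proposal is correct and is essentially the paper's own proof: set $g=\sqrt f$, apply the definition $\Gamma(g)=\frac12\bigl(L(g^2)-2g\,Lg\bigr)$, and rearrange. Your explicit note that $f\ge 0$ is needed for $\sqrt f$ to be defined, and your pointwise check, are sound additions to what the paper writes.
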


\begin{proof}
    By the definition of carr\'e du champ operator, 
    \begin{align*}
        \Gamma(\sqrt{f}) = \frac{1}{2} \tp{L(\sqrt{f}^2) - 2\sqrt{f} L \sqrt{f}} = \frac{1}{2} Lf - \sqrt{f} L \sqrt{f}.
    \end{align*}
    This completes the proof.
\end{proof}

\subsection{Influence and surface area of Boolean functions}

\begin{definition}
\label{def:inf-SA}
    Let $f: \Omega \to \mathbb{R}$ be a function.

    \begin{enumerate}[(1)]
        \item The \textit{total influence} of $f$ is defined as
        \begin{align*}
            I_w(f) := \E[\mu]{\Gamma_{w}(f)} = \E[\mu]{\norm{\nabla_w f}^2_2}.
        \end{align*}
        Also, let $I_{w,i}(f) := \E[\mu]{\Gamma_{w,i}(f)}$ denote the influence of the $i$-th coordinate.

        \item The \textit{(Boolean) surface area} of $f$ is defined as
        \begin{align*}
            A_w(f) := \E[\mu]{\sqrt{\Gamma_wf}} = \E[\mu]{\norm{\nabla_w f}_2}.
        \end{align*}

        \item The \textit{total squared influence} of $f$ is defined as
        \begin{align*}
            J_{w}(f) := \sum_{i=1}^n \tp{\E[\mu]{\abs{\delta_{w,i}} f}}^2 = \norm{\E[\mu]{\abs{\nabla_w} f}}^2_2.
        \end{align*}
    \end{enumerate}
    
\end{definition}

In \Cref{def:inf-SA}, influences and surface area are defined for real-valued functions. The following two examples explain how these notions relate to those defined for Boolean functions in \Cref{subsec:inf-SA}.

We first consider the uniform distribution with a standard jump process.

\begin{example}[Uniform distribution]
\label{ex:uniform}
    Consider the case where $\mu$ is uniform over $\Omega$. Let $q_i(x) = \frac{1}{2}$ and $w_i(x) = 1$ for all $i \in [n]$ and $x \in \Omega$.
    Let $f: \Omega \to \{\pm 1\}$ be a Boolean-valued function. 
    Below, we omit $w$ in the subscript since everything is unweighted.

    \begin{enumerate}[(1)]
        \item Total influence: We have
        \begin{align*}
            I_i(f) = \Pr[\mu]{f(x^i) \neq f(x)} = \mathsf{Inf}_{\mu,i}(f), \quad I(f) = \sum_{i=1}^n I_i(f) = \mathsf{Inf}_{\mu}(f).
        \end{align*}

        \item Surface area: We have
        \begin{align*}
            A(f) = \E[\mu]{\sqrt{s(f)}} = \SurfArea_\mu(f)
        \end{align*}
        where $s(f)$ denotes the sensitivity of $f$; namely, $s(f)(x) = |\{i \in [n]: f(x^i) \neq f(x)\}|$ for each $x \in \Omega$.
    
        \item Total squared influence: We have
        \begin{align*}
            J(f) = \sum_{i=1}^n \Pr[\mu]{f(x^i) \neq f(x)}^2 = \sum_{i=1}^n I_i(f)^2 = \mathsf{SqInf}_\mu(f).
        \end{align*}
    \end{enumerate}
\end{example}

We then consider an arbitrary distribution with a \emph{canonical} choice of transition rates and weights. For Boolean functions, equivalence with the corresponding definitions in \Cref{subsec:inf-SA} requires marginal boundedness of $\mu$.

\begin{example}[Canonical transition rates and weights]
\label{ex:canonical}
    Let $\mu$ be an arbitrary distribution supported on $\Omega$. 
    Consider the canonical transition rates and weights defined as: for any $i \in [n]$ and $x \in \Omega$,
    \begin{align*}
        q^\star_i(x) := \frac 12\sqrt{\frac{\mu(x^i)}{\mu(x)}}
        \qquad\text{and}\qquad
        w^\star_i(x) := 2q^\star_i(x) = \sqrt{\frac{\mu(x^i)}{\mu(x)}}.
    \end{align*}
    Let $f: \Omega \to \{\pm 1\}$ be a Boolean-valued function. 
    Below, we use the superscript $\star$ to represent the canonical weights $w^\star$.

    \begin{enumerate}[(1)]
        \item Total influence: We have
        \begin{align*}
            I^\star_i(f) = \Pr[\mu]{f(x^i) \neq f(x)} = \mathsf{Inf}_{\mu,i}(f), \quad I^\star(f) = \sum_{i=1}^n I^\star_i(f).
        \end{align*}
        The above equality is derived from definitions:
        \begin{align*}
            I^\star_i(f) &= \E[\mu]{q_i^\star(x)^2 (f(x^i) - f(x))^2}
            = \E[\mu]{\ind\{f(x^i)\neq f(x)\}\frac{\mu(x^i)}{\mu(x)}}
            \\&= \sum_{x^i\in \Omega} \ind\{f(x^i)\neq f(x^{ii})\}\mu(x^i)
            = \Pr[\mu]{f(x^i) \neq f(x)},
        \end{align*}
        where the third equality follows from the fact that $x^{ii}=x$ holds for $i\in[n]$ and $x\in \Omega$.

        \item Surface area: We have
        \begin{align*}
            A^\star(f) = \E[\mu]{\sqrt{s_\mu(f)}},
        \end{align*}
        where $s_\mu(f)$ denotes a weighted sensitivity of $f$ defined by
        \begin{align*}
            s_\mu(f)(x) = \sum_{i=1}^n \frac{\mu(x^i)}{\mu(x)} \mathbf{1}\{f(x^i) \neq f(x)\}.
        \end{align*}
        If $b \le \sqrt{\mu(x^i)/\mu(x)} \le 1/b$ for all $x \in \Omega$ for some $b \in (0,1]$, then 
        \begin{align*}
            bA^\star(f) \le \E[\mu]{\sqrt{s(f)}} = \SurfArea_\mu(f) \le \frac 1b A^\star(f).
        \end{align*}

        \item Total squared influence: We have
        \begin{align*}
            J^\star(f) = \sum_{i=1}^n \E[\mu]{\sqrt{\frac{\mu(x^i)}{\mu(x)}} \mathbf{1}\{f(x^i) \neq f(x)\}}^2.
        \end{align*}
        If $b \le \sqrt{\mu(x^i)/\mu(x)} \le 1/b$ for all $x \in \Omega$ for some $b \in (0,1]$, then 
        \begin{align*}
            b^2J^\star(f) \le \sum_{i=1}^n \Pr[\mu]{f(x^i) \neq f(x)}^2 = \mathsf{SqInf}_\mu(f) \le \frac 1{b^2}J^\star(f).
        \end{align*}
    \end{enumerate}
    Finally, we remark that under such canonical transition rates and weight functions, the weighted carr\'e du champ and the iterated weighted carr\'e du champ are a special case of the adapted Gamma calculus studied in \cite{KLMP23,KLMP24}.
\end{example}

By H\"older’s and Jensen’s inequalities, we obtain the following relations.
\begin{fact}
    For any function $f : \Omega \to \R$, we have that $J_w(f)\leq A_w(f)^2 \leq I_w(f)$.
\end{fact}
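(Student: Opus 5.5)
The two inequalities follow from two separate elementary estimates, both applied pointwise to the vector $\nabla_w f(x)$ and then integrated against $\mu$.

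\emph{The bound $A_w(f)^2 \le I_w(f)$.} Recall that $A_w(f)=\E[\mu]{\norm{\nabla_w f}_2}$ and $I_w(f)=\E[\mu]{\norm{\nabla_w f}_2^2}$. Apply Jensen's inequality (equivalently, Cauchy--Schwarz) to the nonnegative random variable $Z(x):=\norm{\nabla_w f(x)}_2$. This gives $\tp{\E[\mu]{Z}}^2 \le \E[\mu]{Z^2}$, which is exactly the claim.

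\emph{The bound $J_w(f) \le A_w(f)^2$.} Set $v := \E[\mu]{\abs{\nabla_w} f}\in\R^n$. Its entries are $v_i=\E[\mu]{\abs{\delta_{w,i} f}}$, and by definition $J_w(f)=\norm{v}_2^2$. The map $u\mapsto\norm{u}_2$ is a norm, hence convex. Jensen's inequality for this vector-valued expectation (equivalently, the triangle inequality for the Bochner integral, i.e.\ Minkowski) gives
\[
\norm{\E[\mu]{\abs{\nabla_w} f}}_2 \le \E[\mu]{\norm{\abs{\nabla_w} f}_2}.
\]
Taking absolute values entrywise does not change the Euclidean norm, so $\norm{\abs{\nabla_w} f}_2=\norm{\nabla_w f}_2$. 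The right-hand side is therefore $A_w(f)$, and squaring yields $J_w(f)\le A_w(f)^2$.

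Alternatively, one can avoid vector-valued Jensen. Choose a unit vector $\theta$ with $\theta_i\ge 0$ and $\norm{v}_2=\inner{\theta}{v}$. Then
\[
\inner{\theta}{v}=\E[\mu]{\inner{\theta}{\abs{\nabla_w} f}}\le\E[\mu]{\norm{\nabla_w f}_2}
\]
by pointwise Cauchy--Schwarz. The argument is elementary and has no real obstacle; the only point needing care is that the absolute values are taken entrywise before averaging, which is what makes the vector Jensen step apply.
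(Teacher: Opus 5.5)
Your proof is correct and matches the paper's justification, which is a one-line appeal to Jensen's and H\"older's inequalities. In your proof, convexity of the Euclidean norm (Minkowski) gives $J_w(f)\le A_w(f)^2$, and Cauchy--Schwarz/Jensen gives $A_w(f)^2\le I_w(f)$.
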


\subsection{Functional inequalities}
\begin{definition}
    The \textit{variance} of a function $f: \Omega \to \mathbb{R}$ is
    \begin{align*}
        \Var[\mu]{f} := \E[\mu]{f^2} - (\E[\mu]{f})^2.
    \end{align*}
    The \textit{entropy} of a function $f: \Omega \to \mathbb{R}_+$ is
    \begin{align*}
        \Ent[\mu]{f} := \E[\mu]{f \log f} - \E[\mu]{f} \log \E[\mu]{f}.
    \end{align*}
\end{definition}

\begin{definition}[Poincar\'e inequality]
    We say that the Poincar\'e inequality holds with constant $\lambda > 0$ if for any $f: \Omega \to \R$,
    \begin{align}\label{eq:Poincare}
        \Var[\mu]{f} \leq \frac 1\lambda \E[\mu]{\Gamma (f)}, \tag{Poincar\'e}
    \end{align}
    where $\Gamma(f)$ is the unweighted carr\'e du champ operator, i.e. $w_i \equiv 1$ for all $i\in [n]$.
    Or equivalently, $\Var[\mu]{P_t f}\leq e^{-2\lambda t}\Var[\mu]{f}$ holds.
\end{definition}

\begin{definition}[Log-Sobolev inequality]
    We say that the log-Sobolev inequality holds with constant $\rho_{\LSI} > 0$ if for any $f: \Omega \to \R$,
    \begin{align}\label{eq:LSI}
        \Ent[\mu]{f^2} \leq \frac 2{\rho_{\LSI}} \E[\mu]{\Gamma (f)}. \tag{LSI}
    \end{align}
\end{definition}

\begin{definition}[Modified log-Sobolev inequality of gradient type]
    We say that the modified log-Sobolev inequality of gradient type holds with constant $\rho_{\MLSI} > 0$ if for any $f: \Omega \to \R_+$,
    \begin{align}\label{eq:mLSI-gradient}
        \Ent[\mu]{f} \leq \frac 2{\rho_{\MLSI}} \E[\mu]{\frac{\Gamma (f)}{f}}. \tag{mLSI-gradient}
    \end{align}
\end{definition}

\begin{theorem}[Hypercontractivity]\label{thm:hypercontractivity}
    If the log-Sobolev inequality holds with constant $\rho > 0$ and $1\leq p \leq q \leq \infty$ satisfy $\frac{q-1}{p-1}\leq \exp(2\rho t)$, then for all $f: \Omega \to \R$, we have $\norm{P_t f}_{q,\mu} \leq \norm{f}_{p,\mu}$.
\end{theorem}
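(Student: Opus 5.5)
This is Gross's classical argument: first show that LSI forces the $L^p$ norm of $P_t f$ to be nonincreasing along a suitable increasing exponent $p(t)$, then reduce general $(p,q,t)$ to that case. Since $P_t$ is a positivity-preserving Markov operator, $|P_t f|\le P_t|f|$. It therefore suffices to treat $f>0$; nonnegative $f$ follow by continuity after replacing $f$ with $f+\varepsilon$. We also take $1<p<\infty$, since $p=1$ gives $q=1$ and is just contractivity.

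Set $p(s):=1+(p-1)e^{2\rho s}$, so that $p(t)\ge q$ exactly under the hypothesis $\frac{q-1}{p-1}\le e^{2\rho t}$. Let $g_s:=P_s f>0$ and $F(s):=\norm{g_s}_{p(s),\mu}$. The main claim is that $F$ is nonincreasing. Granting this, we get $\norm{P_t f}_{q}\le\norm{P_t f}_{p(t)}\le\norm{f}_{p}$, using monotonicity of $L^r(\mu)$ norms in $r$ for a probability measure. The case $q=\infty$ follows by letting $q\to\infty$.

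To prove the claim, differentiate $\log F(s)=\frac1{p(s)}\log\E[\mu]{g_s^{p(s)}}$ and write $p=p(s)$. The standard computation gives
\begin{align*}
\frac{\dif}{\dif s}\log F = \frac{p'}{p^2\,\E[\mu]{g_s^p}}\left(\Ent[\mu]{g_s^p} + \frac{p^2}{p'}\,\E[\mu]{g_s^{p-1} L g_s}\right).
\end{align*}

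Two ingredients turn the bracket into something LSI controls. First, by reversibility, $\E[\mu]{g^{p-1}Lg}=-\E[\mu]{\Gamma(g^{p-1},g)}$. Second, we need the discrete Stroock--Varopoulos inequality
\begin{align*}
\Gamma(g^{p-1},g)\text{ (integrated)}\ \ge\ \frac{4(p-1)}{p^2}\,\E[\mu]{\Gamma(g^{p/2})}.
\end{align*}
This follows edge by edge from the scalar inequality $(a^{p-1}-b^{p-1})(a-b)\ge\frac{4(p-1)}{p^2}(a^{p/2}-b^{p/2})^2$ for $a,b>0$, $p>1$. That scalar inequality is proved by Cauchy--Schwarz on the integral representations $a^{p/2}-b^{p/2}=\frac p2\int_b^a u^{p/2-1}\dif u$ and $a^{p-1}-b^{p-1}=(p-1)\int_b^a u^{p-2}\dif u$.

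Combining these with LSI applied to $h=g^{p/2}$, i.e. $\Ent[\mu]{g^p}\le\frac2\rho\E[\mu]{\Gamma(g^{p/2})}$, gives
\begin{align*}
\Ent[\mu]{g_s^p} + \frac{p^2}{p'}\,\E[\mu]{g_s^{p-1} L g_s} \le \left(\frac{2}{\rho}-\frac{4(p-1)}{p'}\right)\E[\mu]{\Gamma(g_s^{p/2})}.
\end{align*}
With $p'=2\rho(p-1)$ the coefficient is exactly zero, so $\frac{\dif}{\dif s}\log F\le 0$.

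The main obstacle is the discrete replacement of the diffusion chain rule, namely the Stroock--Varopoulos scalar inequality above. Everything else is calculus and reversibility. The differentiation under the expectation is harmless because $\Omega$ is finite and $g_s>0$.
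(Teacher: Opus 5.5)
Your proof is correct. It is Gross's classical argument: show that $s\mapsto\norm{P_s f}_{p(s),\mu}$ is nonincreasing by applying the log-Sobolev inequality to $g^{p/2}$, using the edgewise Stroock--Varopoulos inequality in place of the diffusion chain rule. The paper itself gives no proof of this theorem; it quotes it as the standard consequence of the log-Sobolev inequality. So you have supplied exactly the standard argument the paper relies on, and the normalization $p'(s)=2\rho(p(s)-1)$ matches the paper's convention $\Ent[\mu]{f^2}\le\frac{2}{\rho}\E[\mu]{\Gamma(f)}$.

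One small correction concerns $q=\infty$, which you cannot obtain by letting $q\to\infty$ with $p$ and $t$ fixed, since the hypothesis caps $q$ at $1+(p-1)e^{2\rho t}$. For $p<\infty$ that case is vacuous at finite $t$. For $p=q=\infty$ the claim is just $L^\infty$ contractivity of the Markov operator $P_t$.
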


\subsection{Gaussian isoperimetric profile}
\begin{definition}[Gaussian isoperimetric profile]\label{def:gaussian_isoperimetric_profile}
    We define the Gaussian isoperimetric profile $\+I: [0,1] \to \R_+$ as $\+I(x) = \varphi(\Phi^{-1}(x))$, where $\varphi$ and $\Phi$ are the probability density function (PDF) and cumulative distribution function (CDF) of the standard Gaussian distribution, respectively.
\end{definition}
\begin{fact}[{\cite[Eq.~(3.9)]{BobkovGotze99}}]\label{fact:gaussian_isoperimetric_profile_ineq}
    For any $a\in (0,1)$ and $b\in [0,1]$, we have that
    $$
        \+I(a)(\+I(b)-\+I(a)-\+I'(a)(b-a))\geq -(b-a)^2 .
    $$
\end{fact}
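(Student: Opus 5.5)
The plan is to write the left-hand side through a Taylor expansion with integral remainder, using the ODE satisfied by $\mathcal{I}$, and then to control the remainder by concavity of $\mathcal{I}$.

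\textbf{The ODE for $\mathcal{I}$.} Write $y=\Phi^{-1}(x)$. Since $\varphi'(y)=-y\varphi(y)$ and $\frac{dy}{dx}=1/\varphi(y)$, we get
\[
\mathcal{I}'(x)=-\Phi^{-1}(x), \qquad \mathcal{I}''(x)=-\frac{1}{\mathcal{I}(x)} \quad \text{for } x\in(0,1).
\]
In particular $\mathcal{I}$ is concave on $[0,1]$, positive on $(0,1)$, and $\mathcal{I}(0)=\mathcal{I}(1)=0$.

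\textbf{Taylor expansion.} For $a\in(0,1)$ and $b\in[0,1]$, Taylor's formula with integral remainder gives
\[
\mathcal{I}(b)-\mathcal{I}(a)-\mathcal{I}'(a)(b-a)=-\int_a^b \frac{b-s}{\mathcal{I}(s)}\,ds .
\]
For $b\in(0,1)$ this is standard. For an endpoint $b\in\{0,1\}$ I will take the limit $b\to 0$ or $b\to 1$, using continuity of $\mathcal{I}$ at $b$ and monotone convergence on the right-hand side. The integral stays finite because near $s=1$ the factor $1-s$ makes the integrand $\frac{1-s}{\mathcal{I}(s)}\approx \frac{1}{\sqrt{2\log(1/(1-s))}}$, which is bounded; the case $s\to 0$ is symmetric. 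The case $b=a$ is trivial, so the claim reduces to
\[
\mathcal{I}(a)\int_a^b \frac{b-s}{\mathcal{I}(s)}\,ds\le (b-a)^2 .
\]

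\textbf{Case $b>a$.} Concavity of $\mathcal{I}$ along the segment from $a$ to $1$, together with $\mathcal{I}(1)=0$, gives
\[
\mathcal{I}(s)\ge \mathcal{I}(a)\,\frac{1-s}{1-a} \quad \text{for } s\in[a,1].
\]
Hence the left-hand side is at most
\[
(1-a)\int_a^b \frac{b-s}{1-s}\,ds .
\]
The map $s\mapsto \frac{b-s}{1-s}$ is nonincreasing, since its derivative has the sign of $b-1\le 0$. So the integrand is at most its value at $s=a$, namely $\frac{b-a}{1-a}$. Integrating over an interval of length $b-a$ yields exactly $(b-a)^2$.

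\textbf{Case $b<a$.} This follows symmetrically, using the chord to $0$: $\mathcal{I}(s)\ge \mathcal{I}(a)\,s/a$. Alternatively, apply the first case to the reflection $x\mapsto 1-x$, under which $\mathcal{I}$ is invariant.

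The only delicate point is the endpoint case $b\in\{0,1\}$, where $1/\mathcal{I}$ blows up. The concavity bound already controls the integrand there, so I expect no real obstacle.
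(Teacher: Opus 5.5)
Your proof is correct. The paper gives no proof of this fact: it cites it from Bobkov--G\"otze (Eq.~(3.9)) and uses it only for $a,b\in(0,1)$ in the proof of the local Bobkov inequality. Your argument is therefore a genuinely different route, namely a self-contained verification.

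The steps all check out. The identity $\mathcal{I}\,\mathcal{I}''=-1$ turns the Taylor remainder into $-\int_a^b (b-s)/\mathcal{I}(s)\,ds$, with the correct sign for both orderings of $a$ and $b$. Concavity together with $\mathcal{I}(0)=\mathcal{I}(1)=0$ gives the chord bounds $\mathcal{I}(s)\ge \mathcal{I}(a)(1-s)/(1-a)$ on $[a,1]$ and $\mathcal{I}(s)\ge \mathcal{I}(a)s/a$ on $[0,a]$. Finally, the monotonicity of $s\mapsto (b-s)/(1-s)$ yields exactly $(b-a)^2$. The reflection $x\mapsto 1-x$ also works, since $\mathcal{I}(1-x)=\mathcal{I}(x)$ and $\mathcal{I}'(1-x)=-\mathcal{I}'(x)$ leave the expression invariant.

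Compared with citing the result, your approach shows exactly which properties of $\mathcal{I}$ matter. You only used three things: concavity, vanishing at $0$ and $1$, and $\mathcal{I}\,\mathcal{I}''\ge -1$. So the same inequality holds for any $C^2$ concave profile $J$ on $(0,1)$ that is continuous on $[0,1]$, satisfies $J(0)=J(1)=0$, and has $J J''\ge -1$.

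One simplification: the endpoint cases $b\in\{0,1\}$ need no integrability or monotone-convergence discussion. Both sides of the target inequality are continuous in $b$ on $[0,1]$ for fixed $a\in(0,1)$, so the inequality passes to the limit directly from $b\in(0,1)$. For the paper's application, the endpoints are not needed at all.
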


\section{Bakry--\'Emery Conditions and Gradient Estimates}
\label{sec:BE-GE}

\subsection{BE theory, weak and strong gradient estimates}
For $f,g: \Omega \to \R$, we write $f\leq g$ for brevity if $f(x)\leq g(x)$ for all $x\in \Omega$.

We first define the Bakry--\'Emery conditions, also known as the curvature-dimension condition $\mathrm{CD}(\rho,\infty)$ or the Bakry--\'Emery curvature.

\begin{definition}[Bakry--\'Emery conditions]
    Let $\mu$ be a distribution supported on $\Omega = \{\pm 1\}^n$ and $\{q_i: \Omega \mapsto \R_+\}_{i\in[n]}$ be transition rates reversible for $\mu$.
    Suppose $\{w_i:\Omega \to \R_+\}_{i\in[n]}$ is a collection of weight functions.
    \begin{itemize}
        \item We say that the \textit{Bakry--\'Emery condition} holds with constant $\rho_{\BE} \in \mathbb{R}$ if for any function $f: \Omega \to \mathbb{R}$, it holds
        \begin{align}\label{eq:BE}
            \Gamma_{2;w}(f) \geq \rho_{\BE} \cdot \Gamma_w(f); \tag{BE}
        \end{align}
        \item We say that the \textit{reinforced Bakry--\'Emery condition} holds with constant $\rho_{\RBE} \in \mathbb{R}$ if for any function $f: \Omega \to \R$, it holds
        \begin{align}\label{eq:rBE}
            \Gamma_{2;w}(f) - \Gamma(\sqrt{\Gamma_w(f)}) \ge \rho_{\RBE} \cdot \Gamma_w(f). \tag{rBE}
        \end{align}
    \end{itemize}
\end{definition}

The following comparison follows immediately from the non-negativity of $\Gamma$.
\begin{fact}\label{fact:rBE_impley_BE}
If~\ref{eq:rBE} holds with constant $\rho$, then~\ref{eq:BE} holds with the same constant $\rho$.
\end{fact}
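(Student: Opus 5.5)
The claim is that rBE with constant $\rho$ implies BE with the same constant. The plan is to compare the two defining inequalities pointwise and discard a nonnegative term. Fix an arbitrary $f:\Omega\to\R$ and a point $x\in\Omega$. By \eqref{eq:rBE},
\begin{align*}
    \Gamma_{2;w}(f)(x) \ge \rho\cdot \Gamma_w(f)(x) + \Gamma\big(\sqrt{\Gamma_w(f)}\big)(x).
\end{align*}
It therefore suffices to show that the last term is nonnegative at every point. Once that holds, we get $\Gamma_{2;w}(f)(x)\ge \rho\cdot\Gamma_w(f)(x)$ for all $x$, which is exactly \eqref{eq:BE} with the same constant $\rho$.

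Nonnegativity comes from the explicit formula for the unweighted carr\'e du champ. For any $g:\Omega\to\R$,
\begin{align*}
    \Gamma(g)(x)=\frac12\sum_{i=1}^n q_i(x)\big(g(x^i)-g(x)\big)^2 .
\end{align*}
This is nonnegative because the rates satisfy $q_i\ge 0$ (they map into $\R_+$) and each term is a square.

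We apply this with $g=\sqrt{\Gamma_w(f)}$. This function is well defined and real-valued, since $\Gamma_w(f)=\norm{\nabla_w f}_2^2\ge 0$ pointwise; here we use that the weights $w_i$ are nonnegative, so $\nabla_w f$ is a real vector.

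There is no real obstacle in this argument. The only point to check is that $\sqrt{\Gamma_w(f)}$ makes sense, and that is guaranteed by the nonnegativity of the weights and rates in the standing assumptions.
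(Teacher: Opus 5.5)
Your proposal is correct and is exactly the paper's argument: the paper states that the fact ``follows immediately from the non-negativity of $\Gamma$,'' which is the same as dropping the pointwise nonnegative term $\Gamma\big(\sqrt{\Gamma_w(f)}\big)$ from the rBE inequality. Your check that $\sqrt{\Gamma_w(f)}$ is well defined, using $w_i,q_i\ge 0$, is a harmless extra detail.
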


\begin{definition}[Weak and strong gradient estimate]
    Let $\mu$ be a distribution supported on $\Omega = \{\pm 1\}^n$ and $\{q_i: \Omega \mapsto \R_+\}_{i\in[n]}$ be transition rates reversible for $\mu$.
    Suppose $\{w_i:\Omega \to \R_+\}_{i\in[n]}$ is a collection of weight functions. 
    \begin{itemize}
        \item We say that the \emph{Weak Gradient Estimate (WGE)} holds with constant $\rho_{\WGE} \in \mathbb{R}$ if for any function $f:\Omega \to \R$ and $t\geq 0$,
        \begin{align}\label{eq:WGE}
            \Gamma_w(P_t f) \leq e^{-2\rho_{\WGE} t} P_t(\Gamma_w(f)) \quad\iff\quad \norm{\nabla_w P_t f}^2_2 \leq e^{-2\rho_{\WGE} t} P_t(\norm{\nabla_w f}^2_2); \tag{WGE}
        \end{align}
        \item We say that the \emph{Strong Gradient Estimate (SGE)} holds with constant $\rho_{\SGE} \in \mathbb{R}$ if for any function $f: \Omega \to \R$ and $t\geq 0$,
        \begin{align}\label{eq:SGE}
            \sqrt{\Gamma_w(P_t f)} \leq e^{-\rho_{\SGE} t} P_t  \sqrt{\Gamma_w(f)} \quad\iff\quad \norm{\nabla_w P_t f}_2 \leq e^{-\rho_{\SGE} t} P_t(\norm{\nabla_w f}_2); \tag{SGE}
        \end{align}
    \end{itemize}
\end{definition}

\begin{fact}\label{fact:SGE-to-WGE}
If~\ref{eq:SGE} holds with constant $\rho$, then~\ref{eq:WGE} holds with the same constant $\rho$.
\end{fact}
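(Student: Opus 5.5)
The plan is to obtain the weak estimate from the strong one by a pointwise squaring argument, using only Jensen's inequality for the Markov operator $P_t$. I fix $f:\Omega\to\R$ and $t\ge 0$, and assume \ref{eq:SGE} with constant $\rho$, that is,
\[
\sqrt{\Gamma_w(P_t f)} \le e^{-\rho t} P_t\sqrt{\Gamma_w(f)}.
\]
Both sides are nonnegative: the left side is a square root, and $P_t$ maps nonnegative functions to nonnegative functions because $P_t=e^{tL}$ is a Markov semigroup with nonnegative transition kernel. So I can square pointwise and get
\[
\Gamma_w(P_t f) \le e^{-2\rho t}\bigl(P_t\sqrt{\Gamma_w(f)}\bigr)^2 .
\]

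Next I use that for each $x$, $P_t g(x)=\sum_y p_t(x,y)g(y)$ with $p_t(x,\cdot)$ a probability distribution. This holds because $L$ has nonnegative off-diagonal rates and $L\mathbf{1}=0$, so $P_t\mathbf 1=\mathbf 1$ and $P_t$ is positivity preserving. Applying Jensen's inequality (equivalently Cauchy--Schwarz) to the convex map $s\mapsto s^2$ gives
\[
\bigl(P_t g\bigr)^2 \le P_t(g^2)
\]
pointwise for every $g$. I take $g=\sqrt{\Gamma_w(f)}$, so that $g^2=\Gamma_w(f)$.

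Combining the two displays yields $\Gamma_w(P_t f)\le e^{-2\rho t}P_t(\Gamma_w(f))$, which is exactly \ref{eq:WGE} with the same constant $\rho$.

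There is no real obstacle. The only point that needs care is justifying that $P_t$ is a Markov (positive, unital) operator on the finite space $\Omega$, since this is what makes both the squaring step and Jensen's inequality valid; it follows from the nonnegativity of the rates $q_i$.
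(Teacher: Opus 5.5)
Your proof is correct and follows the paper's argument: you square the strong gradient estimate and then apply the pointwise Cauchy--Schwarz/Jensen inequality $(P_t g)^2 \le P_t(g^2)$ with $g=\sqrt{\Gamma_w(f)}$. Your remark that $P_t$ is positivity preserving and satisfies $P_t\mathbf{1}=\mathbf{1}$, which is what makes the squaring and Jensen steps valid, is a justification the paper leaves implicit.
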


\begin{proof}
    By~\ref{eq:SGE}, it holds that for any $f : \Omega \to \mathbb{R}$ and $t \ge 0$,
    \begin{align*}
         \norm{\nabla_w P_t f}^2_2  \le \e^{-2\rho t} \tp{P_t \norm{\nabla_w f}_2}^2 \le \e^{-2\rho t}P_t \tp{\norm{\nabla_w f}^2_2},
    \end{align*}
    where the last inequality follows from the Cauchy--Schwarz inequality: $\tp{P_t g}^2 \le P_t (g^2)$ for any function $g : \Omega \to \mathbb{R}$.
\end{proof}

A fundamental result in Bakry--\'Emery theory is the equivalence between curvature conditions and the corresponding gradient estimates; see, e.g., \cite{Salez-survey,Kamtue-thesis} for the discrete setting.

\begin{proposition}\label{prop:equivalence_BE_GE}
    The following equivalences hold.
    \begin{itemize}
        \item The Bakry--\'Emery condition holds with constant $\rho$ if and only if the weak gradient estimate holds with the same constant.
        \item The reinforced Bakry--\'Emery condition holds with constant $\rho$ if and only if the strong gradient estimate holds with the same constant.
    \end{itemize}
\end{proposition}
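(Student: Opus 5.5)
We prove each equivalence in two directions: a semigroup interpolation argument for ``curvature $\Rightarrow$ gradient estimate'', and a small-time expansion at $t=0$ for the converse. Throughout, $\Omega$ is finite, so $P_t=\e^{tL}$ is a matrix exponential. All functions $t\mapsto P_tg$ are smooth, and $P_s$ is positivity preserving, which is all the calculus we need.

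\textbf{BE $\Leftrightarrow$ WGE.} For BE $\Rightarrow$ WGE, we repeat the interpolation sketched after \Cref{prop:intro-BE-GE} with $\Gamma$ replaced by $\Gamma_w$. Fix $t$ and set $F_s:=\e^{-2\rho s}P_s\Gamma_w(P_{t-s}f)$ for $s\in[0,t]$. Using $\frac{\dif}{\dif s}P_sg_s=P_s(Lg_s+\partial_sg_s)$ and the bilinearity identity $\partial_s\Gamma_w(h_s)=2\Gamma_w(h_s,\partial_sh_s)$ with $\partial_s P_{t-s}f=-LP_{t-s}f$, we obtain
\[
\frac{\dif}{\dif s}F_s=2\e^{-2\rho s}P_s\bigl(\Gamma_{2;w}(P_{t-s}f)-\rho\,\Gamma_w(P_{t-s}f)\bigr)\ge 0 .
\]
The weights enter only through $\Gamma_w$; the outer operator is still $L$, which is exactly how $\Gamma_{2;w}=\tfrac12L\Gamma_w-\Gamma_w(\cdot,L\cdot)$ is defined, so the computation is unchanged. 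Hence $F_0\le F_t$, which is WGE. For the converse, put $\phi(t):=\e^{-2\rho t}P_t\Gamma_w(f)-\Gamma_w(P_tf)$. WGE gives $\phi\ge0$ and $\phi(0)=0$, so $\phi'(0)\ge0$ pointwise. Direct differentiation yields $\phi'(0)=-2\rho\Gamma_w(f)+L\Gamma_w(f)-2\Gamma_w(f,Lf)=2(\Gamma_{2;w}(f)-\rho\Gamma_w(f))$, which is BE.

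\textbf{rBE $\Leftrightarrow$ SGE.} For SGE $\Rightarrow$ rBE, the same first-order argument applies to $\psi(t):=\e^{-\rho t}P_t\sqrt{\Gamma_w f}-\sqrt{\Gamma_w(P_tf)}$, at points where $\Gamma_w(f)>0$. There $\psi'(0)=-\rho\sqrt{\Gamma_wf}+L\sqrt{\Gamma_wf}-\Gamma_w(f,Lf)/\sqrt{\Gamma_wf}\ge0$. Multiplying by $\sqrt{\Gamma_wf}$ and using the chain rule \Cref{lem:chain-rule} with $g=\Gamma_w f$, namely $\sqrt g\,L\sqrt g=\tfrac12Lg-\Gamma(\sqrt g)$, turns this into $\Gamma_{2;w}(f)-\Gamma(\sqrt{\Gamma_wf})\ge\rho\Gamma_w(f)$. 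At points where $\Gamma_w(f)(x)=0$, rBE reduces to $\tfrac12L\Gamma_w(f)(x)-\Gamma(\sqrt{\Gamma_wf})(x)\ge0$. By the chain rule this equals $\sqrt{\Gamma_w f}(x)\,L\sqrt{\Gamma_wf}(x)=0$, so the inequality holds with equality there.

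For rBE $\Rightarrow$ SGE, we interpolate with $G_s:=\e^{-\rho s}P_s\sqrt{\Gamma_w(P_{t-s}f)+\eps}$ and let $\eps\downarrow0$ at the end; the regularization is needed because the square root is not differentiable at $0$. Writing $g_s:=\Gamma_w(P_{t-s}f)+\eps$ and $h_s:=\sqrt{g_s}$, we have
\[
\frac{\dif}{\dif s}G_s=\e^{-\rho s}P_s\Bigl(Lh_s-\frac{\Gamma_w(P_{t-s}f,LP_{t-s}f)}{h_s}-\rho h_s\Bigr).
\]
The chain rule gives $h_sLh_s=\tfrac12Lg_s-\Gamma(h_s)=\tfrac12L\Gamma_w(P_{t-s}f)-\Gamma(h_s)$. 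So the bracket equals $h_s^{-1}\bigl(\Gamma_{2;w}(u)-\Gamma(\sqrt{\Gamma_wu+\eps})-\rho\Gamma_w(u)-\rho\eps\bigr)$ with $u=P_{t-s}f$.

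\textbf{Main obstacle: the $\eps$-regularization.} We need $\Gamma(\sqrt{a+\eps})\le\Gamma(\sqrt a)$ for nonnegative functions $a$. This holds because $r\mapsto\sqrt{r+\eps}$ is $1$-Lipschitz relative to $\sqrt r$: one has $|\sqrt{a+\eps}-\sqrt{b+\eps}|\le|\sqrt a-\sqrt b|$, and $\Gamma$ is a positive combination of squared differences. With this, rBE gives a bracket of at least $-\rho\eps/h_s\ge-|\rho|\sqrt\eps$, since $h_s\ge\sqrt\eps$. Integrating over $s\in[0,t]$ then yields $\sqrt{\Gamma_w(P_tf)+\eps}\le\e^{-\rho t}P_t\sqrt{\Gamma_wf+\eps}+C_t\sqrt\eps$. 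Sending $\eps\to0$ gives SGE. A secondary point is that $\Gamma$ is the unweighted operator of $L$ while $\Gamma_2$ is weighted. This is consistent because the chain rule is applied only to $L$, and only $L$ ever acts on $\sqrt{\Gamma_w}$.
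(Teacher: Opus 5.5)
Your proof is correct and follows essentially the same route as the paper. For the curvature-to-gradient-estimate directions you interpolate along $s\mapsto P_s\Gamma_w(P_{t-s}f)$ and $s\mapsto P_s\sqrt{\Gamma_w(P_{t-s}f)}$, and for the converses you differentiate at $t=0$; your handling of the $\varepsilon$-regularization is more careful than the paper's, which only gestures at it. In particular, the monotonicity $\Gamma(\sqrt{a+\varepsilon})\le\Gamma(\sqrt a)$ and the separate check at points where $\Gamma_w(f)(x)=0$ (where $\Gamma_w(f,Lf)(x)=0$ by Cauchy--Schwarz, a step you should state explicitly) fill in exactly what the paper leaves implicit.
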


The proof of \Cref{prop:equivalence_BE_GE} requires the following lemma.

\begin{lemma}\label{lem:gamma2-formula}
    For any function $f: \Omega \to \R$, it holds
    \begin{align*}
        \Gamma_{2;w}(f) - \Gamma(\sqrt{\Gamma_w(f)}) = \sqrt{\Gamma_w(f)} L \sqrt{\Gamma_w(f)} - \Gamma_w(f,Lf).
    \end{align*}
\end{lemma}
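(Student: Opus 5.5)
This identity follows from the chain rule (\Cref{lem:chain-rule}) applied to the nonnegative function $g := \Gamma_w(f)$, combined with the definition of $\Gamma_{2;w}$.

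First, expand the definition of the iterated weighted carr\'e du champ:
\begin{align*}
    \Gamma_{2;w}(f) = \frac12 L\Gamma_w(f) - \Gamma_w(f,Lf).
\end{align*}
Second, note that $\Gamma_w(f)\ge 0$ pointwise, since it is a sum of terms $\frac12 w_i q_i (f(x^i)-f(x))^2$ with nonnegative weights and rates. Hence $\sqrt{\Gamma_w(f)}$ is well defined. \Cref{lem:chain-rule} concerns the unweighted operators $L$ and $\Gamma$ and holds for any nonnegative function, so applying it to $g=\Gamma_w(f)$ gives
\begin{align*}
    \sqrt{\Gamma_w(f)}\, L\sqrt{\Gamma_w(f)} = \frac12 L\Gamma_w(f) - \Gamma\big(\sqrt{\Gamma_w(f)}\big).
\end{align*}
Substituting this expression for $\frac12 L\Gamma_w(f)$ into the first display yields
\begin{align*}
    \Gamma_{2;w}(f) - \Gamma\big(\sqrt{\Gamma_w(f)}\big) = \sqrt{\Gamma_w(f)}\, L\sqrt{\Gamma_w(f)} - \Gamma_w(f,Lf),
\end{align*}
which is the claim.

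The only point needing care is that the chain rule is used with the unweighted generator $L$ and unweighted $\Gamma$. This matches the statement, whose left-hand side contains the unweighted $\Gamma(\sqrt{\Gamma_w(f)})$. The weights enter only through the function $\Gamma_w(f)$ and the term $\Gamma_w(f,Lf)$, neither of which is altered by the substitution. The chain rule itself requires nothing beyond the definition $\Gamma(h)=\frac12(L(h^2)-2hLh)$ with $h=\sqrt{g}$.
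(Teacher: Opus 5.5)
Your proposal is correct and follows the paper's proof exactly: expand $\Gamma_{2;w}$ by its definition, then apply the chain rule (\Cref{lem:chain-rule}) to the nonnegative function $\Gamma_w(f)$ and substitute for $\frac12 L\Gamma_w(f)$. Your remark that the chain rule involves only the unweighted $L$ and $\Gamma$ is also the right observation.
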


\begin{proof}
    A straightforward calculation shows that
    \begin{align*}
        \Gamma_{2;w}(f) - \Gamma(\sqrt{\Gamma_w(f)}) &= \frac 12 L \Gamma_w(f) - \Gamma_w(f,Lf) - \Gamma(\sqrt{\Gamma_w(f)}) &&\text{(by definition of $\Gamma_{2;w}$)}
        \\&= \sqrt{\Gamma_w(f)} L \sqrt{\Gamma_w(f)} - \Gamma_w(f,Lf). &&\text{(by \Cref{lem:chain-rule})} \qedhere
    \end{align*}
\end{proof}

\begin{proof}[Proof of \Cref{prop:equivalence_BE_GE}]
    Fix $f : \Omega \to \R$ and $t \ge 0$. Let $F_s = P_s \Gamma_w(P_{t-s} f)$. The derivative $\frac{\dif }{\dif s} F_s$ satisfies
    \begin{align*}
        \frac{\dif }{\dif s} F_s = L P_s \Gamma_w(P_{t-s} f) + P_s \frac{\dif}{\dif s} \Gamma_w(P_{t-s} f) &= L P_s \Gamma_w(P_{t-s} f) -2 P_s \Gamma_w(P_{t-s}f, LP_{t-s} f)\\
        &= 2 P_s \Gamma_{2;w}(P_{t-s} f).
    \end{align*}
    By the Bakry--\'Emery condition, the derivative satisfies $\frac{\dif}{\dif s} F_s \ge 2 \rho F_s$
    implying
    \begin{align*}
        P_t (\Gamma_w (f)) = F_t \ge \e^{2\rho t} F_0 = \e^{2\rho t} \Gamma_w(P_t f),
    \end{align*}
    which is exactly~\ref{eq:WGE} with constant $\rho$. Conversely, by~\ref{eq:WGE}, $\e^{2\rho t}\Gamma_w(P_t f) - P_t(\Gamma_w(f)) \le 0$ for all $t \ge 0$, which implies
    \begin{align*}
        \left.\frac{\dif}{\dif t}
        \tp{\e^{2 \rho t} \Gamma_w(P_t f) - P_t(\Gamma_w(f))}
        \right|_{t = 0}
        &= 2\rho \Gamma_w(f) + 2 \Gamma_w(f,Lf) - L(\Gamma_w(f))\\
        &=2 \rho \Gamma_w(f) - 2 \Gamma_{2;w}(f) \le 0.
    \end{align*}
    This proves the first part.

    Similarly, let $G_s = P_s \sqrt{\Gamma_w(P_{t-s} f)}$. At points where $\Gamma_w(P_{t-s}f)=0$, the following calculation is justified by replacing $\sqrt{\Gamma_w}$ with $\sqrt{\Gamma_w+\varepsilon}$ and letting $\varepsilon\downarrow0$. The derivative satisfies
    \begin{align}\label{eq:dG-1}
    \frac{\dif}{\dif s} G_s = P_s \tp{L \sqrt{\Gamma_w(P_{t-s} f)} - \frac{\Gamma_w(P_{t-s} f, L P_{t-s} f)}{\sqrt{\Gamma_w(P_{t-s} f)}}}.
    \end{align} 
    By the definition of the iterated carr\'e du champ operator $\Gamma_{2;w}$, it holds
    \begin{align}\label{eq:dG-2}
        \Gamma_w(P_{t-s} f, L P_{t-s} f) = \frac{1}{2} L \Gamma_w(P_{t-s} f) - \Gamma_{2;w}(P_{t-s} f). 
    \end{align}
    By~\Cref{lem:chain-rule}, the quantity $\sqrt{\Gamma_w(P_{t-s} f)} L\sqrt{\Gamma_w(P_{t-s} f)}$ can be written as
    \begin{align}\label{eq:dG-3}
        \sqrt{\Gamma_w(P_{t-s} f)} L\sqrt{\Gamma_w(P_{t-s} f)} = \frac{1}{2} L \Gamma_w(P_{t-s} f) - \Gamma(\sqrt{\Gamma_w(P_{t-s} f)}).
    \end{align}
    Combining~\eqref{eq:dG-1},~\eqref{eq:dG-2} and~\eqref{eq:dG-3}, we have
    \begin{align*}
    \frac{\dif}{\dif s} G_s = P_s\tp{\frac{\Gamma_{2;w}(P_{t-s} f) - \Gamma(\sqrt{\Gamma_w(P_{t-s} f)})}{\sqrt{\Gamma_w(P_{t-s} f)}}}.
    \end{align*}
    This shows that~\ref{eq:SGE} can be obtained by~\ref{eq:rBE}. Conversely, by taking derivative of $\e^{\rho t} \sqrt{\Gamma_w(P_t f)} - P_t \sqrt{\Gamma_w(f)}$ at $t = 0$, we show that~\ref{eq:SGE} implies~\ref{eq:rBE}. This completes the proof.
\end{proof}

To motivate the use of Bakry--\'Emery conditions and gradient estimates, we note that they imply contraction of the total influence and surface area when the associated curvature is non-negative.

\begin{proposition}[Contraction in total influence and surface area]
\label{prop:GE_to_IA_contraction}
    \quad
    \begin{itemize}
        \item If~\ref{eq:WGE} holds with constant $\rho \in \mathbb{R}$, then the total influence is controlled along the heat semigroup by the factor $\e^{-2\rho t}$; that is, for any function $f: \Omega \to \R$ and $t \ge 0$,
        \begin{align}\label{eq:I-contraction}
            I_w(P_t f) \leq e^{-2\rho t} I_w(f); \tag{$I$-contraction}
        \end{align}
        
        \item If~\ref{eq:SGE} holds with constant $\rho \in \mathbb{R}$, then the surface area is controlled along the heat semigroup by the factor $\e^{-\rho t}$; that is, for any function $f: \Omega \to \R$ and $t \ge 0$,
        \begin{align}\label{eq:A-contraction}
            A_w(P_t f) \leq e^{-\rho t} A_w(f). \tag{$A$-contraction}
        \end{align}
    \end{itemize}
\end{proposition}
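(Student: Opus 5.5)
The proof is short: integrate the pointwise gradient estimates against the stationary measure $\mu$ and use that $P_t$ preserves $\mu$-expectations. The one ingredient beyond the hypotheses is stationarity, $\E[\mu]{P_t g} = \E[\mu]{g}$ for every $g:\Omega\to\R$ and $t\ge 0$. This holds because $\mu$ is reversible for $L$, so $\E[\mu]{Lg}=0$. Hence $\frac{\dif}{\dif t}\E[\mu]{P_t g} = \E[\mu]{L P_t g} = 0$.

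For the first item, fix $f$ and $t\ge 0$. The \ref{eq:WGE} gives the pointwise inequality $\Gamma_w(P_t f) \le e^{-2\rho t} P_t(\Gamma_w(f))$ on $\Omega$. Taking $\mu$-expectations preserves it since $\mu\ge 0$:
\begin{align*}
I_w(P_t f) = \E[\mu]{\Gamma_w(P_t f)} \le e^{-2\rho t}\,\E[\mu]{P_t(\Gamma_w(f))} = e^{-2\rho t}\,\E[\mu]{\Gamma_w(f)} = e^{-2\rho t} I_w(f),
\end{align*}
where the middle equality is stationarity applied to $g=\Gamma_w(f)$.

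The second item is identical. Start from the pointwise \ref{eq:SGE}, $\sqrt{\Gamma_w(P_t f)} \le e^{-\rho t} P_t\sqrt{\Gamma_w(f)}$, and integrate against $\mu$. Then apply stationarity to $g=\sqrt{\Gamma_w(f)}$ to get
\begin{align*}
A_w(P_t f)=\E[\mu]{\sqrt{\Gamma_w(P_tf)}}\le e^{-\rho t}\E[\mu]{\sqrt{\Gamma_w(f)}} = e^{-\rho t}A_w(f).
\end{align*}

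No step is a real obstacle. The only point to check is that the $P_t$ on the right of the gradient estimates is the same $\mu$-stationary semigroup. This is true: the weights $w_i$ enter only through $\Gamma_w$, not through $P_t$. The sign of $\rho$ is irrelevant, since $e^{-2\rho t}$ and $e^{-\rho t}$ are positive constants that pull out of the expectation.
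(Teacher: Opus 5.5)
Your proposal is correct and follows the paper's proof: integrate the pointwise \ref{eq:WGE} (resp.\ \ref{eq:SGE}) against $\mu$, then use $\mu$-invariance of $P_t$ to replace $\mathbb{E}_\mu[P_t g]$ by $\mathbb{E}_\mu[g]$. Your derivation of that invariance from reversibility ($\mathbb{E}_\mu[Lg]=0$) supplies the one step the paper uses without comment.
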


\begin{proof}
By the definition of the total influence, it holds
\begin{align*}
    I_w(P_t f) = \E[\mu]{\Gamma_w(P_t f)} \le \e^{-2 \rho t} \E[\mu]{P_t \Gamma_w(f)} = \e^{-2 \rho t} \E[\mu]{\Gamma_w(f)}.
\end{align*}
This proves the contraction of the total influence. 
Similarly, by the definition of the surface area, 
\begin{align*}
    A_w(P_t f) = \E[\mu]{\sqrt{\Gamma_w(P_t f)}} \le \e^{-\rho t} \E[\mu]{P_t \sqrt{\Gamma_w(f)}} = \e^{-\rho t} A_w(f).
\end{align*}
This proves the contraction of the surface area.
\end{proof}

\subsection{Coordinate-wise strong gradient estimate}
Suppose $v: \Omega \to \R^n$ is a vector field; for example, given any function $f: \Omega \to \R$, the discrete gradient $\nabla_w f$ and the absolute discrete gradient $\abs{\nabla_w}f$ are both vector fields.
Suppose $T$ is an operator mapping a function $f: \Omega \to \R$ to another function $Tf: \Omega \to \R$.
With a slight abuse of notation, we extend $T$ to apply to vector fields entrywise; specifically, $T v: \Omega \to \R^n$ is again a vector field defined as
\begin{align*}
    T v := (T v_1,\dots,T v_n)
\end{align*}
where $v_1,\dots,v_n$ are components of $v$.

Inspired by \cite{CorderoErausquinLedoux12}, we introduce a coordinate-wise version of the strong gradient estimate.

\begin{definition}[Coordinate-wise strong gradient estimate]
\label{def:CSGE}
    Let $\mu$ be a distribution supported on $\Omega = \{\pm 1\}^n$ and $\{q_i: \Omega \mapsto \R_+\}_{i\in[n]}$ be transition rates reversible for $\mu$.
    Suppose $\{w_i:\Omega \to \R_+\}_{i\in[n]}$ is a collection of weight functions. 
    
    We say that the \emph{Coordinate-wise Strong Gradient Estimate (CSGE)} holds with constant $\rho_{\CSGE} \in \mathbb{R}$ if for any $\tau \in [0,\infty]$, any function $f: \Omega \to \R$, and $t\geq 0$,
        \begin{align}\label{eq:CSGE}
            \norm{P_\tau \abs{\nabla_w}P_t f}_2 \leq e^{-\rho_{\CSGE} t}\norm{P_{\tau+t} \abs{\nabla_w}f}_2. \tag{CSGE}
        \end{align}
        We record two particularly important subcases:
        \begin{itemize}
            \item ($\tau = 0$) For any function $f: \Omega \to \R$ and $t\geq 0$,  
            \begin{align}\label{eq:0-CSGE}
                \norm{\nabla_w P_t f}_2 \leq e^{-\rho_{\CSGE} t}\norm{P_t \abs{\nabla_w}f}_2. \tag{$0$-CSGE}
            \end{align}

            \item ($\tau = \infty$) For any function $f: \Omega \to \R$ and $t\geq 0$,  
            \begin{align}\label{eq:infty-CSGE}
                \norm{\E[\mu]{\abs{\nabla_w}P_t f}}_2 \leq e^{-\rho_{\CSGE} t}\norm{\E[\mu]{\abs{\nabla_w}f}}_2. \tag{$\infty$-CSGE}
            \end{align}
        \end{itemize}
\end{definition}

\begin{remark}
\label{rmk:CSGE-CL12}
    We note that \cite{CorderoErausquinLedoux12} implicitly used a different version of \ref{eq:0-CSGE} in their proof of the $L^1$--$L^2$ inequality:
    \begin{align*}
        \norm{\nabla P_t f}_2 \leq e^{-\rho t}\norm{P_t \nabla f}_2.
    \end{align*}
    Our modification here includes adding the weight $w$ (necessary to bound the curvature) and taking the absolute value (needed for the Eldan--Gross inequality with $\tau = \infty$). For the $L^1$--$L^2$ inequality alone, \ref{eq:0-CSGE} without the absolute value is sufficient, as done in \cite{CorderoErausquinLedoux12}.
\end{remark}

\begin{fact}\label{fact:CSGE_to_SGE}
If~\ref{eq:0-CSGE} holds with constant $\rho \in \mathbb{R}$, then both~\ref{eq:WGE} and~\ref{eq:SGE} hold with the same constant $\rho$.
\end{fact}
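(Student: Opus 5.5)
The plan is a pointwise Jensen-type comparison. I will show that \ref{eq:0-CSGE} implies \ref{eq:SGE} with the same constant, and then obtain \ref{eq:WGE} from \Cref{fact:SGE-to-WGE}.

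Fix $f$, $t\ge 0$ and a point $x\in\Omega$. By \ref{eq:0-CSGE},
\begin{align*}
    \norm{\nabla_w P_t f}_2(x) \le e^{-\rho t}\Big(\sum_{i=1}^n \big(P_t \abs{\delta_{w,i}} f\big)(x)^2\Big)^{1/2}.
\end{align*}
It therefore suffices to bound the right-hand norm pointwise by $P_t\norm{\nabla_w f}_2(x)$.

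For this step, recall that $P_t g(x)=\sum_y p_t(x,y)g(y)$ with $p_t(x,\cdot)$ a probability kernel. Then $(P_t\abs{\nabla_w} f)(x)=\sum_y p_t(x,y)\,\abs{\nabla_w}f(y)$ is a convex combination of vectors in $\R^n$. Minkowski's inequality, i.e.\ convexity of the Euclidean norm, gives
\begin{align*}
    \norm{\sum_y p_t(x,y)\abs{\nabla_w}f(y)}_2 \le \sum_y p_t(x,y)\norm{\abs{\nabla_w}f(y)}_2 = P_t\norm{\nabla_w f}_2(x),
\end{align*}
using that taking entrywise absolute values does not change the Euclidean norm. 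Combining the two displays yields $\norm{\nabla_w P_t f}_2\le e^{-\rho t}P_t\norm{\nabla_w f}_2$, which is exactly \ref{eq:SGE}. Applying \Cref{fact:SGE-to-WGE} then gives \ref{eq:WGE} with the same constant $\rho$.

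There is no real obstacle. The only point to check is that $P_t$ is a Markov (positivity-preserving, mass-one) operator, so that $P_t$ applied to a vector field is a genuine convex combination at each point. This holds because $P_t=e^{tL}$ with nonnegative off-diagonal rates.
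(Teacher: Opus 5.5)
Your proposal is correct and is essentially the paper's own proof. Apply \ref{eq:0-CSGE}, then use Minkowski's inequality (convexity of the Euclidean norm under the Markov kernel $P_t$) to bound $\norm{P_t \abs{\nabla_w} f}_2 \le P_t \norm{\nabla_w f}_2$, which gives \ref{eq:SGE}; \ref{eq:WGE} then follows from \Cref{fact:SGE-to-WGE}.
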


\begin{proof}
    By~\ref{eq:0-CSGE} and Minkowski's inequality,
    \begin{align*}
        \norm{\nabla_w P_t f}_2 \le \e^{-\rho t} \norm{P_t \abs{\nabla_w} f}_2 \le \e^{-\rho t} P_t \norm{\nabla_w f}_2.
    \end{align*}
    Thus,~\ref{eq:SGE} holds with the constant $\rho$.
    By~\Cref{fact:SGE-to-WGE},~\ref{eq:WGE} also holds with the same constant $\rho$.
\end{proof}

Analogous to \Cref{prop:GE_to_IA_contraction}, \ref{eq:CSGE} implies the contraction in the total squared influence.

\begin{proposition}[Contraction in total squared influence]\label{prop:CSGE-to-J-contraction}
    If~\ref{eq:infty-CSGE} holds with constant $\rho$, then the total squared influence satisfies the following estimate: for any function $f: \Omega \to \R$ and $t \ge 0$,
    \begin{align}\label{eq:J-contraction}
        J_w(P_t f) \leq e^{-2\rho t} J_w(f). \tag{$J$-contraction}
    \end{align} 
\end{proposition}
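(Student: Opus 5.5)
This is essentially a direct unwinding of the definitions, in the same way that \Cref{prop:GE_to_IA_contraction} follows from \ref{eq:WGE} and \ref{eq:SGE}. Recall from \Cref{def:inf-SA} that
\begin{align*}
    J_w(g) = \norm{\E[\mu]{\abs{\nabla_w} g}}_2^2,
\end{align*}
where the expectation is taken entrywise, so $\E[\mu]{\abs{\nabla_w} g}\in\R^n$ is the vector whose $i$-th entry is $\E[\mu]{\abs{\delta_{w,i}} g}$. The plan is to apply \ref{eq:infty-CSGE} with $g = P_t f$ and then square both sides.

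Concretely, fix $f:\Omega\to\R$ and $t\ge 0$. By \ref{eq:infty-CSGE} with constant $\rho$,
\begin{align*}
    \norm{\E[\mu]{\abs{\nabla_w}P_t f}}_2 \le e^{-\rho t}\norm{\E[\mu]{\abs{\nabla_w}f}}_2 .
\end{align*}
Both sides are non-negative, so squaring preserves the inequality and gives $J_w(P_t f)\le e^{-2\rho t}J_w(f)$, which is \ref{eq:J-contraction}.

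The only point that needs a word of justification is the identification of the case $\tau=\infty$ in \ref{eq:CSGE} with the expectation under $\mu$. I will record that $P_\tau h \to \E[\mu]{h}$ pointwise as $\tau\to\infty$. This holds because $\mu$ is fully supported, $L$ is reversible with respect to $\mu$, and the chain is irreducible on the hypercube since every $q_i>0$. Consequently, for a constant vector $v$, $\norm{P_\infty v}_2$ is simply the Euclidean norm of $\E[\mu]{v}$, and this is exactly how \ref{eq:infty-CSGE} is written in \Cref{def:CSGE}. Since the definition already states the $\tau=\infty$ case in this form, there is no real obstacle, and the proof amounts to a single line.
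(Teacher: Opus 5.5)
Your proof is correct and matches the paper's argument: apply \ref{eq:infty-CSGE} with the given $f$ and $t$, then square both nonnegative sides, using $J_w(g)=\norm{\E[\mu]{\abs{\nabla_w} g}}_2^2$. The side remark about $P_\tau h\to\E[\mu]{h}$ is not needed, because \Cref{def:CSGE} already states the $\tau=\infty$ case in expectation form.
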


\begin{proof}
    Assume~\ref{eq:infty-CSGE} holds with constant $\rho$. For any $f:\Omega \to \mathbb{R}$ and $t \ge 0$, it holds
    \begin{align*}
        J_w(P_t f) = \norm{\E[\mu]{\abs{\nabla_w} P_t f}}_2^2 \le \e^{-2\rho t} \norm{\E[\mu]{\abs{\nabla_w} f}}_2^2 = \e^{-2 \rho t} J_w(f).
    \end{align*}
\end{proof}

\subsection{Functional inequalities from gradient estimates}
\begin{definition}
    For $b \in (0,1]$, we say the weight functions $\{w_i:\Omega \to \R_+\}_{i\in[n]}$ are $b$-bounded if for all $i \in [n]$ and $x \in \Omega$, it holds
    \begin{align}\label{eq:w_lower_bound}
        b \le w_i(x) \le \frac{1}{b}.
    \end{align}
\end{definition}

\begin{proposition}[Gradient estimate and functional inequalities]
\label{prop:GE-to-FI}
    Let $\mu$ be a distribution supported on $\Omega = \{\pm 1\}^n$ and $\{q_i: \Omega \mapsto \R_+\}_{i\in[n]}$ be transition rates reversible for $\mu$.
    Suppose $\{w_i:\Omega \to \R_+\}_{i\in[n]}$ is a collection of weight functions 
    that are $b$-bounded for some $b \in (0,1]$.
    \begin{itemize}
        \item If~\ref{eq:SGE} holds with $\rho > 0$, then~\ref{eq:mLSI-gradient} holds with constant $4b^2\rho$;
        \item If~\ref{eq:WGE} holds with $\rho > 0$, then~\ref{eq:Poincare} holds with constant $b^2\rho$.
    \end{itemize}
\end{proposition}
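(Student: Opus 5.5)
The plan is to run the standard semigroup interpolation along $t\mapsto P_tf$ and to pay a factor $b$ each time we pass between the weighted and unweighted carr\'e du champ. From $b\le w_i\le 1/b$ and the formula for $\Gamma_w$ we get, pointwise,
\begin{align*}
    b\,\Gamma(g)\le \Gamma_w(g)\le \tfrac1b\,\Gamma(g).
\end{align*}
Since $\rho>0$ and $\mu$ is fully supported, $P_tf\to\E[\mu]{f}$ as $t\to\infty$, so variance and entropy along the semigroup vanish at infinity.

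\emph{Poincar\'e from WGE.} By \eqref{eq:dif-Var} (unweighted), $\Var[\mu]{f}=2\int_0^\infty \E[\mu]{\Gamma(P_tf)}\dif t$. We chain the pointwise comparison, \ref{eq:WGE}, stationarity $\E[\mu]{P_tg}=\E[\mu]{g}$, and the comparison again:
\begin{align*}
    \E[\mu]{\Gamma(P_tf)}\le \tfrac1b\E[\mu]{\Gamma_w(P_tf)}\le \tfrac1b e^{-2\rho t}\E[\mu]{\Gamma_w(f)}\le b^{-2}e^{-2\rho t}\E[\mu]{\Gamma(f)}.
\end{align*}
Integrating gives $\Var[\mu]{f}\le (b^2\rho)^{-1}\E[\mu]{\Gamma(f)}$, which is the constant $b^2\rho$. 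This part is routine.

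\emph{mLSI from SGE.} For $f>0$ (the general case $f\ge0$ follows by approximation $f+\eps$), write $g_t=P_tf$. The de Bruijn identity gives $\Ent[\mu]{f}=\int_0^\infty \E[\mu]{\Gamma(g_t,\log g_t)}\dif t$. The discrete substitute for the chain rule $\Gamma(g,\log g)=\Gamma(g)/g$ is the elementary inequality
\begin{align*}
    (a-c)(\log a-\log c)\le \frac{(a-c)^2}{\sqrt{ac}}\le \frac{(a-c)^2}{2}\Big(\frac1a+\frac1c\Big),
\end{align*}
which holds because the logarithmic mean dominates the geometric mean. Applying it inside $\E[\mu]{\Gamma(g,\log g)}$ and using reversibility $\mu(x)q_i(x)=\mu(x^i)q_i(x^i)$ to swap $x\leftrightarrow x^i$ in the $1/g(x^i)$ term yields
\begin{align*}
    \E[\mu]{\Gamma(g,\log g)}\le c_0\,\E[\mu]{\Gamma(g)/g}
\end{align*}
with an absolute constant $c_0$. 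Then we use, in order, the comparison $\Gamma\le\Gamma_w/b$, \ref{eq:SGE}, and Cauchy--Schwarz in the form $(P_t\sqrt{h})^2=(P_t\sqrt{(h/f)\,f})^2\le P_t(h/f)\,P_tf$:
\begin{align*}
    \frac{\Gamma(g_t)}{g_t}\le \frac{1}{b}\frac{\Gamma_w(P_tf)}{P_tf}\le \frac{e^{-2\rho t}}{b}\frac{(P_t\sqrt{\Gamma_w f})^2}{P_tf}\le \frac{e^{-2\rho t}}{b}P_t\Big(\frac{\Gamma_w f}{f}\Big)\le \frac{e^{-2\rho t}}{b^2}P_t\Big(\frac{\Gamma f}{f}\Big).
\end{align*}
Taking $\E[\mu]{\cdot}$, using stationarity, and integrating in $t$ gives $\Ent[\mu]{f}\le \frac{c_0}{2b^2\rho}\E[\mu]{\Gamma(f)/f}$.

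The main obstacle is the constant in the discrete chain-rule step. To reach the claimed $4b^2\rho$ we need $c_0=1$, i.e.\ $\E[\mu]{\Gamma(g,\log g)}\le\E[\mu]{\Gamma(g)/g}$. The crude bound $\tfrac12(\tfrac1a+\tfrac1c)$ loses a factor relative to this. I would first try to keep the sharper $1/\sqrt{ac}$ form and symmetrize it together with the $\tfrac12$ in $\Gamma$ under reversibility. If that does not give $c_0=1$, I would instead write the entropy dissipation as $-\E[\mu]{L g\,\log g}$ and apply the pointwise SGE to the symmetrized edge terms. Everything else (the comparisons, SGE, Cauchy--Schwarz, stationarity, integration in $t$) is mechanical.
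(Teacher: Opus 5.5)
Your proof follows the paper's proof step for step: the same de Bruijn representation of the entropy, the comparison $\Gamma\le\Gamma_w/b$, SGE, the Cauchy--Schwarz step $(P_t\sqrt{\Gamma_w f})^2\le P_t(\Gamma_w f/f)\,P_tf$, the reverse comparison $\Gamma_w\le\Gamma/b$, and integration in $t$. Your Poincar\'e argument is also identical to the paper's.

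The one flaw is in the step you single out as the main obstacle. Your claim that the bound $\frac12(\frac1a+\frac1c)$ ``loses a factor'' is a miscalculation. Because of it, your write-up stops short of the stated constant and turns to fallbacks that are not needed. Simply carry out the symmetrization you already described:
\begin{align*}
    \E[\mu]{\Gamma(g,\log g)}
    &\le \frac14\sum_{x\in\Omega}\mu(x)\sum_{i=1}^n q_i(x)\,\bigl(g(x^i)-g(x)\bigr)^2\Bigl(\frac{1}{g(x)}+\frac{1}{g(x^i)}\Bigr)\\
    &= \frac12\sum_{x\in\Omega}\mu(x)\sum_{i=1}^n q_i(x)\,\frac{\bigl(g(x^i)-g(x)\bigr)^2}{g(x)}
    = \E[\mu]{\frac{\Gamma(g)}{g}}.
\end{align*}
The middle equality works as follows. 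Substitute $y=x^i$, using $x^{ii}=x$ and $\mu(x)q_i(x)=\mu(x^i)q_i(x^i)$. The sum carrying $1/g(x^i)$ then coincides with the sum carrying $1/g(x)$. So the prefactor $\frac14$ doubles to exactly the $\frac12$ that appears in $\Gamma(g)/g$.

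Hence $c_0=1$. This is precisely the inequality $(\star)$ used in the paper. Your chain then gives $\Ent[\mu]{f}\le\frac{1}{2b^2\rho}\E[\mu]{\Gamma(f)/f}$, which is the claimed constant $4b^2\rho$. The sharper $1/\sqrt{ac}$ form gains nothing here: after AM--GM it reduces to the same computation.
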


\begin{proof}
    By a standard calculation,
    \begin{align*}
    \Ent[\mu]{f} = \int_0^{+\infty} \E[\mu]{\Gamma(P_t f, \log P_t f)} \dif t  \overset{(\star)}{\le}  \int_0^{+\infty} \E[\mu]{\frac{\Gamma(P_t f)}{P_t f}} \dif t \le \frac{1}{b} \cdot \int_0^{+\infty} \E[\mu]{\frac{\Gamma_w(P_t f)}{P_t f}} \dif t,
    \end{align*}
    where the inequality $(\star)$ follows from $(s-t)\tp{\log s -\log t} \le \frac{(s-t)^2}{2}\tp{\frac{1}{s} + \frac{1}{t}}$. By~\ref{eq:SGE},
    \begin{align*}
        \Gamma_w(P_t f) \le \e^{-2\rho t} \tp{P_t \sqrt{\Gamma_w(f)}}^2 \le \e^{-2\rho t} \tp{P_t \tp{\frac{\Gamma_w (f)}{f}}} \tp{P_t f},
    \end{align*}
    where the last inequality follows from the Cauchy--Schwarz inequality. Hence,
    \begin{align*}
    \Ent[\mu]{f} \le \frac{1}{b} \int_0^{+\infty} \e^{-2 \rho t} \E[\mu]{\frac{\Gamma_w (f)}{f}} \dif t \le \frac{1}{2 b^2 \rho} \cdot \E[\mu]{\frac{\Gamma(f)}{f}}.
    \end{align*}
    This proves the first part. For the second part, we use a similar proof strategy. Specifically,
    \begin{align*}
    \Var[\mu]{f} = 2\int_0^{+\infty} \E[\mu]{\Gamma(P_t f)} \dif t &\le \frac{2}{b} \cdot \int_0^{+\infty} \E[\mu]{\Gamma_w(P_t f)} \dif t \\
    (\text{By~\ref{eq:WGE}})\quad & \le \frac{2}{b} \cdot \int_0^{+\infty} \e^{-2\rho t} \E[\mu]{\Gamma_w(f)} \dif t \le \frac{1}{b^2\rho} \cdot \E[\mu]{\Gamma(f)}.
    \end{align*}
    This establishes the desired Poincar\'e inequality.
\end{proof}
\Cref{fig} summarizes the implications established in this section among Bakry--\'Emery conditions, gradient estimates, contraction properties of influences, and functional inequalities.

\begin{figure}[!thbp]
\centering
\begin{tikzpicture}[
block/.style = {rectangle, rounded corners, minimum width=2cm, minimum height=1cm, text centered, text width=1cm, draw=black, fill=orange!23},
arrow/.style = {thick,->,>=stealth}
]

\node[block, text width=2.5cm, fill=green!18] (PI) at (-0.5,0) {\ref{eq:Poincare}};
\node[block] (BE) at (4,0) {\ref{eq:BE}};
\node[block] (WGE) at (8,0) {\ref{eq:WGE}};
\node[block, text width=2.5cm, fill=violet!15] (Ic) at (12.5,0) {\ref{eq:I-contraction}};

\node[block, text width=2.5cm, fill=green!18] (mLSI) at (-0.5,-2) {\ref{eq:mLSI-gradient}};
\node[block] (rBE) at (4,-2) {\ref{eq:rBE}};
\node[block] (SGE) at (8,-2) {\ref{eq:SGE}};
\node[block, text width=2.5cm, fill=violet!15] (Ac) at (12.5,-2) {\ref{eq:A-contraction}};

\node[block] (CSGE) at (8,-4) {\ref{eq:CSGE}};
\node[block, text width=2.5cm, fill=violet!15] (Jc) at (12.5,-4) {\ref{eq:J-contraction}};

\draw[arrow] (BE) -- (PI)
node[midway, above] {\hyperref[prop:GE-to-FI]{Prop.~\ref*{prop:GE-to-FI}}};

\draw[thick,<->,>=stealth] (BE) -- (WGE)
node[midway, above] {\hyperref[prop:equivalence_BE_GE]{Prop.~\ref*{prop:equivalence_BE_GE}}};

\draw[arrow] (WGE) -- (Ic)
node[midway, above] {\hyperref[prop:GE_to_IA_contraction]{Prop.~\ref*{prop:GE_to_IA_contraction}}};

\draw[arrow] (rBE) -- (mLSI)
node[midway, above] {\hyperref[prop:GE-to-FI]{Prop.~\ref*{prop:GE-to-FI}}};

\draw[thick,<->,>=stealth] (rBE) -- (SGE)
node[midway, above] {\hyperref[prop:equivalence_BE_GE]{Prop.~\ref*{prop:equivalence_BE_GE}}};

\draw[arrow] (SGE) -- (Ac)
node[midway, above] {\hyperref[prop:GE_to_IA_contraction]{Prop.~\ref*{prop:GE_to_IA_contraction}}};

\draw[arrow] (CSGE) -- (Jc)
node[midway, above] {\hyperref[prop:CSGE-to-J-contraction]{Prop.~\ref*{prop:CSGE-to-J-contraction}}}
node[midway, below] {$\tau=\infty$};

\draw[arrow] (rBE) -- (BE)
node[midway, right] {\cref{fact:rBE_impley_BE}};

\draw[arrow] (SGE) -- (WGE)
node[midway, right] {\cref{fact:SGE-to-WGE}};

\draw[arrow] (CSGE) -- (SGE)
node[midway, right] {\cref{fact:CSGE_to_SGE}}
node[midway, left] {$\tau=0$};

\end{tikzpicture}

\caption{Implications among Bakry--\'Emery conditions, gradient estimates, influence contractions, and functional inequalities.}
\label{fig}
\end{figure}

\subsection{Comparison and discussion}
\label{sec:different-rates-weights}

In the classical diffusion setting with $w\equiv1$, the Bakry--\'Emery condition, also called the curvature-dimension condition $\mathrm{CD}(\rho,\infty)$, is equivalent to the reinforced Bakry--\'Emery condition with the same constant $\rho$, thanks to the diffusion chain rule \cite{BGL-book}. This equivalence does not hold for discrete Markov semigroups. Moreover, even for fixed transition rates, different choices of weight functions can lead to different constants in the gradient estimates.

We illustrate these distinctions using the ferromagnetic Ising model on a single edge. Let
\[
    \Omega=\{\pm1\}^2,
    \qquad
    \mu_\beta(x)=\frac{1}{Z_\beta}\exp(\beta x_1x_2),
    \qquad x\in\Omega,
\]
where $\beta\in(0,1)$ is the inverse temperature and
\[
    Z_\beta
    =\sum_{x\in\Omega}\exp(\beta x_1x_2)
    =2e^\beta+2e^{-\beta}
    =4\cosh\beta.
\]
For each $i\in[2]$ and $x\in\Omega$, the canonical transition rates are
\[
    q_i^\star(x)
    =\frac12\sqrt{\frac{\mu_\beta(x^i)}{\mu_\beta(x)}}
    =\frac12e^{-\beta x_1x_2},
\]
and the canonical weights are
\[
    w_i^\star(x)
    =2q_i^\star(x)
    =e^{-\beta x_1x_2}.
\]
We fix these transition rates and compare the unweighted choice $w\equiv1$ with the canonical choice $w=w^\star$.

Throughout this example, $\rho$ and $\rho^\star$ denote the optimal constants for the unweighted and canonically weighted conditions, respectively, with subscripts indicating the relevant condition. We write $-\infty$ when the condition fails for every finite constant. Set
\[
    \beta_c:=\frac12\log2.
\]
The optimal constants are summarized in \Cref{tab:one-edge-Ising} and illustrated in \Cref{fig:One-edge-Ising}.

\begin{table}[th]
    \centering
    \small
    \renewcommand{\arraystretch}{1.8}
    \begin{tabular}{c|c|c}
        \hline
        Condition
        & Unweighted constant $\rho$
        & Canonical constant $\rho^\star$ \\
        \hline
        BE / WGE
        & $\displaystyle \frac12(3e^{-\beta}-e^\beta)$
        & $\displaystyle e^{-\beta}+\frac12e^{-3\beta}-\frac12e^\beta$ \\[3pt]
        rBE / SGE
        & $\displaystyle
            \begin{cases}
                1+e^{-\beta}-e^\beta,
                    & 0<\beta\leq\beta_c,\\
                -\infty,
                    & \beta_c<\beta<1
            \end{cases}$
        & $\displaystyle 2e^{-\beta}-e^\beta$ \\[3pt]
        CSGE
        & $-\infty$
        & $\displaystyle 2e^{-\beta}-e^\beta$ \\
        \hline
    \end{tabular}
    \caption{Optimal curvature and gradient-estimate constants for the one-edge Ising model with canonical transition rates $q^\star$, under the unweighted choice $w\equiv1$ and the canonical choice $w=w^\star$.}
    \label{tab:one-edge-Ising}
\end{table}

\begin{figure}[th]
    \centering
    \includegraphics[width=0.9\textwidth]{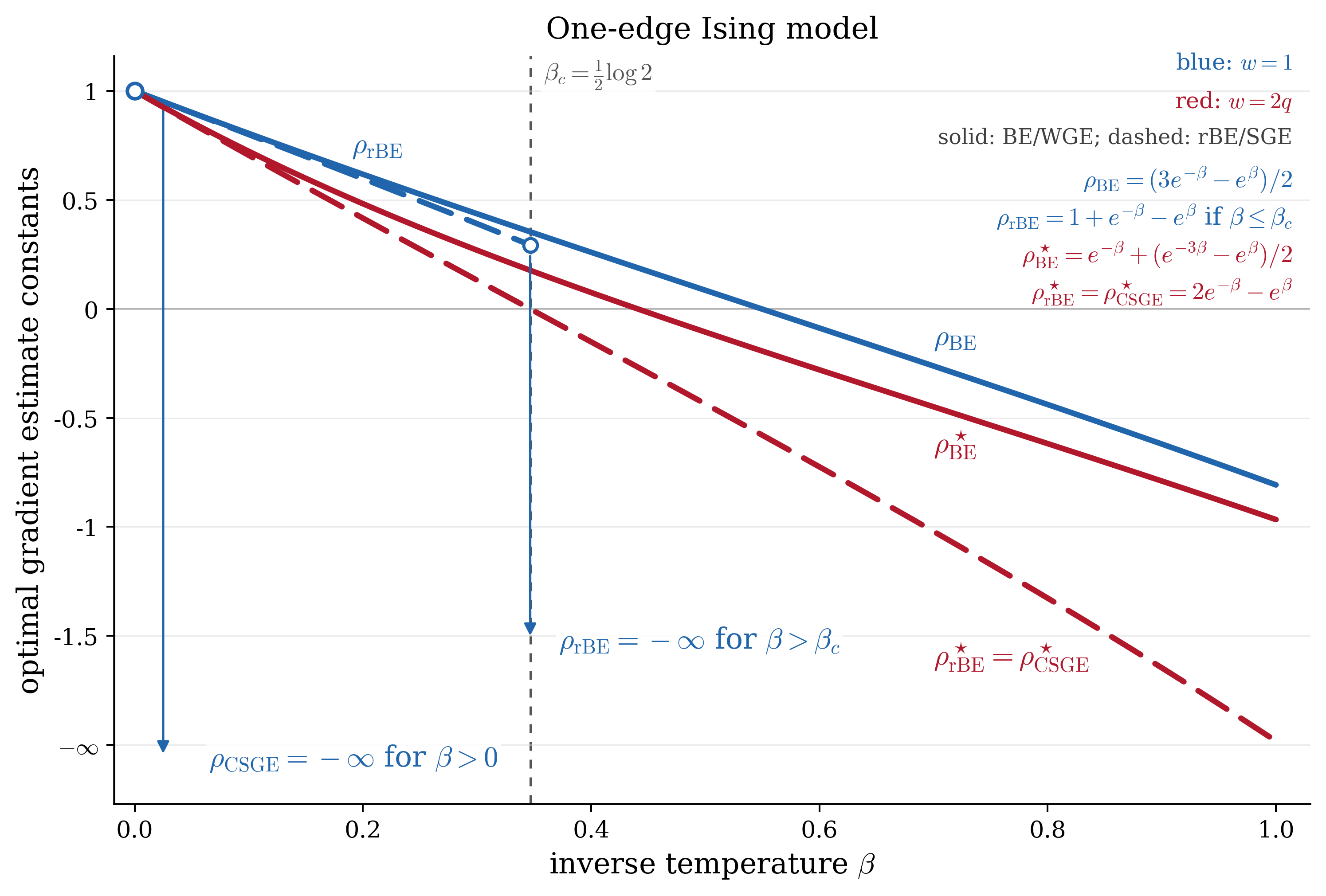}
    \caption{Optimal curvature and gradient-estimate constants for the one-edge Ising model with canonical transition rates. Blue curves correspond to the unweighted choice $w\equiv1$, and red curves correspond to the canonical weights $w=w^\star$. Solid curves show the BE/WGE constants, and dashed curves show the rBE/SGE constants.}
    \label{fig:One-edge-Ising}
\end{figure}

The proofs of the constants in \Cref{tab:one-edge-Ising} are included in \Cref{app:one-edge-Ising}.

For both choices of weights, the reinforced Bakry--\'Emery condition has a strictly smaller optimal constant than the Bakry--\'Emery condition. In the unweighted case, the weak, strong, and coordinate-wise strong gradient estimates have distinct optimal constants for $0<\beta\leq\beta_c$: the weak and strong gradient estimates have different finite optimal constants, whereas the coordinate-wise strong gradient estimate fails for every finite constant. When $\beta>\beta_c$, the strong gradient estimate also fails for every finite constant.

For the canonically weighted carr\'e du champ operator, the strong and coordinate-wise strong gradient estimates have the same optimal constant,
\[
    \rho_\RBE^\star
    =\rho_\SGE^\star
    =\rho_\CSGE^\star
    =2e^{-\beta}-e^\beta.
\]
This value coincides with the bound supplied by \Cref{thm:CSGE_boolean_hypercube_w=q}.

We further show that strong gradient estimate and coordinate-wise strong gradient estimate can be strictly separated in the canonically weighted setting, when the optimal coordinate-wise strong estimate constant is finite and positive. 
Specifically, consider the following one-edge Ising model with external fields (this example was found with the help of ChatGPT-6 Astra, and the bounds below were obtained numerically):
$$
    \nu(x)\propto\exp\tp{\frac 12\log \frac 98x_1x_2 + \frac 12\log2(x_2-x_1)}.
$$
Numerical experiments suggest that 
$$
    \rho_{\CSGE}^\star(\nu) = \rho_{0\text{-}\CSGE}^\star(\nu) = \frac 12 < \frac {33}{64} < \rho^\star_\SGE(\nu).
$$
Here $\rho_{0\text{-}\CSGE}^\star(\nu)$ denotes the optimal constant in the coordinate-wise strong gradient estimate at $\tau = 0$.
These results provide numerical evidence that strong gradient estimate is strictly weaker than coordinate-wise strong gradient estimate in the canonically weighted setting.
Note that our sufficient condition for coordinate-wise strong gradient estimate is sharp for the one-edge Ising models considered here, both with and without external fields.

Finally, consider the uniform distribution on $\Omega=\{\pm1\}^n$ from \Cref{ex:uniform}. With transition rates $q_i(x)=1/2$ and weights $w_i(x)=1$ for all $i\in[n]$ and $x\in\Omega$, the weak, strong, and coordinate-wise strong gradient estimates all hold with constant $\rho=1$. The sharp constants in the Poincar\'e and log-Sobolev inequalities are also equal to $1$. Thus, under this normalization, the standard random walk on the uniform Boolean hypercube has the same curvature and functional-inequality constants as the standard Gaussian Ornstein--Uhlenbeck semigroup.

\section{Isoperimetric Inequalities on the Boolean Hypercube via Curvature}
\label{sec:boolean}

In this section, we derive isoperimetric inequalities from curvature conditions.
As described in~\Cref{sec:curvature}, we first combine curvature estimates with the log-Sobolev inequality to establish the local Bobkov inequality and Talagrand-type inequalities using ideas in~\cite{CorderoErausquinLedoux12,Talagrand97}, and then deduce the Eldan--Gross inequality from these results following the high-level strategy in~\cite{IvanisviliZhang26}.
The central idea is that curvature conditions provide a systematic way to control the dependence among coordinates.

We note that for the isoperimetric inequalities discussed in this section, the weight functions $\{w_i\}_{i \in [n]}$ are not necessary for establishing the results. In fact, the results and proofs would be simpler and more elegant without weights. The only reason to consider the weighted version is that we need weights to bound the curvature in \Cref{sec:GE}.

\subsection{Local Bobkov and Talagrand's variance--surface-area inequalities}
The Gaussian-profile isoperimetric inequality for the uniform measure on the Boolean cube was established in~\cite{Bob97}. Specifically, for any function $f:\cube \to \{0,1\}$,
\begin{align*}
    \+I(\E[\mu]{f}) \le \frac12\SurfArea_\mu(2f-1) = \E[\mu]{\sqrt{\Gamma(f)}},
\end{align*}
where $\mu$ denotes the uniform measure on $\cube$ and $\+I$ is the Gaussian isoperimetric profile (\Cref{def:gaussian_isoperimetric_profile}).

The local Bobkov inequality introduced in~\cite{BakryLedoux96} is a semigroup refinement of this estimate in the diffusion setting: it controls $\+I(P_t f)$ pointwise in terms of both $\+I(f)$ and the initial gradient.
Building on the semigroup approach in~\cite[Theorem~8.5.3]{BGL-book}, we establish below a local Bobkov inequality for general measures on the Boolean hypercube under the strong gradient estimate, without requiring independence among the coordinates.
\begin{theorem}[Local Bobkov inequality, \cite{BakryLedoux96}]\label{lem:local_bobkov}
    Let $\mu$ be a distribution supported on $\Omega = \{\pm 1\}^n$ and $\{q_i: \Omega \mapsto \R_+\}_{i\in[n]}$ be transition rates reversible for $\mu$.
    Assume $\{w_i:\Omega \to \R_+\}_{i\in[n]}$ is a collection of weight functions that are $b$-bounded for some $b \in (0,1]$.

    If the reinforced Bakry--\'Emery condition (equivalently, the strong gradient estimate) holds with constant $\rho \in \mathbb{R}$,
    then, for any $f:\Omega \to [0,1]$ and $t\geq 0$, it holds
    \begin{align}\label{eq:local_bobkov}
        \+I(P_t f)\leq P_t\tp{\sqrt{\+I(f)^2 + \gamma(t) \Gamma_w(f)}},
    \end{align}
    where $\gamma(t) =
    \begin{cases}
        \frac{2(1-e^{-2\rho t})}{b \rho}, & \rho \neq 0\\
        \frac{4t}{b}, & \rho = 0
    \end{cases}$ and $\+I:[0,1] \to \R_+$ denotes the Gaussian isoperimetric profile.
\end{theorem}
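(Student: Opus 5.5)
Following \cite[Theorem~8.5.3]{BGL-book}, I would run a semigroup interpolation. Fix $t>0$ and $f:\Omega\to[0,1]$, and set, for $s\in[0,t]$,
\begin{align*}
    \Psi(s) := P_s\Big(\sqrt{\+I(P_{t-s}f)^2 + c(s)\,\Gamma_w(P_{t-s}f)}\Big),
\end{align*}
where $c:[0,t]\to\R_+$ is nondecreasing with $c(0)=0$, to be chosen. Then $\Psi(0)=\+I(P_tf)$, while $\Psi(t)$ is the right-hand side of \eqref{eq:local_bobkov} once $c(t)\le\gamma(t)$. It therefore suffices to show $\frac{\dif}{\dif s}\Psi(s)\ge 0$. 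If needed, I would first replace $\+I^2+c\Gamma_w$ by $\+I^2+c\Gamma_w+\eps$ and assume $f$ takes values in $(0,1)$, then pass to the limit.

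Write $g=P_{t-s}f$ and $\Phi_s=\sqrt{\+I(g)^2+c\,\Gamma_w(g)}$. Since $\frac{\dif}{\dif s}P_sh_s=P_s(L+\partial_s)h_s$, I need the pointwise inequality
\begin{align*}
    L\Phi_s + \partial_s\Phi_s\ge 0,\qquad \partial_s\Phi_s=\frac{-\+I(g)\+I'(g)Lg + c'\Gamma_w(g)/2 - c\,\Gamma_w(g,Lg)}{\Phi_s}.
\end{align*}
The discrete substitute for the diffusion chain rule is a convexity estimate. The map $(u,v)\mapsto\sqrt{u^2+v^2}$ is convex, so with $\Phi=\sqrt{\+I(g)^2+(\sqrt{c\Gamma_w(g)})^2}$ I get, at each jump $x\to x^i$,
\begin{align*}
    \Phi(x^i)-\Phi(x)\ge \frac{\+I(g)}{\Phi}\big(\+I(g(x^i))-\+I(g(x))\big)+\frac{\sqrt{c\Gamma_w(g)}}{\Phi}\Big(\sqrt{c\Gamma_w(g)}(x^i)-\sqrt{c\Gamma_w(g)}(x)\Big).
\end{align*}
Summing with the rates gives
\begin{align*}
    \Phi\, L\Phi\ge \+I(g)\,L\+I(g)+c\sqrt{\Gamma_w(g)}\,L\sqrt{\Gamma_w(g)}.
\end{align*}
The second term combines with $-c\Gamma_w(g,Lg)$ through \Cref{lem:gamma2-formula} and rBE to give at least $c\rho\,\Gamma_w(g)$.

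For the first term I would use \Cref{fact:gaussian_isoperimetric_profile_ineq} with $a=g(x)$, $b=g(x^i)$, weighted by $q_i$ and summed:
\begin{align*}
    \+I(g)L\+I(g)\ge \+I(g)\+I'(g)Lg-2\Gamma(g)\ge \+I(g)\+I'(g)Lg-\tfrac{2}{b}\Gamma_w(g),
\end{align*}
where the last step uses that the weights are $b$-bounded. The $\+I\+I'Lg$ terms cancel, and altogether $\Phi(L\Phi+\partial_s\Phi)\ge \Gamma_w(g)\big(c'/2+\rho c-2/b\big)$. So it suffices to solve $c'=4/b-2\rho c$ with $c(0)=0$, which gives $c(s)=\frac{2(1-e^{-2\rho s})}{b\rho}$ (and $4s/b$ when $\rho=0$), matching $\gamma$.

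I expect the main obstacle to be the convexity step. It must be set up so that the $\+I$-part and the $\sqrt{\Gamma_w}$-part separate exactly as above, and I need to check the sign and positivity conventions, including division by $\Phi$ when it vanishes. The $\eps$-regularization and the interior assumption on $f$, with $f$ at $\{0,1\}$ handled by continuity, should address this.
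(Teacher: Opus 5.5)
Your proposal is correct and follows the paper's proof essentially step for step. Both use the same interpolation $\Psi(s)$ with $c=\gamma$ solving $\gamma'+2\rho\gamma=4/b$, apply rBE through \Cref{lem:gamma2-formula}, and close with \Cref{fact:gaussian_isoperimetric_profile_ineq} summed against the rates and combined with $w_i\ge b$. Your convexity bound $\Phi L\Phi\ge \mathcal{I}(g)L\mathcal{I}(g)+c\sqrt{\Gamma_w(g)}\,L\sqrt{\Gamma_w(g)}$ is just the paper's chain rule (\Cref{lem:chain-rule}) plus the Minkowski estimate $\Gamma(\sqrt{F})\le\Gamma(\mathcal{I}(g))+\gamma\,\Gamma(\sqrt{\Gamma_w(g)})$ written in one line, since both reduce to the Cauchy--Schwarz inequality $\Phi(x)\Phi(x^i)\ge \mathcal{I}(g)(x)\,\mathcal{I}(g)(x^i)+c\sqrt{\Gamma_w(g)(x)\,\Gamma_w(g)(x^i)}$; the paper likewise keeps $\Phi>0$ by the regularization $f\mapsto\varepsilon+(1-2\varepsilon)f$.
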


\begin{proof}
    The assertion is trivial for constant functions, so we assume throughout that \(f\) is nonconstant.
    Fix $f : \Omega \to [0,1]$ and $t \ge 0$.
    By applying the argument to $f_\varepsilon:=\varepsilon+(1-2\varepsilon)f$, where $0<\varepsilon<1/2$, and then letting $\varepsilon\downarrow0$, we may assume that $f$ takes values in $(0,1)$; in particular, $F>0$ below.
    Let $\Psi(s) = P_s \tp{\sqrt{\+I(P_{t-s} f)^2 + \gamma(s) \Gamma_w(P_{t-s} f)}}$. It suffices to prove that $\Psi$ is monotone increasing. By a standard calculation, it holds
    \begin{align}\label{eq:dPsi-1}
    \frac{\dif}{\dif s} \Psi(s) &= P_s L\tp{\sqrt{\+I(P_{t-s} f)^2 + \gamma(s) \Gamma_w(P_{t-s} f)}} + P_s \frac{\dif}{\dif s}\sqrt{\+I(P_{t-s} f)^2 + \gamma(s) \Gamma_w(P_{t-s} f)}
    \end{align}
    Let $F = \+I(P_{t-s} f)^2 + \gamma(s) \Gamma_w(P_{t-s} f)$. By chain rule, it holds
    \begin{align}\label{eq:dPsi-2}
    \frac{\dif}{\dif s} \sqrt{F} &= \frac{1}{2\sqrt{F}}\cdot\tp{- 2 \+I(P_{t-s} f) \+I'(P_{t-s} f) \cdot P_{t-s} L f + \gamma'(s) \Gamma_w(P_{t-s} f) - 2\gamma(s) \Gamma_w(P_{t-s} f, L P_{t-s} f)}
    \end{align}
    By~\Cref{lem:chain-rule}, it holds
    \begin{align}\label{eq:dPsi-3}
        2\sqrt{F} L \sqrt{F} =  L F - 2\Gamma(\sqrt{F}).
    \end{align}
    Furthermore, the reinforced Bakry--\'Emery condition reads
    \begin{align}\label{eq:dPsi-4}
        \gamma(s) L \Gamma_w(P_{t-s} f) - 2 \gamma(s) \Gamma_w(P_{t-s} f, L P_{t-s} f) - 2\rho \gamma(s) \Gamma_w(P_{t-s} f) \ge 2 \gamma(s)\Gamma\tp{\sqrt{\Gamma_w(P_{t-s} f)}}.
    \end{align}
    Combining~\eqref{eq:dPsi-1},~\eqref{eq:dPsi-2},~\eqref{eq:dPsi-3},~\eqref{eq:dPsi-4} and $\gamma'(s) + 2\rho \gamma(s) = \frac{4}{b}$, it holds
    \begin{align}\label{eq:dPsi-5}
    \frac{\dif }{\dif s} \Psi(s) \ge P_s \frac{R}{2\sqrt{F}},
    \end{align}
    where
    \begin{align}\label{eq:dPsi-6}
    R = L \+I^2 (P_{t-s} f) - 2 \Gamma(\sqrt{F}) - 2 \+I(P_{t-s} f) \+I'(P_{t-s} f) P_{t-s} L f+ \frac{4}{b}\Gamma_w(P_{t-s} f) + 2\gamma(s) \Gamma(\sqrt{\Gamma_w(P_{t-s}f)})
    \end{align}
    Note that by Minkowski's inequality, it holds
    \begin{align*}
        & 2\Gamma(\sqrt{F})(x) \\
        ={}& \sum_{i=1}^n q_i(x) \tp{\sqrt{\+I^2(P_{t-s} f)(x) + \gamma(s) \Gamma_w(P_{t-s} f)(x)} - \sqrt{\+I^2(P_{t-s} f)(x^i) + \gamma(s) \Gamma_w(P_{t-s} f)(x^i)}}^2\\
        \le{}& \sum_{i=1}^n q_i(x) \tp{\tp{\+I(P_{t-s} f)(x) - \+I(P_{t-s} f)(x^i)}^2 + \gamma(s) \tp{\sqrt{\Gamma_w(P_{t-s} f)(x)} - \sqrt{\Gamma_w(P_{t-s} f)(x^i)}}^2}\\
        ={}& 2\Gamma(\+I(P_{t-s}f))(x) + 2\gamma(s) \Gamma(\sqrt{\Gamma_w(P_{t-s} f)})(x).
    \end{align*}
    Combining~\eqref{eq:dPsi-5} and~\eqref{eq:dPsi-6}, it only remains to show
    \begin{align}\label{eq:dPsi-7}
        L \+I^2(P_{t-s} f) - 2 \+I(P_{t-s} f) \+I'(P_{t-s} f) P_{t-s} L f+ \frac{4}{b}\Gamma_w(P_{t-s} f) - 2 \Gamma(\+I(P_{t-s} f)) \ge 0.
    \end{align}
    By definition of $L$ and $\Gamma_w$, the left-hand side of the above inequality satisfies
    \begin{align*}
    \text{LHS of~\eqref{eq:dPsi-7} at $x$} &= \sum_{i=1}^n q_i(x) \tp{\+I^2(P_{t-s} f)(x^i) - \+I^2(P_{t-s} f)(x)}\\
    &- 2 \sum_{i=1}^n q_i(x) \+I(P_{t-s} f)(x) \+I'(P_{t-s} f)(x) ((P_{t-s} f)(x^i) - P_{t-s}(f)(x))\\
    &+ \frac{2}{b}\sum_{i=1}^n q_i(x) w_i(x) \tp{P_{t-s} f(x^i) - P_{t-s} f(x)}^2\\
    &-\sum_{i=1}^n q_i(x) (\+I(P_{t-s} f)(x^i) - \+I(P_{t-s} f)(x))^2,
    \end{align*}
    which is non-negative by $w_i(x) \ge b$ and
    \begin{align*}
        \+I(x)\tp{\+I(y) - \+I(x) - \+I'(x)(y-x)} + (y-x)^2 \ge 0,
    \end{align*}
    for all $x,y \in (0,1)$, by~\Cref{fact:gaussian_isoperimetric_profile_ineq}.
\end{proof}

The following improved Talagrand inequality can be obtained by~\Cref{lem:local_bobkov}.

\begin{corollary}[Talagrand's variance--surface-area inequality]
\label{lem:improved_talagrand}
    Let $\mu$ be a distribution supported on $\Omega = \{\pm 1\}^n$ and $\{q_i: \Omega \mapsto \R_+\}_{i\in[n]}$ be transition rates reversible for $\mu$.
    Assume $\{w_i:\Omega \to \R_+\}_{i\in[n]}$ is a collection of weight functions that are $b$-bounded for some $b \in (0,1]$.

    Suppose the following conditions hold:
    \begin{itemize}
        \item The reinforced Bakry--\'Emery condition (equivalently, the strong gradient estimate) holds with constant $\rho_{\RBE}$;
        \item The log-Sobolev inequality holds with constant $\rho_{\LSI} > 0$.
    \end{itemize}
    Then, for any $f:\Omega \to \{0,1\}$, it holds
   \begin{align}\label{eq:improved_talagrand}
        A_w(f) \gtrsim \frac{\sqrt{b} \rho_{\LSI}}{\sqrt{\rho_{\LSI} + \kappa}} \cdot \Var[\mu]{f}\sqrt{\log \frac e{\Var[\mu]{f}}},
   \end{align}
   where $\kappa = \max \set{0,-\rho_{\RBE}}$.
\end{corollary}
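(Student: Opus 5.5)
The plan is to integrate the local Bobkov inequality (\Cref{lem:local_bobkov}) against $\mu$ at a well-chosen time $t$, and then bound the resulting left-hand side from below using hypercontractivity (\Cref{thm:hypercontractivity}).

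\textbf{Upper bound from local Bobkov.} Let $f:\Omega\to\{0,1\}$ and write $m=\E[\mu]{f}$. Replacing $f$ by $1-f$ changes neither $\Var[\mu]{f}$ nor $A_w(f)$, so we may assume $m\le 1/2$; then $\Var[\mu]{f}=m(1-m)\asymp m$ and $\log(e/\Var[\mu]{f})\asymp \log(e/m)$. Since $f$ is Boolean, $\+I(f)\equiv 0$. Taking $\mu$-expectations in \eqref{eq:local_bobkov} and using stationarity $\E[\mu]{P_t g}=\E[\mu]{g}$ gives
\begin{align*}
    \E[\mu]{\+I(P_t f)} \le \sqrt{\gamma(t)}\, A_w(f).
\end{align*}
With $\kappa=\max\{0,-\rho_{\RBE}\}$, we have $\gamma(t)\lesssim t e^{2\kappa t}/b$ for every sign of $\rho_{\RBE}$. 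I choose
\begin{align*}
    t=\frac{1}{\rho_{\LSI}+\kappa},
\end{align*}
so that $e^{2\kappa t}\le e^2$ and hence $\gamma(t)\lesssim 1/(b(\rho_{\LSI}+\kappa))$.

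\textbf{Lower bound on $\E[\mu]{\+I(P_t f)}$.} Write $u=P_t f\in[0,1]$. I would use the standard profile estimate
\begin{align*}
    \+I(u)\gtrsim u(1-u)\sqrt{\log\frac{1}{u(1-u)}}.
\end{align*}
\emph{Mass of $u(1-u)$.} By \Cref{thm:hypercontractivity} with $p_t=1+e^{-2\rho_{\LSI}t}$,
\begin{align*}
    \E[\mu]{u^2}\le \norm{f}_{p_t}^2=m^{2/p_t}\le m^{1+\theta},\qquad \theta=\tanh(\rho_{\LSI}t)\asymp \frac{\rho_{\LSI}}{\rho_{\LSI}+\kappa},
\end{align*}
where $\theta\asymp\rho_{\LSI}t$ holds because $\rho_{\LSI}t\le 1$. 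Since $m\le 1/2$, this yields
\begin{align*}
    \E[\mu]{u(1-u)}=m-\E[\mu]{u^2}\ge m(1-2^{-\theta})\gtrsim \theta m.
\end{align*}
\emph{Logarithmic factor.} The factor $\sqrt{\log(1/(u(1-u)))}$ has to be made comparable to $\sqrt{\log(1/m)}$, but this cannot be done pointwise. I split on the event $E=\{u>m^{\theta/2}\}$. On $E^c$ we have $u(1-u)\le m^{\theta/2}$, so
\begin{align*}
    \log\frac{1}{u(1-u)}\ge \frac{\theta}{2}\log\frac{1}{m}.
\end{align*}
On $E$, Markov's inequality applied to $u^2$ gives
\begin{align*}
    \E[\mu]{u(1-u)\mathbf 1_E}\le \E[\mu]{u^2}\,m^{-\theta/2}\le m^{1+\theta/2}.
\end{align*}
For small $m$ this is $o(\theta m)$; the bounded range $m\gtrsim e^{-C/\theta}$ is handled directly by the mass bound $\E[\mu]{\+I(u)}\gtrsim \E[\mu]{u(1-u)}\gtrsim\theta m$. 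Combining the two regimes gives
\begin{align*}
    \E[\mu]{\+I(P_tf)}\gtrsim \theta^{c}\, m\sqrt{\log(e/m)}.
\end{align*}

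\textbf{Combining and the main obstacle.} Putting the two bounds together,
\begin{align*}
    A_w(f)\gtrsim \sqrt{b(\rho_{\LSI}+\kappa)}\;\theta^{c}\,\Var[\mu]{f}\sqrt{\log\frac{e}{\Var[\mu]{f}}}.
\end{align*}
With $c=1$ and $\theta\asymp\rho_{\LSI}/(\rho_{\LSI}+\kappa)$, this is exactly the claimed factor $\sqrt b\,\rho_{\LSI}/\sqrt{\rho_{\LSI}+\kappa}$. The main obstacle is therefore getting the exponent $c=1$ in the previous step. The crude split above loses a factor of $\sqrt{\theta}$ inside the logarithm, so $c>1$. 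To fix this, I would tune the threshold defining $E$: either replace $m^{\theta/2}$ by $m^{\theta'}$ with $\theta'$ chosen to balance the two errors, or instead use a $p$ slightly larger than $p_t$ in the Hölder/Markov step. Only if neither works would I accept a worse power of $\theta$ and recheck the $t$-dependence of $\gamma(t)$.
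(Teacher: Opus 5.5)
\textbf{There is a genuine gap, at the step you call the main obstacle.} The skeleton is sound: you integrate the local Bobkov inequality using $\mathcal{I}(f)\equiv 0$, fix $t=1/(\rho_{\LSI}+\kappa)$ so that $\gamma(t)\lesssim 1/(b(\rho_{\LSI}+\kappa))$, and use hypercontractivity to get $\mathbb{E}_\mu[u^2]\le m^{1+\theta}$. But the step $c=1$ is exactly what produces the stated constant, and you leave it open. Your own estimates give only about $\theta^{3/2}$, and slightly less in the intermediate range of $m$. That yields the factor $\sqrt{b}\,\rho_{\LSI}^{3/2}/(\rho_{\LSI}+\kappa)$, which is worse than the claim by $\sqrt{\rho_{\LSI}/(\rho_{\LSI}+\kappa)}$.

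\textbf{The diagnosis is also off, so threshold tuning cannot close the gap.} The loss does not come from the threshold. It comes from replacing $\mathbb{E}_\mu[u(1-u)]\ge m(1-m^{\theta})$ by $m(1-2^{-\theta})\gtrsim \theta m$, which throws away the factor $\log(1/m)$ that hypercontractivity actually gives. If the mass is only known to be at least $\theta m$, take any threshold $m^{\theta'}$. On $E^c$ you gain $\sqrt{\theta'\log(1/m)}$. The Markov error $m^{1+\theta-\theta'}$ is useless unless $\theta'<\theta$. So the product never exceeds $\theta^{3/2}m\sqrt{\log(1/m)}$.

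\textbf{The fix is to keep the logarithm:} $1-m^{\theta}=1-e^{-\theta\log(1/m)}\gtrsim\min\{1,\theta\log(1/m)\}$. Your split then works in two regimes.
\begin{itemize}
\item If $\theta\log(1/m)\ge 4$, then $m^{\theta/2}\le e^{-2}$. The mass of $u(1-u)$ on $E^c$ is at least $m(1-m^{\theta}-m^{\theta/2})\ge m/2$. Hence $\mathbb{E}_\mu[\mathcal{I}(u)]\gtrsim m\sqrt{\theta\log(1/m)}\ge \theta m\sqrt{\log(1/m)}$.
\item If $\theta\log(1/m)\le 4$, the crude bound $\mathcal{I}(u)\gtrsim u(1-u)$ already gives $\mathbb{E}_\mu[\mathcal{I}(u)]\gtrsim m\,\theta\log(1/m)$. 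This is $\gtrsim\theta m\sqrt{\log(e/m)}$ because $\log(1/m)\ge\log 2$.
\end{itemize}
This gives $c=1$, and with it the stated inequality.

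\textbf{Comparison with the paper.} The paper uses the same hypercontractive gain but never needs the logarithmic refinement of the profile, only $\mathcal{I}(u)\gtrsim u(1-u)$. Instead it takes the $v$-dependent time $t=1/(100(\kappa+\rho_{\LSI}\log(1/v)))$. Then $1-v^{\tanh(\rho_{\LSI}t)}\gtrsim \rho_{\LSI}t\log(1/v)$ and $\gamma(t)\lesssim t/b$, and $\sqrt{\log(1/v)}$ comes from balancing $\rho_{\LSI}t\log(1/v)$ against $\sqrt{t}$. Your repaired fixed-time route instead draws $\sqrt{\log}$ from the Gaussian tail of $\mathcal{I}$. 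Both reach the same bound up to constants, and both hinge on $1-m^{\theta}\approx\theta\log(1/m)$, which is exactly what your write-up discards.
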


\begin{proof}
    The assertion is trivial for constant functions, so we assume throughout that \(f\) is nonconstant.
    Fix a function $f: \Omega \to \{0,1\}$. Without loss of generality, we may assume $v := \E[\mu]{f} \le \frac{1}{2}$.
    By~\Cref{lem:local_bobkov}, it holds
    \begin{align}\label{eq:Bobkov-01}
        \E[\mu]{\+I(P_t f)} \le \sqrt{\gamma(t)} A_w(f).
    \end{align}
    The Gaussian isoperimetric profile $\+I(P_t f) \gtrsim (P_t f)(1-P_t f)$. Therefore, the expectation of $\+I(P_t f)$ can be lower bounded by
    \begin{align}\label{eq:isoperimetric-bound}
        \E[\mu]{\+I(P_t f)} \gtrsim \tp{v - \norm{P_t f}_{2,\mu}^2}.
    \end{align}
    By hypercontractivity, $\norm{P_t f}_{2,\mu}^2 \le \tp{v}^{1+\tanh(\rho_{\LSI} t)}$. Combining~\eqref{eq:Bobkov-01} and~\eqref{eq:isoperimetric-bound}, the surface area $A_w(f)$ then satisfies
    \begin{align*}
        A_w(f) \gtrsim \frac{v(1-v^{\tanh(\rho_{\LSI} t)})}{\sqrt{\gamma(t)}}.
    \end{align*}
    Take $t = \frac{1}{100(\kappa+\rho_{\LSI} \log 1/v)}$. By estimates $1-v^{\tanh(\rho_{\LSI} t)} \gtrsim \rho_{\LSI} t \log 1/v$ and $\gamma(t) \lesssim \frac{t}{b}$, it holds
    \begin{align*}
        A_w(f) \gtrsim \frac{\sqrt{b} \rho_{\LSI}}{\sqrt{\kappa + \rho_{\LSI} \log 1/v}} v \log 1/v \gtrsim \frac{\sqrt{b}\rho_{\LSI}}{\sqrt{\kappa + \rho_{\LSI}}} v \sqrt{\log 1/v} \gtrsim \frac{\sqrt{b}\rho_{\LSI}}{\sqrt{\kappa + \rho_{\LSI}}} \Var[\mu]{f} \sqrt{\log \frac{\e}{\Var[\mu]{f}}}.
    \end{align*}
    The inequalities follow from $v \le \frac{1}{2}$.
\end{proof}

\subsection{Talagrand's \texorpdfstring{$L^1$--$L^2$}{L1–L2} and KKL inequalities}

Talagrand's $L^1$--$L^2$ inequality~\cite{Talagrand94} is a logarithmic refinement of the Poincar\'e inequality.
For Markov random fields on bounded-degree graphs satisfying a log-Sobolev inequality, an analogue of this inequality was established in~\cite{KoehlerLifshitzMinzerMossel23}.
Our proof follows the semigroup approach of~\cite{CorderoErausquinLedoux12}, replacing the bounded-degree assumption with gradient estimates.
\begin{theorem}[Weak $L^1$--$L^2$ inequality]\label{lem:l1_l2_talagrand_weak}
    Let $\mu$ be a distribution supported on $\Omega = \{\pm 1\}^n$ and $\{q_i: \Omega \mapsto \R_+\}_{i\in[n]}$ be transition rates reversible for $\mu$.
    Assume the strong gradient estimate~\ref{eq:SGE} and the log-Sobolev inequality hold with constants $\rho_{\SGE}$ and $\rho_{\LSI}$ respectively, and $\{w_i:\Omega \to \mathbb{R}_+\}_{i \in [n]}$ is a collection of weight functions that are $b$-bounded for some $b \in (0,1]$. It holds for any $f:\Omega \to \R$,
    \begin{align}\label{eq:l1_l2_talagrand_weak}
        I(f) \gtrsim \frac{b^2 \rho^2_{\LSI}}{\rho_{\LSI} + \kappa} \Var[\mu]{f}\tp{1 + \log \frac{I_w(f)}{A_w(f)^2}},
    \end{align}
    where $\kappa = \max\set{-\rho_{\SGE},0}$.
\end{theorem}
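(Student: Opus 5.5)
We follow the proof sketch of \Cref{thm:intro-L1-L2}, now keeping track of the weights and constants. Fix $f:\Omega\to\R$ and $T>0$, to be chosen later. The log-Sobolev inequality implies the Poincar\'e inequality with constant $\rho_{\LSI}$, so $\Var[\mu]{P_T f}\le \e^{-2\rho_{\LSI}T}\Var[\mu]{f}$. Combining this with \eqref{eq:dif-Var} and the $b$-boundedness $\Gamma\le \frac1b\Gamma_w$ gives
\begin{align*}
(1-\e^{-2\rho_{\LSI}T})\Var[\mu]{f} = 2\int_0^T \E[\mu]{\Gamma(P_tf)}\dif t \le \frac2b\int_0^T \E[\mu]{\Gamma_w(P_tf)}\dif t .
\end{align*}
Next we apply \ref{eq:SGE} pointwise, $\Gamma_w(P_tf)\le \e^{-2\rho_{\SGE}t}(P_t g)^2$ with $g:=\sqrt{\Gamma_w(f)}$. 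Then \Cref{lem:hypercontractivity}, with $\theta_t=\tanh(\rho_{\LSI}t)$, yields $\norm{P_tg}_2^2\le \norm{g}_2^2\,r^{\theta_t}$. Here $r:=\norm{g}_1^2/\norm{g}_2^2 = A_w(f)^2/I_w(f)\in(0,1]$. Consequently
\begin{align*}
(1-\e^{-2\rho_{\LSI}T})\Var[\mu]{f}\le \frac{2}{b}\, I_w(f)\int_0^T \e^{2\kappa t}\, r^{\tanh(\rho_{\LSI}t)}\dif t .
\end{align*}

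The main step is estimating this integral. We take $T=(\rho_{\LSI}+\kappa)^{-1}$. On $[0,T]$ we have $\e^{2\kappa t}\le \e^2$ and $\rho_{\LSI}t\le 1$, so $\tanh(\rho_{\LSI}t)\ge c\rho_{\LSI}t$ with $c=\tanh 1$. Writing $L:=\log(1/r)\ge0$, the integral is at most
\begin{align*}
\e^2\int_0^T \e^{-c\rho_{\LSI}Lt}\dif t\le \e^2\min\set{T,\frac{1}{c\rho_{\LSI}L}} \lesssim \frac{1}{\rho_{\LSI}+\kappa}\cdot\frac{\rho_{\LSI}+\kappa}{\rho_{\LSI}}\cdot\frac{1}{1+L}\cdot\frac{\rho_{\LSI}+\kappa}{\rho_{\LSI}}\cdots
\end{align*}
Stated more carefully: $\min\{T,1/(\rho_{\LSI}L)\}\lesssim \frac{1}{\rho_{\LSI}(1+L)}$, because $T\le 1/\rho_{\LSI}$. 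On the left side, $1-\e^{-2\rho_{\LSI}T}\gtrsim \rho_{\LSI}T=\rho_{\LSI}/(\rho_{\LSI}+\kappa)$, since $\rho_{\LSI}T\le1$. Putting these together,
\begin{align*}
\Var[\mu]{f}\lesssim \frac{\rho_{\LSI}+\kappa}{b\,\rho_{\LSI}^2}\cdot\frac{I_w(f)}{1+\log(I_w(f)/A_w(f)^2)}.
\end{align*}

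Finally we pass from $I_w$ to $I$ on the left of the statement. Since $I_w(f)\le \frac1b I(f)$, this produces one more factor of $b$, so $I(f)\gtrsim \frac{b^2\rho_{\LSI}^2}{\rho_{\LSI}+\kappa}\Var[\mu]{f}\,(1+\log\frac{I_w(f)}{A_w(f)^2})$, which is \eqref{eq:l1_l2_talagrand_weak}. The ratio $I_w/A_w^2\ge1$ by Jensen, so the logarithm is nonnegative; the case $\Gamma_w(f)\equiv0$ (constant $f$) is trivial.

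The only delicate point is the integral estimate. We must choose $T$ so that the loss $1-\e^{-2\rho_{\LSI}T}$ and the growth $\e^{2\kappa t}$ cost only the factor $\rho_{\LSI}/(\rho_{\LSI}+\kappa)$ each, while the logarithmic gain of order $1/(\rho_{\LSI}(1+L))$ survives. The choice $T=(\rho_{\LSI}+\kappa)^{-1}$ together with the linear lower bound on $\tanh$ handles this.
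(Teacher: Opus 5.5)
Your proposal is correct and follows the paper's proof essentially step for step: Poincar\'e from the log-Sobolev inequality, the pointwise \ref{eq:SGE} bound, hypercontractivity plus H\"older via \Cref{lem:hypercontractivity}, and the choice $T=(\rho_{\LSI}+\kappa)^{-1}$ with $\tanh x\ge \tanh(1)\,x$ and $\e^{2\kappa t}\le \e^2$ on $[0,T]$. You also make explicit the final conversion $I_w(f)\le \frac1b I(f)$, which the paper leaves implicit when passing to the stated form with $b^2$; you should delete the aborted display just before ``Stated more carefully,'' because it carries a spurious extra factor $(\rho_{\LSI}+\kappa)/\rho_{\LSI}$.
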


\begin{proof}
    The assertion is trivial for constant functions, so we assume throughout that \(f\) is nonconstant.
Note that for any $T \ge 0$,
\begin{align*}
\Var[\mu]{f} - \Var[\mu]{P_T f} = 2 \int_0^{T} \E[\mu]{\Gamma(P_t f)} \dif t &\le \frac{2}{b} \int_0^{T} \E[\mu]{\Gamma_w(P_tf)} \dif t\\
&\le \frac{2}{b} \int_0^{T} \e^{-2 \rho_{\mathrm{SGE}} t} \cdot \E[\mu]{\tp{P_t \sqrt{\Gamma_w f}}^2 } \dif t,
\end{align*}
where the last inequality follows from~\ref{eq:SGE}. By Poincar\'e inequality, it holds
\begin{align*}
    (1-\e^{-2\rho_{\LSI}T}) \Var[\mu]{f} \le \frac{2}{b} \int_0^{T} \e^{-2\rho_{\SGE} t} \cdot \E[\mu]{\tp{P_t \sqrt{\Gamma_w f}}^2} \dif t.
\end{align*}
 By hypercontractivity, it holds
\begin{align*}
    \E[\mu]{(P_t \sqrt{\Gamma_w f})^2} = \norm{P_t \sqrt{\Gamma_w f}}_{2,\mu}^2 \le \norm{\sqrt{\Gamma_w f}}_{p_t,\mu}^2 \le \norm{\sqrt{\Gamma_w f}}_{1,\mu}^{2 \theta_t} \norm{\sqrt{\Gamma_w f}}_{2,\mu}^{2 (1-\theta_t)},
\end{align*}
where $p_t = 1 + \e^{-2\rho_{\mathrm{LSI}} t}$ and $\theta_t = \frac{2}{p_t} - 1 = \tanh (\rho_{\LSI} t)$, and the last inequality follows from H\"older's inequality.
Note that $\norm{\sqrt{\Gamma_w f}}_{1,\mu} = A_w(f)$, $\norm{\sqrt{\Gamma_w f}}_{2,\mu}^2 = I_w(f)$. Therefore, it holds
\begin{align}\label{eq:var-bound}
    \nonumber (1-\e^{-2\rho_{\LSI} T}) \Var[\mu]{f} &\le \frac{2}{b} \int_0^{T} \e^{-2 \rho_{\SGE} t} \cdot A_w(f)^{2 \theta_t} I_w(f)^{1-\theta_t} \dif t\\
    &\le \frac{2 I_w(f)}{b} \int_0^{T} \e^{-2 \rho_{\SGE} t} \tp{\frac{A_w(f)^2}{I_w(f)}}^{\theta_t} \dif t.
\end{align}
We now choose $T = \frac{1}{\rho_{\LSI} + \kappa}$.
Let $R = \frac{I_w(f)}{A^2_w(f)} \ge 1$. Since $\tanh x \ge \tanh(1) x$ for all $x \in [0,1]$ and $\e^{-2\rho_{\SGE} t} \le \e^2$ for all $t \in [0,T]$, the integral satisfies
\begin{align*}
    \int_0^T \e^{-2\rho_{\SGE} t} R^{-\tanh\tp{\rho_{\LSI} t}} &\lesssim \int_0^T \e^{-\tanh(1) \rho_{\LSI} \log R \cdot t} \dif t \lesssim \min\set{T,\frac{1}{\rho_{\LSI} \log R}} \lesssim \frac{T}{1+\rho_{\LSI} T \log R}.
\end{align*}
By~\eqref{eq:var-bound} and $1-\e^{-2 \rho_{\LSI} T} \gtrsim \rho_{\LSI} T$, the variance $\Var[\mu]{f}$ then satisfies
\begin{align*}
    \Var[\mu]{f} \lesssim \frac{1}{b\rho_{\LSI}(1+\rho_{\LSI} T \log R)} I_w(f) \lesssim \frac{\rho_{\LSI} + \kappa}{b\rho_{\LSI}^2} \cdot \frac{I_w(f)}{1+ \log R},
\end{align*}
where the last inequality follows from $\rho_{\LSI} T \le 1$.
This concludes the proof.
\end{proof}

With the coordinate-wise strong gradient estimate, we establish the following $L^1$--$L^2$ inequality.
The proof presented here resembles the proof in \cite{CorderoErausquinLedoux12}, though our notion of curvature is defined with weights and absolute values; see \Cref{subsec:related-work,rmk:CSGE-CL12} for detailed discussion and comparison.

\begin{theorem}[Talagrand's $L^1$--$L^2$ inequality, \cite{CorderoErausquinLedoux12}]\label{lem:l1_l2_talagrand}
     Let $\mu$ be a distribution supported on $\Omega = \{\pm 1\}^n$ and $\{q_i: \Omega \mapsto \R_+\}_{i\in[n]}$ be transition rates reversible for $\mu$.
    Assume~\ref{eq:0-CSGE} holds with constant $\rho_{\CSGE}$, log-Sobolev inequality holds with constant $\rho_{\LSI} > 0$, and $\{w_i:\Omega \to \mathbb{R}_+\}_{i \in [n]}$ is a collection of weight functions that are $b$-bounded for some $b \in (0,1]$. The following inequality holds for any $f:\Omega \to \R$:
    \begin{align}\label{eq:l1_l2_talagrand_2}
        \Var[\mu]{f} \lesssim \frac{\rho_{\LSI} + \kappa_0}{b \rho^2_{\LSI}} \cdot \sum_{i=1}^n \frac{I_{w,i} (f)}{1 + \log\tp{\sqrt{I_{w,i}(f)} / \E[\mu]{\abs{\delta_{w,i}} f}}},
    \end{align}
    where $\kappa_0 = \max\set{-\rho_{\CSGE},0}$.
    A summand is interpreted as zero when $I_{w,i}(f)=0$.

    Furthermore, we have that
    \begin{align}\label{eq:l1_l2_talagrand_3}
        I(f) \gtrsim \frac{b^2 \rho^2_{\LSI}}{\rho_{\LSI} + \kappa_0}\cdot \Var[\mu]{f}\tp{1 + \log \frac{I_w(f)}{J_w(f)}} .
    \end{align}
\end{theorem}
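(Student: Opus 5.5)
We follow the template of the proof of \Cref{lem:l1_l2_talagrand_weak}, but apply the hypercontractive estimate coordinate by coordinate rather than to the whole gradient norm. We may assume $f$ is nonconstant and fix $T=\frac{1}{\rho_{\LSI}+\kappa_0}$.

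\textbf{Step 1: variance identity.} Using the variance identity, the Poincar\'e consequence of LSI, and $b$-boundedness of the weights, we get
\begin{align*}
(1-\e^{-2\rho_{\LSI}T})\Var[\mu]{f}\le \frac{2}{b}\int_0^T \E[\mu]{\Gamma_w(P_tf)}\dif t=\frac2b\int_0^T \E[\mu]{\norm{\nabla_w P_t f}_2^2}\dif t .
\end{align*}

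\textbf{Step 2: apply \ref{eq:0-CSGE} and split coordinates.} By \ref{eq:0-CSGE}, $\norm{\nabla_w P_tf}_2^2\le \e^{-2\rho_{\CSGE}t}\sum_i (P_t|\delta_{w,i}|f)^2$. Taking expectations, the right-hand side becomes
\begin{align*}
\frac2b\sum_{i=1}^n\int_0^T \e^{2\kappa_0 t}\norm{P_t|\delta_{w,i}|f}_{2,\mu}^2\dif t .
\end{align*}
This is the key point: the curvature input is a statement about the whole vector, but after squaring it decouples into a sum of scalar semigroup terms, one per coordinate.

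\textbf{Step 3: per-coordinate hypercontractivity.} For each $i$, \Cref{lem:hypercontractivity} gives
\begin{align*}
\norm{P_t|\delta_{w,i}|f}_{2,\mu}^2\le I_{w,i}(f)\,R_i^{-\tanh(\rho_{\LSI}t)},
\qquad R_i:=\frac{I_{w,i}(f)}{(\E[\mu]{\abs{\delta_{w,i}} f})^2}\ge 1,
\end{align*}
using $\norm{|\delta_{w,i}|f}_{2}^2=I_{w,i}(f)$ and $\norm{|\delta_{w,i}|f}_1=\E[\mu]{\abs{\delta_{w,i}}f}$. Coordinates with $I_{w,i}(f)=0$ contribute nothing. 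Since $\e^{2\kappa_0t}\le \e^2$ on $[0,T]$ and $\tanh x\ge \tanh(1)x$ for $x\le1$, the same integral estimate as in the weak inequality gives
\begin{align*}
\int_0^T \e^{2\kappa_0 t}R_i^{-\tanh(\rho_{\LSI}t)}\dif t\lesssim \frac{T}{1+\rho_{\LSI}T\log R_i}\lesssim \frac{T}{1+\log R_i}\cdot\frac{1}{\rho_{\LSI}T}.
\end{align*}
Note that $\log R_i=2\log(\sqrt{I_{w,i}}/\E[\mu]{\abs{\delta_{w,i}}f})$, and the factor $2$ is absorbed into the constant. Dividing by $1-\e^{-2\rho_{\LSI}T}\gtrsim\rho_{\LSI}T$ and using $\rho_{\LSI}T\le1$ yields \eqref{eq:l1_l2_talagrand_2} with prefactor $\frac{\rho_{\LSI}+\kappa_0}{b\rho_{\LSI}^2}$.

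\textbf{Step 4: deducing \eqref{eq:l1_l2_talagrand_3}.} Write $a_i=\E[\mu]{\abs{\delta_{w,i}}f}$ and $I_i=I_{w,i}(f)$. We need
\begin{align*}
\sum_i \frac{I_i}{1+\tfrac12\log(I_i/a_i^2)}\lesssim \frac{I_w(f)}{1+\log(I_w/J_w)}.
\end{align*}
Set $p_i=I_i/I_w$. The function $\phi(u)=\frac1{1+\frac12\log u}$ is convex and decreasing on $[1,\infty)$, so Jensen's inequality under the weights $p_i$ gives
\begin{align*}
\sum_i p_i\,\phi(I_i/a_i^2)\le \phi\Big(\sum_i p_i\, I_i/a_i^2\Big),
\end{align*}
but this points in the wrong direction. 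The correct tool is Jensen with the concave function $\log$ under the weights $p_i$:
\begin{align*}
\sum_i p_i\log\frac{a_i^2}{I_i}\le\log\sum_i \frac{a_i^2}{I_w}=\log\frac{J_w}{I_w}.
\end{align*}
So the $p$-average of $\ell_i:=\log(I_i/a_i^2)$ is at least $\log(I_w/J_w)$. Since $x\mapsto \frac1{1+x/2}$ is convex, this alone does not bound $\sum_ip_i\phi$ from above. We therefore split the coordinates. Let $\ell=\log(I_w/J_w)$ and $S=\{i:\ell_i\ge \ell/2\}$. Coordinates in $S$ contribute at most $I_w\cdot\frac{1}{1+\ell/4}$. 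For coordinates outside $S$, we fall back on the alternative route: \Cref{fact:CSGE_to_SGE} gives \ref{eq:SGE}, and \Cref{lem:l1_l2_talagrand_weak} then provides a lower bound on $I(f)$ in terms of $\log(I_w/A_w^2)$. Closing the complement bound is the main obstacle.

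A cleaner route for \eqref{eq:l1_l2_talagrand_3}, which is the one we would actually pursue, is to bypass Step 3's per-coordinate Hölder and bound $\sum_i\norm{P_t|\delta_{w,i}|f}_2^2$ directly. By Riesz--Thorin interpolation (or Hölder applied to the sum), this sum is at most $I_w^{1-\theta_t}J_w^{\theta_t}$: interpolate the vector-valued operator $g\mapsto(P_tg_i)_i$ from $\ell^2(L^{p_t})$ into $\ell^2(L^2)$, using the bound $\norm{g_i}_{p_t}\le \norm{g_i}_1^{\theta}\norm{g_i}_2^{1-\theta}$ and then Hölder on the sum $\sum_i a_i^{2\theta}I_i^{1-\theta}\le J_w^\theta I_w^{1-\theta}$. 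After that, the same integral estimate as in \Cref{lem:l1_l2_talagrand_weak} applies verbatim with $R=I_w/J_w$, and $I(f)\ge bI_w(f)$ converts the result into \eqref{eq:l1_l2_talagrand_3}.
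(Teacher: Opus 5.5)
Your final argument is correct and is essentially the paper's proof. It uses the variance identity with Poincar\'e and $b$-boundedness, then \ref{eq:0-CSGE} to split into coordinates, then per-coordinate hypercontractivity plus H\"older with the integral estimate at $T=1/(\rho_{\LSI}+\kappa_0)$ for \eqref{eq:l1_l2_talagrand_2}, and finally H\"older on the sum, $\sum_i \E[\mu]{\abs{\delta_{w,i}}f}^{2\theta_t} I_{w,i}(f)^{1-\theta_t}\le J_w(f)^{\theta_t}I_w(f)^{1-\theta_t}$, for \eqref{eq:l1_l2_talagrand_3}.

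You should delete the abandoned Jensen/splitting detour; the Riesz--Thorin mention is also unnecessary, since only coordinatewise hypercontractivity is used. The obstacle you flagged there was not real, however: $r\mapsto(1+\tfrac12\log(1/r))^{-1}$ is concave on $(0,1]$. Applying Jensen with weights $I_{w,i}(f)/I_w(f)$ at $r_i=\E[\mu]{\abs{\delta_{w,i}}f}^2/I_{w,i}(f)$ therefore derives \eqref{eq:l1_l2_talagrand_3} directly from \eqref{eq:l1_l2_talagrand_2}.
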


\begin{proof}
    The assertion is trivial for constant functions, so we assume throughout that \(f\) is nonconstant.
Similar to the previous proof, we set $T = \frac{1}{\rho_{\LSI} + \kappa_0}$. The variance $\Var[\mu]{f}$ can be bounded by
\begin{align*}
(1-\e^{-2\rho_{\LSI} T})\Var[\mu]{f} \le \frac{2}{b} \int_0^{T} \E[\mu]{\norm{\abs{\nabla_w} P_t f}_2^2} \dif t \le \frac{2}{b}\int_0^{T} \e^{-2 \rho_{\CSGE} t}\E[\mu]{ \norm{P_t \abs{\nabla_w} f}_2^2} \dif t.
\end{align*}
Let $g_i = \abs{\delta_{w,i}} f$. It holds
\begin{align}\label{eq:var-bound-2}
(1-\e^{-2\rho_{\LSI} T}) \Var[\mu]{f} \le \frac{2}{b} \sum_{i=1}^n \int_0^{T} \e^{- 2 \rho_{\CSGE} t} \norm{P_t g_i}_{2,\mu}^2 \dif t.
\end{align}
Following exactly the same argument as in the previous proof, the integral can be bounded by
\begin{align*}
\int_0^{T} \e^{-2 \rho_{\CSGE} t} \norm{P_t g_i}_{2,\mu}^2 \dif t &\le \norm{g_i}_{2,\mu}^2 \int_0^{T}  \e^{-2 \rho_{\CSGE} t} \tp{\frac{\norm{g_i}_{1,\mu}^2} {\norm{g_i}_{2,\mu}^2}}^{\theta_t} \dif t\\
 &\lesssim \frac{1}{\rho_{\LSI}} \cdot \frac{\norm{g_i}_{2,\mu}^2}{1+\log \frac{\norm{g_i}_{2,\mu}}{\norm{g_i}_{1,\mu}}}.
\end{align*}
The proof of~\eqref{eq:l1_l2_talagrand_2} then follows from~\eqref{eq:var-bound-2}, $\norm{g_i}_{1,\mu} = \E[\mu]{\abs{\delta_{w,i}} f}$ and $\norm{g_i}_{2,\mu} = \sqrt{I_{w,i}(f)}$.

To prove~\eqref{eq:l1_l2_talagrand_3}, we observe
\begin{align*}
    \sum_{i=1}^n \norm{P_t g_i}_{2,\mu}^2 \le \sum_{i=1}^n \norm{g_i}_{1,\mu}^{2\theta_t} \norm{g_i}_{2,\mu}^{2(1-\theta_t)} \le \tp{\sum_{i=1}^n \norm{g_i}_{1,\mu}^2}^{\theta_t} \tp{\sum_{i=1}^n \norm{g_i}_{2,\mu}^2}^{1-\theta_t},
\end{align*}
where $\theta_t = \tanh\tp{\rho_{\LSI} t}$ and the last inequality follows from H\"older's inequality. By $J_w(f) = \sum_{i=1}^n \norm{g_i}_{1,\mu}^2$ and $I_w(f) = \sum_{i=1}^n \norm{g_i}_{2,\mu}^2$, we have
\begin{align*}
    \Var[\mu]{f} \lesssim \frac{\rho_{\LSI} + \kappa_0}{b \rho_{\LSI}^2}\frac{I_w(f)}{1+\log \frac{I_w(f)}{J_w(f)}} \lesssim  \frac{\rho_{\LSI} + \kappa_0}{b^2 \rho_{\LSI}^2}\frac{I(f)}{1+\log \frac{I_w(f)}{J_w(f)}} .
\end{align*}
This completes the proof of~\eqref{eq:l1_l2_talagrand_3}.
\end{proof}

\begin{remark}
    Since $A^2_w(f) \ge J_w(f)$,~\eqref{eq:l1_l2_talagrand_3} is a tighter bound for $I(f)$ than~\eqref{eq:l1_l2_talagrand_weak}.
\end{remark}
If the transition rates are also marginally bounded, then the KKL inequality can be obtained by Talagrand's $L^1$--$L^2$ inequality.
\begin{corollary}[KKL]\label{cor:KKL}
    Let $\mu$ be a distribution supported on $\Omega = \{\pm 1\}^n$ and $\{q_i: \Omega \mapsto \R_+\}_{i\in[n]}$ be transition rates reversible for $\mu$.
    Assume~\ref{eq:0-CSGE} and the log-Sobolev inequality hold with constants $\rho_{\CSGE}$ and $\rho_{\LSI}$ respectively.
    Given a collection of weight functions $\{w_i:\Omega \to \mathbb{R}_+\}_{i \in [n]}$, if both transition rates $\{q_i\}_{i\in [n]}$ and $\{w_i\}_{i\in [n]}$ are $b$-bounded for some $b \in (0,1]$, then for any Boolean function $f: \Omega\mapsto \{\pm 1\}$,
    \begin{align}\label{eq:KKL_boolean_1}
        I_w(f) \gtrsim_{b} \frac{\rho_{\LSI}^2}{\rho_{\LSI}+\kappa_0} \cdot \log (1/\+M_w(f))\cdot \Var[\mu]{f},
    \end{align}
    where $\+M_w(f) = \max_{i\in[n]} I_{w,i}(f)$ and $\kappa_0 = \max\{-\rho_\CSGE,0\}$.
    Hence, by applying the elementary implication $x/\log(1/x)\geq y\implies x\gtrsim y\log(1/y)$, the following holds,
    \begin{align}\label{eq:KKL_boolean_2}
        \+M_w(f) \geq \Omega_b(1) \cdot \frac{\rho_{\LSI}^2}{(\rho_{\LSI}+\kappa_0)n}\log\tp{\frac{(\rho_{\LSI}+\kappa_0)n}{\rho_{\LSI}^2}} \cdot \Var[\mu]{f}.
    \end{align}
\end{corollary}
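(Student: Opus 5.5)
The plan is to derive the bound directly from the coordinate-wise inequality \eqref{eq:l1_l2_talagrand_2} of \Cref{lem:l1_l2_talagrand}. For a Boolean $f:\Omega\to\{\pm1\}$ the pointwise quantity $\abs{\delta_{w,i}}f(x)=\sqrt{\tfrac12 w_i(x)q_i(x)}\,\abs{f(x^i)-f(x)}$ is either $0$ or $2\sqrt{\tfrac12 w_iq_i}$. Since both $w_i$ and $q_i$ are $b$-bounded, this nonzero value lies in $[\sqrt2\, b,\sqrt2/b]$. Writing $p_i:=\Pr[\mu]{f(x^i)\neq f(x)}$, we therefore obtain
\begin{align*}
    \E[\mu]{\abs{\delta_{w,i}} f}\asymp_b p_i,\qquad I_{w,i}(f)=\E[\mu]{(\delta_{w,i}f)^2}\asymp_b p_i .
\end{align*}
It follows that $\sqrt{I_{w,i}(f)}/\E[\mu]{\abs{\delta_{w,i}}f}\asymp_b p_i^{-1/2}\asymp_b I_{w,i}(f)^{-1/2}$. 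Consequently each logarithmic denominator satisfies
\begin{align*}
    1+\log\tp{\sqrt{I_{w,i}(f)}/\E[\mu]{\abs{\delta_{w,i}}f}}\ \gtrsim_b\ 1+\tfrac12\log(1/I_{w,i}(f)) - O_b(1).
\end{align*}
The additive $O_b(1)$ loss has to be absorbed carefully. When $I_{w,i}(f)$ is bounded below by a $b$-dependent constant, I will use the trivial bound that the denominator is at least $1$, because the ratio is at least $1$ by Cauchy--Schwarz. When $I_{w,i}(f)$ is small, the logarithm dominates. In both cases the denominator is $\gtrsim_b 1+\log(1/I_{w,i}(f))$, with the convention that $I_{w,i}(f)\le 4/b^2$.

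Substituting this into \eqref{eq:l1_l2_talagrand_2}, I will then bound every denominator from below by the worst one, $1+\log(1/\+M_w(f))$. This is valid because $x\mapsto 1+\log(1/x)$ is decreasing, after capping to handle the case $\+M_w(f)\gtrsim 1$. Summing the numerators gives $I_w(f)$, so
\begin{align*}
    \Var[\mu]{f}\lesssim_b \frac{\rho_{\LSI}+\kappa_0}{\rho_{\LSI}^2}\cdot\frac{I_w(f)}{1+\log(1/\+M_w(f))},
\end{align*}
which is \eqref{eq:KKL_boolean_1}. If $\log(1/\+M_w(f))$ is negative or small, the inequality follows instead from the Poincar\'e-type bound contained in the same estimate, since $\log(1/\+M_w)\lesssim_b 1$ in that regime.

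For \eqref{eq:KKL_boolean_2}, I will use $I_w(f)\le n\,\+M_w(f)$. Setting $x=\+M_w(f)$, this gives
\begin{align*}
    \frac{x}{\log(1/x)}\gtrsim_b y:=\frac{\rho_{\LSI}^2\Var[\mu]{f}}{(\rho_{\LSI}+\kappa_0)n}.
\end{align*}
The elementary implication $x/\log(1/x)\ge y\Rightarrow x\gtrsim y\log(1/y)$ then yields $x\gtrsim y\log\tp{(\rho_{\LSI}+\kappa_0)n/(\rho_{\LSI}^2\Var{f})}$. Since $\Var{f}\le1$, this logarithm is at least $\log\tp{(\rho_{\LSI}+\kappa_0)n/\rho_{\LSI}^2}$. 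The case where $\log(1/x)\le 1$ is trivial, since then $x$ is of constant order.

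The main obstacle is the bookkeeping at the boundary regimes. The constants from $b$-boundedness introduce additive shifts inside the logarithms, and these have to be absorbed into the $\gtrsim_b$ without accidentally dividing by a nonpositive quantity. I will handle this by splitting according to whether $\+M_w(f)\le c_b$ for a small constant $c_b$: if so, the logarithm dominates every constant; if not, the claim reduces to the Poincar\'e-type inequality obtained from the same theorem with trivial denominators.
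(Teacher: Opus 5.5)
Your argument is correct, but it uses the other half of \Cref{lem:l1_l2_talagrand} than the paper does.

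The paper's route goes through the aggregated inequality \eqref{eq:l1_l2_talagrand_3}, whose left side is the unweighted $I(f)$. It reduces KKL to the single estimate \eqref{eq:IM_to_J}, $I_w(f)\,\+M_w(f)\ge\sum_i I_{w,i}(f)^2\gtrsim_b J_w(f)$. This gives $\log(I_w(f)/J_w(f))\ge\log(1/\+M_w(f))-O_b(1)$. Only one additive constant then has to be absorbed, using $I_w\ge J_w$ in the trivial regime, together with the comparison $I(f)\le I_w(f)/b$.

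Your route goes through the coordinatewise inequality \eqref{eq:l1_l2_talagrand_2}. You bound each denominator from below separately by $\gtrsim_b 1+\log_+(1/I_{w,i}(f))$, using $\E[\mu]{\abs{\delta_{w,i}}f}\asymp_b p_i\asymp_b I_{w,i}(f)$ and the Cauchy--Schwarz floor of $1$. This costs a capped logarithm per coordinate, which you handle correctly. In exchange, it produces the Boolean form \eqref{eq:inf-L1-L2} of Talagrand's $L^1$--$L^2$ inequality as an intermediate step. The paper states that form in \Cref{thm:informal} but does not derive it explicitly from \eqref{eq:l1_l2_talagrand_2}. So in your route KKL is a corollary of \eqref{eq:inf-L1-L2}, as in the classical uniform-cube derivation. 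The paper's route instead stays with the $J_w$ form, which is the one it reuses in the Eldan--Gross proof.

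Your derivation of \eqref{eq:KKL_boolean_2} from $I_w(f)\le n\,\+M_w(f)$ and $\Var[\mu]{f}\le 1$ matches what the paper does.
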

\begin{proof}
    The assertion is trivial for constant functions, so we assume throughout that \(f\) is nonconstant.
    Since $f$ is a Boolean function, by marginal boundedness, we can simplify $I_{w,i}$ and $J_{w}$ as follows,
    $$
        I_{w,i}(f) \geq 2b^2 \Pr[\mu]{f(x^i)\neq f(x)}\quad \text{ and } \quad J_{w}(f) \leq \frac{2}{b^2}\sum_{i=1}^n \Pr[\mu]{f(x^i)\neq f(x)}^2.
    $$
    Since $\+M_w(f) \geq I_{w,i}$ for any $i\in[n]$, we conclude that
    \begin{align}\label{eq:IM_to_J}
         I_{w}(f) \+M_w(f) \geq 4b^4\sum_{i=1}^n \Pr[\mu]{f(x^i)\neq f(x)}^2 \geq 2b^6 J_w(f) .
    \end{align}
    By \eqref{eq:l1_l2_talagrand_3} in \Cref{lem:l1_l2_talagrand}, the KKL inequality immediately follows from \eqref{eq:IM_to_J}.
\end{proof}

\subsection{Eldan--Gross inequality}

Conjectured by Talagrand~\cite{Talagrand97} and proved by Eldan and Gross~\cite{EldanGross22}, the Eldan--Gross inequality unifies the KKL inequality with Talagrand's variance--surface-area inequality, retaining the square-root sensitivity boundary while capturing the logarithmic KKL gain through the squared influences.
We establish the following counterpart under our curvature assumptions, following the strategy in \cite{IvanisviliZhang26}.

\begin{theorem}[Eldan--Gross inequality]\label{thm:eldan_gross}
     Let $\mu$ be a distribution supported on $\Omega = \{\pm 1\}^n$ and $\{q_i: \Omega \mapsto \R_+\}_{i\in[n]}$ be transition rates reversible for $\mu$.
    Assume both~\ref{eq:0-CSGE} and~\ref{eq:infty-CSGE} hold with constant $\rho_{\CSGE} \in \mathbb{R}$, log-Sobolev inequality holds with constant $\rho_{\LSI} > 0$, and $\{w_i:\Omega \to \mathbb{R}_+\}_{i \in [n]}$ is a collection of weight functions that are $b$-bounded for some $b \in (0,1]$.
    For any Boolean function $f:\Omega \to \{\pm 1\}$, it holds
   \begin{align}\label{eq:eldan_gross}
        A_w(f) \gtrsim_{b,\rho_{\CSGE}} \frac{\sqrt{\rho_{\LSI}} \min\set{1,\rho_{\LSI}}}{\sqrt{1 + \log_+\tp{1/\rho_{\LSI}}}} \cdot \Var[\mu]{f}\sqrt{\log\tp{1 + \frac e{J_w(f)}}}.
   \end{align}
\end{theorem}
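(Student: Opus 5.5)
The plan follows the strategy of \cite{IvanisviliZhang26}: combine the local Bobkov inequality (\Cref{lem:local_bobkov}) at a short time $t$ with a lower bound on how much variance the semigroup dissipates by time $t$. That lower bound comes from the $L^1$--$L^2$ inequality \eqref{eq:l1_l2_talagrand_3} applied along the flow, together with the $J$-contraction of \Cref{prop:CSGE-to-J-contraction}.

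Write $g=(1+f)/2\in\{0,1\}$, so that $A_w(f)=2A_w(g)$ and $\Var[\mu]{f}=4\Var[\mu]{g}$. Put $v=\E[\mu]{g}$, $V=\Var[\mu]{g}$, $J=J_w(f)$. By \Cref{fact:CSGE_to_SGE}, \ref{eq:0-CSGE} gives \ref{eq:SGE}, hence \ref{eq:rBE} by \Cref{prop:equivalence_BE_GE}. So both \Cref{lem:local_bobkov} and \Cref{lem:improved_talagrand} are available with $\kappa=\max\{0,-\rho_{\CSGE}\}$.

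\textbf{Step 1: the easy case.} Suppose $V\le J^{c}$ for a small absolute constant $c$ (say $c=1/4$), or $J$ is bounded below by a constant. Then $\log(e/V)\gtrsim \log(1+e/J)$, and \Cref{lem:improved_talagrand} already gives \eqref{eq:eldan_gross}. Its prefactor $\sqrt b\,\rho_{\LSI}/\sqrt{\rho_{\LSI}+\kappa}$ is dominated by the stated one up to constants depending on $b,\rho_{\CSGE}$.

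\textbf{Step 2: the main case $V\ge J^{c}$ with $J$ small.} \Cref{lem:local_bobkov}, integrated against $\mu$, gives $\E[\mu]{\+I(P_tg)}\le\sqrt{\gamma(t)}\,A_w(g)$. Using $\+I(u)\gtrsim u(1-u)$ and $\E[\mu]{P_tg}=v$, the left side is at least a constant times
\[
\E[\mu]{P_tg(1-P_tg)}=V-\Var[\mu]{P_tg}.
\]
We will choose $t\asymp 1/\bigl(\kappa+\rho_{\LSI}\log(1/J)\bigr)$. Since $\gamma(t)\lesssim t/b$ for $t\lesssim 1/\kappa$, it then suffices to show $\Var[\mu]{P_tg}\le V/2$. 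That yields
\[
A_w(g)\gtrsim \sqrt{b}\,V\sqrt{\rho_{\LSI}\log(1/J)+\kappa}\,,
\]
which is stronger than needed in this regime.

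\textbf{Step 3: variance decay (the main obstacle).} Set $\phi(s)=\Var[\mu]{P_sg}$, so $\phi'(s)=-2\E[\mu]{\Gamma(P_sg)}\le -2b\,I_w(P_sg)$, using $\Gamma\ge b\,\Gamma_w$ from $b$-boundedness. At each $s\le t$, apply \eqref{eq:l1_l2_talagrand_3} (in its $I_w$ form) to $P_sg$. By \ref{eq:infty-CSGE} and the $J$-contraction, $J_w(P_sg)\le e^{2\kappa s}J/4\lesssim J$ for $s\le t$. There are two subcases.
\begin{itemize}
\item[(a)] $I_w(P_sg)\ge J^{1/2}$ for all $s\le t$. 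Then $\log\bigl(I_w(P_sg)/J_w(P_sg)\bigr)\gtrsim\log(1/J)$, and we get $\phi'\le -c'\rho_{\LSI}^2/(\rho_{\LSI}+\kappa)\cdot\log(1/J)\,\phi$. Integrating up to $t$ halves $\phi$, once the constant in $t$ is adjusted with the $b,\rho_{\CSGE}$ dependence absorbed. The $\log_+(1/\rho_{\LSI})$ correction in the statement is where the bookkeeping for small $\rho_{\LSI}$ goes.
\item[(b)] $I_w(P_sg)<J^{1/2}$ for some $s\le t$. Then the Poincaré inequality (\Cref{prop:GE-to-FI}, or LSI) gives $\phi(s)\lesssim J^{1/2}/(b\rho_{\LSI})$. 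This is at most $V/2$ because $V\ge J^{c}$ with $c<1/2$ and $J$ is small enough depending on $b,\rho_{\LSI}$. Monotonicity of $\phi$ then gives $\phi(t)\le V/2$.
\end{itemize}

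Getting the constants in Step 3 to come out exactly in the stated form, including the choice of $c$ and the threshold for ``$J$ small'', is the delicate part. The structure of the argument is as above.
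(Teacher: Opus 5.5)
Your architecture is the paper's: improved Talagrand (\Cref{lem:improved_talagrand}) when the variance is small relative to $J$, and otherwise local Bobkov at a short time combined with variance dissipation from \eqref{eq:l1_l2_talagrand_3} and $J$-contraction. Steps 1, 2 and 3(b) are sound. Your (a)/(b) split is a legitimate substitute for the paper's device of putting the Poincar\'e bound $I_w(P_s f)\gtrsim b\rho_{\LSI}\Var[\mu]{P_s f}$ inside the logarithm.

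The gap is the halving claim in 3(a). Write the rate from 3(a) as $\lambda=c_b\,\rho_{\LSI}^2\log(1/J)/(\rho_{\LSI}+\kappa)$ and take $t=c_0/(\kappa+\rho_{\LSI}\log(1/J))$. Then
\[
\lambda t=c_bc_0\,\frac{\rho_{\LSI}}{\rho_{\LSI}+\kappa}\cdot\frac{\rho_{\LSI}\log(1/J)}{\kappa+\rho_{\LSI}\log(1/J)}\le c_bc_0\,\frac{\rho_{\LSI}}{\rho_{\LSI}+\kappa}.
\]
So when $\rho_{\CSGE}<0$ and $\rho_{\LSI}\ll\kappa$, no $c_0$ depending only on $b,\rho_{\CSGE}$ halves $\phi$. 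Suppose you instead enlarge $c_0$ to order $\kappa/(c_b\rho_{\LSI})$. Whenever $\rho_{\LSI}\log(1/J)\lesssim\kappa$, this makes $\kappa t$ of that same order. Then $\gamma(t)=2(e^{2\kappa t}-1)/(b\kappa)$ is exponentially larger than $t/b$, and you lose a factor $e^{-O(\kappa/\rho_{\LSI})}$ instead of the stated polynomial-type dependence on $\rho_{\LSI}$. If the implicit constant may also depend on $\rho_{\LSI}$, your argument does go through. That suffices for \Cref{thm:informal}, but not for \eqref{eq:eldan_gross} as stated.

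The missing step is what the paper does in its case $\kappa T>1$ (with $T^\star=1/\kappa$): do not insist on halving.
\begin{itemize}
\item Keep your $t$, so that $\gamma(t)\lesssim t/b$, and use $V-\Var[\mu]{P_t g}\ge V\min\{1/2,\,1-e^{-\lambda t}\}$. This gives $A_w(g)\gtrsim\sqrt b\,V\min\{1,\lambda t\}/\sqrt t$.
\item When $\rho_{\LSI}\log(1/J)\ge\kappa$, this already yields $A_w(g)\gtrsim_b V\rho_{\LSI}^{3/2}\sqrt{\log(1/J)}/(\rho_{\LSI}+\kappa)$, which is enough.
\item When $\rho_{\LSI}\log(1/J)<\kappa$, you have $t\asymp1/\kappa$ and only $A_w(g)\gtrsim_{b,\kappa}V\min\{1,\lambda\}$. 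Here you must add the Poincar\'e rate: LSI gives $\E[\mu]{\Gamma(P_s g)}\ge\rho_{\LSI}\phi(s)$. Then $\lambda\gtrsim_{b,\kappa}\rho_{\LSI}+\rho_{\LSI}^2\log(1/J)\ge 2\rho_{\LSI}^{3/2}\sqrt{\log(1/J)}$ by AM--GM, which matches \eqref{eq:eldan_gross}.
\end{itemize}
Without the Poincar\'e term you only get $V\rho_{\LSI}^2\log(1/J)$. That falls short of the target whenever $\rho_{\LSI}\log(1/J)\,(1+\log(1/\rho_{\LSI}))\ll1$. The threshold that 3(b) forces (e.g.\ $J\le(b\rho_{\LSI}/2)^4$ when $c=1/4$) does not exclude this regime. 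Alternatively, raise the threshold to $\log(1/J)\ge1/\rho_{\LSI}$ and let Step 1 absorb the remaining cases.
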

\begin{proof}
The assertion is trivial for constant functions, so we assume throughout that \(f\) is nonconstant.
Fix a Boolean function $f:\{\pm 1\}^n \to \{\pm 1\}$ and let $h=(1+f)/2$. When $\Var[\mu]{f} \le \sqrt{J_w(f)}$,
the improved Talagrand inequality (\Cref{lem:improved_talagrand}), applied to $h$ together with $A_w(h)=A_w(f)/2$ and $\Var[\mu]{h}=\Var[\mu]{f}/4$, implies the Eldan--Gross inequality:
\begin{align*}
A_w(f) & \gtrsim_{b,\rho_{\CSGE}} \sqrt{\rho_{\LSI}} \min\set{1,\rho_{\LSI}} \cdot \Var[\mu]{f} \sqrt{\log \frac{\e}{\Var[\mu]{f}}}\\
&\gtrsim_{b,\rho_{\CSGE}} \sqrt{\rho_{\LSI}} \min\set{1,\rho_{\LSI}} \cdot \Var[\mu]{f} \sqrt{\log \tp{1+\frac{\e}{\Var[\mu]{f}^2}}}\\
&\gtrsim_{b,\rho_{\CSGE}} \sqrt{\rho_{\LSI}} \min\set{1,\rho_{\LSI}}  \Var[\mu]{f} \sqrt{\log \tp{1+\frac{\e}{J_w(f)}}}.
\end{align*}
Therefore, we may assume $\Var[\mu]{f} > \sqrt{J_w(f)}$ in the following.

By Poincar\'e inequality,~\Cref{lem:l1_l2_talagrand} and~\ref{eq:infty-CSGE}, it holds for any $s \ge 0$ that
\begin{align*}
I(P_s f) &\gtrsim_{b,\rho_{\CSGE}} \Var[\mu]{P_s f}\left[\rho_{\LSI}+\rho_{\LSI}\min\set{1,\rho_{\LSI}}\tp{1+\log\frac{I_w(P_s f)}{J_w(P_s f)}}\right]\\
&\gtrsim_{b,\rho_{\CSGE}} \Var[\mu]{P_s f}\left[\rho_{\LSI}+\frac{\rho_{\LSI}\min\set{1,\rho_{\LSI}}}{1+\log_+\tp{1/\rho_{\LSI}}}
\tp{1+\tp{\log\frac{\Var[\mu]{P_s f}}{J_w(f)}-2\kappa s}_+}\right],
\end{align*}
where $(x)_+ = \max \set{x,0}$ and $\kappa = (-\rho_{\CSGE})_+$.
For simplicity, let $C = \frac{\rho_{\LSI} \min \set{1,\rho_{\LSI}} }{1+\log_+\tp{1/\rho_{\LSI}}}$, and further
let $T = \min\{s \ge 0 \mid \Var[\mu]{P_s f} \le \frac{\Var[\mu]{f}}{2}\}$.

We first consider the case where $\kappa T \le 1$. It holds
\begin{align*}
\frac{\Var[\mu]{f}}{2} = \Var[\mu]{f} - \Var[\mu]{P_T f} &= 2\int_0^{T} I(P_s f) \dif s\\
 &\gtrsim_{b,\rho_{\CSGE}} C \cdot \int_0^{T} \Var[\mu]{P_s f} \tp{1 + \log_+ \frac{\Var[\mu]{P_s f}}{J_w(f)}} \dif s\\
&\gtrsim_{b,\rho_{\CSGE}} C\cdot \Var[\mu]{f} \tp{1 + \log_+ \frac{\Var[\mu]{f}}{J_w(f)}} T.
\end{align*}
This implies $CT \tp{1 + \log_+ \frac{\Var[\mu]{f}}{J_w(f)}} \lesssim_{b,\rho_{\CSGE}} 1$.

By~\Cref{lem:local_bobkov}, it holds for any $s \ge 0$ that
\begin{align*}
P_s h - (P_s h)^2 \le \+I(P_s h) \lesssim \sqrt{\gamma(s)} P_s \sqrt{\Gamma_w(h)}
\end{align*}
By taking expectation over $\mu$ on both sides, the following holds when $\kappa s \le 1$:
\begin{align}\label{eq:bobkov-exp}
\Var[\mu]{f} - \Var[\mu]{P_s f} \lesssim \sqrt{\gamma(s)} A_w(f) \lesssim_b \sqrt{s} A_w(f).
\end{align}
By taking $s = T$, it holds
\begin{align*}
A_w(f) \gtrsim_b \frac{\Var[\mu]{f}}{\sqrt{T}} &\gtrsim_{b,\rho_{\CSGE}} \sqrt{C} \Var[\mu]{f} \sqrt{1+\log_+\frac{\Var[\mu]{f}}{J_w(f)}}\\
&\gtrsim_{b,\rho_{\CSGE}} \frac{\sqrt{\rho_{\LSI}} \min \set{1,\rho_{\LSI}}}{\sqrt{1+\log_+\tp{1/\rho_{\LSI}}}} \cdot \Var[\mu]{f} \sqrt{\log \tp{1+\frac{\e}{J_w(f)}}},
\end{align*}
where the last inequality follows from the assumption $\Var[\mu]{f} \gtrsim \sqrt{J_w(f)}$ and the definition of $C$.

We now consider the remaining case $\kappa T > 1$. Let $T^\star = \frac{1}{\kappa} < T$. 
Since $T^\star<T$ and $\kappa T^\star=1$, a similar argument gives
\begin{align}\label{eq:EG-ineq-1}
    \Var[\mu]{f} - \Var[\mu]{P_{T^\star} f} \gtrsim_{b,\rho_{\CSGE}}\Var[\mu]{f}\tp{\rho_{\LSI}+C\tp{1+\log_+\frac{\Var[\mu]{f}}{J_w(f)}}}T^\star.
\end{align} 
By~\eqref{eq:bobkov-exp} and $\gamma(T^\star) \lesssim_{b,\rho_{\CSGE}} 1$, we have
\begin{align}\label{eq:EG-ineq-2}
    \Var[\mu]{f} - \Var[\mu]{P_{T^\star} f} \lesssim_{b,\rho_{\CSGE}} A_w(f).
\end{align}
Combining~\eqref{eq:EG-ineq-1},~\eqref{eq:EG-ineq-2} and our assumption $\Var[\mu]{f} \ge \sqrt{J_w(f)}$, it holds
\begin{align*}
    A_w(f) &\gtrsim_{b,\rho_{\CSGE}} \Var[\mu]{f}\tp{\rho_{\LSI}+C\tp{1+\log_+\frac{\Var[\mu]{f}}{J_w(f)}}}\\
    &\gtrsim_{b,\rho_{\CSGE}} \sqrt{\rho_{\LSI}C} \Var[\mu]{f}\sqrt{1+\log_+\frac{\Var[\mu]{f}}{J_w(f)}}\\
    &\gtrsim_{b,\rho_{\CSGE}}\frac{\sqrt{\rho_{\LSI}}\min\set{1,\rho_{\LSI}}}{\sqrt{1+\log_+\tp{1/\rho_{\LSI}}}}\Var[\mu]{f}\sqrt{\log\tp{1+\frac{\e}{J_w(f)}}}.
\end{align*}
This completes the proof.
\end{proof}

\section{Sufficient Conditions for Gradient Estimates}
\label{sec:GE}

In this section, we establish \ref{eq:CSGE} by choosing appropriate transition rates and weights, and the resulting bound can be controlled by norms of the Dobrushin influence matrix.

\subsection{Canonical transition rates and weights}
Given a distribution $\mu$ supported on $\Omega=\{\pm 1\}^n$, for any $x \in \Omega$ and $i \in [n]$, we define
\begin{align}\label{eq:canonical-r-w}
    q^\star_i(x) = \frac 12\sqrt{\frac{\mu(x^i)}{\mu(x)}}
        \qquad\text{and}\qquad
    w^\star_i(x) = 2q^\star_i(x) = \sqrt{\frac{\mu(x^i)}{\mu(x)}}.
\end{align}
It is straightforward to show that such transition rates $\{q^\star_i\}_{i \in [n]}$ satisfy the detailed balance condition and hence the associated heat semigroup $(P_t)_{t\ge 0}$ is reversible with respect to $\mu$.
We call \eqref{eq:canonical-r-w} the \emph{canonical transition rates and weights} associated with $\mu$. 

Under the canonical rates and weights, many notions in the $\Gamma$-calculus simplify to a nice form, which we summarize below.
Throughout, we use superscript $\star$ to indicate that the weight functions are canonical, and thereby omit $w^\star$ in the subscript; e.g., $\Gamma^\star = \Gamma_{w^\star}$ denotes the weighted carr\'e du champ operator under $q^\star$ and $w^\star$.
We have the following:
\begin{align*}
    \delta^\star_i f(x) &= \sqrt{\frac{1}{2} w^\star_i(x) q^\star_i(x)} (f(x^i) - f(x)) = q^\star_i(x) (f(x^i) - f(x)); \\
    \grad^\star f &= (\delta^\star_1 f, \dots, \delta^\star_n f); \quad
    L f = \sum_{i=1}^n \delta^\star_i f; \quad
    \Gamma^\star f = \sum_{i=1}^n (\delta^\star_i f)^2.
\end{align*}

We also record two important properties of the canonical transition rates and weights. These properties are essential for establishing \ref{eq:SGE} and \ref{eq:CSGE}.

\begin{fact}[Square property]\label{fact:square}
    For any $i,j \in [n]$ and $x \in \Omega$, we have
    \begin{align*}
        q^\star_i(x) q^\star_j(x^i) = q^\star_j(x) q^\star_i(x^j).
    \end{align*}
\end{fact}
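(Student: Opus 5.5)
This identity follows by direct substitution of the definition of the canonical rates in \eqref{eq:canonical-r-w}; no earlier result beyond that definition is needed. The plan is to write both sides as products of square roots of ratios of $\mu$ and check that they reduce to the same expression.

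For the left-hand side we have $q^\star_i(x) = \frac12\sqrt{\mu(x^i)/\mu(x)}$. Applying the definition at the point $x^i$ in direction $j$ gives $q^\star_j(x^i) = \frac12\sqrt{\mu(x^{ij})/\mu(x^i)}$. Multiplying, the factor $\mu(x^i)$ cancels, so
\begin{align*}
    q^\star_i(x)\, q^\star_j(x^i) = \frac14\sqrt{\frac{\mu(x^{ij})}{\mu(x)}}.
\end{align*}
The right-hand side is handled the same way: $q^\star_j(x)\,q^\star_i(x^j) = \frac14\sqrt{\mu(x^{ji})/\mu(x)}$.

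It remains to note that flipping coordinate $i$ and then $j$ gives the same configuration as flipping $j$ and then $i$, so $x^{ij} = x^{ji}$. This is also true when $i=j$, since then both equal $x$. Hence the two expressions coincide. Full support of $\mu$ ensures every ratio is well defined and positive.

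There is no real obstacle here. The only point worth making explicit is the telescoping cancellation of $\mu(x^i)$ (respectively $\mu(x^j)$). This cancellation is exactly why the canonical square-root rates are chosen: in general the rate around a square of the cube depends only on the endpoints.
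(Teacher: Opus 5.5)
Your proof is correct and is exactly the direct substitution the paper has in mind: the paper states this as a fact without a written proof. Each side reduces to $\frac14\sqrt{\mu(x^{ij})/\mu(x)}$, and coordinate flips commute, so $x^{ij}=x^{ji}$.
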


\begin{lemma}[Commutator identity]
\label{lem:commutator}
    Let $\mu$ be a distribution supported on $\Omega = \{\pm 1\}^n$.
    Let $M: \Omega \to \R^{n \times n}$ be a matrix field defined by, for each $x \in \Omega$,
    \begin{align}\label{eq:local-matrix}
        M_{ij}(x) &:= \frac{1}2 \tp{\sqrt{\frac{\mu(x^{ij})}{\mu(x^i)}}-\sqrt{\frac{\mu(x^j)}{\mu(x)}}}, \quad i \neq j; \nonumber \\ 
        M_{ii}(x) &:= 0.
    \end{align}
    For any function $f: \Omega \to \R$, it holds
    \begin{align*}
        (L \grad^\star - \grad^\star L) f = \left( \diag\{M\*1\} - M^\top \right) \grad^\star f.
    \end{align*}
\end{lemma}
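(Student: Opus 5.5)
We verify the identity coordinate by coordinate: fix $x\in\Omega$, $j\in[n]$, and a function $f$. Both sides are then explicit finite sums, and the proof consists of expanding them and matching terms. We abbreviate $q_i := q^\star_i$ and use $\delta^\star_j f(x) = q_j(x)(f(x^j)-f(x))$ and $Lf = \sum_i \delta^\star_i f$.

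First compute $(L\delta^\star_j f)(x) = \sum_{i} q_i(x)\bigl(\delta^\star_j f(x^i) - \delta^\star_j f(x)\bigr)$. For $i=j$, since $x^{jj}=x$, we get $\delta^\star_j f(x^j) = q_j(x^j)(f(x)-f(x^j))$. Hence the $i=j$ summand equals $-q_j(x)\bigl(q_j(x^j)+q_j(x)\bigr)(f(x^j)-f(x))$. For $i\ne j$ the summand is
\begin{align*}
q_i(x)q_j(x^i)\bigl(f(x^{ij})-f(x^i)\bigr) - q_i(x)q_j(x)\bigl(f(x^j)-f(x)\bigr).
\end{align*}

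Next compute $(\delta^\star_j Lf)(x) = q_j(x)\bigl(Lf(x^j)-Lf(x)\bigr)$ and expand $Lf(x^j)=\sum_i q_i(x^j)(f(x^{ji})-f(x^j))$. The $i=j$ term of $q_j(x)Lf(x^j)$ is $q_j(x)q_j(x^j)(f(x)-f(x^j))$, and the $i=j$ term of $-q_j(x)Lf(x)$ is $-q_j(x)^2(f(x^j)-f(x))$. Together these exactly cancel the $i=j$ part of $L\delta^\star_j f$. For $i\ne j$, the square property (\Cref{fact:square}) gives $q_j(x)q_i(x^j)=q_i(x)q_j(x^i)$, so $q_j(x)q_i(x^j)(f(x^{ij})-f(x^j))$ can be paired with $q_i(x)q_j(x^i)(f(x^{ij})-f(x^i))$. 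Their difference eliminates $f(x^{ij})$ and leaves $q_i(x)q_j(x^i)\bigl(f(x^j)-f(x^i)\bigr)$. Similarly, $-q_i(x)q_j(x)(f(x^j)-f(x))$ from $L\delta^\star_j$ is compared with $+q_j(x)q_i(x)(f(x^i)-f(x))$ from $-\delta^\star_j L$.

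Collecting the $i\ne j$ terms, the commutator at coordinate $j$ is
\begin{align*}
\sum_{i\ne j}\Bigl[\bigl(q_i(x)q_j(x^i)-q_i(x)q_j(x)\bigr)\bigl(f(x^j)-f(x)\bigr) - \bigl(q_i(x)q_j(x^i)-q_i(x)q_j(x)\bigr)\bigl(f(x^i)-f(x)\bigr)\Bigr],
\end{align*}
obtained by splitting $f(x^j)-f(x^i) = (f(x^j)-f(x))-(f(x^i)-f(x))$. Now $q_j(x^i)-q_j(x) = M_{ij}(x)$ by the definition \eqref{eq:local-matrix}. The first part therefore becomes $\bigl(\sum_{i\ne j} M_{ij}\bigr)\, q_i(x)(f(x^j)-f(x))$.

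This is where the one real obstacle lies: the prefactor is $q_i$, not $q_j$. To rewrite the terms in the form $\delta^\star_j f$ and $\delta^\star_i f$, we must move the rate factors. The plan is to use the square property once more in the form $q_i(x)q_j(x^i)=q_j(x)q_i(x^j)$, which lets each product $q_i(x)M_{ij}(x)$ be rewritten as $q_j(x)(q_i(x^j)-q_i(x))$ whenever we want a $q_j$ prefactor.

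After this bookkeeping:
\begin{itemize}
\item The first part should become $q_j(x)\sum_{i} M_{ji}(x)\,(f(x^j)-f(x)) = (M\mathbf 1)_j\,\delta^\star_j f$.
\item The second part should become $-\sum_i M_{ij}(x)\,\delta^\star_i f = -(M^\top\nabla^\star f)_j$.
\end{itemize}
The key check is which index of $M$ pairs with which coordinate; the transpose in the statement is exactly that check. If the indices come out reversed, the conventions in \eqref{eq:local-matrix} should be reread rather than the algebra redone. Because the difference $q_i(x^j)-q_i(x)$ equals $M_{ji}$, the identity follows entrywise, completing the proof.
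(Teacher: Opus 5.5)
Your proposal is correct and is essentially the paper's argument: the paper writes $L=\sum_j \delta^\star_j$ and computes the pairwise commutators $(\delta^\star_j\delta^\star_i-\delta^\star_i\delta^\star_j)f$, so the diagonal terms you cancel by hand vanish automatically. It uses the square property exactly twice, as you do: once to eliminate $f(x^{ij})$, and once in the form $M_{ij}q^\star_i=M_{ji}q^\star_j$ to move the rate prefactor. Your hedging about the indices is unnecessary: with $M_{ij}(x)=q^\star_j(x^i)-q^\star_j(x)$, the two parts are exactly $\sum_i M_{ji}\,\delta^\star_j f=(M\mathbf{1})_j\,\delta^\star_j f$ and $-\sum_i M_{ij}\,\delta^\star_i f=-(M^\top\nabla^\star f)_j$.
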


\begin{proof}
    Fix $f: \Omega\mapsto \R$, $x\in \Omega$.
    For any $i,j \in [n]$ with $i\neq j$, by definition,
    $$
        \delta^\star_j \delta^\star_i f(x) = q^\star_j(x) (\delta^\star_i f(x^j) - \delta^\star_i f(x)) = q^\star_j(x) q^\star_i(x^j)(f(x^{ij})-f(x^j)) - q^\star_j(x)q^\star_i(x)(f(x^i)-f(x)).
    $$
    \Cref{fact:square} implies the cancellation of the second derivative in the commutator term $\delta^\star_i\delta^\star_j - \delta^\star_j\delta^\star_i$.
    We have
    \begin{align}
        (\delta^\star_j \delta^\star_i - \delta^\star_i \delta^\star_j) f(x) &= (q_j^\star(x)q_i^\star(x^j) - q_i^\star(x)q_j^\star(x^i)) f(x^{ij})  \nonumber
        \\&-(q_i^\star(x^j) - q_i^\star(x))q_j^\star(x)f(x^j)+(q_j^\star(x^i) - q_j^\star(x))q_i^\star(x)f(x^i) \nonumber
        \\&= M_{ij}(x)q_i^\star(x)f(x^i) - M_{ji}(x)q_j^\star(x)f(x^j), \nonumber
        \\&= M_{ij}(x)\delta^\star_i f(x) - M_{ji}(x)\delta^\star_j f(x), \label{eq:commutator_delta_ij}
    \end{align}
    where the last equality follows from $M_{ij}(x) q_i^\star(x) = M_{ji}(x) q_j^\star(x)$ by \Cref{fact:square}.
    Then the following equality immediately follows from
    \eqref{eq:commutator_delta_ij},
    \begin{align}
        (L\delta^\star_i - \delta^\star_iL) f(x) &= (\sum_{j=1}^n \delta^\star_j\delta^\star_i - \delta^\star_i\delta^\star_j)f(x)
        = \sum_{j=1}^n M_{ij}(x)\delta^\star_i f(x) - \sum_{j=1}^n M_{ji}(x)\delta^\star_j f(x) \nonumber
        \\&= \tp{M(x)\*1}_i\delta^\star_i f(x) - \inner{(M^\top(x))_i}{\nabla^\star f(x)}, \nonumber
    \end{align}
    that is, $(L \grad^\star - \grad^\star L) f = \left( \diag\{M\*1\} - M^\top \right) \grad^\star f$ holds for any $x\in \Omega$.
\end{proof}

\subsection{Sufficient condition for SGE} \label{sec:SGE_boolean_hypercube}

\begin{theorem}[SGE under canonical transition rates and weights]
\label{thm:SGE_boolean_hypercube}
    Let $\mu$ be a distribution on $\Omega = \{\pm 1\}^n$ with full support. Let $M,D: \Omega \to \R^{n \times n}$ be two matrix fields where $M$ is defined by \eqref{eq:local-matrix} and $D$ is defined by, for each $x \in \Omega$,
    \begin{align}\label{eq:def-D}
        D(x) := \diag\left\{\sqrt{\frac{\mu(x)}{\mu(x^i)}}\right\}_{i\in[n]}.
    \end{align}
    Then, under the canonical transition rates and weights, \ref{eq:SGE} holds with constant
    \begin{align*}
        \rho = \min_{x \in \Omega} \;
        \lambda_{\min}\left( D(x) + \diag\{M(x)\*1\} - \frac{1}{2}\left( M(x) + M(x)^\top \right) \right),
    \end{align*}
    where $\lambda_{\min}(A)$ denotes the minimum eigenvalue of a symmetric matrix $A$.
\end{theorem}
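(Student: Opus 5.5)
By \Cref{prop:equivalence_BE_GE} it suffices to verify \ref{eq:rBE} pointwise with the stated $\rho$. By \Cref{lem:gamma2-formula}, this amounts to showing, at every $x$ and for every $f$,
\[
\sqrt{\Gamma^\star(f)}\,L\sqrt{\Gamma^\star(f)} - \Gamma^\star(f,Lf) \ge \rho\,\Gamma^\star(f).
\]
Write $v := \grad^\star f$, a vector field, and $g := \norm{v}_2 = \sqrt{\Gamma^\star f}$. Then $\Gamma^\star(f,Lf) = \inner{v}{\grad^\star L f}$. The commutator identity (\Cref{lem:commutator}) gives $\grad^\star L f = L v - (\diag\{M\*1\} - M^\top) v$, where $L v$ is applied entrywise. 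Hence the left-hand side equals
\[
g\,Lg - \inner{v}{Lv} + \inner{v}{(\diag\{M\*1\}-M^\top)v},
\]
and the last term is the quadratic form of the symmetric part $\diag\{M\*1\} - \tfrac12(M+M^\top)$.

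Next I compute $g\,Lg - \inner{v}{Lv}$ at $x$:
\[
\sum_j q^\star_j(x)\bigl(g(x)g(x^j) - \inner{v(x)}{v(x^j)}\bigr),
\]
using $g(x)^2 = \norm{v(x)}^2$ to cancel the $-g(x)^2$ and $-\norm{v(x)}^2$ terms. By Cauchy--Schwarz, each summand $g(x)g(x^j) - \inner{v(x)}{v(x^j)}$ is nonnegative. The key step is to not discard all of it. The $j$-th summand also involves $v_j(x^j)$, and I expect this coordinate to produce the $D(x)$ term. Indeed,
\[
v_j(x^j) = q^\star_j(x^j)\bigl(f(x)-f(x^j)\bigr) = -\frac{q^\star_j(x^j)}{q^\star_j(x)}\, v_j(x),
\qquad
\frac{q^\star_j(x^j)}{q^\star_j(x)} = \frac{\mu(x)}{\mu(x^j)} .
\]
So $v_j$ flips sign across the edge. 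Splitting off coordinate $j$ and applying Cauchy--Schwarz only to the remaining coordinates gives
\[
g(x)g(x^j) - \inner{v(x)}{v(x^j)} \ge \abs{v_j(x)}\,\abs{v_j(x^j)} + v_j(x)\,\abs{v_j(x^j)}\cdot\sgn = 2\,\abs{v_j(x)}\,\abs{v_j(x^j)} .
\]
More precisely, I use $\norm{a}\norm{b} \ge \abs{a_j}\abs{b_j} + \norm{a_{-j}}\norm{b_{-j}}$ together with $-\inner{a}{b} = \abs{a_j}\abs{b_j} - \inner{a_{-j}}{b_{-j}}$, where $a_jb_j = -\abs{a_j}\abs{b_j}$ because of the sign flip. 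Multiplying by $q^\star_j(x)$ yields
\[
2\,q^\star_j(x)\,\frac{\mu(x)}{\mu(x^j)}\, v_j(x)^2 = \sqrt{\frac{\mu(x)}{\mu(x^j)}}\; v_j(x)^2 = D_{jj}(x)\, v_j(x)^2 .
\]
Summing over $j$ produces exactly $v^\top D v$.

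Combining the two pieces, the left-hand side is at least $v(x)^\top\bigl(D + \diag\{M\*1\} - \tfrac12(M+M^\top)\bigr)v(x) \ge \rho\norm{v(x)}^2 = \rho\,\Gamma^\star(f)(x)$, which is \ref{eq:rBE} with constant $\rho$. Hence \ref{eq:SGE} holds by \Cref{prop:equivalence_BE_GE}.

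The main obstacle is the refined Cauchy--Schwarz step: verifying the sign-flip identity for $v_j(x^j)$ and checking that the constant comes out to exactly $D_{jj}$ rather than $D_{jj}/2$. Points where $g = 0$ require no special handling, since the pointwise inequality holds there trivially.
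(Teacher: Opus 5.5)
Your proposal is correct and follows the paper's proof essentially step for step. You reduce to \ref{eq:rBE} via \Cref{lem:gamma2-formula}, extract the $D$ term from the refined Cauchy--Schwarz bound $\norm{a}_2\norm{b}_2 \ge \inner{a}{b} + 2\abs{a_j}\abs{b_j}$ (valid because $\delta^\star_j f$ flips sign across the $j$-edge), and handle $\inner{v}{\grad^\star L f}$ with the commutator identity; the only difference is that you spell out the two-dimensional Cauchy--Schwarz behind that bound explicitly.
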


\begin{proof}
    By \Cref{prop:equivalence_BE_GE}, it is sufficient to establish the \ref{eq:rBE} condition.
    Specifically, for any function $f: \Omega \to \R$, we aim to show the following pointwise inequality:
    \begin{align*}
        \Gamma^\star_2(f) - \Gamma(\sqrt{\Gamma^\star(f)}) \ge \rho \cdot \Gamma^\star(f),
    \end{align*}
    where we recall that $\Gamma$ is the unweighted carr\'e du champ operator with respect to the canonical rates $q^\star$.
    By \Cref{lem:gamma2-formula}, we have
    \begin{align}
        \Gamma^\star_2(f) - \Gamma(\sqrt{\Gamma^\star(f)}) 
        = \sqrt{\Gamma^\star(f)} L \sqrt{\Gamma^\star(f)} - \Gamma^\star(f,Lf) 
        = \norm{\nabla^\star f}_2 L \norm{\nabla^\star f}_2 - \inner{\nabla^\star f}{\nabla^\star Lf}. \label{eq:first-rBE}
    \end{align}

    Fix $x\in \Omega$. For the first term, we have that
    \begin{align}
        \norm{\nabla^\star f}_2 L \norm{\nabla^\star f}_2 (x)
        &= \norm{\nabla^\star f(x)}_2 \sum_{i=1}^n q^\star_i(x) \tp{\norm{\nabla^\star f(x^i)}_2 - \norm{\nabla^\star f(x)}_2} \nonumber \\
        &= \sum_{i=1}^n q^\star_i(x) \tp{\norm{\nabla^\star f(x)}_2 \norm{\nabla^\star f(x^i)}_2 - \norm{\nabla^\star f(x)}_2^2}. \label{eq:norm-L-norm}
    \end{align}
    We then deduce from the Cauchy--Schwarz inequality that
    \begin{align}
        \norm{\nabla^\star f(x)}_2 \norm{\nabla^\star f(x^i)}_2
        &\ge \inner{\nabla^\star f(x)}{\nabla^\star f(x^i)} + 2\abs{\delta^\star_i} f(x) \abs{\delta^\star_i} f(x^i) \nonumber\\
        &= \inner{\nabla^\star f(x)}{\nabla^\star f(x^i)} + 2 \frac{q^\star_i(x^i)}{q^\star_i(x)} (\delta^\star_i f(x))^2. \label{eq:cauchy-schwarz}
    \end{align}
    where we crucially use the fact that $\delta^\star_i f(x) = q^\star_i(x) (f(x^i)-f(x))$ and $\delta^\star_i f(x^i) = q^\star_i(x^i) (f(x)-f(x^i))$ have opposite sign.
    Combining \eqref{eq:norm-L-norm} and \eqref{eq:cauchy-schwarz},
    \begin{align*}
        \norm{\nabla^\star f}_2 L \norm{\nabla^\star f}_2 (x)
        &\ge \sum_{i=1}^n q^\star_i(x) \tp{ \inner{\nabla^\star f(x)}{\nabla^\star f(x^i)} + 2 \frac{q^\star_i(x^i)}{q^\star_i(x)} (\delta^\star_i f(x))^2 - \inner{\nabla^\star f(x)}{\nabla^\star f(x)} } \\
        &= \sum_{i=1}^n 2 q^\star_i(x^i) (\delta^\star_i f(x))^2 + \sum_{i=1}^n q^\star_i(x) \inner{\nabla^\star f(x)}{\nabla^\star f(x^i) - \nabla^\star f(x)} \\
        &= \inner{\grad^\star f}{D \grad^\star f} (x) + \inner{\nabla^\star f}{L \nabla^\star f} (x).
    \end{align*}
    Plugging into \eqref{eq:first-rBE} and combining \Cref{lem:commutator}, we deduce
    \begin{align*}
        \Gamma^\star_2(f) - \Gamma(\sqrt{\Gamma^\star(f)}) 
        &\ge \inner{\grad^\star f}{D \grad^\star f} + \inner{\nabla^\star f}{(L \nabla^\star - \nabla^\star L) f} \\
        &= \inner{\grad^\star f}{\left( D + \diag\{M\*1\} - M^\top \right) \grad^\star f}
        \ge \rho \norm{\grad^\star f}_2^2 = \rho \cdot \Gamma^\star(f),
    \end{align*}
    where the last inequality holds for any constant $\rho \le \lambda_{\min} \left( D + \diag\{M\*1\} - \frac{1}{2}(M+M^\top) \right)$ uniformly at all points $x \in \Omega$.
\end{proof}

\subsection{Sufficient condition for CSGE} 
\label{sec:CSGE_boolean_hypercube}

\begin{theorem}[CSGE under canonical transition rates and weights]
\label{thm:CSGE_boolean_hypercube_w=q}
    Let $\mu$ be a distribution on $\Omega = \{\pm 1\}^n$ with full support. Let $M,D: \Omega \to \R^{n \times n}$ be two matrix fields defined by \eqref{eq:local-matrix} and \eqref{eq:def-D}, respectively. Let $M^\star := (\sup_x|M_{ij}(x)|)_{i,j \in [n]}$ and $D^\star = (\inf_x D_{ij}(x))_{i,j \in [n]}$ denote the entrywise uniform upper and lower bound on $\abs{M}$ and $D$, respectively. 
    Then, under the canonical transition rates and weights, for any $\tau \in \R_+$, \ref{eq:CSGE} holds with constant
    \begin{align}\label{eq:CSGE_constant}
        \rho =
        \lambda_{\min}\left( D^\star - \diag\{M^\star\*1\} - \frac{1}{2}\left( M^\star + (M^\star)^\top \right) \right),
    \end{align}
    where $\lambda_{\min}(A)$ denotes the minimum eigenvalue of a symmetric matrix $A$.
\end{theorem}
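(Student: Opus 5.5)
The plan is an interpolation argument in the spirit of the proof of \Cref{prop:equivalence_BE_GE}, run directly on the vector field of absolute weighted gradients. Fix $f$, $t\ge 0$, $\tau\ge 0$ and a point $x$; the inequality \eqref{eq:CSGE} is read pointwise in $x$. For $s\in[0,t]$ set $g_s:=P_{t-s}f$ and $u_s:=\abs{\nabla^\star} g_s$, so $u_s$ is an entrywise non-negative vector field. Define
\[
\Phi(s):=\norm{P_{\tau+s}u_s}_2^2 ,
\]
so that $\Phi(0)$ is the square of the left-hand side of \eqref{eq:CSGE} and $\Phi(t)$ is the square of its right-hand norm. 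It then suffices to show $\frac{\dif}{\dif s}\Phi(s)\ge 2\rho\,\Phi(s)$, since this gives $\e^{-2\rho s}\Phi(s)$ nondecreasing and hence $\Phi(0)\le \e^{-2\rho t}\Phi(t)$. We have
\[
\frac{\dif}{\dif s}\Phi(s)=2\inner{P_{\tau+s}u_s}{P_{\tau+s}(L u_s+\partial_s u_s)} .
\]
Because $P_{\tau+s}u_s\ge 0$ entrywise and $P_{\tau+s}$ is positivity preserving, any entrywise lower bound $Lu_s+\partial_s u_s\ge A u_s$, with $A$ a \emph{constant} matrix, yields
\[
\frac{\dif}{\dif s}\Phi(s)\ge 2\inner{P_{\tau+s}u_s}{A\,P_{\tau+s}u_s}\ge 2\lambda_{\min}\tp{\tfrac12(A+A^\top)}\,\Phi(s).
\]
The case $\tau=\infty$ follows by letting $\tau\to\infty$, or directly with $\E[\mu]{\cdot}$ in place of $P_{\tau+s}$.

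The core is the entrywise bound, with target $A=D^\star-\diag\{M^\star\*1\}-(M^\star)^\top$; its symmetric part gives exactly \eqref{eq:CSGE_constant}. Write $v_i:=\delta^\star_i g_s$ and $\sigma_i:=\sgn(v_i(x))$. Since $\partial_s g_s=-Lg_s$, we get $\partial_s\abs{v_i}=-\sigma_i\,\delta^\star_i L g_s$. By \Cref{lem:commutator},
\[
\delta^\star_i L g=L\delta^\star_i g-\big((\diag\{M\*1\}-M^\top)\nabla^\star g\big)_i .
\]
Hence
\[
(Lu+\partial_s u)_i=\big(L\abs{v_i}-\sigma_i Lv_i\big)+\sigma_i (M\*1)_i v_i-\sigma_i\sum_j M_{ji}v_j .
\]
For the first bracket, expand $\sum_j q^\star_j(x)\big(\abs{v_i(x^j)}-\sigma_i v_i(x^j)\big)$. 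The terms with $j\neq i$ are non-negative. The term $j=i$ is where the discrete structure helps, exactly as in \eqref{eq:cauchy-schwarz}: $v_i(x^i)$ has sign opposite to $v_i(x)$, and $\abs{v_i(x^i)}=\frac{q^\star_i(x^i)}{q^\star_i(x)}\abs{v_i(x)}$. So this term equals $2q^\star_i(x^i)\abs{v_i(x)}=D_{ii}(x)\,u_i\ge D^\star_{ii}u_i$. For the other two terms, bound $\sigma_i(M\*1)_i v_i\ge-(M^\star\*1)_i u_i$ and $-\sigma_i\sum_j M_{ji}v_j\ge-\sum_j M^\star_{ji}u_j$. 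This gives $Lu+\partial_s u\ge A u$ entrywise.

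The step I expect to be the main technical obstacle is justifying the differentiation of $s\mapsto\abs{\delta^\star_i g_s}(y)$ at times and points where $\delta^\star_i g_s(y)=0$. I would handle it by replacing $\abs{v}$ with $\sqrt{v^2+\eps}$ and controlling the error terms as $\eps\downarrow 0$, as in the proof of \Cref{prop:equivalence_BE_GE}. Alternatively, I would work with one-sided (Dini) derivatives: the needed inequality $\partial_s^+\abs{v_i}\ge-\sigma_i\,\delta^\star_i Lg$ holds at zeros for any choice $\sigma_i\in[-1,1]$, and the $j=i$ lower bound above remains valid there trivially. A second point to check carefully is that all sign estimates use only $\abs{M_{ij}(x)}\le M^\star_{ij}$ and $D_{ii}(x)\ge D^\star_{ii}$. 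This is precisely why uniform, $x$-independent matrices are needed: $A$ must commute with $P_{\tau+s}$.
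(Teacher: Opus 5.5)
Your proposal is correct and follows essentially the paper's proof. The paper uses the same functional $\e^{-2\rho s}\norm{P_{\tau+s}\abs{\nabla^\star}P_{t-s}f}_2^2$ and arrives at the same entrywise bound $(D+\diag\{M\*1\}-\abs{M}^\top)\abs{\nabla^\star}g$. Your split into $L\abs{v_i}-\sigma_i Lv_i$ plus the signed commutator of \Cref{lem:commutator} is the same term-by-term computation as the proof of \Cref{lem:abs-commutator}. The paper then concludes, as you do, from positivity of $P_{\tau+s}$ together with the $x$-independent bounds $D^\star, M^\star$.

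The only real difference is how you handle non-differentiability at zeros of $\delta^\star_i g$. The paper uses real-analyticity of $s\mapsto\delta^\star_i P_{t-s}f(x)$ to show the bad times are countable and differentiates elsewhere. Your right-Dini-derivative argument, allowing any $\sigma_i\in[-1,1]$ at zeros, is an equally valid and slightly cleaner alternative.
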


\begin{proof}
    Fix an arbitrary function $f: \Omega \to \R$ and real numbers $\tau, t \ge 0$.
    Our goal is to show \ref{eq:CSGE} under the canonical rates and weights, namely, 
    \begin{align}\label{eq:goal-CSGE}
        \norm{P_\tau \abs{\nabla^\star}P_t f}_2 \leq e^{-\rho t} \norm{P_{\tau+t} \abs{\nabla^\star}f}_2. 
    \end{align}
    Define $F_s = e^{-2\rho s} \norm{P_{\tau+s}\abs{\nabla^\star} P_{t-s} f}_2^2$; hence, \eqref{eq:goal-CSGE} is equivalent to $F_0 \le F_t$.
    
    We first show that $F_s$ is differentiable almost everywhere on $[0,t]$.
    Non-differentiability of $F_s$ exists due to the absolute value function. 
    That is, $F_s$ is not differentiable at some $s\in [0,t]$ only if there exists $i\in [n]$ and $x\in \Omega$ such that $\delta_i^\star P_{t-s}f(x) = 0$.
    Fix $i\in [n]$ and $x\in \Omega$.
    For any $s,s_0\in [0,t]$, we have that
    $$
        P_{t-s}f(x) = \sum_{i=0}^{\infty} \frac{(L^i P_{t-s_0} f(x))}{i!} (s_0-s)^i
    $$
    and 
    $$ \sum_{i=0}^{\infty}\abs{ \frac{(L^i P_{t-s_0} f(x))}{i!} (s_0-s)^i} \leq \sum_{i=0}^{\infty} \frac{\norm{L}_2^i\abs{s_0-s}^i}{i!}\norm{P_{t-s_0}f}_{2} \leq e^{\norm{L}_2t}\norm{P_{t-s_0}f}_{2} < \infty,
    $$
    is absolutely convergent, where $\norm{L}_2$ is the operator norm of $L$.
    Hence $s\mapsto P_{t-s}f(x)$ is a real analytic function, and so is $s\mapsto \delta_i^\star P_{t-s}f(x)$.
    Then by the identity theorem of analytic functions, either $\delta_i^\star P_{t-s}f(x) \equiv 0$ for all $s\in [0,t]$, or the fiber $(\delta_i^\star P_{t-s}f(x))^{-1}(0)$ is a countable set in $[0,t]$.
    In the former case, the $i$-th coordinate does not affect the differentiability of $F_s$ at $x$.
    Therefore, we conclude that $F_s$ is differentiable almost everywhere on $[0,t]$.

    Together with the continuity of $F_s$, it is sufficient to show that $\frac d{ds} F_s \geq 0$ for all differentiable $s \in [0,t]$.
    Fix differentiable $s \in [0,t]$. To ease the notation, let $g = P_{t-s} f$.
    Taking the derivative of $F_s$ gives 
    \begin{align}
        \frac d{ds} F_s &= 2 e^{-2\rho s} \left( - \rho \norm{P_{\tau+s} |\grad^\star| g}_2^2 
        + \inner{P_{\tau+s} |\grad^\star| g}{\frac{d}{ds} P_{\tau+s} |\grad^\star| g}
        \right).
        \label{eq:CSGE_potential_function_derivative}
    \end{align}
    Since $P_{\tau+s} = e^{(\tau+s)L}$ and $g = P_{t-s} f = e^{(t-s)L} f$, we have that
    \begin{align}
    \label{eq:derivative-CSGE}
        \frac d{ds} P_{\tau+s} |\grad^\star| g
        &= P_{\tau+s} \left( L + \frac{d}{ds} \right) |\grad^\star| g 
        = P_{\tau+s} \left( L |\grad^\star| - \sgn(\grad^\star g) \odot \grad^\star L \right) g,
    \end{align}
    where $\sgn(\cdot)$ denotes the entrywise sign function, with $\sgn(0)=0$, and $\odot$ denotes the entrywise product of two vectors.
    To calculate this, we need an adapted version of \Cref{lem:commutator}, which is stated below and proved afterward.

    \begin{lemma}
    \label{lem:abs-commutator}
        Let $\mu$ be a distribution supported on $\Omega = \{\pm 1\}^n$.
        Let $M,D: \Omega \to \R^{n \times n}$ be two matrix fields defined by \eqref{eq:local-matrix} and \eqref{eq:def-D}, respectively, and let $|M| := (|M_{ij}|)_{i,j \in [n]}$ be the entrywise absolute value of $M$.
        For any function $g: \Omega \to \R$, it holds
        \begin{align*}
            \left( L |\grad^\star| - \sgn(\grad^\star g) \odot \grad^\star L \right) g \ge \left( D + \diag\{M\*1\} - |M|^\top \right) |\grad^\star| g.
        \end{align*}
    \end{lemma}
    
    Combining \eqref{eq:derivative-CSGE} and \Cref{lem:abs-commutator}, 
    we deduce that
    \begin{align*}
        \inner{P_{\tau+s} |\grad^\star| g}{\frac{d}{ds} P_{\tau+s} |\grad^\star| g}
        &\geq \inner{P_{\tau+s} |\grad^\star| g}{P_{\tau+s} \left( D + \diag\{M\*1\} - |M|^\top \right) |\grad^\star| g}
        \\&\geq \inner{P_{\tau+s} |\grad^\star| g}{ \left( D^\star - \diag\{M^\star\*1\} -( M^\star)^\top \right) P_{\tau+s}|\grad^\star| g}.
    \end{align*}
    We conclude that 
    $$
        \frac{d}{ds}F_s \geq 2e^{-2\rho s}\tp{P_{\tau+s} |\grad^\star| g}^\top\tp{D^\star - \diag\{M^\star\*1\}-\frac{1}{2}(M^\star+(M^\star)^\top) - \rho I}\tp{P_{\tau+s} |\grad^\star| g} .
    $$
    Therefore, the coordinate-wise strong gradient estimate (\ref{eq:CSGE}) holds with constant $\rho = \lambda_{\min}(D^\star-\diag\{M^\star\*1\} - \frac{1}{2}(M^\star+(M^\star)^\top))$ at all $x \in \Omega$.
\end{proof}

\begin{proof}[Proof of \Cref{lem:abs-commutator}]
    Define $H_{ij}(x) = \frac{q_i^\star(x^j)}{q_i^\star(x)}$.
    By \Cref{fact:square}, we have that $H_{ij}(x) = \frac{q_i^\star(x^j)}{q_i^\star(x)} = \frac{q_j^\star(x^i)}{q_j^\star(x)} = H_{ji}(x)$.
    For notational convenience, we occasionally omit the dependence on $x$, writing, for example, $q_i^\star$ instead of $q_i^\star(x)$.
    For $i,j\in[n]$ with $i\neq j$, the following is deduced from $H_{ij}=H_{ji}$:
    \begin{align}
        (\delta_j^\star \abs{\delta_i^\star} - \sgn(\delta^\star_i g(x))\delta_i^\star \delta_j^\star) g(x)
        &=  q_j^\star q_i^\star\tp{H_{ij}\abs{g(x^{ij}) - g(x^j)} - \abs{g(x^i) - g(x)}} \nonumber
        \\&- \sgn(\delta^\star_i g) q_i^\star q_j^\star(H_{ji}(g(x^{ij})-g(x^i)) - g(x^j)+g(x))\nonumber
        \\&=q_i^\star q_j^\star H_{ij}\tp{\abs{g(x^{ij}) - g(x^j)} - \sgn(\delta^\star_i g)(g(x^{ij}) - g(x^j))} \label{eq:abs-commutator_second_term}
        \\&+q_i^\star q_j^\star\tp{-\frac{\abs{\delta^\star_i} g}{q_i^\star} - \sgn(\delta^\star_i g)\tp{H_{ij}(g(x^j) - g(x^i)) - \frac{\delta^\star_jg}{q_j^\star}}}. \label{eq:abs-commutator_eq}
    \end{align}
    Since $\abs{a}-\sgn(b)a \geq 0$ holds for any $a,b\in\R$, \eqref{eq:abs-commutator_second_term} is non-negative, resulting in the cancellation of the second derivative term in the commutator.
    Therefore, by \eqref{eq:abs-commutator_eq}, 
    \begin{align}
        (\delta_j^\star \abs{\delta_i^\star} - \sgn(\delta^\star_i g(x))\delta_i^\star \delta_j^\star) g(x)
        &\geq - q_j^\star\abs{\delta^\star_i} g - \sgn(\delta^\star_i g)\tp{H_{ij}q_i^\star q_j^\star(\frac{\delta_j^\star g}{q_j^\star} - \frac{\delta_i^\star g}{q_i^\star}) - q_i^\star\delta^\star_jg} \nonumber
        \\&= (H_{ij} - 1)q_j^\star\abs{\delta^\star_i}g - \sgn(\delta^\star_i g) \tp{H_{ij} - 1}q_i^\star\delta^\star_j g \nonumber
        \\&\geq (H_{ij} - 1)q_j^\star\abs{\delta^\star_i}g - \abs{H_{ji} - 1}q_i^\star\abs{\delta^\star_j}g \label{eq:CSGE_commutator_term}
    \end{align}
    For the non-commutator term, we have that
    \begin{align} \label{eq:CSGE_non-commutator_term}
        (\delta_i^\star \abs{\delta_i^\star} - \sgn(\delta^\star_i g(x))\delta_i^\star \delta_i^\star)g(x) &= q_i^\star(x) \tp{\abs{\delta_i^\star}g(x^i) - \sgn(\delta^\star_i g(x))\delta_i^\star g(x^i)} \nonumber
        \\&= 2 q_i^\star(x) \abs{\delta_i^\star}g(x^i)
        = 2 q_i^\star(x^i) \abs{\delta_i^\star}g(x)
        = \frac 1{2q_i^\star(x)} \abs{\delta_i^\star}g(x),
    \end{align}
    where we use the fact that for any $i\in[n]$, $x^{ii} = x$ and $\sgn(\delta^\star_i g(x)) = \sgn(g(x^i) - g(x)) = - \sgn(g(x) - g(x^i)) = -\sgn(\delta^\star_i g(x^i))$.
    Combining \eqref{eq:CSGE_commutator_term} and \eqref{eq:CSGE_non-commutator_term}, we obtain the following inequality:
    \begin{align*}
        (L\abs{\delta_i^\star} - \sgn(\delta^\star_i g(x))\delta_i^\star L) g(x) &\geq \frac{1}{2q_i^\star}\abs{\delta^\star_i} g + \sum_{j\neq i} \tp{(H_{ij} - 1)q_j^\star\abs{\delta^\star_i} g - \abs{H_{ji} - 1}q_i^\star\abs{\delta^\star_j} g}
        \\&= (D_{ii} + (M_i)^\top \*1)\abs{\delta^\star_i} g - \inner{(\abs{M}^\top)_i}{\abs{\nabla^\star} g}, 
    \end{align*}
    where the last equality follows from $\abs{H_{ij}-1}q_j^\star = \abs{M_{ij}}$.
    This is exactly
    \[
        \left( L |\grad^\star| - \sgn(\grad^\star g) \odot \grad^\star L \right) g \ge \left( D + \diag\{M\*1\} - |M|^\top \right) |\grad^\star| g. \qedhere
    \]
\end{proof}

\subsection{Dobrushin condition}
In this section, we state our isoperimetric inequalities under the conditions of the Dobrushin matrix and marginal bounds. 
In addition to the CSGE condition established above, the results of \Cref{sec:boolean} also require hypercontractivity.

It turns out that the largest eigenvalue of $D$ and $M$ can be controlled by suitable norms of Dobrushin influence matrix with some additional bounds associated with the canonical transition rates.
Consequently, a general and easy way to check the CSGE condition is to establish an upper bound on the norms of the Dobrushin matrix and check the marginal bounds of the distribution $\mu$.
The following is the definition of the Dobrushin influence matrix.
\begin{definition}[Dobrushin influence matrix]
    For a finite space $\+X$, the Dobrushin influence matrix of a distribution $\mu$ over $\+X^n$ is defined as follows:
    \begin{align}\label{eq:def_Dobrushin_matrix}
        R = (R_{ij})_{i,j\in[n]},\quad R_{ij} = \sup_{x,y,x_{-j}=y_{-j}}\TV{\mu_i(\cdot\mid x_{-i})}{\mu_i(\cdot\mid y_{-i})} .
    \end{align}
    We say a distribution satisfies the $L^2$--Dobrushin condition if $\norm{R(\mu)}_2 < 1$.
\end{definition}

\begin{lemma}[]\label{lem:Dobrushin_upper_bound_CSGE}
     If $w_i^\star(x) \in [b,1/b]$ for any $x\in \{\pm 1\}^n$ and $i\in[n]$, then under the canonical transition rates and weights, \ref{eq:CSGE} holds with constant $\rho = b - \frac{(1+b^2)^2}{4b^3}(\norm{R}_2 + \norm{R}_1)$.
\end{lemma}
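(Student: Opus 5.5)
We apply \Cref{thm:CSGE_boolean_hypercube_w=q}, which yields \ref{eq:CSGE} with constant $\lambda_{\min}$ of
\[
D^\star - \diag\{M^\star\*1\} - \tfrac12\bigl(M^\star+(M^\star)^\top\bigr).
\]
So it suffices to show two things. First, $D^\star \succeq b\,I$. Second, the remaining symmetric part has operator norm at most $\frac{(1+b^2)^2}{4b^3}(\norm{R}_2+\norm{R}_1)$. The first is immediate: $D_{ii}(x)=\sqrt{\mu(x)/\mu(x^i)} = 1/w_i^\star(x)\ge b$, so $D^\star \succeq bI$. For the second we bound
\[
\lambda_{\max}\bigl(\diag\{M^\star\*1\}\bigr)\le \max_i (M^\star\*1)_i=\norm{M^\star}_\infty
\qquad\text{and}\qquad
\bigl\|\tfrac12(M^\star+(M^\star)^\top)\bigr\|_2\le \norm{M^\star}_2 .
\]
The target is therefore an entrywise comparison $M^\star_{ij}\le c_b R_{ij}$ or $M^\star_{ij}\le c_b R_{ji}$, with $c_b=\frac{(1+b^2)^2}{4b^3}$. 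The orientation has to be arranged so that $\norm{M^\star}_\infty$ becomes $\norm{R}_1$ (max column sum of $R$, i.e.\ $M^\star \lesssim R^\top$). It also gives $\norm{M^\star}_2\le c_b\norm{R}_2$, since for nonnegative matrices entrywise domination implies domination of the spectral norm and $\norm{R^\top}_2=\norm{R}_2$.

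The key step is the entrywise bound on $M_{ij}(x)=\frac12\bigl(\sqrt{\mu(x^{ij})/\mu(x^i)}-\sqrt{\mu(x^j)/\mu(x)}\bigr)$. Both square roots are of the form $\sqrt{p/(1-p)}$ for conditional probabilities of coordinate $j$. Let $p=\mu_j(x^j_j\mid x_{-j})$, the conditional probability of the flipped value of coordinate $j$ given the others as in $x$; then $\mu(x^j)/\mu(x)=p/(1-p)$. Similarly, let $p'=\mu_j(x^j_j\mid x^i_{-j})$, the same quantity when coordinate $i$ is also flipped; then $\mu(x^{ij})/\mu(x^i)=p'/(1-p')$. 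Hence
\[
M_{ij}=\tfrac12\bigl(\phi(p')-\phi(p)\bigr),\qquad \phi(p)=\sqrt{p/(1-p)} ,
\]
and $|p-p'|\le R_{ji}$ by the definition of the Dobrushin matrix, since this compares the conditional law of $j$ when only coordinate $i$ differs. Marginal boundedness gives $\phi(p),\phi(p')\in[b,1/b]$, i.e.\ $p\in[b^2/(1+b^2),\,1/(1+b^2)]$. By the mean value theorem, $|M_{ij}|\le \frac12\sup\phi'\cdot R_{ji}$, and
\[
\phi'(p)=\frac{1}{2\sqrt{p}(1-p)^{3/2}} .
\]
Maximizing over the allowed interval should give exactly $\frac12\sup\phi'=\frac{(1+b^2)^2}{4b^3}$. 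At the endpoint $p=1/(1+b^2)$ we have $\sqrt p=1/\sqrt{1+b^2}$ and $(1-p)^{3/2}=b^3/(1+b^2)^{3/2}$, so $\phi'=(1+b^2)^2/(2b^3)$, and half of that matches. One needs to check that $\phi'$ is maximized at this endpoint on the interval. That holds since $p(1-p)^3$ is decreasing for $p>1/4$, and the interval lies within that region up to the other endpoint. At the other endpoint $\phi'$ is smaller. If the interval dips below $1/4$ (for small $b$), the lower endpoint should be compared directly; it gives $(1+b^2)^2/(2b)$, which is smaller.

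Thus $M^\star \le c_b R^\top$ entrywise. This yields $\norm{M^\star}_\infty\le c_b\norm{R^\top}_\infty=c_b\norm{R}_1$ and $\norm{M^\star}_2\le c_b\norm{R}_2$, and combining everything gives $\lambda_{\min}\ge b-c_b(\norm{R}_2+\norm{R}_1)$. The main obstacle I expect is bookkeeping: getting the index orientation right so that the row sums of $M^\star$ correspond to $\norm{R}_1$ rather than $\norm{R}_\infty$, and verifying the calculus bound on $\phi'$ over the full allowed range of $p$ for every $b\in(0,1]$.
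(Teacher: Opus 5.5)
Your proposal is correct and follows the paper's proof. Both arguments show $D^\star\succeq bI$ and establish the entrywise bound $M^\star_{ij}\le \frac{(1+b^2)^2}{4b^3}R_{ji}$, so that $M^\star$ is dominated by a multiple of $R^\top$ (row sums controlled by $\norm{R}_1$, spectral norm by $\norm{R}_2$), and then conclude from \Cref{thm:CSGE_boolean_hypercube_w=q} via Weyl's inequality. The only difference is cosmetic: the paper gets the constant by writing $R_{ij}$ in closed form, factoring $|u^2-v^2|=(u+v)|u-v|$, and bounding $\frac{u+v}{(1+u^2)(1+v^2)}$ below on $[b,1/b]^2$, whereas you apply the mean value theorem to $\phi(p)=\sqrt{p/(1-p)}$; both give the same constant.
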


\begin{proof}
    The entries of the Dobrushin influence matrix can be expressed in terms of the canonical transition rates as follows:
    \begin{align*}
        R_{ij} &= \sup_{x \in \Omega} \frac{4\abs{q_i^\star(x^j)^2-q_i^\star(x)^2}}{(1+4q_i^\star(x)^2)(1 + 4q_i^\star(x^j)^2)}, \quad i\neq j; \\
        R_{ii} &= 0.
    \end{align*}
    Then the Dobrushin matrix gives an entry-wise upper bound of $M^\star = (\sup_x \abs{M_{ij}(x)})_{i,j\in[n]}$.
    That is, for any $i, j\in[n]$ and $i\neq j$,
    \begin{align}\label{eq:Dobrushin_to_CSGE_constant}
        R_{ij} \geq \tp{\inf_{u,v\in [b,1/b]}\frac{u+v}{(1+u^2)(1+v^2)}}\sup_{x\in \Omega}2\abs{q_i^\star(x)-q_i^\star(x^j)} \geq \frac{4b^3}{(1+b^2)^2}M^\star_{ji} .
    \end{align}
    Since $D^\star_{ii} = \inf_x w_i(x^i)$ for $i\in[n]$, $D^\star\geq_{\-{ew}} bI$.
    Therefore, by symmetry of matrices and \eqref{eq:Dobrushin_to_CSGE_constant}, we have that
    \begin{align*}
        & \lambda_{\min}\left( D^\star-\diag\{M^\star\*1\}-\frac{1}{2}\left(M^\star+(M^\star)^\top \right) \right) \\
        \geq{}& \lambda_{\min}(D^\star) - \lambda_{\max}(\diag\{M^\star\*1\}) - \lambda_{\max}\left( \frac{1}{2}\left( M^\star+(M^\star)^\top \right) \right) \\
        \geq{}& b - \frac{(1+b^2)^2}{4b^3}(\norm{R}_2 + \norm{R}_1)  \geq \rho.
    \end{align*}
    By \Cref{thm:CSGE_boolean_hypercube_w=q}, we complete the proof.
\end{proof}

The following theorem by Marton shows that for the discrete product space, the Dobrushin condition and the single-site conditional marginal lower bound are sufficient to obtain a positive \ref{eq:LSI} constant and hypercontractivity for the classical Glauber dynamics.
Via a simple comparison of the Dirichlet form, this also holds for the semigroup with the canonical rates.

\begin{theorem}[{\cite[Theorem 1.14 and Corollary 1.11]{marton2019logarithmic}}]\label{thm:marton_Dobrushin_to_LSI}
Let $\+X$ be a finite space.
Suppose the distribution $\mu$ on $\Omega = \+X^n$ satisfies the $L^2$--Dobrushin condition $\norm{R}_2 < 1$ and the one-site conditional marginal lower bound $\alpha = \min_{i\in[n],\,x\in\Omega} \mu_i(x_i\mid x_{-i})>0$, then for Gibbs sampler $G = \frac 1 n\sum_{i=1}^n K_i$ where $K_i(z\mid y) = \ind[z_{-i}=y_{-i}]\mu(z_i\mid z_{-i})$, the log-Sobolev inequality holds as follows,
$$
    \forall f\in \R^\Omega,\quad \frac 1n\Ent[\mu]{f^2} \leq \frac{2}{\alpha(1-\norm{R}_2)^2} \inner{f}{(I-G)f}_{\mu} .
$$
\end{theorem}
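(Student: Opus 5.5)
Since this is Marton's theorem, I would follow her two-step scheme: first an approximate tensorization of entropy, then a single-site log-Sobolev bound. The Dirichlet form of $G$ is $\inner{f}{(I-G)f}_\mu = \frac1n\sum_{i=1}^n \E[\mu]{\Var[\mu_i(\cdot\mid x_{-i})]{f}}$, so after multiplying by $n$ the claim reduces to
\[
\Ent[\mu]{f^2} \;\le\; \frac{2}{\alpha(1-\norm{R}_2)^2}\sum_{i=1}^n \E[\mu]{\Var[\mu_i(\cdot\mid x_{-i})]{f}} .
\]
I would obtain this by composing two estimates:
\[
\Ent[\mu]{f^2}\le \frac{1}{(1-\norm{R}_2)^2}\sum_i \E[\mu]{\Ent[\mu_i(\cdot\mid x_{-i})]{f^2}},\qquad
\Ent[\mu_i(\cdot\mid x_{-i})]{f^2}\le \frac{2}{\alpha}\,\Var[\mu_i(\cdot\mid x_{-i})]{f}.
\]

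The second estimate is the one-site inequality. The conditional law $\mu_i(\cdot\mid x_{-i})$ lives on the finite set $\+X$, and each atom has mass at least $\alpha$. I would prove this bound by the standard finite-space argument, comparing entropy to variance. One route is the bound $\Ent_\pi(g^2)\le \frac{\log(1/\pi_{\min}-1)}{1-2\pi_{\min}}\cdot$(variance Dirichlet form) of Diaconis--Saloff-Coste. Another is the cruder but sufficient estimate $\Ent_\pi(g^2)\le \frac{2}{\pi_{\min}}\Var_\pi(g)$, which follows from $\Ent_\pi(g^2)\le \E_\pi[g^2\log(g^2/\E_\pi g^2)]$ together with $\log u\le u-1$ and $g^2\le \Var_\pi(g)/\pi_{\min}+\dots$. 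This step is routine; only the constant $2/\alpha$ needs tracking.

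The first estimate, approximate tensorization, is the heart of the proof. Set $\nu=f^2\mu/\E[\mu]{f^2}$, so that it reads
\[
D(\nu\,\|\,\mu)\le \frac{1}{(1-\norm{R}_2)^2}\sum_i \E[\nu]{D\big(\nu_i(\cdot\mid x_{-i})\,\|\,\mu_i(\cdot\mid x_{-i})\big)} .
\]
I would begin from the chain rule $D(\nu\|\mu)=\sum_i \E[\nu]{D(\nu_i(\cdot\mid x_{<i})\|\mu_i(\cdot\mid x_{<i}))}$. Each past-conditioned term then has to be compared with the fully-conditioned terms. For this I would build Marton's coupling of $\nu$ and $\mu$, coordinate by coordinate, and let $u_i$ be the probability that coordinate $i$ disagrees. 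The Dobrushin matrix controls how disagreements propagate: conditioning on differing outside coordinates changes the law of $x_i$ by at most $\sum_j R_{ij}\mathbf 1[x_j\ne y_j]$ in total variation. This gives a vector inequality $u\le a+Ru$, where $a_i$ is the total variation between the fully-conditioned laws $\nu_i(\cdot\mid x_{-i})$ and $\mu_i(\cdot\mid x_{-i})$. Hence $u\le (I-R)^{-1}a$ and $\norm{u}_2\le \norm{a}_2/(1-\norm{R}_2)$, which is where the square of $1-\norm{R}_2$ enters.

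The step I expect to be the main obstacle is converting back and forth between relative entropy and squared total variation. Pinsker gives $a_i^2\lesssim D(\nu_i\|\mu_i)$ in one direction. In the reverse direction, bounding the past-conditioned divergences by squared disagreement probabilities needs a reverse-Pinsker inequality, $D(\pi'\|\pi)\lesssim \TV{\pi'}{\pi}^2/\pi_{\min}$. Here the marginal lower bound $\alpha$ is indispensable. I would follow Marton's Theorem~1.14 to keep this $\alpha$-dependence confined to the single-site step, so that it does not appear twice in the final constant. Rather than reproducing the full computation, I would cite Corollary~1.11 of \cite{marton2019logarithmic} for the precise constant $\frac{2}{\alpha(1-\norm{R}_2)^2}$.
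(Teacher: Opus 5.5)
\textbf{Verdict: there is a genuine gap.} The paper does not prove this statement; it imports it from \cite{marton2019logarithmic}. Your reconstruction, however, rests on an intermediate inequality that is false.

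Your single-site bound $\mathrm{Ent}_\pi(g^2)\le\frac{2}{\pi_{\min}}\mathrm{Var}_\pi(g)$ is true, e.g.\ via Diaconis--Saloff-Coste. The problem is your first estimate: approximate tensorization of entropy with the $\alpha$-free constant $(1-\norm{R}_2)^{-2}$ fails, so the $\alpha$-dependence cannot be ``confined to the single-site step''. Take $n=2$, $\mathcal X=\{0,1\}$ and $0<\eta<\tfrac1{16}$ with
\[
\mu(1,1)=\eta,\qquad \mu(1,0)=\mu(0,1)=3\eta,\qquad \mu(0,0)=1-7\eta .
\]
Then $\mu_1(1\mid x_2=1)=\mu_2(1\mid x_1=1)=\tfrac14$ and $\mu_1(1\mid x_2=0)=\mu_2(1\mid x_1=0)=\tfrac{3\eta}{1-4\eta}$. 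Hence $\norm{R}_2=R_{12}=R_{21}<\tfrac14$, so $(1-\norm{R}_2)^{-2}<\tfrac{16}{9}$, while $\alpha=\tfrac{3\eta}{1-4\eta}\to0$.

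For $f=\mathbf 1_{\{(1,1)\}}$ you get $\mathrm{Ent}_\mu(f^2)=\eta\log(1/\eta)$. On the other hand, $\sum_i\mathbb E_\mu[\mathrm{Ent}_{\mu_i(\cdot\mid x_{-i})}(f^2)]=2\cdot 4\eta\cdot\tfrac14\log 4=2\eta\log 4$. The ratio $\log(1/\eta)/(2\log 4)$ is already about $2.49>\tfrac{16}{9}$ at $\eta=10^{-3}$, and it tends to infinity. So no tensorization constant depending only on $\norm{R}_2$ exists; it must grow at least like $\log(1/\alpha)$.

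Your own sketch shows the tension. The reverse-Pinsker step you use inside the tensorization argument already puts a factor $1/\alpha$ into that constant. Composing it with a separate single-site factor $2/\alpha$ would give $\alpha^{-2}$, not the stated $\frac{2}{\alpha(1-\norm{R}_2)^2}$. Deferring ``the precise constant'' to Marton does not repair this, because the decomposition you attribute to her cannot hold.

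\textbf{What to prove instead.} Rewrite the target exactly. For $f\ge 0$ (replacing $f$ by $|f|$ costs nothing) and $\nu=f^2\mu/\mathbb E_\mu f^2$, one has $\mathrm{Var}_{\mu_i}(f)/\mathbb E_{\mu_i}f^2=1-\mathrm{BC}_i^2$, where $\mathrm{BC}_i=\sum_{x_i}\sqrt{\nu_i(x_i\mid x_{-i})\,\mu_i(x_i\mid x_{-i})}$. So the statement is equivalent to
\[
D(\nu\,\|\,\mu)\le\frac{2}{\alpha(1-\norm{R}_2)^2}\sum_{i=1}^n\mathbb E_\nu\big[1-\mathrm{BC}_i^2\big]\qquad\text{for all }\nu .
\]
The right-hand side consists of quadratic, Hellinger-type local discrepancies, not local relative entropies. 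Since $\mathrm{TV}^2\le 1-\mathrm{BC}^2$, it suffices to prove this with $\mathbb E_\nu[\mathrm{TV}^2(\nu_i(\cdot\mid x_{-i}),\mu_i(\cdot\mid x_{-i}))]$ on the right.

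That form is the natural output of your coupling step: its inputs $a_i$ are already total variations, and it yields $\norm{u}_2\le\norm{a}_2/(1-\norm{R}_2)$. The marginal bound $\alpha$ should then be spent exactly once, when comparing $D(\nu\|\mu)$ with these quadratic quantities, with no separate single-site log-Sobolev step. As a consistency check, for $R=0$ this reduces to subadditivity of entropy plus $D\le\chi^2\le\frac{2}{\pi_{\min}}\mathrm{TV}^2$, which reproduces exactly the constant $2/\alpha$.
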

\begin{corollary}\label{cor:Dobrushin_to_LSI}
    Suppose $\norm{R}_2<1$ and $w_i^\star(x)\in [b,1/b]$ for any $x\in \{\pm 1\}^n$ and $i\in[n]$, then \ref{eq:LSI} holds with constant $\frac{b^2(1 - \norm{R}_2)^2}{1+b^2}$ with respect to the canonical transition rates $q^\star$.
\end{corollary}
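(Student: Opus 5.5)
The plan is to derive the log-Sobolev inequality for the canonical semigroup directly from \Cref{thm:marton_Dobrushin_to_LSI}. This needs two ingredients: a lower bound on Marton's marginal constant $\alpha$ in terms of $b$, and a comparison of the Gibbs-sampler Dirichlet form $n\inner{f}{(I-G)f}_\mu$ with the canonical Dirichlet form $\E[\mu]{\Gamma(f)}$, where $\Gamma$ is built from $q^\star$.

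First, fix $x\in\Omega$ and $i\in[n]$, and write $r:=w^\star_i(x)=\sqrt{\mu(x^i)/\mu(x)}$. Since $x$ and $x^i$ are the only two configurations agreeing with $x$ off coordinate $i$,
\begin{align*}
\mu_i(x_i\mid x_{-i})=\frac{\mu(x)}{\mu(x)+\mu(x^i)}=\frac{1}{1+r^2},
\qquad
\mu_i(x^i_i\mid x_{-i})=\frac{r^2}{1+r^2}.
\end{align*}
Because $r\in[b,1/b]$ and $t\mapsto t^2/(1+t^2)$ is increasing, both conditional probabilities are at least $b^2/(1+b^2)$. Hence $\alpha\ge b^2/(1+b^2)$.

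Second, compare the Dirichlet forms. By reversibility of each $K_i$,
\begin{align*}
n\inner{f}{(I-G)f}_\mu=\sum_{i=1}^n\E[\mu]{\Var[\mu_i(\cdot\mid x_{-i})]{f}}.
\end{align*}
For a two-point conditional law this equals
\begin{align*}
\frac12\sum_{i=1}^n\sum_{x}\mu(x)\,\mu_i(x^i_i\mid x_{-i})\,(f(x^i)-f(x))^2 .
\end{align*}
To check this, pair each $x$ with $x^i$: the term for the pair $\{x,x^i\}$ collects $\mu(x)\mu(x^i)/(\mu(x)+\mu(x^i))$ times $(f(x^i)-f(x))^2$, which is exactly the two-point variance weighted by $\mu(x)+\mu(x^i)$.

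On the other hand,
\begin{align*}
\E[\mu]{\Gamma(f)}=\frac12\sum_i\sum_x\mu(x)\,q^\star_i(x)\,(f(x^i)-f(x))^2,
\qquad q^\star_i(x)=r/2 .
\end{align*}
Comparing the rates termwise gives
\begin{align*}
\frac{\mu_i(x^i_i\mid x_{-i})}{q^\star_i(x)}=\frac{2r}{1+r^2}\le 1
\end{align*}
by AM--GM. Therefore $n\inner{f}{(I-G)f}_\mu\le\E[\mu]{\Gamma(f)}$, with no loss depending on $b$.

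Combining the two steps with \Cref{thm:marton_Dobrushin_to_LSI} yields
\begin{align*}
\Ent[\mu]{f^2}\le\frac{2n}{\alpha(1-\norm{R}_2)^2}\inner{f}{(I-G)f}_\mu\le\frac{2(1+b^2)}{b^2(1-\norm{R}_2)^2}\E[\mu]{\Gamma(f)} .
\end{align*}
This is \eqref{eq:LSI} with $\rho_{\LSI}=b^2(1-\norm{R}_2)^2/(1+b^2)$.

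The only delicate point is the Dirichlet-form identity for the Gibbs sampler: the normalization $1/n$ in $G$ must cancel against the $n$ on the left side of Marton's bound, and the factor $\tfrac12$ from the symmetric pairing must match the $\tfrac12$ in $\Gamma$. I would verify this by writing out the two-point variance $p(1-p)(a-b)^2$ explicitly before summing.
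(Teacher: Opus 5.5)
Your proposal is correct and follows essentially the same route as the paper: combine Marton's theorem with a termwise Dirichlet-form comparison. The paper writes the heat-bath rate as $2q_i^\star(x)/(1/w_i^\star(x)+w_i^\star(x))$ and bounds it by $q_i^\star(x)$ via AM--GM, which is exactly your $2r/(1+r^2)\le 1$; the only difference is that you spell out the marginal bound $\alpha\ge b^2/(1+b^2)$, which the paper uses implicitly.
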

\begin{proof}
    This immediately follows from
    \[
        \inner{f}{(I-G)f}_{\mu} = \E[\mu]{\frac 12\sum_{i=1}^n \frac{2q_i^\star(x)}{n(1/w_i^\star(x)+w_i^\star(x))}(f(x^i)-f(x))^2}
        \leq \frac 1n\E[\mu]{\Gamma(f,f)} . \qedhere
    \]
\end{proof}

For our canonical weights and transition rates, if $\{w_i^\star\}_{i\in[n]}$ is $b$-bounded, $\{q_i^\star\}_{i\in[n]}$ is $(b/2)$-bounded.
Therefore, by \Cref{thm:CSGE_boolean_hypercube_w=q}, \Cref{cor:Dobrushin_to_LSI} and \cref{sec:boolean}, we conclude the following theorem about isoperimetric inequalities.



\begin{theorem}\label{thm:Dobrushin_to_isoperimetric}
    Let $\mu$ be a distribution on $\{\pm 1\}^n$ with full support, and define the transition rates and weights as in~\eqref{eq:canonical-r-w}.
    Assume $w_i^\star \in [b,1/b]$ for all $x \in \cube$ and $i \in [n]$.
    If the Dobrushin influence matrix satisfies $\norm{R}_1 \le C_0$ and $\norm{R}_2 \le 1-\delta$ for constants $C_0\geq 0$ and $\delta\in(0,1]$, then all the inequalities in~\Cref{sec:boolean} hold.
    Specifically, the local Bobkov inequality \eqref{eq:local_bobkov}, Talagrand's variance--surface-area inequality~\eqref{eq:improved_talagrand}, Talagrand's $L^1$--$L^2$ inequality \eqref{eq:l1_l2_talagrand_2}, the KKL inequality \eqref{eq:KKL_boolean_1}, the Eldan--Gross inequality \eqref{eq:eldan_gross} hold with constant depending on $C_0, \delta$ and $b$.
\end{theorem}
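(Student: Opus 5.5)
The plan is to verify, under the canonical rates and weights \eqref{eq:canonical-r-w}, each hypothesis required by the results of \Cref{sec:boolean}, with constants depending only on $(b,\delta,C_0)$, and then invoke those results directly.

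\textbf{Boundedness.} The weights $w^\star_i$ are $b$-bounded by assumption. The rates $q^\star_i=w^\star_i/2$ lie in $[b/2,1/(2b)]$, so they are $(b/2)$-bounded. This is the extra hypothesis used in \Cref{cor:KKL}.

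\textbf{Log-Sobolev constant.} By \Cref{cor:Dobrushin_to_LSI},
\[
\rho_{\LSI}\ge \frac{b^2(1-\norm{R}_2)^2}{1+b^2}\ge \frac{b^2\delta^2}{1+b^2}>0,
\]
which depends only on $b$ and $\delta$. The one-site marginal lower bound $\alpha$ needed in \Cref{thm:marton_Dobrushin_to_LSI} is at least $b^2/(1+b^2)$ by marginal boundedness.

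\textbf{Curvature.} \Cref{lem:Dobrushin_upper_bound_CSGE} gives \ref{eq:CSGE} for every $\tau\in\R_+$ with
\[
\rho_{\CSGE}\ge b-\frac{(1+b^2)^2}{4b^3}(1-\delta+C_0).
\]
This constant may be negative, but it is bounded below uniformly in $n$, so $\kappa_0=\max\{-\rho_\CSGE,0\}$ is controlled by $b,\delta,C_0$.

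\textbf{Extending to $\tau=\infty$.} This is the step needing the most care, since the theorem and lemma are stated for finite $\tau$. For fixed $t$ and $f$, let $\tau\to\infty$ in \eqref{eq:goal-CSGE}. Ergodicity gives $P_\tau g\to\E[\mu]{g}$ pointwise, because the chain is irreducible on a finite state space. This yields \ref{eq:infty-CSGE} with the same constant. The case $\tau=0$ is already covered, giving \ref{eq:0-CSGE}.

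\textbf{Applying the results of \Cref{sec:boolean}.} By \Cref{fact:CSGE_to_SGE}, \ref{eq:0-CSGE} implies \ref{eq:SGE} with the same constant. By \Cref{prop:equivalence_BE_GE}, this is equivalent to \ref{eq:rBE} with $\rho_\RBE\ge\rho_\CSGE$. All hypotheses are now in place:
\begin{itemize}
\item \Cref{lem:local_bobkov} (local Bobkov) and \Cref{lem:improved_talagrand} (Talagrand's variance--surface-area inequality) need rBE, $b$-bounded weights, and the LSI.
\item \Cref{lem:l1_l2_talagrand} ($L^1$--$L^2$) needs \ref{eq:0-CSGE} and the LSI.
\item \Cref{cor:KKL} (KKL) additionally needs the $(b/2)$-boundedness of the rates.
\item \Cref{thm:eldan_gross} (Eldan--Gross) needs both \ref{eq:0-CSGE} and \ref{eq:infty-CSGE}.
\end{itemize}
Every constant in these results is an explicit function of $b$, $\rho_\LSI$ and $\kappa_0$, hence of $(b,\delta,C_0)$ alone.

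The remaining bookkeeping is to replace $b$ by $b/2$ wherever the rate bound enters. Translating back to $\Inf_\mu$, $\SurfArea_\mu$ and $\SqInf_\mu$ through \Cref{ex:canonical} changes the constants only by powers of $b$.
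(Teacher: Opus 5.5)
Your proposal is correct and follows the paper's route: obtain a dimension-free CSGE constant from \Cref{lem:Dobrushin_upper_bound_CSGE} (via \Cref{thm:CSGE_boolean_hypercube_w=q}) and a positive LSI constant from \Cref{cor:Dobrushin_to_LSI}. Then use the $(b/2)$-boundedness of $q^\star$ and invoke the results of \Cref{sec:boolean}. Your ergodic limit $\tau\to\infty$ to obtain \ref{eq:infty-CSGE} fills in a step the paper leaves implicit, since \Cref{thm:CSGE_boolean_hypercube_w=q} is stated only for finite $\tau$.
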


Our main result, \Cref{thm:informal}, is a consequence of \Cref{thm:Dobrushin_to_isoperimetric}. 
Note \Cref{thm:Dobrushin_to_isoperimetric} applies only to the canonical transition rates and weights.
The marginal boundedness assumption in \Cref{thm:informal} implies the canonical weights $\{w_i^\star\}_{i\in[n]}$ are bounded. This then allows us to compare, for Boolean $f$, the canonical influence quantities $I^\star(f)$, $A^\star(f)$ and $J^\star(f)$ with their Boolean counterparts $\Inf(f)$, $\SurfArea(f)$ and $\SqInf(f)$, up to multiplicative constants depending only on~$b$; see \Cref{ex:canonical}. These comparisons lead us to conclude \Cref{thm:informal} from \Cref{thm:Dobrushin_to_isoperimetric}.

\subsection{Application to Ising model}
A distribution $\mu$ over $\{\pm 1\}^n$ is called an \emph{Ising model} with interaction matrix $J \in \R^{n\times n}$ and external field $h\in \R^n$ if 
$$
    \forall x\in \{\pm 1\}^n, \mu(x) \propto \exp(\frac 12 x^\top J x + h^\top x),
$$
where $J$ is a symmetric matrix.
Note that the diagonals of $J$ do not affect the distribution, hence we can assume that $J_{ii} = 0$ for $i\in[n]$.

Koehler, Lifshitz, Minzer, and Mossel showed that Talagrand's $L^1$--$L^2$ inequality \eqref{eq:l1_l2_talagrand_2} holds under marginal bounds and the log-Sobolev inequality \ref{eq:LSI} for the Gibbs sampler, where the underlying distribution is a bounded-degree Markov random field \cite{KoehlerLifshitzMinzerMossel23}.
In particular, this result can apply to the bounded-degree graphical Ising model in the high-temperature regime.
On the other hand, the Dobrushin condition~\cite{Dobrushin68} is a classical sufficient condition for rapid mixing of Glauber dynamics~\cite{DGJ09}.
In our work, we establish the isoperimetric inequalities through the Dobrushin condition and marginal bounds, without requiring bounded degrees.

In the following theorem, we establish a new criterion for the high-temperature Ising model to prove the isoperimetric inequalities under a Dobrushin-type condition by \cref{thm:Dobrushin_to_isoperimetric}.
\begin{theorem}\label{thm:Ising}
    Suppose that $\mu$ is an Ising model parameterized by zero external field $h = \*0$ and interaction matrix $J$ satisfying $\norm{\abs{J}}_2 < 1$ and $\norm{J}_1\leq C$ for some constant $C > 0$.
    Let the transition rates and weights be the canonical ones defined in \eqref{eq:canonical-r-w}.
    Then the local Bobkov inequality \eqref{eq:local_bobkov}, Talagrand's variance--surface-area inequality \eqref{eq:improved_talagrand}, Talagrand's $L^1$--$L^2$ inequality \eqref{eq:l1_l2_talagrand_2}, the KKL inequality \eqref{eq:KKL_boolean_1} and the Eldan--Gross inequality \eqref{eq:eldan_gross} hold with constants depending only on $\norm{\abs{J}}_2$ and $C$.
\end{theorem}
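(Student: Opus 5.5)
The plan is to derive \Cref{thm:Ising} directly from \Cref{thm:Dobrushin_to_isoperimetric}. That theorem needs three inputs, and I would verify each one for the zero-field Ising measure $\mu(x)\propto\exp(\frac12 x^\top Jx)$ with $J_{ii}=0$, keeping track of how the constants depend on $J$.

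\emph{Step 1: marginal boundedness.} Flipping coordinate $i$ changes the exponent by $-2x_i(Jx)_i$, where $(Jx)_i=\sum_{j\ne i}J_{ij}x_j$. Hence
\[
w_i^\star(x)=\sqrt{\mu(x^i)/\mu(x)}=\exp\bigl(-x_i(Jx)_i\bigr).
\]
Since $\abs{(Jx)_i}\le\sum_j\abs{J_{ij}}\le\norm{J}_\infty=\norm{J}_1\le C$ (using symmetry of $J$), the weights are $b$-bounded with $b=e^{-C}$.

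\emph{Step 2: entrywise bound on the Dobrushin matrix.} Conditionally on $x_{-i}$, we have $\mu_i(x_i=+1\mid x_{-i})=\frac12(1+\tanh((Jx)_i))$. Flipping $x_j$ for $j\neq i$ shifts the local field $a=(Jx)_i$ to some $a'$ with $\abs{a-a'}=2\abs{J_{ij}}$. Therefore
\[
R_{ij}=\sup\tfrac12\abs{\tanh a-\tanh a'}\le\tfrac12\abs{a-a'}=\abs{J_{ij}},
\]
using that $\tanh$ is $1$-Lipschitz. So $0\le R\le\abs{J}$ entrywise. It follows at once that $\norm{R}_1\le\norm{\abs{J}}_1=\norm{J}_1\le C$.

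\emph{Step 3: spectral norm.} For nonnegative matrices $0\le A\le B$ entrywise, we have $0\le A^\top A\le B^\top B$ entrywise. Spectral radius is monotone on nonnegative matrices (Perron--Frobenius), so $\norm{A}_2^2=\rho(A^\top A)\le\rho(B^\top B)=\norm{B}_2^2$. Applying this with $A=R$ and $B=\abs{J}$ gives $\norm{R}_2\le\norm{\abs{J}}_2<1$. We may therefore take $\delta=1-\norm{\abs{J}}_2$ and $C_0=C$.

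\emph{Conclusion.} With all three hypotheses in place, \Cref{thm:Dobrushin_to_isoperimetric} yields every listed inequality, with constants depending only on $b=e^{-C}$, $\delta$ and $C_0$, that is, only on $\norm{\abs{J}}_2$ and $C$. This goes through \Cref{lem:Dobrushin_upper_bound_CSGE} for CSGE, whose constant may be negative but is bounded below in terms of $b$ and $C$, and \Cref{cor:Dobrushin_to_LSI} for the LSI constant $\frac{b^2\delta^2}{1+b^2}$.

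I expect no serious obstacle. The only points needing care are getting the factor $2$ in the field shift right in Step 2, and justifying the Perron--Frobenius monotonicity used in Step 3, which does not hold for matrices with mixed signs. That failure is exactly why the hypothesis is stated with $\norm{\abs{J}}_2$ rather than $\norm{J}_2$.
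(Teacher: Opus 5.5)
Your proposal is correct and follows essentially the same route as the paper: you verify $b$-boundedness of $w^\star$ with $b=e^{-\norm{J}_1}$, bound the Dobrushin matrix entrywise by $\abs{J}$ via the single-site conditional law, and then invoke \Cref{thm:Dobrushin_to_isoperimetric} with $C_0=C$ and $\delta=1-\norm{\abs{J}}_2$. The differences are cosmetic. You use the $1$-Lipschitz property of $\tanh$, whereas the paper computes the total variation distance exactly and bounds it by $\tanh\abs{J_{ij}}$. You also justify $\norm{R}_2\le\norm{\abs{J}}_2$ via Perron--Frobenius monotonicity, a step the paper simply asserts.
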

\begin{proof}
    Let $b = e^{-\norm{J}_1}$.
    For any $i,x$, $w_i^\star(x) = \exp(-x_iJ_i^\top x) \in [b,1/b]$.
    Fix $x\in \Omega$ and $i,j\in [n]$ with $i\neq j$.
    Define $\alpha = \sum_{k\neq i,j}J_{ik} x_k$ and $\beta = \abs{J_{ij}}$ .
    We have that 
    \begin{align*}
        \TV{\mu_i(\cdot\mid x_{-i})}{\mu_i(\cdot\mid (x^j)_{-i})} &= \abs{\mu(x_i=+1\mid x_j=+1,x_{-i,j}) - \mu(x_i=+1\mid x_j=-1,x_{-i,j})}
        \\&= \frac 12\abs{\tanh(\alpha+\beta) - \tanh(\alpha-\beta)}
        = \frac{2\abs{\sinh(\beta)\cosh(\beta)}}{\cosh(2\alpha)+\cosh(2\beta)}
        \\&\leq \frac{2\abs{\sinh(\beta)\cosh(\beta)}}{1+\cosh(2\beta)} = \tanh(\beta) \leq \beta.
    \end{align*}
    Therefore, $R \leq_{\-{ew}} \abs{J}$ and $\norm{R}_2\leq \norm{\abs{J}}_2 < 1$.
    We have that $\norm{J}_2 \leq \sqrt{\norm{J}_1\norm{J}_{\infty}} = \norm{J}_1$ follows from symmetry. 
    By \cref{thm:Dobrushin_to_isoperimetric}, since $\max\{\norm{R}_1,\norm{R}_2\}\leq \norm{J}_1 \leq C$, all the isoperimetric inequalities in \cref{sec:boolean} hold.
\end{proof}

\section{Extension to Schreier Graphs}
\label{sec:Schreier}
In this section, we extend the framework from the Boolean hypercube $\Omega=\{\pm 1\}^n$ to Schreier graphs. This more general setting was also considered in~\cite{ODonnellWimmer13,ODonnellWimmerSharp13,CorderoErausquinLedoux12}, which established a KKL theorem for Schreier graphs given a log-Sobolev constant. Here, we derive an Eldan--Gross inequality in this setting under a slightly stronger assumption.

\subsection{Basic definitions}
Let $G$ be a group acting on a finite set $X$, and $U$ be a finite generating set for $G$ that is symmetric, i.e., $U^{-1} = U$. The \emph{Schreier graph} $\Sch(G,X,U)$ is a graph with vertex set $V = X$ and edge set $E = \{(x,gx) \mid x \in X, g \in U\}$. Let $\mathcal A$ be a finite index set with $d=|\mathcal A|$, and suppose that $U = \bigcup_{i\in\mathcal A} U_i$ where each $U_i$ is symmetric, i.e., $U_i^{-1} = U_i$.
In this work, we assume that $\{U_i\}_{i\in\mathcal A}$ is a partition of $U$.

Schreier graphs encompass a broad range of discrete structures, including product spaces such as $[q]^d$ and slices of the Boolean hypercube such as $\binom{[n]}{k}$.
\begin{example}\label{example:SGraph}
    Let $G = X = \Z_m^n$, $\mathcal A=[n]$, and $U_i = \{k e_i: k \in [m-1]\}$. The Schreier graph $\Sch(G,X,U)$ is a Hamming graph.
\end{example}

\begin{example}\label{example:k_slice}
    Let $G = S_n$, $X = \binom{[n]}{k}$, and $\+A=\{(i,j):1\le i<j\le n\}$. For each $(i,j)\in\mathcal A$, the set $U_{(i,j)} = \{(i\ j)\}$ consists of a single transposition. The Schreier graph $\Sch(G,X,U)$ is the exchange graph on the Boolean slice $\binom{[n]}{k}$.
\end{example}

Similar to the Boolean hypercube case, we define the carr\'e du champ operators, influences, and surface area on Schreier graphs.

\begin{definition}[$\Gamma$-calculus]
Let $\Sch(G,X,U)$ be a Schreier graph with $U = \bigcup_{i\in\mathcal A} U_i$. Given the transition rates $\{q_u: X \to \mathbb{R}_{+}\}_{u \in U}$ and weights $\{w_u: X \to \mathbb{R}_+\}_{u \in U}$, we define the operator $L_w$ and carr\'e du champ operators as follows:
\begin{itemize}
    \item The operator $L_w$ on functions $f: X \to \mathbb{R}$ satisfies
    \begin{align*}
        L_w f(x) = \sum_{u \in U} w_u(x) q_u(x) (f(x^u) - f(x)),
    \end{align*}
    where $x^u = ux$, and we omit the subscript $w$ if $w \equiv 1$.
    \item For any $f,g : X \to \mathbb{R}$, the carr\'e du champ operator $\Gamma_w$ is defined as
    \begin{align*}
        \Gamma_w (f,g) = \frac{1}{2}\tp{L_w (fg) - f L_w g - g L_w f}.
    \end{align*}
    By a straightforward calculation,
    \begin{align*}
        \Gamma_w(f,g)(x) = \frac{1}{2} \sum_{i \in \mathcal A} \sum_{u \in U_i} w_u(x) q_u(x) \tp{f(x^u) - f(x)}\tp{g(x^u)-g(x)}.
    \end{align*}
    Hence, the operator $\Gamma_{w,i}$ for the $i$-th ``coordinate'' is defined as
    \begin{align*}
    \Gamma_{w,i} (f,g) (x) = \frac{1}{2}\sum_{u \in U_i} w_u(x) q_u(x) \tp{f(x^u) - f(x)}\tp{g(x^u)-g(x)}.
    \end{align*}
\end{itemize}
\end{definition}

Similar to~\Cref{def:weighted-gradient}, we define the absolute weighted discrete gradient.
\begin{definition}
    For any function $f:X \to \R_+$, the absolute weighted discrete gradient is given by
    \begin{align*}
        \abs{\nabla_w} f = (\abs{\delta_{w,i}} f)_{i \in \mathcal A}, \quad \text{where $\abs{\delta_{w,i}} f = \sqrt{\Gamma_{w,i}(f)}$}.
    \end{align*}
\end{definition}

With the above definitions, we are now ready to define the influences and surface area.

\begin{definition}
    Let $f: X \to \mathbb{R}$ be a function.

    \begin{enumerate}[(1)]
        \item The \textit{total influence} of $f$ is defined as
        \begin{align*}
            I_w(f) := \E[\mu]{\Gamma_{w}(f)} = \E[\mu]{\norm{\abs{\nabla_w} f}^2_2}.
        \end{align*}
        Also, let $I_{w,i}(f) := \E[\mu]{\Gamma_{w,i}(f)}$ denote the influence of the $i$-th coordinate.

        \item The \textit{(Boolean) surface area} of $f$ is defined as
        \begin{align*}
            A_w(f) := \E[\mu]{\sqrt{\Gamma_wf}} = \E[\mu]{\norm{\abs{\nabla_w} f}_2}.
        \end{align*}

        \item The \textit{total squared influence} of $f$ is defined as
        \begin{align*}
            J_{w}(f) := \sum_{i\in\mathcal A} \tp{\E[\mu]{\abs{\delta_{w,i}} f}}^2 = \norm{\E[\mu]{\abs{\nabla_w} f}}^2_2.
        \end{align*}
    \end{enumerate}
\end{definition}

Finally, we define the gradient estimates for Schreier graphs.

\begin{definition}[Gradient estimates]
    Let $\mu$ be a distribution supported on $X$ and $\{q_u: X \mapsto \R_+\}_{u\in U}$ be transition rates reversible for $\mu$.
    Suppose $\{w_u:X \to \R_+\}_{u\in U}$ is a collection of weight functions.
    \begin{itemize}
        \item We say that the \emph{Weak Gradient Estimate (WGE)} holds with constant $\rho_{\WGE} \in \mathbb{R}$ if for any function $f:X \to \R$ and $t\geq 0$,
        \begin{align}\label{eq:WGE-2}
            \Gamma_w(P_t f) \leq e^{-2\rho_{\WGE} t} P_t(\Gamma_w(f)) \quad\iff\quad \norm{\abs{\nabla_w} P_t f}^2_2 \leq e^{-2\rho_{\WGE} t} P_t(\norm{\abs{\nabla_w} f}^2_2); \tag{WGE}
        \end{align}
        \item We say that the \emph{Strong Gradient Estimate (SGE)} holds with constant $\rho_{\SGE} \in \mathbb{R}$ if for any function $f:X \to \R$ and $t\geq 0$,
        \begin{align}\label{eq:SGE-2}
            \sqrt{\Gamma_w(P_t f)} \leq e^{-\rho_{\SGE} t} P_t  \sqrt{\Gamma_w(f)} \quad\iff\quad \norm{\abs{\nabla_w} P_t f}_2 \leq e^{-\rho_{\SGE} t} P_t(\norm{\abs{\nabla_w} f}_2); \tag{SGE}
        \end{align}
        \item  We say that the \emph{Coordinate-wise Strong Gradient Estimate (CSGE)} holds with constant $\rho_{\CSGE} \in \mathbb{R}$ if for any $\tau \in [0,\infty]$, any function $f:X \to \R$, and $t\geq 0$,
        \begin{align}\label{eq:CSGE-2}
            \norm{P_\tau \abs{\nabla_w}P_t f}_2 \leq e^{-\rho_{\CSGE} t}\norm{P_{\tau+t} \abs{\nabla_w}f}_2. \tag{CSGE}
        \end{align}
        We record two particularly important subcases:
        \begin{itemize}
            \item ($\tau = 0$) For any function $f:X \to \R$ and $t\geq 0$,
            \begin{align}\label{eq:0-CSGE-2}
                \norm{\abs{\nabla_w} P_t f}_2 \leq e^{-\rho_{\CSGE} t}\norm{P_t \abs{\nabla_w}f}_2. \tag{$0$-CSGE}
            \end{align}

            \item ($\tau = \infty$) For any function $f:X \to \R$ and $t\geq 0$,
            \begin{align}\label{eq:infty-CSGE-2}
                \norm{\E[\mu]{\abs{\nabla_w}P_t f}}_2 \leq e^{-\rho_{\CSGE} t}\norm{\E[\mu]{\abs{\nabla_w}f}}_2. \tag{$\infty$-CSGE}
            \end{align}
        \end{itemize}
    \end{itemize}
\end{definition}

\subsection{Isoperimetric inequalities via gradient estimates}\label{sec:SGraph_iso_ineq}

In this section, we extend the isoperimetric inequalities from~\Cref{sec:boolean} to Schreier graphs. With the exception of Bobkov's inequality, the proofs do not rely on the explicit form of the operator $L_w$ and therefore carry over directly to this setting.
Complete proofs of the results in this subsection are deferred to~\Cref{app:schreier-proofs}.
\begin{theorem}[Local Bobkov inequality]\label{lem:local_bobkov-sch}
    Let $\Sch(G,X,U)$ be a connected Schreier graph. Let $\mu$ be a distribution supported on $X$ and $\{q_u: X \to \R_+\}_{u \in U}$ be transition rates reversible for $\mu$. 
    Assume $\{w_u:X \to \R_+\}_{u\in U}$ is a collection of weight functions that are $b$-bounded for some $b \in (0,1]$. 
    
    If the strong gradient estimate~\ref{eq:SGE-2} holds with constant $\rho \in \mathbb{R}$,
    then, for any $f:X \to [0,1]$ and $t\geq 0$, it holds
    \begin{align}\label{eq:local_bobkov-sch}
        \+I(P_t f)\leq P_t\tp{\sqrt{\+I(f)^2 + \gamma(t) \Gamma_w(f)}},
    \end{align}
    where $\gamma(t) =
    \begin{cases}
        \frac{2(1-e^{-2\rho t})}{b \rho}, & \rho \neq 0,\\
        \frac{4t}{b}, & \rho = 0,
    \end{cases}$ and $\+I:[0,1] \to \R_+$ denotes the Gaussian isoperimetric profile.

\end{theorem}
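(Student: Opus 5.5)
The plan is to repeat the interpolation argument of \Cref{lem:local_bobkov} on $X$. The only ingredients that used the hypercube structure are the chain rule (\Cref{lem:chain-rule}), the equivalence SGE $\Leftrightarrow$ rBE, and the pointwise Minkowski bound on $\Gamma(\sqrt{F})$. All three are local edge-by-edge identities, so they extend to any reversible jump process on the Schreier graph.

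\textbf{Step 1: setup.} Replace $f$ by $f_\varepsilon=\varepsilon+(1-2\varepsilon)f$, so that $f$ takes values in $(0,1)$ and $F>0$; then let $\varepsilon\downarrow 0$ at the end. Set
\[
F=\+I(P_{t-s}f)^2+\gamma(s)\Gamma_w(P_{t-s}f),\qquad \Psi(s)=P_s\sqrt{F}.
\]
It suffices to show that $\Psi$ is nondecreasing on $[0,t]$.

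\textbf{Step 2: SGE implies rBE on $X$.} First check that SGE on the Schreier graph yields rBE with the same $\rho$. Take the derivative at $t=0$ of $e^{\rho t}\sqrt{\Gamma_w(P_tf)}-P_t\sqrt{\Gamma_w(f)}\le 0$, exactly as in \Cref{prop:equivalence_BE_GE}. This step only uses the definition of $\Gamma_{2;w}$ and the chain rule $2\sqrt{g}L\sqrt g=Lg-2\Gamma(\sqrt g)$, which holds for any generator $L$ by the definition of $\Gamma$. Here $\Gamma$ is the unweighted carré du champ for the rates $q_u$.

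\textbf{Step 3: differentiate $\Psi$.} Compute
\[
\frac{\dif}{\dif s}\Psi=P_s\Big(L\sqrt F+\partial_s\sqrt F\Big).
\]
Write $L\sqrt F=(LF-2\Gamma(\sqrt F))/(2\sqrt F)$. Insert rBE for the $\gamma(s)\Gamma_w$ part, and use $\gamma'+2\rho\gamma=4/b$. This reduces the claim to showing that
\[
R=L\+I^2(g)-2\Gamma(\sqrt F)-2\+I(g)\+I'(g)Lg+\tfrac4b\Gamma_w(g)+2\gamma\Gamma(\sqrt{\Gamma_w(g)})
\]
is nonnegative, where $g=P_{t-s}f$.

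\textbf{Step 4: Minkowski edge by edge.} On each edge $(x,ux)$ the triangle inequality in $\R^2$ gives
\[
\big(\sqrt{a^2+\gamma c}-\sqrt{a'^2+\gamma c'}\big)^2\le (a-a')^2+\gamma(\sqrt c-\sqrt{c'})^2 .
\]
Summing with weights $q_u(x)$ yields $\Gamma(\sqrt F)\le \Gamma(\+I(g))+\gamma\Gamma(\sqrt{\Gamma_w(g)})$, which cancels the last term of $R$.

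\textbf{Step 5: the remaining sum.} What remains, expanded at $x$, is a sum over $u\in U$ of $q_u(x)$ times
\[
\+I^2(y)-\+I^2(z)-2\+I(z)\+I'(z)(y-z)-(\+I(y)-\+I(z))^2+\tfrac2b w_u(z)(y-z)^2,
\]
with $z=g(x)$ and $y=g(ux)$. The first four terms combine to $2\+I(z)\big(\+I(y)-\+I(z)-\+I'(z)(y-z)\big)$. By $w_u\ge b$ and \Cref{fact:gaussian_isoperimetric_profile_ineq}, each summand is nonnegative.

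The main point to watch is Step 5: the decomposition $\Gamma_w=\sum_i\sum_{u\in U_i}$ must match $L$ term by term. This holds because both $\Gamma_w$ and $L$ are sums over the same generators $u$, with $\Gamma_w$ carrying the extra factor $w_u q_u/2$. A secondary concern is the non-smoothness of $\sqrt{\Gamma_w}$ where it vanishes. I would handle it as before, by using $\sqrt{\Gamma_w+\eta}$ and letting $\eta\downarrow 0$. Connectedness of $\Sch(G,X,U)$ is needed only so that $\mu$ is the unique stationary measure; the pointwise argument itself does not use it.
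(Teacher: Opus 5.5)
Your proposal is correct and follows essentially the same route as the paper's proof: you use the $\varepsilon$-regularization, prove monotonicity of $\Psi(s)=P_s\sqrt{F}$ via the chain rule combined with the infinitesimal (rBE) form of SGE and $\gamma'+2\rho\gamma=4/b$, apply the edgewise Minkowski bound on $\Gamma(\sqrt F)$, and finish with the pointwise Gaussian-profile inequality together with $w_u\ge b$. The $\eta$-regularization you mention is not needed for SGE $\Rightarrow$ rBE, because at points where $\Gamma_w(f)(x)=0$ both sides of rBE vanish identically.
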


\begin{corollary}[Talagrand's Variance-Surface Area inequality]\label{lem:improved_talagrand_sch}
    Let $\Sch(G,X,U)$ be a connected Schreier graph. Let $\mu$ be a distribution supported on $X$ and $\{q_u:X \mapsto \R_+\}_{u\in U}$ be transition rates reversible for $\mu$.
    Assume $\{w_u:X \to \R_+\}_{u\in U}$ is a collection of weight functions that are $b$-bounded for some $b \in (0,1]$.

    Suppose the following conditions hold:
    \begin{itemize}
        \item The strong gradient estimate~\ref{eq:SGE-2} holds with constant $\rho_{\SGE} \in \mathbb{R}$;
        \item The log-Sobolev inequality holds with constant $\rho_{\LSI} > 0$.
    \end{itemize}
    Then, for any $f:X \to \{0,1\}$, it holds
   \begin{align}\label{eq:improved_talagrand_sch}
        A_w(f) \gtrsim \frac{\sqrt{b} \rho_{\LSI}}{\sqrt{\rho_{\LSI} + \kappa}} \cdot \Var[\mu]{f}\sqrt{\log \frac e{\Var[\mu]{f}}},
   \end{align}
   where $\kappa = \max \set{0,-\rho_{\SGE}}$.
\end{corollary}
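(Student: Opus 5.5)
The plan is to repeat the proof of \Cref{lem:improved_talagrand} verbatim, now using the Schreier-graph local Bobkov inequality (\Cref{lem:local_bobkov-sch}). That proof never used the product structure of $\cube$. It used only three inputs: the local Bobkov inequality, hypercontractivity coming from the log-Sobolev constant (\Cref{thm:hypercontractivity}, which holds for any reversible finite Markov semigroup), and the elementary properties of $\+I$.

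Fix a nonconstant $f:X\to\{0,1\}$ and set $v=\E[\mu]{f}$. By the symmetry $\+I(x)=\+I(1-x)$, replacing $f$ with $1-f$ changes neither $A_w(f)$ nor $\Var[\mu]{f}$, so we may assume $v\le 1/2$. Since $f$ is $\{0,1\}$-valued, $\+I(f)=0$ pointwise. Taking $\mu$-expectations in \eqref{eq:local_bobkov-sch} and using the stationarity $\E[\mu]{P_t g}=\E[\mu]{g}$ gives
\begin{align*}
\E[\mu]{\+I(P_t f)} \le \sqrt{\gamma(t)}\,\E[\mu]{\sqrt{\Gamma_w(f)}}=\sqrt{\gamma(t)}\,A_w(f).
\end{align*}

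For the lower bound, I use $\+I(y)\gtrsim y(1-y)$ on $[0,1]$. Since $P_tf\in[0,1]$ and $\E[\mu]{P_tf}=v$, this gives
\begin{align*}
\E[\mu]{\+I(P_tf)}\gtrsim v-\norm{P_tf}_{2,\mu}^2.
\end{align*}
By hypercontractivity with $p_t=1+e^{-2\rho_{\LSI}t}$, and because $f$ is an indicator, $\norm{P_tf}_{2,\mu}^2\le \norm{f}_{p_t,\mu}^2=v^{2/p_t}=v^{1+\tanh(\rho_{\LSI}t)}$. Combining the two bounds,
\begin{align*}
A_w(f)\gtrsim \frac{v\bigl(1-v^{\tanh(\rho_{\LSI}t)}\bigr)}{\sqrt{\gamma(t)}}.
\end{align*}

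Now choose $t=\frac{1}{100(\kappa+\rho_{\LSI}\log(1/v))}$ with $\kappa=\max\{0,-\rho_{\SGE}\}$. With this choice $\rho_{\LSI}t\log(1/v)\le 1/100$, so $1-v^{\tanh(\rho_{\LSI}t)}\gtrsim \rho_{\LSI}t\log(1/v)$. Also $\kappa t\le 1/100$, and this controls $\gamma$ when $\rho<0$: then $\gamma(t)=\frac{2(e^{2\kappa t}-1)}{b\kappa}\lesssim t/b$. When $\rho\ge 0$ we have $\gamma(t)\le 4t/b$ directly. Substituting,
\begin{align*}
A_w(f)\gtrsim \sqrt{b}\,\rho_{\LSI}\,v\log(1/v)\sqrt{t}\gtrsim \frac{\sqrt b\,\rho_{\LSI}}{\sqrt{\kappa+\rho_{\LSI}\log(1/v)}}\,v\log(1/v).
\end{align*}
To finish, use $\log(1/v)\ge\log 2$. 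This gives $\kappa+\rho_{\LSI}\log(1/v)\lesssim(\kappa+\rho_{\LSI})\log(1/v)$, hence the bound $\frac{\sqrt b\rho_{\LSI}}{\sqrt{\kappa+\rho_{\LSI}}}v\sqrt{\log(1/v)}$. Finally, $v(1-v)=\Var[\mu]{f}$ with $v\asymp\Var[\mu]{f}$ and $\log(1/v)\asymp\log(e/\Var[\mu]{f})$ for $v\le1/2$.

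The only genuine obstacle is whether \Cref{lem:local_bobkov-sch} may be used as stated. Its Schreier proof needs a Minkowski step for $\Gamma(\sqrt F)$ and the per-edge inequality from \Cref{fact:gaussian_isoperimetric_profile_ineq}, and both go through edge by edge over $u\in U$. Since that theorem is stated earlier, I take it as given. Everything else is bookkeeping of constants, exactly as in the hypercube case.
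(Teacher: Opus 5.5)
Your proposal is correct and follows the paper's proof essentially line by line. Like the paper, you take $\mu$-expectations in the Schreier local Bobkov inequality (using $\+I(f)=0$ for $\{0,1\}$-valued $f$), apply $\+I(y)\gtrsim y(1-y)$ together with hypercontractivity for the indicator, and make the same choice $t=\frac{1}{100(\kappa+\rho_{\LSI}\log(1/v))}$. You also spell out steps the paper leaves implicit, namely the exponent identity $2/p_t=1+\tanh(\rho_{\LSI}t)$ and the estimate $\gamma(t)\lesssim t/b$ in the case $\rho_{\SGE}<0$.
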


\begin{theorem}[Talagrand's $L^1$--$L^2$ inequality, \cite{CorderoErausquinLedoux12}]\label{lem:l1_l2_talagrand_sch}
     Let $\Sch(G,X,U)$ be a connected Schreier graph.
     Let $\mu$ be a distribution supported on $X$ and $\{q_u: X \mapsto \R_+\}_{u \in U}$ be transition rates reversible for $\mu$.
    Assume~\ref{eq:0-CSGE-2} holds with constant $\rho_{\CSGE} \in \mathbb{R}$, the log-Sobolev inequality holds with constant $\rho_{\LSI} > 0$, and $\{w_u:X \to \mathbb{R}_+\}_{u \in U}$ is a collection of weight functions that are $b$-bounded for some $b \in (0,1]$. The following inequality holds for any $f:X \to \R$:
    \begin{align}\label{eq:l1_l2_talagrand_2_sch}
        \Var[\mu]{f} \lesssim \frac{\rho_{\LSI} + \kappa_0}{b \rho^2_{\LSI}} \cdot \sum_{i\in\mathcal A} \frac{I_{w,i} (f)}{1 + \log\tp{\sqrt{I_{w,i}(f)} / \E[\mu]{\abs{\delta_{w,i}} f}}},
    \end{align}
    where $\kappa_0 = \max\set{-\rho_{\CSGE},0}$.
    A summand is interpreted as zero when $I_{w,i}(f)=0$.

    Furthermore, we have that
    \begin{align}\label{eq:l1_l2_talagrand_3_sch}
        I(f) \gtrsim \frac{b^2 \rho^2_{\LSI}}{\rho_{\LSI} + \kappa_0}\cdot \Var[\mu]{f}\tp{1 + \log \frac{I_w(f)}{J_w(f)}} .
    \end{align}
\end{theorem}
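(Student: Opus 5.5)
The plan is to transplant the proof of \Cref{lem:l1_l2_talagrand} essentially verbatim. The only change is that a coordinate $i\in\mathcal A$ now indexes a block $U_i$ of generators, and $\abs{\delta_{w,i}} f=\sqrt{\Gamma_{w,i}(f)}$ replaces the single signed derivative. The proof uses only a few inputs: the variance identity along the semigroup, the comparison $\Gamma\le \frac1b\Gamma_w$ coming from $b$-boundedness of the weights, \ref{eq:0-CSGE-2}, and hypercontractivity. None of these depends on the hypercube structure of $L$.

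First, reversibility of $L$ with respect to $\mu$ gives $\frac{\dif}{\dif t}\Var[\mu]{P_tf}=-2\E[\mu]{\Gamma(P_tf)}$. The log-Sobolev inequality implies the Poincar\'e inequality with constant $\rho_{\LSI}$. Together these give, for $T=(\rho_{\LSI}+\kappa_0)^{-1}$,
\begin{align*}
(1-\e^{-2\rho_{\LSI}T})\Var[\mu]{f}\le 2\int_0^T\E[\mu]{\Gamma(P_tf)}\dif t\le \frac2b\int_0^T\E[\mu]{\norm{\abs{\nabla_w}P_tf}_2^2}\dif t .
\end{align*}
The second inequality uses $w_u\ge b$ termwise in the formula for $\Gamma_{w}$.

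Next, I apply \ref{eq:0-CSGE-2} inside the integral, squared and then averaged over $\mu$. This bounds the right-hand side by $\frac2b\sum_{i\in\mathcal A}\int_0^T \e^{-2\rho_{\CSGE}t}\norm{P_tg_i}_{2,\mu}^2\dif t$, where $g_i=\abs{\delta_{w,i}}f\ge0$. Since \Cref{thm:hypercontractivity} holds for any reversible semigroup, the Hölder step of \Cref{lem:hypercontractivity} gives $\norm{P_tg_i}_{2,\mu}^2\le\norm{g_i}_{1,\mu}^{2\theta_t}\norm{g_i}_{2,\mu}^{2(1-\theta_t)}$ with $\theta_t=\tanh(\rho_{\LSI}t)$. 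The remaining time integral is estimated exactly as in \Cref{lem:l1_l2_talagrand}, yielding
\begin{align*}
\int_0^T\e^{-2\rho_{\CSGE}t}\norm{P_tg_i}_{2,\mu}^2\dif t\lesssim \frac1{\rho_{\LSI}}\frac{\norm{g_i}_{2,\mu}^2}{1+\log(\norm{g_i}_{2,\mu}/\norm{g_i}_{1,\mu})}.
\end{align*}
This step uses $\e^{-2\rho_{\CSGE}t}\le\e^2$ on $[0,T]$, $\tanh x\ge\tanh(1)x$ on $[0,1]$, and $1-\e^{-2\rho_{\LSI}T}\gtrsim\rho_{\LSI}T$. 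Substituting $\norm{g_i}_{2,\mu}^2=I_{w,i}(f)$ and $\norm{g_i}_{1,\mu}=\E[\mu]{\abs{\delta_{w,i}}f}$ gives \eqref{eq:l1_l2_talagrand_2_sch}.

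For \eqref{eq:l1_l2_talagrand_3_sch}, before integrating in $t$ I apply Hölder's inequality to the sum over $i$:
\begin{align*}
\sum_i\norm{g_i}_1^{2\theta_t}\norm{g_i}_2^{2(1-\theta_t)}\le J_w(f)^{\theta_t}I_w(f)^{1-\theta_t}.
\end{align*}
The same integral estimate then gives $\Var[\mu]{f}\lesssim\frac{\rho_{\LSI}+\kappa_0}{b\rho_{\LSI}^2}\frac{I_w(f)}{1+\log(I_w/J_w)}$. Converting $I_w(f)\le \frac1b I(f)$ finishes the proof.

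I do not expect a genuine obstacle. The one point that needs checking is that nothing in the original proof used a sign or single-edge structure of $\delta_i$, which would fail for blocks $U_i$ containing several generators. Here $g_i$ is defined directly as a nonnegative function, and \ref{eq:0-CSGE-2} is assumed in exactly the form the argument needs, so this check should go through. I would also confirm that connectedness of the Schreier graph plays no role beyond making $\mu$ the unique stationary measure, so that $\Var[\mu]{P_Tf}\to0$ and the Poincar\'e inequality is consistent.
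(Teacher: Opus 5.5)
Your proposal is correct and follows the paper's proof essentially step for step: the same choice $T=(\rho_{\LSI}+\kappa_0)^{-1}$, the comparison $\Gamma\le\frac1b\Gamma_w$, the squared and $\mu$-averaged form of \ref{eq:0-CSGE-2} applied to $g_i=\abs{\delta_{w,i}}f$, hypercontractivity with H\"older interpolation at $\theta_t=\tanh(\rho_{\LSI}t)$, and H\"older over $i\in\mathcal A$ followed by $I_w\le\frac1b I$ for the second inequality. Your check that nothing uses a sign or single-edge structure of $\delta_i$ is exactly why the hypercube argument transfers verbatim to blocks $U_i$.
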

\begin{corollary}[KKL]\label{cor:KKL_sch}
    Let $\Sch(G,X,U)$ be a connected Schreier graph.
    Let $\mu$ be a distribution supported on $X$ and $\{q_u: X \mapsto \R_+\}_{u \in U}$ be transition rates reversible for $\mu$.
    Assume~\ref{eq:0-CSGE-2} and the log-Sobolev inequality hold with constants $\rho_{\CSGE} \in \mathbb{R}$ and $\rho_{\LSI} > 0$ respectively.
    Given a collection of weight functions $\{w_u:X \to \mathbb{R}_+\}_{u \in U}$, if both transition rates $\{q_u\}_{u\in U}$ and $\{w_u\}_{u\in U}$ are $b$-bounded for some $b \in (0,1]$, then for any Boolean function $f:X\mapsto \{\pm 1\}$,
    \begin{align}\label{eq:KKL_sch_1}
        I_w(f) \geq \frac{C(b)\cdot \rho_{\LSI}^2}{\rho_{\LSI}+\kappa_0} \cdot \log (1/\+M_w(f))\cdot \Var[\mu]{f},
    \end{align}
    where $\+M_w(f) = \max_{i\in\mathcal A} I_{w,i}(f)$ and $\kappa_0 = \max\{-\rho_{\CSGE},0\}$.
    Hence, by applying the elementary implication $x/\log(1/x)\geq y\implies x\gtrsim y\log(1/y)$, the following holds,
    \begin{align}\label{eq:KKL_sch_2}
        \+M_w(f) \geq \Omega_{b}(1) \cdot \frac{\rho_{\LSI}^2}{(\rho_{\LSI}+\kappa_0)d}\log\tp{\frac{(\rho_{\LSI}+\kappa_0)d}{\rho_{\LSI}^2}} \cdot \Var[\mu]{f}.
    \end{align}
\end{corollary}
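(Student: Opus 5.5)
The plan mirrors the hypercube proof of \Cref{cor:KKL}, using \eqref{eq:l1_l2_talagrand_3_sch} in \Cref{lem:l1_l2_talagrand_sch} as a black box. The only new ingredient is comparing the coordinate influences $I_{w,i}(f)$ and the squared influence $J_w(f)$ for a Boolean $f:X\to\{\pm1\}$. Here coordinate $i$ is a block $U_i$ of generators rather than a single flip.

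First, for Boolean $f$ I would write, for every $x$,
\[
\Gamma_{w,i}(f)(x)=2\sum_{u\in U_i}w_u(x)q_u(x)\ind\{f(ux)\neq f(x)\}.
\]
Since $w_u,q_u\in[b,1/b]$, this lies between $2b^2 N_i(x)$ and $2b^{-2}N_i(x)$, where $N_i(x)=\#\{u\in U_i: f(ux)\ne f(x)\}$ counts the pivotal generators in block $i$. Hence $|\delta_{w,i}|f=\sqrt{\Gamma_{w,i}(f)}\le \sqrt2\, b^{-1}N_i(x)$, using $\sqrt{N}\le N$ for the integer $N=N_i(x)\ge0$. Also $I_{w,i}(f)\ge 2b^2\E[\mu]{N_i}$.

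Second, I would chain these bounds. Writing $p_i=\E[\mu]{N_i}$, we get
\[
J_w(f)=\sum_i\tp{\E[\mu]{|\delta_{w,i}|f}}^2\le \frac{2}{b^2}\sum_i p_i^2\le \frac{2}{b^2}\max_i p_i\sum_i p_i\le \frac{2}{b^2}\cdot\frac{\+M_w(f)}{2b^2}\cdot\frac{I_w(f)}{2b^2}.
\]
This gives $I_w(f)\,\+M_w(f)\ge 2b^6 J_w(f)$, exactly as in \eqref{eq:IM_to_J}.

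Third, I would plug this into \eqref{eq:l1_l2_talagrand_3_sch}. That inequality gives $I(f)\gtrsim \frac{b^2\rho_{\LSI}^2}{\rho_{\LSI}+\kappa_0}\Var[\mu]{f}\,(1+\log\frac{I_w}{J_w})$, and $\log(I_w/J_w)\ge \log(1/\+M_w)+\log(2b^6)$. The additive constant $\log(2b^6)$ is absorbed as follows. If $\+M_w(f)\le c(b)$ is small, then $1+\log(I_w/J_w)\gtrsim_b\log(1/\+M_w)$. Otherwise $\log(1/\+M_w)=O_b(1)$, and the Poincar\'e-type bound implied by the same inequality (the factor $1+\log\ge1$, since $I_w\ge J_w$) already suffices. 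Finally $I(f)\le b^{-1}I_w(f)$, which yields \eqref{eq:KKL_sch_1} with a constant $C(b)$.

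For \eqref{eq:KKL_sch_2}, I would use $I_w(f)\le d\,\+M_w(f)$ to get $\+M_w/\log(1/\+M_w)\ge y:=\frac{C(b)\rho_{\LSI}^2}{(\rho_{\LSI}+\kappa_0)d}\Var[\mu]{f}$. The elementary implication then gives $\+M_w\gtrsim y\log(1/y)$. The variance factor inside the logarithm is handled as in the hypercube corollary: since $\Var\le1$, $\log(1/y)\ge\log\frac{(\rho_{\LSI}+\kappa_0)d}{C(b)\rho_{\LSI}^2}$.

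The main obstacle is the first step. On a Schreier graph a block $U_i$ may contain several generators, so $|\delta_{w,i}|f$ is a square root of a sum, not a single term. Bounding $\E[\mu]{|\delta_{w,i}|f}$ by $\E[\mu]{N_i}$ needs the integrality trick $\sqrt N\le N$. This is the point where Booleanity and $b$-boundedness of both $q$ and $w$ are essential.
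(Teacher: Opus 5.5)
Your proposal is correct and follows essentially the paper's proof: you show $I_w(f)\,\mathcal{M}_w(f)\gtrsim_b J_w(f)$ from the discreteness of Boolean gradients, then invoke \eqref{eq:l1_l2_talagrand_3_sch} and use $I_w(f)\le d\,\mathcal{M}_w(f)$. The only difference is cosmetic: the paper notes $\Gamma_{w,i}(f)(x)\in\{0\}\cup[2b^2,\infty)$, which gives $\sqrt{\Gamma_{w,i}(f)}\le \Gamma_{w,i}(f)/(\sqrt2\,b)$ and hence $I_w\mathcal{M}_w\ge 2b^2J_w$ directly, using only the lower bound $w_uq_u\ge b^2$; your route through $N_i$ and $\sqrt N\le N$ gives the constant $2b^6$ instead, and your explicit absorption of the additive $\log(2b^6)$ term is a detail the paper leaves implicit.
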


\begin{theorem}[Eldan--Gross inequality]\label{thm:eldan_gross_sch}
      Let $\Sch(G,X,U)$ be a connected Schreier graph.
     Let $\mu$ be a distribution supported on $X$ and $\{q_u:X \mapsto \R_+\}_{u\in U}$ be transition rates reversible for $\mu$.
    Assume both~\ref{eq:0-CSGE-2} and~\ref{eq:infty-CSGE-2} hold with constant $\rho_{\CSGE} \in \mathbb{R}$, the log-Sobolev inequality holds with constant $\rho_{\LSI} > 0$, and $\{w_u:X \to \mathbb{R}_+\}_{u \in U}$ is a collection of weight functions that are $b$-bounded for some $b \in (0,1]$.
    For any Boolean function $f:X \to \{\pm 1\}$, it holds
   \begin{align}\label{eq:eldan_gross-sch}
        A_w(f)
        \gtrsim_{b,\rho_{\CSGE}}
        \frac{\sqrt{\rho_{\LSI}}\min\set{1,\rho_{\LSI}}}
        {\sqrt{1+\log_+\tp{1/\rho_{\LSI}}}}
        \cdot \Var[\mu]{f}\sqrt{\log\tp{1 + \frac e{J_w(f)}}}.
   \end{align}
\end{theorem}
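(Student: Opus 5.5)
The plan is to transport the hypercube proof of \Cref{thm:eldan_gross} verbatim to the Schreier setting. We use only results already stated for Schreier graphs: the local Bobkov inequality (\Cref{lem:local_bobkov-sch}), Talagrand's variance--surface-area inequality (\Cref{lem:improved_talagrand_sch}), and the $L^1$--$L^2$ inequality (\Cref{lem:l1_l2_talagrand_sch}). We also need the $J$-contraction. Its proof is the one-line argument of \Cref{prop:CSGE-to-J-contraction}, which uses only \ref{eq:infty-CSGE-2} and the definition $J_w(f)=\norm{\E[\mu]{\abs{\nabla_w} f}}_2^2$, so it carries over unchanged. The same holds for the Poincar\'e inequality, which follows from LSI. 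The generator $L_w$ and the group structure enter only through these black boxes, so the remaining argument is purely one-dimensional calculus in the time variable $s$.

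Fix Boolean $f$ and set $h=(1+f)/2$. \textbf{Case 1:} $\Var[\mu]{f}\le\sqrt{J_w(f)}$. We apply \Cref{lem:improved_talagrand_sch} to $h$, noting that $A_w(h)=A_w(f)/2$ and $\Var[\mu]{h}=\Var[\mu]{f}/4$. We then use $\log(e/V)\gtrsim\log(1+e/V^2)\ge\log(1+e/J_w)$ in this case. The bookkeeping $\sqrt b\rho_\LSI/\sqrt{\rho_\LSI+\kappa}\gtrsim_{b,\rho_\CSGE}\sqrt{\rho_\LSI}\min\{1,\rho_\LSI\}$ holds because \ref{eq:0-CSGE-2} implies SGE with the same constant. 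I expect this to need a remark: the analogue of \Cref{fact:CSGE_to_SGE} requires Minkowski's inequality with $\abs{\delta_{w,i}}=\sqrt{\Gamma_{w,i}}$, which is fine since $\norm{\abs{\nabla_w}f}_2=\sqrt{\Gamma_w f}$.

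\textbf{Case 2:} $\Var[\mu]{f}>\sqrt{J_w(f)}$. Combining Poincar\'e with \eqref{eq:l1_l2_talagrand_3_sch} applied to $P_sf$, we get a lower bound on $I(P_sf)$ in terms of $\Var[\mu]{P_sf}(1+\log(I_w(P_sf)/J_w(P_sf)))$. We then bound $I_w(P_sf)\gtrsim b\rho_\LSI\Var[\mu]{P_sf}$ and $J_w(P_sf)\le e^{2\kappa s}J_w(f)$ by $J$-contraction. Let $T$ be the first time the variance halves. If $\kappa T\le1$, integrating $\frac{d}{ds}\Var[\mu]{P_sf}=-2I(P_sf)$ over $[0,T]$ gives $CT(1+\log_+(\Var[\mu]{f}/J_w(f)))\lesssim1$. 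Meanwhile, taking expectations in \Cref{lem:local_bobkov-sch} with $\+I(x)\ge x(1-x)$ gives $\Var[\mu]{f}-\Var[\mu]{P_Tf}\lesssim\sqrt{\gamma(T)}A_w(f)\lesssim_b\sqrt T A_w(f)$. Combining the two yields $A_w(f)\gtrsim\Var[\mu]{f}/\sqrt T$, and hence the claim, using $\log(\Var/J)\gtrsim\log(1+e/J)$ when $\Var>\sqrt J$ (since then $\Var^2>J$). If $\kappa T>1$, we run the same argument on $[0,1/\kappa]$, where $\gamma(1/\kappa)=O_{b,\rho_\CSGE}(1)$, and finish with AM--GM: $\rho_\LSI+C\ell\ge\sqrt{\rho_\LSI C\ell}$.

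The main obstacle is verifying that nothing in the Schreier setting breaks the black boxes, specifically that the local Bobkov inequality (whose proof, unlike the others, uses the explicit form of $L$) has already been established in \Cref{lem:local_bobkov-sch}. Given that, a smaller point to check is the variance identity $\frac{d}{ds}\Var[\mu]{P_sf}=-2\E[\mu]{\Gamma(P_sf)}$ together with $I\le I_w/b$. Both follow from reversibility and $b$-boundedness of $w_u$, as on the cube. All constants then depend only on $b$, $\rho_\CSGE$, $\rho_\LSI$, exactly as in \Cref{thm:eldan_gross}.
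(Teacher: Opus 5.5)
Your proposal is correct and is essentially the paper's own proof. The appendix transports the hypercube argument verbatim: the same split on whether $\mathrm{Var}_\mu(f)\le\sqrt{J_w(f)}$ (handled by the variance--surface-area inequality for $h=(1+f)/2$), the same halving time $T$, and the same subcases $\kappa T\le1$ and $\kappa T>1$, with the latter run up to $T^\star=1/\kappa$ and closed by AM--GM. Your remarks that ($0$-CSGE) gives SGE via Minkowski and that $J$-contraction follows from ($\infty$-CSGE) supply steps the paper uses only implicitly in the Schreier setting.
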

\subsection{Establishing CSGE}
Similar to the Boolean hypercube case, we also define the canonical transition rates and weights for Schreier graphs as follows:
\begin{equation}\label{eq:canonical_SGraph}
    \forall u\in U, x\in X, q_u^\star(x) = \frac 1{2}\sqrt{\frac{\mu(x^u)}{\mu(x)}}\quad \text{and}\quad w_u^\star(x) = 2q_u^\star(x) = \sqrt{\frac{\mu(x^u)}{\mu(x)}}.
\end{equation}
The semigroup associated with the canonical transition rates is reversible with respect to $\mu$. Together with the canonical weights, these rates yield simple expressions for gradients and weighted carr\'e du champ operators.
We introduce the following notations for the canonical transition rates and weights, for any $u\in U$ and $i\in \+A$,
$$
    \delta^\star_u f(x) = q_u^\star(x)(f(x^u) - f(x)), 
    \quad \grad_i^\star f(x) = (\delta^\star_u f(x))_{u\in U_i};
$$
$$
    \quad \abs{\delta^\star_i} f(x) = \norm{\grad_i^\star f(x)}_2,
    \quad \abs{\nabla^\star} f(x) = (\abs{\delta_i^\star} f(x))_{i\in\mathcal A};
$$
$$
    \Gamma_i^\star f(x) = \Gamma_{w^\star,i} f(x)=  \sum_{u\in U_i} \delta_u^\star f(x)^2, \quad L f = \sum_{u\in U} \delta^\star_u f.
$$
Here we follow the notation in \Cref{sec:CSGE_boolean_hypercube} and use a superscript $\star$ to denote operators associated with canonical transition rates and weights.

We define $I: U\mapsto \+A$ to map each operator $u$ to its index, i.e. $u\in U_{I(u)}$.
Accordingly, one natural commutativity assumption about the operators is that
\begin{equation}\label{eq:commutativity_assumption_CSGE}
    \forall u,v\in U, x\in X, I(u)\neq I(v) \implies x^{uv} = x^{vu},
\end{equation}
where we use $x^{uv}$ to denote $(x^u)^v$.
Under this assumption, we have a similar square property for the distribution $\mu$ over Schreier graphs.
\begin{fact}\label{fact:square_property_SGraph}
    Suppose \eqref{eq:commutativity_assumption_CSGE} holds for Schreier graph $(G,X,U)$.
    For any $u,v\in U$ with $I(u)\neq I(v)$, we have that
    $$
        q_u^\star(x)q_v^\star(x^u) = q_v^\star(x)q_u^\star(x^v).
    $$
    holds for any $x\in X$.
\end{fact}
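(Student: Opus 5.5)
The identity follows by direct substitution of the definition of $q^\star$. Both sides are products of square roots of ratios of $\mu$-values, so I will expand each product, use the commutativity assumption \eqref{eq:commutativity_assumption_CSGE} to identify the endpoint configurations, and see that the two products coincide. This parallels \Cref{fact:square} on the hypercube, where the analogous identity holds because $x^{ij}=x^{ji}$.

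Concretely, by \eqref{eq:canonical_SGraph} and the convention $x^{uv}=(x^u)^v$, the left-hand side is
\[
q_u^\star(x)\,q_v^\star(x^u)=\frac14\sqrt{\frac{\mu(x^u)}{\mu(x)}}\sqrt{\frac{\mu(x^{uv})}{\mu(x^u)}}=\frac14\sqrt{\frac{\mu(x^{uv})}{\mu(x)}}.
\]
Here the intermediate factor $\mu(x^u)$ telescopes; this uses only that $\mu$ is supported on all of $X$, so every ratio is finite and positive. Similarly, the right-hand side is
\[
q_v^\star(x)\,q_u^\star(x^v)=\frac14\sqrt{\frac{\mu(x^{vu})}{\mu(x)}}.
\]
Since $I(u)\neq I(v)$, assumption \eqref{eq:commutativity_assumption_CSGE} gives $x^{uv}=x^{vu}$, so the two expressions are equal.

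There is no genuine obstacle. The only point needing care is the composition convention: $x^{uv}$ means first apply $u$, then $v$. The left-hand side therefore ends at $x^{uv}$ and the right-hand side at $x^{vu}$, and the hypothesis is used exactly to equate these two endpoints. I will also remark that no condition beyond full support is needed. In particular, the identity does not rely on reversibility or on the partition structure of $U$, beyond the requirement $I(u)\neq I(v)$ that triggers the commutativity assumption.
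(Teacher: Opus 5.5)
Your proof is correct and is exactly the direct computation the paper intends; the paper states this fact without proof, as the Schreier-graph analogue of \Cref{fact:square}. Both sides telescope to $\frac14\sqrt{\mu(x^{uv})/\mu(x)}$ and $\frac14\sqrt{\mu(x^{vu})/\mu(x)}$ respectively, and these agree by \eqref{eq:commutativity_assumption_CSGE}, with your reading of the composition convention $x^{uv}=(x^u)^v$ matching the paper's.
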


In this section, we generalize the sufficient condition of CSGE from the Boolean hypercube to Schreier graphs.
By decomposing the relevant quantity into the local curvature term and other influence terms, we can derive a matrix-based upper bound for CSGE.
As in the Boolean hypercube setting, with the canonical transition rates and weights, the matrix can be bounded entrywise in terms of the Dobrushin influence matrix and marginal bounds.
In the more general Schreier graph setting, however, an additional lower bound on the local curvature is required.
\begin{theorem}\label{thm:SGraph_CSGE_general}
Suppose \eqref{eq:commutativity_assumption_CSGE} holds for Schreier graph $(G,X,U)$.
Let $\mu$ be a distribution on $X$ with full support. 
Let $M^\star,D^\star,D^\star_{\-{loc}}\in \R^{d \times d}$ be matrix fields satisfying that
\begin{equation}\label{eq:def_M_star_Sgraph}
    M^\star := \tp{\sup_x\norm{\tp{\frac{\ind[i\neq j]}{2}\tp{\sqrt{\frac{\mu(x^{uv})}{\mu(x^v)}}-\sqrt{\frac{\mu(x^u)}{\mu(x)}}}}_{u\in U_i, v\in U_j}}_2}_{i,j\in\mathcal A};
\end{equation}
\begin{equation}\label{eq:def_D_star_Sgraph}
    D^\star := \diag\left\{\inf_{x\in X,u\in U_i} \frac {1}{2}\sum_{v\in U\setminus U_i} \tp{\sqrt{\frac{\mu(x^{uv})}{\mu(x^u)}} - \sqrt{\frac{\mu(x^v)}{\mu(x)}}}\right\}_{i\in\mathcal A}.
\end{equation}
\begin{equation}\label{eq:def_D_star_loc_Sgraph}
    D^\star_{\-{loc}} := \diag\left\{\inf_{f,x} \frac{\tp{\sqrt{\Gamma_i^\star f}L_{U_i}\sqrt{\Gamma_i^\star f} - \Gamma_i^\star(f,L_{U_i}f)}(x)}{\Gamma_{i}^\star f (x)} \right\}_{i\in\mathcal A} \quad\text{where}\quad L_S(f) = \sum_{u\in S} \delta_u^\star f.
\end{equation}
Then, under the canonical transition rates and weights, for any $\tau \in \R_+$, \ref{eq:CSGE-2} holds with constant
\begin{align*}
    \rho =
    \lambda_{\min}\left( D_{\-{loc}}^\star + D^\star - \frac{1}{2}\left( M^\star + (M^\star)^\top \right) \right),
\end{align*}
where $\lambda_{\min}(A)$ denotes the minimum eigenvalue of a symmetric matrix $A$.
\end{theorem}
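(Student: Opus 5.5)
We follow the proof of \Cref{thm:CSGE_boolean_hypercube_w=q}. Fix $f$, $\tau,t\ge 0$, and set
\[
F_s=e^{-2\rho s}\norm{P_{\tau+s}\abs{\nabla^\star}P_{t-s}f}_2^2 .
\]
It suffices to show $\frac{d}{ds}F_s\ge 0$ almost everywhere on $[0,t]$. For almost-everywhere differentiability we reuse the analyticity argument. The maps $s\mapsto \nabla_i^\star P_{t-s}f(x)$ are real analytic. So is $\Gamma_i^\star P_{t-s}f(x)$, and it either vanishes identically or only on a countable set. Hence $\abs{\delta_i^\star}P_{t-s}f(x)=\sqrt{\Gamma_i^\star P_{t-s}f(x)}$ is differentiable off a null set. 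Writing $g=P_{t-s}f$, the derivative reduces as in \eqref{eq:derivative-CSGE} to controlling the vector field
\[
\Big(L\abs{\delta_i^\star}g-\frac{\Gamma_i^\star(g,Lg)}{\abs{\delta_i^\star}g}\Big)_{i\in\mathcal A}.
\]
The goal is the Schreier analogue of \Cref{lem:abs-commutator}: pointwise,
\[
L\abs{\delta_i^\star}g-\frac{\Gamma_i^\star(g,Lg)}{\abs{\delta_i^\star}g}\;\ge\;\big(D^\star_{\-{loc}}+D^\star\big)_{ii}\abs{\delta_i^\star}g-\sum_{j\ne i}M^\star_{ji}\abs{\delta_j^\star}g .
\]
Given this, the conclusion follows as before. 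Positivity of $P_{\tau+s}$ moves the inequality inside, and the entries are uniform bounds, so they commute with $P_{\tau+s}$. Pairing with $P_{\tau+s}\abs{\nabla^\star}g\ge0$ and symmetrizing gives $\frac{d}{ds}F_s\ge 2e^{-2\rho s}v^\top\big(D^\star_{\-{loc}}+D^\star-\tfrac12(M^\star+M^{\star\top})-\rho I\big)v\ge0$.

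To prove the pointwise inequality, split $L=L_{U_i}+L_{U\setminus U_i}$. Multiplying through by $\abs{\delta_i^\star}g$, the left side becomes
\[
\sqrt{\Gamma_i^\star g}\,L\sqrt{\Gamma_i^\star g}-\Gamma_i^\star(g,Lg).
\]
The $U_i$-part equals $\sqrt{\Gamma_i^\star g}L_{U_i}\sqrt{\Gamma_i^\star g}-\Gamma_i^\star(g,L_{U_i}g)$, which is at least $(D^\star_{\-{loc}})_{ii}\Gamma_i^\star g$ by definition \eqref{eq:def_D_star_loc_Sgraph}. This term replaces the one-dimensional computation \eqref{eq:CSGE_non-commutator_term}.

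For the cross part, fix $v\in U_j$ with $j\neq i$. By Cauchy--Schwarz,
\[
\sqrt{\Gamma_i^\star g(x)}\sqrt{\Gamma_i^\star g(x^v)}\ge\inner{\nabla_i^\star g(x)}{\nabla_i^\star g(x^v)} .
\]
Then expand $\nabla_i^\star g(x^v)$ componentwise using $\delta_u^\star g(x^v)=q_u^\star(x^v)(g(x^{vu})-g(x^v))$. By the commutativity assumption \eqref{eq:commutativity_assumption_CSGE}, $x^{vu}=x^{uv}$, and \Cref{fact:square_property_SGraph} lets us cancel the second-order terms $g(x^{uv})$ against those in $\delta_u^\star\delta_v^\star g$, exactly as in \Cref{lem:commutator}. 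What remains is a first-order expression of two kinds. The diagonal kind has coefficient $\frac12\big(\sqrt{\mu(x^{uv})/\mu(x^u)}-\sqrt{\mu(x^v)/\mu(x)}\big)$ multiplying $(\delta_u^\star g)^2$; summing over $v\in U\setminus U_i$, these contribute at least $(D^\star)_{ii}\Gamma_i^\star g$ via \eqref{eq:def_D_star_Sgraph}. The off-diagonal kind is a bilinear form $\nabla_i^\star g(x)^\top N^{ij}(x)\nabla_j^\star g(x)$, where $N^{ij}$ is the block matrix inside \eqref{eq:def_M_star_Sgraph}. Its absolute value is at most $M^\star_{ij}\abs{\delta_i^\star}g\abs{\delta_j^\star}g$ by the operator-norm bound. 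Dividing by $\abs{\delta_i^\star}g$ gives the claim, possibly with the transposed index convention, which does not matter after symmetrization.

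The main obstacle is this cross-term bookkeeping. Three things need care: the Cauchy--Schwarz step must not lose the first-order terms, the resulting diagonal coefficient must match $D^\star$ exactly, and the off-diagonal coefficient must match the block matrix defining $M^\star$. This requires repeatedly using the identity $q_u^\star(x)q_v^\star(x^u)=q_v^\star(x)q_u^\star(x^v)$ to convert ratios $q_u^\star(x^v)/q_u^\star(x)$ into those coefficients. I would first carry out the Boolean case with $\abs{U_i}=1$ as a sanity check against \Cref{lem:abs-commutator}.
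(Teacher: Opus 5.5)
Your proposal is correct and follows essentially the same route as the paper: the same potential $F_s$ with the analyticity argument for differentiability, the split $L=L_{U_i}+L_{U\setminus U_i}$ with the local part absorbed by $D^\star_{\mathrm{loc}}$, and, for each $v\notin U_i$, Cauchy--Schwarz followed by the commutator identity (via commutativity and the square property), which yields the $D^\star$ diagonal term and the $M^\star$ blocks; this is exactly \Cref{lem:abs-commutator-Sgraph}. One small point: your derivation, with $u\in U_i$ and $v\in U_j$, naturally produces the coefficient $M^\star_{ij}$, as in the paper, rather than the $M^\star_{ji}$ in your displayed target, and, as you note, this is immaterial once the quadratic form is symmetrized.
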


We first prove the following lemma, which provides a lower bound for the commutator-type term arising in the proof.
\begin{lemma}\label{lem:abs-commutator-Sgraph}
    Suppose \eqref{eq:commutativity_assumption_CSGE} holds for Schreier graph $(G,X,U)$.
    Let $M^\star,D^\star,D^\star_{\-{loc}}$ be matrix fields defined in \Cref{thm:SGraph_CSGE_general}.
    For any function $g: X \to \R$, it holds that for any $i\in\mathcal A$,
    \begin{align*}
        \abs{\delta^\star_i} g L \abs{\delta^\star_i} g - \inner{\grad_i^\star g}{\grad_i^\star L g} \geq ((D^\star_{\-{loc}})_{ii}+D^\star_{ii})\tp{\abs{\delta_i^\star} g}^2 - \sum_{j\neq i} M^\star_{ij} \abs{\delta_j^\star} g\abs{\delta^\star_i} g .
    \end{align*}
\end{lemma}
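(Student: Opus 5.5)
Fix $x\in X$ and $i\in\mathcal A$, and split the generator as $L = L_{U_i} + L_{U\setminus U_i}$. The left-hand side then separates into a local part
\[
\abs{\delta^\star_i} g\, L_{U_i}\abs{\delta^\star_i} g - \inner{\grad_i^\star g}{\grad_i^\star L_{U_i} g},
\]
which by definition of $D^\star_{\-{loc}}$ (note $\Gamma_i^\star(g,L_{U_i}g)=\inner{\grad_i^\star g}{\grad_i^\star L_{U_i}g}$ and $(\abs{\delta_i^\star}g)^2=\Gamma_i^\star g$) is at least $(D^\star_{\-{loc}})_{ii}(\abs{\delta_i^\star}g)^2$. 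What remains is a nonlocal part
\[
\sum_{v\notin U_i}\Bigl(\abs{\delta^\star_i} g\cdot q_v^\star(x)\bigl(\abs{\delta^\star_i} g(x^v)-\abs{\delta^\star_i} g(x)\bigr) - \inner{\grad_i^\star g}{\grad_i^\star \delta_v^\star g}(x)\Bigr).
\]
This is handled exactly as in \Cref{lem:abs-commutator}, with the scalar sign replaced by the unit vector $e=\grad_i^\star g(x)/\norm{\grad_i^\star g(x)}_2$. The case $\abs{\delta_i^\star}g(x)=0$ is trivial.

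For $u\in U_i$ and $v\notin U_i$, use \eqref{eq:commutativity_assumption_CSGE} to get $x^{uv}=x^{vu}$. Introduce $H_{uv}(x)=q_u^\star(x^v)/q_u^\star(x)$, which by \Cref{fact:square_property_SGraph} equals $q_v^\star(x^u)/q_v^\star(x)$. Expanding $\delta_u^\star\delta_v^\star g(x)$ then gives
\[
\delta_u^\star\delta_v^\star g(x)=q_v^\star(x)\bigl(\delta_u^\star g(x^v)-\delta_u^\star g(x)\bigr)+\text{(first-order terms)}.
\]
Here the mixed second difference $g(x^{uv})$ appears with coefficient $q_u^\star(x)q_v^\star(x^u)=q_v^\star(x)q_u^\star(x^v)$, matching the term $q_v^\star \delta_u^\star g(x^v)$. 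The second-order contribution is therefore
\[
q_v^\star(x)\Bigl(\abs{\delta_i^\star}g(x)\,\norm{\grad_i^\star g(x^v)}_2-\inner{\grad_i^\star g(x)}{\grad_i^\star g(x^v)}\Bigr)\ge 0
\]
by Cauchy--Schwarz; this is the vector analogue of $\abs{a}-\sgn(b)a\ge0$. The remaining first-order terms are linear in $\delta_u^\star g(x)$ and $\delta_v^\star g(x)$, with coefficients of the form $q_v^\star(x)(H_{uv}-1)$ and $q_u^\star(x)(H_{uv}-1)$.

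The diagonal piece $\sum_{v\notin U_i}\sum_{u\in U_i}(\delta_u^\star g)^2 (H_{uv}-1)q_v^\star$ needs care, because its coefficient $(H_{uv}-1)q_v^\star=\tfrac12\bigl(\sqrt{\mu(x^{uv})/\mu(x^u)}-\sqrt{\mu(x^v)/\mu(x)}\bigr)$ depends on $u$. Taking the infimum over $u\in U_i$ gives the lower bound $D^\star_{ii}\norm{\grad_i^\star g}_2^2$, so the diagonal piece contributes at least $D^\star_{ii}(\abs{\delta_i^\star}g)^2$.

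The cross piece is
\[
-\sum_{j\ne i}\sum_{u\in U_i,v\in U_j}\delta_u^\star g\,\delta_v^\star g\,\cdot\tfrac12\Bigl(\sqrt{\tfrac{\mu(x^{uv})}{\mu(x^v)}}-\sqrt{\tfrac{\mu(x^u)}{\mu(x)}}\Bigr).
\]
Writing this as a bilinear form in $\grad_i^\star g$ and $\grad_j^\star g$ and applying the operator-norm bound, it is at least $-\sum_{j\ne i}M^\star_{ij}\abs{\delta_i^\star}g\,\abs{\delta_j^\star}g$.

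The main obstacle is the bookkeeping: correctly identifying which coefficients, via the square property, become the entries of $D^\star$ and which become the matrix blocks in $M^\star$ (including the orientation, i.e. $M^\star_{ij}$ versus $M^\star_{ji}$). To settle this I would redo the expansion of \eqref{eq:abs-commutator_eq} with vectors in place of scalars.
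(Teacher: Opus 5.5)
Your proposal is correct and follows essentially the same route as the paper: you split off the $L_{U_i}$ part via the definition of $D^\star_{\-{loc}}$, kill the second-order terms of the nonlocal part by Cauchy--Schwarz, use the commutator identity $(\delta_v^\star\delta_u^\star-\delta_u^\star\delta_v^\star)g=(H_{uv}-1)(q_v^\star\delta_u^\star-q_u^\star\delta_v^\star)g$, bound the diagonal piece by $D^\star_{ii}$, and bound the cross piece by the block operator norm. For the diagonal piece, sum over $v$ before taking the infimum over $u$, as in the definition of $D^\star_{ii}$. The orientation you worried about is already right: the cross coefficient $(H_{uv}-1)q_u^\star=q_u^\star(x^v)-q_u^\star(x)$ with $u\in U_i$, $v\in U_j$ is exactly the $(u,v)$ entry of the block defining $M^\star_{ij}$.
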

\begin{proof}[Proof of \Cref{lem:abs-commutator-Sgraph}]
    Fix $i\in\mathcal A$.
    Recall that the definition of $D_{\-{loc}}^\star$ in \eqref{eq:def_D_star_loc_Sgraph} gives
    \begin{equation}\label{eq:local_term_Sgraph}
        \abs{\delta_i^\star} g\cdot L_{U_i} \abs{\delta_i^\star} g - \inner{\nabla_i^\star g}{\nabla^\star_i L_{U_i}g} \geq (D_{\-{loc}}^\star)_{ii}\cdot(\abs{\delta_i^\star} g)^2.
    \end{equation}
    Let $Q = U\setminus U_i$.
    It is sufficient to lower bound the non-local term $\abs{\delta_i^\star} g\cdot L_{Q} \abs{\delta_i^\star} g - \inner{\nabla_i^\star g}{\nabla^\star_i L_Qg}$.
    
    For notational convenience, we occasionally omit the dependence on $x$.
    Define $H_{uv}(x) = \frac{q_u^\star(x^v)}{q_u^\star(x)}$.
    By \Cref{fact:square_property_SGraph}, for any $u,v\in U$ with $I(u)\neq I(v)$, $H_{uv}(x) = H_{vu}(x)$ holds.
    Fix $u,v\in U$ with $I(u)\neq I(v)$ and $x\in X$, since $x^{uv} = x^{vu}$ and $H_{uv}(x) = H_{vu}(x)$, we have that
    \begin{align}
        (\delta_v^\star \delta_u^\star - \delta_u^\star \delta_v^\star)g &= q_v^\star q_u^\star\tp{H_{uv}(g(x^{uv}) - g(x^v)) - (g(x^u)-g(x))} \nonumber
        \\&-q_v^\star q_u^\star\tp{H_{vu}(g(x^{vu}) - g(x^u)) - (g(x^v)-g(x))} \nonumber
        \\&= q_v^\star q_u^\star(H_{uv}-1)(g(x^u) - g(x^v)) \nonumber
        \\&= (H_{uv}-1)(q_v^\star \delta_u^\star - q_u^\star  \delta_v^\star) g. \label{eq:commutator-SGraph}
    \end{align}
    Let $Q = U\setminus U_i$.
    Hence, we have that for any $v\in Q$,
    \begin{align}
        |\delta_i^\star| g \cdot \delta_v^\star |\delta_i^\star| g - \inner{\nabla_i^\star g}{\nabla_i^\star \delta_v^\star g}
        &= q_v^\star (|\delta_i^\star| g\cdot  |\delta_i^\star| g(x^v) - \inner{\nabla_i^\star g}{\nabla_i^\star g(x^v)}) \nonumber
        \\&- \inner{\grad_i^\star g}{\grad_i^\star \delta_v^\star g  + q_v^\star(\grad_i^\star g - \grad_i^\star g(x^v))} \nonumber
        \\&\geq \sum_{u\in U_i} \delta_u^\star g \cdot \tp{\delta_v^\star \delta_u^\star - \delta_u^\star \delta_v^\star} g  \nonumber
        \\&\geq \sum_{u\in U_i} \delta^\star_u g \cdot (H_{uv}-1)(q_v^\star \delta_u^\star - q_u^\star  \delta_v^\star) g,
    \end{align}
    where the first inequality follows from the Cauchy--Schwarz inequality and the second inequality follows from \eqref{eq:commutator-SGraph}.
    Now it is sufficient to lower bound 
    \begin{align}
        &\abs{\delta^\star_i} g\cdot L_Q \abs{\delta^\star_i} g -  \inner{\grad_i^\star g}{\grad_i^\star L_Q g} \nonumber
        \\\geq{} &\sum_{v\in Q}\sum_{u\in U_i} \delta^\star_u g \cdot (H_{uv}-1)\tp{q_v^\star \delta_u^\star g - q_u^\star  \delta_v^\star g} \nonumber
        \\\geq{} & D^\star_{ii}(\abs{\delta_i^\star} g)^2 - \sum_{j\neq i}\sum_{v\in U_j}\sum_{u\in U_i}(q_u^\star(x^v)-q_u^\star) \delta^\star_u g \cdot \delta^\star_v g \nonumber
        \\\geq{} &D^\star_{ii}(\abs{\delta_i^\star} g)^2 - \sum_{j\neq i} M^\star_{ij} \abs{\delta_i^\star} g\cdot \abs{\delta_j^\star} g , \label{eq:non-local_term_Sgraph}
    \end{align}
    where $D^\star$ and $M^\star$ are defined in \eqref{eq:def_D_star_Sgraph} and \eqref{eq:def_M_star_Sgraph} respectively.
    We conclude this lemma by \eqref{eq:local_term_Sgraph} and \eqref{eq:non-local_term_Sgraph}.
\end{proof}

Now we prove \Cref{thm:SGraph_CSGE_general}.
\begin{proof}[Proof of \Cref{thm:SGraph_CSGE_general}]
    Fix $f: X \to \R$ and $\tau, t\geq 0$.
    In the same way as in the proof of \Cref{thm:CSGE_boolean_hypercube_w=q}, it is sufficient to prove that $\frac d{ds}F_s \geq 0$ for any differentiable $s\in [0,t]$, where $F_s := e^{-2\rho s}\norm{P_{\tau+s}\abs{\nabla^\star}P_{t-s}f}_2^2$.
    Note that $F_s$ is continuous and differentiable almost everywhere on $[0,t]$ by identity theorem.

    Fix differentiable $s\in [0,t]$ and let $g = P_{t-s}f$.
    We have the following equation for the derivative of $F_s$,
    \begin{align}
        \frac d{ds} F_s &= 2 e^{-2\rho s} \left( - \rho \norm{P_{\tau+s} |\grad^\star| g}_2^2 
        + \inner{P_{\tau+s} |\grad^\star| g}{\frac{d}{ds} P_{\tau+s} |\grad^\star| g}
        \right) ;
        \label{eq:CSGE_potential_function_derivative-SGraph}
    \end{align}
    \begin{align}\nonumber
        \frac d{ds} P_{\tau+s} |\delta^\star_i| g
        &= P_{\tau+s} \left( L \abs{\delta^\star_i} g -  \frac{\inner{\grad_i^\star g}{\grad_i^\star L g}}{\abs{\delta^\star_i} g}\right).
    \end{align}
    By \Cref{lem:abs-commutator-Sgraph}, we have the entrywise inequality
    \begin{align}
    \label{eq:derivative-CSGE-Sgraph}
        \frac d{ds} P_{\tau+s} |\nabla^\star| g
        &\geq \left( D^\star_{\-{loc}}+D^\star - M^\star \right) P_{\tau+s}\abs{\nabla^\star}g .
    \end{align}
    Therefore, \eqref{eq:CSGE_potential_function_derivative-SGraph} and \eqref{eq:derivative-CSGE-Sgraph}, after symmetrizing the resulting quadratic form, show that
    $$
        \frac d{ds} F_s \geq 2 e^{-2\rho s} (P_{\tau+s}\abs{\nabla^\star}g)^\top  \left( D^\star_{\-{loc}}+D^\star - \frac{M^\star + (M^\star)^\top}2 - \rho I \right)(P_{\tau+s}\abs{\nabla^\star}g) .
    $$
    Hence, the coordinate-wise strong gradient estimate (\ref{eq:CSGE-2}) holds with constant $\rho \leq \lambda_{\min}(D_{\-{loc}}^\star+D^\star - \frac{1}{2}(M^\star+(M^\star)^\top))$ at all $x \in X$.
\end{proof}

\subsection{Application to \texorpdfstring{$k$-slice}{k-slice}}
In this section, we take the uniform distribution over the $k$-slice as an example and derive isoperimetric inequalities under canonical transition rates and weights through the log-Sobolev inequality and \ref{eq:CSGE-2} condition.

    Let $\mu$ be the uniform distribution on $\binom{[n]}{k}$.
    The transition rates and weights are given by $q_u^\star \equiv 1/2$ and $w_u^\star \equiv 1$.
    For each transposition $u$, let $L_u f(x)=\frac12(f(x^u)-f(x))=\delta_u^\star f(x)$. The set $U$ is closed under conjugation, so $L_uP_t=P_tL_u$. Therefore,
    \begin{align*}
        \abs{\delta^\star_u P_t f} = \abs{L_u P_t f} = \abs{P_t L_u f} \le P_t \abs{\delta^\star_u f}.
    \end{align*}
    Applying $P_\tau$ and taking the Euclidean norm over $u\in U$ proves~\ref{eq:CSGE-2} with constant $0$; the case $\tau=\infty$ follows by taking $\mu$-expectations. Minkowski and Jensen also give~\ref{eq:SGE-2} and~\ref{eq:WGE-2}, respectively.

\begin{theorem}[\cite{lee1998logarithmic}]
    The log-Sobolev constant of uniform distribution and Schreier graph on $\binom{[n]}{k}$ is as follows,
    $$
        \rho = \Theta\tp{\frac n{\log(1/\nu(k))}},
    $$
    where $\nu(k) = \nu(n-k) = k(n-k)/\binom{n}2$.
\end{theorem}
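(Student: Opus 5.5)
This is a cited result (Lee--Yau), so the plan is to reconstruct its standard proof: an induction on $n$ through a block decomposition of the slice, combined with a comparison to product measures. Normalize the generator so that each transposition rate is $1/2$; the total jump rate is then of order $n^2$, which accounts for the $\Theta(n)$ scaling of the constant.

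\textbf{Lower bound.} I would decompose $\binom{[n]}{k}$ according to the value of coordinate $n$:
\[
\binom{[n]}{k}=\binom{[n-1]}{k-1}\times\{1\}\;\cup\;\binom{[n-1]}{k}\times\{0\},
\]
with respective weights $k/n$ and $(n-k)/n$. Chain-rule entropy along this split:
\[
\Ent[\mu]{f^2}=\E[\mu]{\Ent{f^2\mid x_n}}+\Ent{\E{f^2\mid x_n}}.
\]
\begin{enumerate}[(1)]
\item The first term is handled by the induction hypothesis on the two smaller slices. Transpositions not involving $n$ form the slice dynamics on $[n-1]$, so this step costs a factor $(n-1)/n$.
\item The second term is the entropy of a two-point function under the Bernoulli$(k/n)$ law. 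The sharp two-point log-Sobolev constant is of order $\log(n/k)$ when $k\ll n$, which is where $\log(1/\nu(k))$ enters.
\item To bound this two-point entropy by a Dirichlet form, I would use transpositions $(i\ n)$ that move mass between the two blocks. By symmetry one can average over $i$ and apply Cauchy--Schwarz, controlling $\E{f^2\mid x_n=1}$ versus $\E{f^2\mid x_n=0}$ through the energy of those edges.
\item Summing over all choices of the distinguished coordinate (rather than only $n$) symmetrizes the recursion, giving roughly $\alpha_n\ge \frac{n-2}{n}\alpha_{n-1}+\text{(gain)}$.
\end{enumerate}
Iterating this recursion yields $\rho\gtrsim n/\log(1/\nu(k))$.

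\textbf{Upper bound.} I would take the test function $f=\mathbf 1\{x_1=1\}$, or a slight smoothing of it. Then
\[
\Ent[\mu]{f^2}\approx \tfrac kn\log\tfrac nk,
\]
while the Dirichlet form is of order $(k/n)\cdot(n-k)/n$ per relevant transposition. Since $\nu(k)\asymp k(n-k)/n^2$, comparing these gives the matching ratio $n/\log(1/\nu(k))$.

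\textbf{Main obstacle.} The hardest step is controlling the cross term in (3): bounding the entropy of the conditional expectation by the Dirichlet form of the transpositions $(i\ n)$ without losing more than a constant. Lee--Yau's key lemma addresses exactly this, via a careful convexity/symmetrization estimate. I would reproduce it by first reducing to the spectral-gap analogue, and then upgrading to entropy using the Bernoulli log-Sobolev constant.
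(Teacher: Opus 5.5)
The paper gives no proof of this statement; it is quoted from Lee--Yau. Your sketch therefore has to stand as a self-contained reconstruction.

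The skeleton is right: the entropy chain rule over a coordinate $x_m$, induction on the two conditional slices, the two-point log-Sobolev inequality for $x_m\mapsto\mathbb{E}[f^2\mid x_m]$, averaging over $m$, and an indicator test function for the upper bound. The upper bound is fine once you take $k\le n/2$ and use $\mathbf{1}\{x_1=0\}$ otherwise.

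You have, however, put the difficulty in the wrong step. Step (3) needs no special lemma. Set $G_m(s)=\mathbb{E}[f^2\mid x_m=s]$, $p=k/n$ and $q=1-p$.
\begin{itemize}
\item The two-point inequality gives $\mathrm{Ent}(G_m)\le\frac{pq\log(q/p)}{q-p}\bigl(\sqrt{G_m(1)}-\sqrt{G_m(0)}\bigr)^2$.
\item Minkowski, applied along the measure-preserving bijection $(x,i)\mapsto(x^{(i\,m)},i)$ from $\{x_m=1,x_i=0\}$ to $\{x_m=0,x_i=1\}$, gives $\bigl(\sqrt{G_m(1)}-\sqrt{G_m(0)}\bigr)^2\le\frac{n}{2k(n-k)}\,\mathbb{E}_\mu\sum_{i\ne m}\bigl(f(x)-f(x^{(i\,m)})\bigr)^2$.
\item Averaged over $m$, this contributes only $O(\Lambda_{n,k}/n^2)$ to $1/\rho_{n,k}$. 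Here $\Lambda_{n,k}:=\log\frac{e n^2}{k(n-k)}$, which is comparable to $\log(1/\nu(k))$ for $n\ge 3$.
\end{itemize}

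The genuine gap is step (4), together with the claim that iterating the recursion yields $\rho\gtrsim n/\log(1/\nu(k))$.
\begin{itemize}
\item A recursion in the single index $n$ cannot output a $k$-dependent bound.
\item It also has the wrong shape. The chain rule bounds $1/\rho$ by a weighted average of the conditional $1/\rho$'s plus an additive cost. It does not bound $\rho_n$ by $\frac{n-2}{n}\rho_{n-1}$ plus a gain.
\item Suppose you close it with one index, i.e.\ bound both conditional slices by the worse one. Then the worst branch $(n,k)\to(n-1,k-1)\to\cdots$ ends on the slice $(n-k+1,1)$. That slice is a complete graph whose constant really is of order $n/\log n$.
\item So you only obtain $\rho\gtrsim n/\log n$ even for balanced slices, which is exactly the case the paper needs.
\end{itemize}

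The missing observation is the exact split between the two conditional slices. At a fixed $x$, a transposition $(i\,j)$ with $x_i\ne x_j$ avoids $m$ for exactly $k-1$ indices with $x_m=1$ and exactly $n-k-1$ indices with $x_m=0$. Averaging over $m$ therefore gives
\[
\frac{1}{\rho_{n,k}}\le\frac{k-1}{n}\cdot\frac{1}{\rho_{n-1,k-1}}+\frac{n-k-1}{n}\cdot\frac{1}{\rho_{n-1,k}}+O\Bigl(\frac{\Lambda_{n,k}}{n^2}\Bigr).
\]
The term with coefficient $0$ is absent when $k=1$ or $k=n-1$.

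Try the ansatz $1/\rho_{n,k}\le C\Lambda_{n,k}/n$. Use $m\log m-(m-1)\log(m-1)\le\log m+1$ and $(n-1)\log n-(n-2)\log(n-1)\ge\log n+\frac{n-2}{n}$. These give
\[
(n-1)\Lambda_{n,k}-(k-1)\Lambda_{n-1,k-1}-(n-k-1)\Lambda_{n-1,k}\ \ge\ \Lambda_{n,k}-\frac{4}{n}\ \ge\ \frac12\Lambda_{n,k}\qquad(n\ge 4).
\]
This surplus absorbs the additive cost once $C$ is large, so the induction closes. The telescoping here, not step (3), is where the $\log(1/\nu(k))$ dependence is actually produced.
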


For a Boolean function $f:\binom{[n]}{k}\to\{\pm1\}$, under canonical transition rates and weights we have:
\begin{itemize}
    \item [(1)] Influence of transposition $u$ and total influence:
    $$
        \mathsf{Inf}_{\mu,u}(f) := I^\star_u(f) = \Pr[\mu]{f(x)\neq f(x^{u})},
    $$
    $$
        \Inf_\mu(f) :=  I^\star(f) = \sum_{u\in U}\Pr[\mu]{f(x)\neq f(x^u)} .
    $$
    \item [(2)] Surface area:
    $$
        \mathsf{SA}_\mu(f) := A^\star(f) =  \E[\mu]{\sqrt{s(f)}}
    $$
    where $s(f)$ is the sensitivity of $f$; namely $s(f)(x) = |\{u\in U: f(x^u)\neq f(x)\}|$.
    \item [(3)] Total squared influence:
    $$
        \mathsf{SqInf}_\mu(f):= J^\star(f) =  \sum_{u \in U} \tp{\Pr[\mu]{f(x) \neq f(x^u)}}^2.
    $$
\end{itemize}
Combining the preceding gradient estimates and log-Sobolev bound with \Cref{sec:SGraph_iso_ineq} gives the following inequalities for the uniform $k$-slice.
\begin{theorem}\label{thm:k_slice_isoperimetric}
    Let $\-{Sch}(G,X,U)$ be the exchange graph on the Boolean slice $\binom{[n]}{k}$ as in \Cref{example:k_slice}, with $\Omega(1) \leq k/n\leq 1-\Omega(1)$, and $\mu$ be the uniform distribution over $\binom{[n]}k$.
    Suppose the transition rates and weights are the canonical transition rates and weights we define in \eqref{eq:canonical_SGraph}.
    Then for any Boolean function $f: \binom{[n]}k\mapsto \{\pm 1\}$, all the following isoperimetric inequalities hold:
    \begin{itemize}
        \item [(1)] KKL inequality:
        $$
            \max_{u\in U}\Inf_{\mu,u}(f) \gtrsim \frac{\log n}{n}\Var[\mu]{f} ;
        $$
        \item [(2)] Talagrand's $L^1$--$L^2$ inequality: Let $L_u f(x) = \frac{1}{2} (f(x^u) - f(x))$, and it holds
        $$
            \Var[\mu]{f} \lesssim \frac{1}{n} \sum_{u \in U}\frac{\norm{L_u f}_2^2}{1 + \log(\norm{L_u f}_2/\norm{L_u f}_1)} = \frac{1}{n} \sum_{u\in U} \frac{\Inf_{\mu,u}(f)}{1 + \log(1/\sqrt{\Inf_{\mu,u}(f)})} ;
        $$
        \item [(3)] Talagrand's variance--surface-area inequality:
        $$
            \mathsf{SA}_\mu(f) \gtrsim \sqrt n\cdot \Var[\mu]{f}\sqrt{\log \frac e{\Var[\mu]{f}}} ;
        $$
        \item [(4)] Eldan--Gross inequality:
        $$
            \mathsf{SA}_\mu(f) \gtrsim \sqrt n \cdot \Var[\mu]{f}\sqrt{\log\tp{1+\frac e{\SqInf_\mu(f)}}}.
        $$
    \end{itemize}
\end{theorem}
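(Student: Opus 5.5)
This theorem specializes the Schreier-graph results of \Cref{sec:SGraph_iso_ineq} to the uniform slice. For that we need three inputs: a curvature bound, a log-Sobolev constant, and $b$-boundedness of weights and rates, followed by a translation back to the Boolean quantities.

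\textbf{Curvature, log-Sobolev, and boundedness.} The commutation argument preceding the statement gives \ref{eq:CSGE-2} with $\rho_{\CSGE}=0$ for every $\tau\in[0,\infty]$. It uses that $U$ is closed under conjugation, so $L_uP_t=P_tL_u$. By Minkowski and Jensen it also gives \ref{eq:SGE-2} with $\rho_{\SGE}=0$, so $\kappa=\kappa_0=0$. Since $k/n\in[\Omega(1),1-\Omega(1)]$, we have $\nu(k)=\Theta(1)$. The cited result of \cite{lee1998logarithmic} then gives $\rho_{\LSI}=\Theta(n)$. I must check that its normalization matches ours: our generator has rate $1/2$ per transposition, and $|U|=\binom n2$. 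If the cited constant is stated for the averaged walk instead, I rescale by $|U|$. The target scalings ($\sqrt n$ in (3)–(4), and $1/n$ in (2)) are consistent with $\rho_{\LSI}\asymp n$, which I will take as the working normalization. Finally, $w\equiv1$ and $q\equiv1/2$, so both are $b$-bounded with $b=1/2$.

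\textbf{Plugging in the Schreier theorems.} For (3), \Cref{lem:improved_talagrand_sch} gives $A^\star(f)\gtrsim \sqrt{\rho_{\LSI}}\,\Var[\mu]{f}\sqrt{\log(e/\Var[\mu]{f})}$, and $\sqrt{\rho_{\LSI}}\asymp\sqrt n$. For (2), \Cref{lem:l1_l2_talagrand_sch} gives the prefactor $(\rho_{\LSI}+\kappa_0)/(b\rho_{\LSI}^2)\asymp 1/n$. For (4), \Cref{thm:eldan_gross_sch} with $\rho_{\LSI}\ge 1$ gives the factor $\sqrt{\rho_{\LSI}}\min\{1,\rho_{\LSI}\}/\sqrt{1+\log_+(1/\rho_{\LSI})}=\sqrt{\rho_{\LSI}}\asymp\sqrt n$. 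For (1), \Cref{cor:KKL_sch} gives $I(f)\gtrsim n\log(1/\+M(f))\Var[\mu]{f}$. Combining this with $I(f)\le\binom n2\+M(f)$ yields $\+M(f)/\log(1/\+M(f))\gtrsim \Var[\mu]{f}/n$, hence $\+M(f)\gtrsim \frac{\log n}{n}\Var[\mu]{f}$ whenever $\Var[\mu]{f}\ge n^{-O(1)}$. Tiny variances are handled by the trivial bound $\+M\gtrsim$ Poincar\'e$/n^2$. Alternatively, I use \eqref{eq:KKL_sch_2} with $d=\binom n2$, $\rho_{\LSI}\asymp n$, which gives $\frac{1}{n}\log n$ directly.

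\textbf{Translation to Boolean quantities.} With $q^\star\equiv w^\star/2\equiv1/2$, we have $\Gamma^\star_u f=\frac14(f(x^u)-f(x))^2=\mathbf 1\{f(x^u)\ne f(x)\}$ for each singleton class. So $I^\star_u=\Inf_{\mu,u}$, $A^\star=\SurfArea_\mu$, and $J^\star=\SqInf_\mu$. In (2), $\|L_uf\|_2^2=\Inf_{\mu,u}$ and $\|L_uf\|_1=\Inf_{\mu,u}$, so $\|L_uf\|_2/\|L_uf\|_1=1/\sqrt{\Inf_{\mu,u}}$, which gives the stated form.

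\textbf{Main obstacle.} The main obstacle is the normalization bookkeeping of the log-Sobolev constant relative to our generator. If instead $\rho_{\LSI}=\Theta(1)$ under our rates, every $\sqrt n$ would need to come from rescaling time. I would verify this first by testing on a dictator-like function $f=x_1$: with $\rho_{\LSI}\asymp n$, the Poincar\'e gap of $L$ should be $\Theta(n)$. This holds because each element of $\binom{[n]}{k}$ has $k(n-k)\asymp n^2$ active transpositions at rate $1/2$, and a coordinate flips via $\asymp n$ of them.
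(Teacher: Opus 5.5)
Your proposal is correct and follows the paper's route. The paper likewise obtains CSGE (hence SGE) with constant $0$ from $L_uP_t=P_tL_u$ and $\rho_{\LSI}\asymp n$ from Lee--Yau under the rate-$1/2$ normalization; it uses that $q^\star\equiv 1/2$ and $w^\star\equiv 1$ are $b$-bounded, then applies the Schreier-graph theorems together with $\Gamma^\star_u f=\mathbf 1\{f(x^u)\neq f(x)\}$. One small correction in (1): your Poincar\'e fallback for tiny variances only yields $\+M(f)\gtrsim \Var[\mu]{f}/n$, with no $\log n$ factor. It is also unnecessary, because with $y=\Var[\mu]{f}/n\le 1/n$ you have $y\log(1/y)=\frac{\Var[\mu]{f}}{n}\log\frac{n}{\Var[\mu]{f}}\ge\frac{\log n}{n}\Var[\mu]{f}$ for every variance, which is exactly why \eqref{eq:KKL_sch_2} already gives the bound directly.
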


The first two inequalities recover the known KKL and Talagrand $L^1$--$L^2$ bounds for balanced slices~\cite{ODonnellWimmer13,ODonnellWimmerSharp13,CorderoErausquinLedoux12}, while the last two give Talagrand's variance--surface-area and Eldan--Gross inequalities in this setting.


\subsection{Application to hypergrid}

Similarly in \Cref{sec:CSGE_boolean_hypercube}, we can upper bound matrix $M^\star$ and $D^\star$ by marginal bounds and the Dobrushin matrix we defined in \eqref{eq:def_Dobrushin_matrix}.

\begin{lemma}\label{lem:hyper_grid_Dobrushin_to_AB_in_CSGE}
    Suppose the Schreier graph is the Hamming graph and the transition rates and weights are the canonical transition rates and weights we define in \eqref{eq:canonical_SGraph}.
    If $w_u(x)^\star \in [b,1/b]$ for $x\in X$ and $u\in U$, then
    we have that
    $$
        D^\star \geq_{\-{ew}}-\frac{(m+2b)(b^2+m-1)}{4b^3}\cdot  \diag\{R^\top \*1\}
        \qquad \text{and} \qquad M^\star \leq_{\-{ew}}\frac{(m-1)(b^2+m-1)(b+1)}{4b^3} \cdot R .
    $$
\end{lemma}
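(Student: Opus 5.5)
The plan is to reduce both bounds to one scalar estimate: how much a single canonical rate $q_u^\star$ can change when one coordinate other than its own is modified. That change is then controlled by the Dobrushin matrix together with the marginal bound $w^\star_u\in[b,1/b]$. On the Hamming graph $X=\Z_m^n$, a generator $u=ke_i\in U_i$ only moves coordinate $i$. Hence
\[
q_u^\star(x)=\tfrac12\sqrt{\mu_i(x_i+k\mid x_{-i})/\mu_i(x_i\mid x_{-i})}
\]
depends only on the conditional law of coordinate $i$ given $x_{-i}$. The commutativity assumption \eqref{eq:commutativity_assumption_CSGE} holds trivially because the group is abelian. In \eqref{eq:def_M_star_Sgraph} and \eqref{eq:def_D_star_Sgraph}, the summand $\frac12\bigl(\sqrt{\mu(x^{uv})/\mu(x^v)}-\sqrt{\mu(x^u)/\mu(x)}\bigr)$ for $v\in U_j$, $j\neq i$, is exactly $q^\star_u(x^v)-q^\star_u(x)$. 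It is the change in a ratio of two atoms of the conditional law $\mu_i(\cdot\mid x_{-i})$ when only $x_j$ is altered.

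\textbf{Step 1 (atom bounds).} From $w^\star_u\in[b,1/b]$, every ratio of two atoms of $\mu_i(\cdot\mid x_{-i})$ lies in $[b^2,1/b^2]$. Since the $m$ atoms sum to $1$, each atom satisfies $p\ge b^2/(b^2+m-1)$.

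\textbf{Step 2 (atom perturbation).} Changing $x_j$ moves each single atom by at most the total variation distance. So by \eqref{eq:def_Dobrushin_matrix}, $|\Delta p|\le R_{ij}$ for every atom $p$, with the paper's orientation of $R$ (influence of $j$ on $i$). This is where the transpose $R^\top$ in the statement has to be matched carefully against the indexing convention.

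\textbf{Step 3 (ratio perturbation).} Apply the mean value theorem to $(p,p')\mapsto\frac12\sqrt{p'/p}$ along the segment joining the two conditional laws; the derivative is $\bigl(\frac{1}{4\sqrt{pp'}},\,-\frac{\sqrt{p'}}{4p^{3/2}}\bigr)$. The ratio $p'/p$ is bounded along the segment by $[b^2,1/b^2]$, since that bound holds at both endpoints and the segment is a convex combination. Using the lower bound on $p$ from Step 1, this gives
\[
|q_u^\star(x^v)-q_u^\star(x)|\le c\,R_{ij},\qquad c\approx\frac{(1+b)(b^2+m-1)}{4b^3}.
\]
The factor $(1+b)$ comes from combining the two partial derivatives, one carrying $1/\sqrt{pp'}\le 1/(bp)$ and the other $\sqrt{p'}/p^{3/2}\le 1/(bp)\cdot(1/b)$ after normalization.

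\textbf{Step 4 (assemble $M^\star$).} For fixed $i\ne j$, the block indexed by $(u,v)\in U_i\times U_j$ is an $(m-1)\times(m-1)$ matrix whose entries are bounded by $cR_{ij}$. Its spectral norm is at most $(m-1)\max|\text{entry}|$, via the Frobenius norm or the bound $\sqrt{\|\cdot\|_1\|\cdot\|_\infty}$. This gives $M^\star_{ij}\le (m-1)c\,R_{ij}$, which is the claimed form.

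\textbf{Step 5 (assemble $D^\star$).} The sum defining $D^\star_{ii}$ runs over $v\in U_j$, $j\ne i$, and contributes $m-1$ terms per $j$. Bounding each term crudely from below by $-cR_{ij}$ gives an extra factor $m-1$. The statement's constant $\frac{(m+2b)(b^2+m-1)}{4b^3}$ is smaller, so the sum over $v\in U_j$ should be handled jointly rather than termwise. The idea is to write $\sum_{v\in U_j}\bigl(q_u^\star(x^v)-q_u^\star(x)\bigr)$ as a sum over the $m-1$ alternative values of $x_j$. Then a telescoping or TV-summation argument applies: the total movement of an atom, summed over the other values of $x_j$, is controlled by $R_{ij}$ up to an $O(m)$ factor, which accounts for the $m+2b$.

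I expect the main obstacle to be obtaining exactly the stated constants, especially $(m+2b)$ in the $D^\star$ bound, together with matching the transpose convention. The qualitative bounds $D^\star\ge -O_{m,b}(1)\diag\{R^\top\*1\}$ and $M^\star\le O_{m,b}(1)R$ follow routinely from Steps 1–5. If the sharp constant resists, I would first try splitting the derivative into the $p'$-term and the $p$-term: the $p$-term is common to all $u\in U_i$ and can be summed once, while the $p'$-terms use $\sum_a|\Delta p_a|\le 2R_{ij}$.
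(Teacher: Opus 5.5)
Your Steps 1--4 are essentially the paper's argument, and they do give the $M^\star$ bound. The ingredients are the atom lower bound $b^2/(b^2+m-1)$, a per-entry estimate $\frac{(b+1)(b^2+m-1)}{4b^3}$ times a Dobrushin entry, and the spectral norm of the block bounded by $m-1$ times its largest entry. You reach the per-entry estimate via the mean value theorem, the paper via a two-term triangle inequality. To get the factor $b+1$ rather than $2$, bound $1/\sqrt{pp'}$ using $p,p'\ge b^2/(b^2+m-1)$ directly, not via $p'\ge b^2p$.

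The gap is in the $D^\star$ part, and it comes from your identification of the summands. That identification is correct only for \eqref{eq:def_M_star_Sgraph}. In \eqref{eq:def_D_star_Sgraph}, the generator $u\in U_i$ is fixed (it sits under the infimum) and the ratio is $\sqrt{\mu(x^{uv})/\mu(x^u)}$. So the summand is $q_v^\star(x^u)-q_v^\star(x)$: the change of the rate of $v=k_2e_j$, a function of $\mu_j(\cdot\mid x_{-j})$, when coordinate $i$ is moved by $u$. This change is governed by $R_{ji}$, which is exactly why $\diag\{R^\top\*1\}$ appears. Your reading would produce $\diag\{R\*1\}$, a different statement since $R$ is not symmetric.

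The misreading also defeats your plan for the sharp constant. Summing the changes of one fixed rate $q_u^\star$ over the $m-1$ alternative values of $x_j$ admits no telescoping, because each of these single-site changes can move $\mu_i(\cdot\mid x_{-i})$ by the full $R_{ij}$. You are then left with the termwise factor $(m-1)(b+1)$, which exceeds $m+2b$ whenever $(m-3)b>1$.

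What the paper does is your fallback idea, applied to the correctly identified sum. Fix $x$ with $x_j=0$, $u=k_1e_i$ and $j\ne i$. Let $\alpha=\mu_j(\cdot\mid x_{-j})$ and $\beta=\mu_j(\cdot\mid (x^u)_{-j})$, so that $d_{\mathrm{TV}}(\alpha,\beta)\le R_{ji}$. The $m-1$ summands with $v\in U_j$ are $\frac12\bigl(\sqrt{\beta_{k_2}/\beta_0}-\sqrt{\alpha_{k_2}/\alpha_0}\bigr)$ for $k_2\in[m-1]$. This is a single perturbation of a single conditional law, and all ratios share the denominator atom.

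Split each summand into a numerator change and a denominator change:
\begin{itemize}
\item The numerator parts total at most $\frac{b^2+m-1}{4b^2}\sum_{k_2}|\alpha_{k_2}-\beta_{k_2}|\le\frac{b^2+m-1}{2b^2}R_{ji}$. This is where $\sum_k|\Delta p_k|\le 2\,d_{\mathrm{TV}}$ enters.
\item The shared denominator contributes $m-1$ copies of at most $\frac{b^2+m-1}{4b^3}|\alpha_0-\beta_0|$.
\end{itemize}
The total is $\frac{(m-1+2b)(b^2+m-1)}{4b^3}R_{ji}\le\frac{(m+2b)(b^2+m-1)}{4b^3}R_{ji}$. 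Summing over $j\ne i$ gives $(R^\top\*1)_i$, which is the claimed bound.
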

\begin{proof}
    Fix $x\in X$, $i\neq j$ and $k_1\in [m-1]$.
    WLOG, assume $x_j = 0$.
    Let $\alpha$ be the marginal distribution $\mu_j(\cdot\mid x_{-j})$, that is, $\alpha_k = \mu(X_j = k\mid X_{-j}=x_{-j})$.
    In the same way, let $\beta$ be the marginal distribution of $\mu_j(\cdot\mid x_{-j}^{k_1 e_i})$.
    The marginal lower bound is as follows,
    $$
        \forall k\in \{0,1,\ldots,m-1\}, \quad\min\{\beta_k,\alpha_k\} \geq \frac{1}{1 + (m-1)/b^2}.
    $$
    Therefore, for any $k_2\in [m-1]$,
    \begin{align}
        2\abs{q_{k_2 e_j}^\star(x) - q_{k_2 e_j}^\star(x^{k_1 e_i})}
        &= \abs{\sqrt{\frac{\alpha_{k_2}}{\alpha_0}} - \sqrt{\frac{\beta_{k_2}}{\beta_0}}} 
        \leq \abs{\sqrt{\frac{\alpha_{k_2}}{\alpha_0}} - \sqrt{\frac{\beta_{k_2}}{\alpha_0}}} + \abs{\sqrt{\frac{\beta_{k_2}}{\alpha_0}} - \sqrt{\frac{\beta_{k_2}}{\beta_0}}} \nonumber
        \\&\leq \frac{\abs{\alpha_{k_2} - \beta_{k_2}}}{\sqrt{\alpha_{0}\alpha_{k_2}}+\sqrt{\alpha_0\beta_{k_2}}} + \sqrt{\frac{\beta_{k_2}}{\beta_0}}\frac{\abs{\alpha_0 - \beta_0}}{\alpha_0 + \sqrt{\alpha_0\beta_0}} \nonumber
        \\&\leq \frac{b^2+m-1}{2b^2}\abs{\alpha_{k_2} - \beta_{k_2}} + \frac{b^2+m-1}{2b^3}\abs{\alpha_0 - \beta_0} , \label{eq:Dobrushin_upper_bound_AB}
    \end{align}
    where the last inequality follows from the marginal bounds.
    Note that in Hamming graph, 
    $$
        \forall i\in[n], D^\star_{ii} = \inf_{x\in X, k_1\in [m-1]} \sum_{j\neq i}\sum_{k_2\in [m-1]}(q_{k_2 e_j}^\star(x^{k_1 e_i}) - q_{k_2 e_j}^\star(x)).
    $$
    Therefore, given $x,k_1$, we have that by \eqref{eq:Dobrushin_upper_bound_AB},
    \begin{multline*}
        \sum_{j\neq i}\sum_{k_2}\abs{q_{k_2 e_j}^\star(x) - q_{k_2 e_j}^\star(x^{k_1 e_i})} \\
        \leq \frac 12\sum_{j\neq i} \tp{\frac{b^2+m-1}{b^2} + \frac{m(b^2+m-1)}{2b^3}}\TV{\mu(\cdot\mid x_{-j})}{\mu(\cdot\mid x_{-j}^{k_1 e_i})}.
    \end{multline*}
    This implies that $D^\star \geq_{\-{ew}} -\frac{(m+2b)(b^2+m-1)}{4b^3}\diag\{R^\top \*1\}$.
    On the other hand, \eqref{eq:Dobrushin_upper_bound_AB} shows that
    for any $x \in X$,
    $$
        \tp{\abs{\sqrt{\frac{\mu(x^{k_1e_i})}{\mu(x)}}-\sqrt{\frac{\mu(x^{k_1e_i,k_2e_j})}{\mu(x^{k_2e_j})}}}}_{k_1,k_2\in [m-1]} \leq_{\-{ew}} \frac{(b^2+m-1)(b+1)}{2b^3}\cdot R_{ij} J,
    $$
    where $J$ is the all one matrix.
    Hence, $M^\star \leq_{\-{ew}} \frac{(m-1)(b^2+m-1)(b+1)}{4b^3} \cdot R$.
\end{proof}

\begin{lemma}[Local curvature lower bound]\label{lem:local_curvature}
    Suppose the Schreier graph is the Hamming graph and the transition rates and weights are the canonical transition rates and weights we define in \eqref{eq:canonical_SGraph}.
    If $w_u(x)^\star \in [b,1/b]$ for $x\in X$ and $u\in U$, then we have that
    $$
        \inf_{\substack{f,x:\,\Gamma_i^\star f(x)>0}} \frac{\sqrt{\Gamma_{i}^\star f}L_{U_i}\sqrt{\Gamma_{i}^\star f} - \Gamma_{i}^\star (f,L_{U_i}f)}{\Gamma_{i}^\star f} \geq \frac12\left[b(\sqrt{m-1}+1)-(m-1)(\frac 1b-b)\right].
    $$
\end{lemma}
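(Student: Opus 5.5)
The plan is to reduce the statement to a finite-dimensional inequality on a single fibre and then copy the proof of \Cref{thm:SGE_boolean_hypercube} with the complete graph $K_m$ in place of a single edge. The uniform case is kept as a reference point, and the non-uniformity is charged to the error term $(m-1)(\frac1b-b)$.

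\textbf{Reduction to one fibre.} Fix $i$ and $x$. Only the generators $U_i=\{ke_i\}$ enter both $\Gamma_i^\star$ and $L_{U_i}$. These generators move $x$ inside the fibre $\{y : y_{-i}=x_{-i}\}$, which is a copy of $K_m$. So only the restriction of $f$ to this fibre matters, together with the conditional law $\pi=\mu_i(\cdot\mid x_{-i})$.

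On the fibre, write $q(y,z)=\frac12\sqrt{\pi(z)/\pi(y)}$ and $v(y)=(q(y,z)(f(z)-f(y)))_{z\neq y}\in\R^{m-1}$. Then $\Gamma_i^\star f(y)=\norm{v(y)}_2^2$. Two facts are used throughout:
\begin{itemize}
\item $q(y,z)q(z,y)=\frac14$;
\item $q\in[\frac b2,\frac1{2b}]$, by $b$-boundedness of $w^\star$.
\end{itemize}

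\textbf{Expanding the target.} As in \eqref{eq:norm-L-norm}, the first term is
\[
\sqrt{\Gamma_i^\star f}\,L_{U_i}\sqrt{\Gamma_i^\star f}(x)=\sum_{y\neq x}q(x,y)\bigl(\norm{v(x)}\norm{v(y)}-\norm{v(x)}^2\bigr).
\]
Write $\Delta_y=f(y)-f(x)$ and expand $\Gamma_i^\star(f,L_{U_i}f)(x)$ explicitly in the $\Delta$'s; it is a quadratic form in $\Delta$ whose coefficients are products of rates. The $-\norm{v(x)}^2$ part and the "diagonal'' part of $\Gamma(f,Lf)$ combine into a quadratic form in $\Delta$. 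The remaining piece is the cross term $\sum_y q(x,y)\norm{v(x)}\norm{v(y)}$, and this is where curvature is gained.

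\textbf{Cauchy--Schwarz with a sign-aligned pairing.} Following \eqref{eq:cauchy-schwarz}, the key observation is that the $x$-component of $v(y)$, namely $q(y,x)(f(x)-f(y))$, has sign opposite to the $y$-component of $v(x)$. I will therefore lower bound $\norm{v(x)}\norm{v(y)}$ by
\[
\langle v(x),\tilde v(y)\rangle+2\,|v(x)_y|\,|v(y)_x|.
\]
Here $\tilde v(y)$ is $v(y)$ reindexed by the transposition $x\leftrightarrow y$ of coordinates. Its other components $q(y,z)(f(z)-f(y))$, for $z\neq x,y$, are matched with $v(x)_z=q(x,z)(f(z)-f(x))$.

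\emph{Uniform case ($q\equiv\frac12$).} The inner products telescope against $\Gamma(f,Lf)$ exactly as in the hypercube. What remains is a sum of $2|v(x)_y||v(y)_x|=2\cdot\frac14\Delta_y^2$-type terms plus a complete-graph term $\frac14\bigl(\sum_y\Delta_y\bigr)^2$-like contribution. Checking $b=1$ against the claimed constant $\frac12(\sqrt{m-1}+1)$ fixes how much slack is available. The $\sqrt{m-1}$ should come from the stronger estimate $\norm{v(y)}\ge|v(y)_x|$ combined with $\sum_y|\Delta_y|\ge\norm{\Delta}_2$. More precisely, I would take the better of two bounds: the aligned-pairing bound and the crude bound $\norm{v(x)}\norm{v(y)}\ge\norm{v(x)}\,q(y,x)|\Delta_y|$ summed over $y$. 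These give a term $\frac14\norm{v(x)}\sum_y|\Delta_y|$, which is compared with $\norm{v(x)}^2$ using $q\le\frac1{2b}$.

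\emph{General $\pi$.} Every rate $q(y,z)$ appearing in $\tilde v$ and in $\Gamma(f,Lf)$ is written as $q(x,z)+(q(y,z)-q(x,z))$. The main terms reproduce the uniform computation up to the factor $b$, using $q\ge\frac b2$. Each discrepancy $|q(y,z)-q(x,z)|$ is bounded by $\frac12(\frac1b-b)$, and there are $m-1$ choices of $z$. Absorbing these discrepancies with $2|ab|\le a^2+b^2$ should yield the loss $\frac12(m-1)(\frac1b-b)\Gamma_i^\star f$.

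\textbf{Main obstacle.} The hard step is the bookkeeping in the pairing. Unlike the hypercube, the fibre is a complete graph, so $\Gamma(f,Lf)$ contains genuine second-order terms $q(x,y)q(y,z)$ with $z\notin\{x,y\}$, and the square property does not make these cancel. I expect to spend most of the effort choosing the pairing so that these terms cancel exactly in the uniform case. Only then is the perturbation estimate clean enough to produce precisely $\frac12[b(\sqrt{m-1}+1)-(m-1)(\frac1b-b)]$. If exact cancellation fails, I will first try the $\frac14\norm{v(x)}\sum_y|\Delta_y|$ route alone, since it is sign-insensitive and directly explains the $\sqrt{m-1}$ factor.
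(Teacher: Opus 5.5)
There is a genuine gap exactly where the $\sqrt{m-1}$ must come from. The two estimates you propose to combine cannot reach the claimed constant, even for the uniform conditional law.

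Take $q\equiv\frac12$ on the fibre, $f(x)=0$, $\Delta_y=f(y)$ and $S=\sum_{y\neq x}\Delta_y$. Then $\Gamma_i^\star f(x)=\frac14\norm{\Delta}_2^2$, and a direct computation gives $-\Gamma_i^\star(f,L_{U_i}f)(x)=\frac m2\,\Gamma_i^\star f(x)$.
\begin{itemize}
\item Your sign-aligned pairing gives $\norm{v(x)}\norm{v(y)}\ge\frac14(\norm{\Delta}_2^2+\Delta_y^2-\Delta_yS)$. Hence the curvature quotient is at least $\frac12\tp{m+1-S^2/\norm{\Delta}_2^2}$, which is only $1$ when all $\Delta_y$ are equal.
\item The crude bound $\norm{v(y)}\ge\abs{v(y)_x}$ gives a quotient of at least $\frac12\tp{1+\norm{\Delta}_1/\norm{\Delta}_2}$. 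Here $\norm{\Delta}_1\ge\norm{\Delta}_2$ contributes a factor $1$, not $\sqrt{m-1}$; getting $\sqrt{m-1}$ would need the reverse of Cauchy--Schwarz.
\end{itemize}
Now let all $\Delta_y>0$ and set $r=\norm{\Delta}_1/\norm{\Delta}_2$, so $S^2=r^2\norm{\Delta}_2^2$. The better of your two bounds is $\frac12\max\{m+1-r^2,\,1+r\}$. At $r^2+r=m$, which is an admissible value with $r\in[1,\sqrt{m-1}]$, this equals $\frac14\tp{1+\sqrt{4m+1}}$. That is strictly smaller than $\frac12\tp{1+\sqrt{m-1}}$ for every $m\ge3$. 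So your route proves only a weaker constant, and the perturbative treatment of general $\pi$ cannot repair this, since it only subtracts further.

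The missing idea is to apply Minkowski's inequality across the neighbours $y$, after writing all the vectors $v(y)$ in a common frame. This works because canonical rates on a fibre satisfy the cocycle identity $q(x,y)\,q(y,z)=\frac12\,q(x,z)$ for all $y,z$ in the fibre, with the convention $q(x,x):=\frac12$. This is much stronger than $q(x,y)q(y,x)=\frac14$. With $\Delta_x:=0$ and $z$ ranging over the whole fibre, it gives
\[
\sum_{y\neq x}q(x,y)\norm{v(y)}_2=\frac12\sum_{y\neq x}\Bigl(\sum_{z}q(x,z)^2(\Delta_z-\Delta_y)^2\Bigr)^{1/2}\ge\frac12\Bigl(\sum_{z}q(x,z)^2\Bigl(\sum_{y\neq x}(\Delta_z-\Delta_y)\Bigr)^2\Bigr)^{1/2}.
\]
The inner sums equal $(m-1)\Delta_z-S$ for $z\neq x$ and $-S$ for $z=x$. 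So the right-hand side is at least $\frac12\min_z q(x,z)\,\bigl((m-1)^2\norm{\Delta}_2^2-(m-2)S^2\bigr)^{1/2}\ge\frac12\min_zq(x,z)\sqrt{m-1}\,\norm{\Delta}_2$. Comparing with $\norm{v(x)}_2\le\max_zq(x,z)\norm{\Delta}_2$, and using that $\min_z q(x,z)/\max_z q(x,z)\ge b$, gives
\[
\sqrt{\Gamma_i^\star f}(x)\sum_{y\neq x}q(x,y)\sqrt{\Gamma_i^\star f}(y)\ \ge\ \frac b2\sqrt{m-1}\,\Gamma_i^\star f(x).
\]

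The same cocycle identity also makes $-\Gamma_i^\star(f,L_{U_i}f)(x)$ an exact quadratic form in $(q(x,y)\Delta_y)_{y\neq x}$. The second-order terms you worried about are therefore handled exactly, and no perturbation around uniform rates is needed. The loss $(m-1)(\frac1b-b)$ then comes only from bounding $\sum_y q(x,y)$ and the smallest eigenvalue of that form via $b$-boundedness. This is how the paper's proof goes.
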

\begin{proof}
    Fix $i\in[n]$ and $f\in \R^\Omega$.
    Set $\widehat L_{U_i}:=2L_{U_i}$ and $\widehat\Gamma_i:=4\Gamma_i^\star$. Then
    \begin{align*}
        \frac{\sqrt{\widehat\Gamma_i f}\,\widehat L_{U_i}\sqrt{\widehat\Gamma_i f}-\widehat\Gamma_i(f,\widehat L_{U_i}f)}{\widehat\Gamma_i f}
        =2\frac{\sqrt{\Gamma_i^\star f}\,L_{U_i}\sqrt{\Gamma_i^\star f}-\Gamma_i^\star(f,L_{U_i}f)}{\Gamma_i^\star f}.
    \end{align*}
    We prove the corresponding lower bound for the rescaled operators and, for notational convenience, write them again as $L_{U_i}$ and $\Gamma_i^\star$ below. Set $q_0=1$ and $q_k=w_{k e_i}^\star=2q_{k e_i}^\star$ for $k\in[m-1]$.
    Also, we omit $(x)$ for simplicity.
    The canonical definition gives $q_\ell/q_k=w_{(\ell-k)e_i}^\star(x^{k e_i})\in[b,1/b]$ for all distinct $k,\ell\in\{0\}\cup[m-1]$, where the indices are interpreted modulo $m$.
    Let $d_k = f(x^{k e_i}) - f(x)$.
    For the first part $\sqrt{\Gamma_{i}^\star f}L_{U_i}\sqrt{\Gamma_{i}^\star f}$, we have the following,
    \begin{align}
        \sum_{k=1}^{m-1} q_k\sqrt{\frac{\Gamma_{i}^\star f(x^k)}{\Gamma_{i}^\star f}} 
        &= \sum_{k=1}^{m-1}q_k\sqrt{\frac{\sum_{\ell = 1}^{m-1}q_\ell(x^k)^2(f(x^{(k+\ell)e_i}) - f(x^{k e_i}))^2}{\sum_{\ell=1}^{m-1}q_\ell^2d_\ell^2}} \nonumber
        \\& = \sum_{k=1}^{m-1}\sqrt{\tp{\sum_{\ell = 1}^{m}q_\ell^2(d_\ell - d_k)^2} / \tp{\sum_{\ell=1}^{m-1}q_\ell^2d_\ell^2}} &&(\text{Rearrange}) \nonumber
        \\&\geq \sqrt{\tp{\sum_{\ell = 1}^m q_\ell^2\tp{\sum_{k=1}^{m-1} d_\ell - d_k}^2 }/ \tp{\sum_{\ell=1}^{m-1}q_\ell^2d_\ell^2}} &&(\text{Triangle ineq.}) \nonumber
        \\&\geq b\sqrt{\tp{\sum_{\ell = 1}^m \tp{\sum_{k=1}^{m-1} d_\ell - d_k}^2 }/ \tp{\sum_{\ell=1}^{m-1}d_\ell^2}} &&(\forall \ell,k,q_\ell/q_k \in [b,1/b]) \nonumber
        \\&= b\sqrt{(m-1)^2 - (m-2)\tp{\sum_{k=1}^{m-1}d_k}^2/\tp{\sum_{k=1}^{m-1}d_k^2}} \nonumber
        \\&\geq b\sqrt{(m-1)}. &&(\text{AM-GM}) \label{eq:Local_curvature_part1}
    \end{align}
    Define $Q = \sum_{k=1}^{m-1} q_k$.
    Therefore, 
    $$
        \frac{\sqrt{\Gamma_{i}^\star f}L_{U_i}\sqrt{\Gamma_{i}^\star f}}{\Gamma_{i}^\star f} = \sum_{k=1}^{m-1}  q_k\tp{\sqrt{\frac{\Gamma_{i}^\star f(x^k)}{\Gamma_{i}^\star f}} - 1}\geq b\sqrt{m-1} - Q.
    $$
    For the second part $-\Gamma_{i}^\star(f,L_{U_i} f)$, its lower bound is as follows,
    \begin{align}
        -\Gamma_{i}^\star(f,L_{U_i} f)
        &= -\sum_{k=1}^{m-1} q_k^2 d_k \sum_{\ell=1}^{m-1} \tp{q_l(x^k)(f(x^{(k+\ell)e_i}) - f(x^k)) - q_\ell d_\ell} \nonumber
        \\&=\sum_{k=1}^{m-1}\sum_{\ell=1}^{m-1}q_k^2q_\ell d_kd_\ell
        + \sum_{k=1}^{m-1}\sum_{\ell=1}^m q_kq_\ell d_k^2
        - \sum_{k=1}^{m-1}\sum_{\ell=1}^m q_kq_\ell d_k d_\ell \nonumber
        \\&= (qd)^\top (q\*1^\top + (Q+1)\cdot \diag\{q\}^{-1} - \*1\*1^\top)  (qd) \nonumber
        \\&\geq \norm{qd}_2^2 \lambda_{\min}\tp{\frac{(q-\*1)\*1^\top + \*1(q-\*1)^\top}{2} + (Q+1)\cdot \diag\{q\}^{-1}} \label{eq:Local_curvature_part2} .
    \end{align}
    Since $\lambda_{\min}(A+B)\geq \lambda_{\min}(A)+\lambda_{\min}(B)$ holds for symmetric matrices $A,B$, we have that,
    $$
        \lambda_{\min}\tp{\frac{(q-\*1)\*1^\top + \*1(q-\*1)^\top}{2} + (Q+1)\cdot \diag\{q\}^{-1}} \geq \frac{\inner{q-\*1}{\*1}-\sqrt{m-1}\norm{q-\*1}_2}{2} + b(Q+1) .
    $$
    Since $\Gamma_{i}^\star f = \norm{qd}_2^2$, by \eqref{eq:Local_curvature_part1} and \eqref{eq:Local_curvature_part2}, we conclude that,
    \begin{align*}
        &\frac{\sqrt{\Gamma_{i}^\star f}L_{U_i}\sqrt{\Gamma_{i}^\star f} - \Gamma_{i}^\star(f,L_{U_i} f)}{\Gamma_{i}^\star f}
        \\\geq{} & b\sqrt{m-1} - Q +  \frac{Q-(m-1)-\sqrt{m-1}\norm{q-\*1}_2}{2} + b(Q+1)
        \\={} & b\sqrt{m-1} -\frac{m-1-2b}{2} + (b-\frac{1}{2})Q - \frac{\sqrt{m-1}\norm{q-\*1}_2}{2} 
        \\\geq{} & b\sqrt{m-1} -\frac{m-1-2b}{2} + (m-1)\min\{b^2-\frac{b}{2},1-\frac{1}{2b}\} - \frac{(m-1)(1-b)}{2b} 
        \\\geq{} & b\sqrt{m-1} -\frac{m-1-2b}{2} + (m-1)(b-\frac 1{2b}) - \frac{(m-1)(1-b)}{2b} 
        \\={} & b(\sqrt{m-1}+1)-(m-1)(\frac 1b-b),
    \end{align*}
    where the second inequality follows from $Q \in [(m-1)b,\frac {m-1}b]$ and $\norm{q-\*1}_2 \leq \sqrt{m-1}(1-b)/b$.
    Returning to the original normalization divides the curvature quotient by two, which proves the claim.
\end{proof}

\Cref{lem:hyper_grid_Dobrushin_to_AB_in_CSGE} and \Cref{lem:local_curvature} immediately give the upper bound of $\norm{K}_2$.
\begin{lemma}\label{lem:hyper_grid_Dobrushin_to_CSGE}
    Suppose the Schreier graph is the Hamming graph and the transition rates and weights are the canonical transition rates and weights we define in \eqref{eq:canonical_SGraph}.
    If $w_u(x)^\star \in [b,1/b]$ for $x\in X$ and $u\in U$, then under the canonical transition rates and weights, for any $\tau \in \R_+$, \ref{eq:CSGE-2} holds with constant
    \begin{align*}
        \rho^\star = \frac12\left[b(\sqrt{m-1}+1)-(m-1)(\frac 1b-b)\right] - \frac{(b^2+m-1)}{4b^3}((m+2b)\norm{R}_1 + (m-1)(b+1)\norm{R}_2).
    \end{align*}
\end{lemma}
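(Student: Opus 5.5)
The plan is to apply \Cref{thm:SGraph_CSGE_general} directly. The Hamming graph of \Cref{example:SGraph} satisfies the commutativity assumption \eqref{eq:commutativity_assumption_CSGE}: for $u=k_1e_i$ and $v=k_2e_j$ with $i\neq j$ we have $x^{uv}=x+k_1e_i+k_2e_j=x^{vu}$ in $\Z_m^n$, so that theorem applies. The CSGE constant is then at least
\[
\lambda_{\min}\Bigl(D^\star_{\mathrm{loc}}+D^\star-\tfrac12\bigl(M^\star+(M^\star)^\top\bigr)\Bigr),
\]
and the task reduces to lower bounding each of the three pieces using the two preceding lemmas.

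We use Weyl's inequality for symmetric matrices, $\lambda_{\min}(A+B+C)\ge\lambda_{\min}(A)+\lambda_{\min}(B)+\lambda_{\min}(C)$, as in the proof of \Cref{lem:Dobrushin_upper_bound_CSGE}. The three pieces are bounded as follows.

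\begin{itemize}
\item $D^\star_{\mathrm{loc}}$ is diagonal. By \Cref{lem:local_curvature}, each diagonal entry is at least $\tfrac12[b(\sqrt{m-1}+1)-(m-1)(\tfrac1b-b)]$, so its minimum eigenvalue is at least this quantity.
\item $D^\star$ is diagonal. By \Cref{lem:hyper_grid_Dobrushin_to_AB_in_CSGE}, its entries are at least $-\frac{(m+2b)(b^2+m-1)}{4b^3}(R^\top\mathbf 1)_i$. Since $\max_i(R^\top\mathbf 1)_i$ is the maximum column sum, it equals $\norm{R}_1$, which gives $\lambda_{\min}(D^\star)\ge-\frac{(m+2b)(b^2+m-1)}{4b^3}\norm{R}_1$.
\item $M^\star$ is entrywise nonnegative, since each entry is a supremum of a norm. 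By \Cref{lem:hyper_grid_Dobrushin_to_AB_in_CSGE}, $M^\star\le_{\mathrm{ew}} cR$ with $c=\frac{(m-1)(b^2+m-1)(b+1)}{4b^3}$. For the symmetric matrix $S=\tfrac12(M^\star+(M^\star)^\top)$ we have $\lambda_{\max}(S)\le\norm{S}_2\le\norm{M^\star}_2$. By Perron--Frobenius monotonicity of the spectral norm for entrywise ordered nonnegative matrices, $\norm{M^\star}_2\le c\norm{R}_2$. Hence $\lambda_{\min}(-S)\ge-c\norm{R}_2$.
\end{itemize}

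Summing the three bounds gives exactly
\[
\rho^\star=\tfrac12\Bigl[b(\sqrt{m-1}+1)-(m-1)\bigl(\tfrac1b-b\bigr)\Bigr]-\frac{b^2+m-1}{4b^3}\Bigl((m+2b)\norm{R}_1+(m-1)(b+1)\norm{R}_2\Bigr).
\]

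The only real checks are bookkeeping. First, we must confirm that the sign convention for $D^\star$ in \eqref{eq:def_D_star_Sgraph} matches the lemma's lower bound, and that the column sums are the right quantity (the $\norm{\cdot}_1$ norm is the max column sum). Second, the norm monotonicity step needs $0\le M^\star\le_{\mathrm{ew}}cR$; it holds because $\norm{A}_2=\sup_{v,w\ge0,\ \norm{v}=\norm{w}=1}v^\top Aw$ for nonnegative $A$. I expect no serious obstacle, since all the heavy lifting is done by the two lemmas.
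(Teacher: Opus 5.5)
Your proposal is correct and is essentially the paper's proof. Like the paper, you invoke \Cref{thm:SGraph_CSGE_general}, bound $D^\star_{\mathrm{loc}}$ via \Cref{lem:local_curvature}, bound $\lambda_{\min}(D^\star)$ and $\lambda_{\max}\bigl(\tfrac12(M^\star+(M^\star)^\top)\bigr)$ via \Cref{lem:hyper_grid_Dobrushin_to_AB_in_CSGE}, and combine the pieces with Weyl's inequality. You also supply two justifications the paper leaves implicit: the commutativity check (automatic since $\Z_m^n$ is abelian), and the monotonicity argument $\norm{M^\star}_2\le c\norm{R}_2$ for entrywise-ordered nonnegative matrices.
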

\begin{proof}
    By \Cref{lem:local_curvature}, we have $D_{\-{loc}}^\star \geq_{\-{ew}}\frac12\tp{ b(\sqrt{m-1}+1)-(m-1)(\frac 1b-b)} I$.
    Hence, we deduce from \Cref{lem:hyper_grid_Dobrushin_to_AB_in_CSGE} that
    $$
        \lambda_{\min}(D^\star) \geq -\frac{(m+2b)(b^2+m-1)}{4b^3}\cdot \norm{R}_1 ;
    $$
    $$
        \lambda_{\max}\tp{\frac{M^\star + (M^\star)^\top}{2}} \leq \frac{(m-1)(b^2+m-1)(b+1)}{4b^3} \norm{R}_2 .
    $$
    Since $\lambda_{\min}(A+B) \geq \lambda_{\min}(A) + \lambda_{\min}(B)$ holds for any symmetric matrices $A,B$, by \Cref{thm:SGraph_CSGE_general},  \ref{eq:CSGE-2} holds with constant $\rho^\star$.
\end{proof}

Recall that \Cref{thm:marton_Dobrushin_to_LSI} gives an \ref{eq:LSI} constant for the Gibbs sampler in terms of the Dobrushin matrix.
For hypergrids, the discrepancy between the Gibbs generator and our generator can be bounded by one-site marginal bounds.
Therefore, together with the CSGE lower bound in \Cref{lem:hyper_grid_Dobrushin_to_CSGE}, we conclude the following theorem: under suitable Dobrushin conditions and marginal bounds, the isoperimetric inequalities in \Cref{sec:SGraph_iso_ineq} hold.
Specifically, for any Boolean function $f: \Omega \mapsto \{\pm 1\}$, we have that under canonical transition rates and weights,
\begin{itemize}
    \item [(1)] Influence of $i$-th coordinate and total influence: 
    $$
        \mathsf{Inf}_{\mu,i}(f) := I^\star_i(f) = \sum_{k\in [m-1]}\Pr[\mu]{f(x)\neq f(x^{k e_i})},
    $$
    $$
        \Inf_\mu(f) :=  I^\star(f) = \sum_{i=1}^n I_i^\star(f) = \sum_{u\in U}\Pr[\mu]{f(x)\neq f(x^u)} .
    $$
    \item [(2)] Surface area:
    $$
        b\cdot  A^\star(f)\leq \mathsf{SA}_\mu(f) := \E[\mu]{\sqrt{s(f)}} \leq \frac 1b\cdot A^\star(f)
    $$
    where $s(f)$ is the sensitivity of $f$; namely $s(f)(x) = |\{u\in U: f(x^u)\neq f(x)\}|$.
    \item [(3)] Total squared influence:
    $$
        b^2 \cdot J^\star(f) \leq \mathsf{SqInf}_\mu(f):=  \sum_{i=1}^n \tp{\E[x \sim \mu]{\sqrt{\sum_{k\in[m-1]}\*1[f(x) \neq f(x^{k e_i})]}}}^2 \leq \frac 1{b^2}\cdot J^\star(f).
    $$
\end{itemize}
Here we assume that the canonical weights are $b$-marginally bounded.
\begin{theorem}\label{thm:hyper_grid_Dobrushin_to_iso}
    Consider the Hamming graph on $[m]^n$.
    Let $\mu$ be a distribution on $[m]^n$ with full support.
    Suppose we have the following condition:
    \begin{itemize}
        \item [(i)] (Marginal boundedness) For any $u\in U$ and $x\in [m]^n$,
        $$
            b\leq \sqrt{\frac{\mu(x^u)}{\mu(x)}} \leq \frac 1b;
        $$
        \item [(ii)] (Dobrushin condition) The Dobrushin matrix $R$ of $\mu$ satisfies $\norm{R}_1 \leq C_0$ and $\norm{R}_2\leq 1-\delta$.
    \end{itemize}
    Then for any Boolean function $f: [m]^n \mapsto \{\pm 1\}$, all the following isoperimetric inequalities hold with a constant $C>0$ depending only on $b,C_0,\delta,m$.
    \begin{itemize}
        \item [(1)] KKL inequality:
        $$
            \max_{i\in [n]}\Inf_{\mu,i}(f) \geq C\, \frac{\log n}n \Var[\mu]{f} ;
        $$
        \item [(2)] Talagrand's $L^1$--$L^2$ inequality: Let $L_i f(x)=\sum_{k=1}^{m-1}q_{k e_i}^\star(x)(f(x^{k e_i})-f(x))$, and it holds
        $$
            C\,\Var[\mu]{f} \leq \sum_{i=1}^n \frac{\norm{L_i f}_{2,\mu}^2}{1 + \log(\norm{L_i f}_{2,\mu}/\norm{L_i f}_{1,\mu})};
        $$
        \item [(3)] Talagrand's variance--surface-area inequality:
        $$
            \mathsf{SA}_\mu(f) \geq C\, \Var[\mu]{f}\sqrt{\log \frac e{\Var[\mu]{f}}} ;
        $$
        \item [(4)] Eldan--Gross inequality:
        $$
            \mathsf{SA}_\mu(f) \geq C\, \Var[\mu]{f}\sqrt{\log\tp{1+\frac e{\SqInf_\mu(f)}}}.
        $$
    \end{itemize}
    Finally, the local Bobkov inequality \eqref{eq:local_bobkov-sch} holds as well.
\end{theorem}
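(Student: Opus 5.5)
The plan is to reduce \Cref{thm:hyper_grid_Dobrushin_to_iso} to the abstract Schreier-graph results of \Cref{sec:SGraph_iso_ineq}. Those results need three inputs: a CSGE constant, a log-Sobolev constant, and $b$-boundedness of the canonical rates and weights. After that, the canonical quantities $I^\star,A^\star,J^\star$ are translated into $\Inf_\mu,\SurfArea_\mu,\SqInf_\mu$ exactly as in the Boolean case. Boundedness is immediate: marginal boundedness (i) says $w^\star_u\in[b,1/b]$ and $q^\star_u=w^\star_u/2\in[b/2,1/b]$ for every $u\in U$. The Hamming graph satisfies the commutativity assumption \eqref{eq:commutativity_assumption_CSGE}, since translations in distinct coordinates commute. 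Hence \Cref{lem:hyper_grid_Dobrushin_to_CSGE} gives \ref{eq:CSGE-2} for every $\tau\in[0,\infty]$, with constant $\rho^\star$ depending only on $b,m,\norm{R}_1\le C_0,\norm{R}_2\le 1-\delta$. This constant may be negative, but only $\kappa_0=\max\{-\rho^\star,0\}=O_{b,m,C_0,\delta}(1)$ enters. The $\tau=\infty$ case follows by letting $\tau\to\infty$, because $P_\tau\to\E[\mu]{\cdot}$ on a finite connected graph. By \Cref{fact:CSGE_to_SGE} (whose proof carries over verbatim), CSGE also gives \ref{eq:SGE-2}.

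Next I obtain LSI for the canonical generator from \Cref{thm:marton_Dobrushin_to_LSI}. Its hypothesis is the one-site lower bound $\alpha\ge(1+(m-1)/b^2)^{-1}$, as computed in the proof of \Cref{lem:hyper_grid_Dobrushin_to_AB_in_CSGE}, together with $\norm{R}_2\le 1-\delta$. That theorem gives LSI for the heat-bath Gibbs sampler with constant $\gtrsim \alpha\delta^2/n$ per $n$ normalization. I would then compare Dirichlet forms as in \Cref{cor:Dobrushin_to_LSI}. The Gibbs kernel moves from $x$ to $x^{ke_i}$ at rate $\frac1n\mu_i(x_i+k\mid x_{-i})\le\frac1n$. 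The canonical rate is $q^\star_{ke_i}(x)\ge b/2$. So $n\inner{f}{(I-G)f}_\mu\le \frac{2}{b}\E[\mu]{\Gamma(f)}$, and LSI holds with $\rho_{\LSI}\gtrsim b\,\alpha\,\delta^2$, a constant independent of $n$. This comparison is the step I expect to need the most care. The sum over each fiber has $m-1$ neighbours, and I must check that the $\frac12$-factors and the symmetrization over pairs $(x,x^{ke_i})$ match. Constant losses in $m$ are harmless, however.

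Plugging these inputs in, \Cref{lem:local_bobkov-sch} gives local Bobkov. \Cref{lem:improved_talagrand_sch} with $A^\star\ge b\,\SurfArea_\mu$ gives (3). \Cref{thm:eldan_gross_sch} together with $J^\star\le b^{-2}\SqInf_\mu$ gives (4); the log factor only changes by a constant depending on $b$, using $\log(1+\e/(cx))\gtrsim_c\log(1+\e/x)$. For (2), apply \eqref{eq:l1_l2_talagrand_2_sch}. Here $\abs{\delta^\star_i}f=\big(\sum_k q^\star_{ke_i}{}^2(f(x^{ke_i})-f(x))^2\big)^{1/2}$. For Boolean $f$ this is comparable within factors of $b,m$ to $|L_if|$ pointwise, since $L_if$ is a sum of at most $m-1$ terms each of size $\asymp_b 1$ whenever nonzero. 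A bounded-ratio perturbation of the quantities inside $1+\log(\cdot)$ changes the bound only by constants, because $1+\log(cr)\gtrsim_c 1+\log r$ for $r\ge1$. Finally, (1) follows from \Cref{cor:KKL_sch} with $d=n$, since $I^\star_i(f)=\Inf_{\mu,i}(f)$ exactly and $\log\big(n/C'\big)\gtrsim\log n$ for large $n$; small $n$ is absorbed into $C$ using the Poincaré inequality.
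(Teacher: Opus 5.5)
Your proposal is correct and follows the paper's proof essentially step by step. The ingredients are the same: Marton's theorem plus a Dirichlet-form comparison for the log-Sobolev constant, \Cref{lem:hyper_grid_Dobrushin_to_CSGE} for a dimension-free CSGE constant, the abstract Schreier-graph inequalities, and the $b$-dependent comparison of $I^\star,A^\star,J^\star$ with $\Inf_\mu,\SurfArea_\mu,\SqInf_\mu$. You also supply details the paper leaves implicit, namely the $\tau=\infty$ limit, the $(b/2)$-boundedness of $q^\star$, and small $n$ in KKL.

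A few small corrections:

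\begin{enumerate}
\item In (3) and (4), the comparison you actually need is $\SurfArea_\mu\ge b\,A^\star$, not $A^\star\ge b\,\SurfArea_\mu$. Both hold, but only the former transfers a lower bound on $A^\star$ to $\SurfArea_\mu$.

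\item In (2), the lower bound $\abs{L_if}\ge\abs{\delta^\star_i}f$ is not justified by each term having size $\asymp_b 1$, since such terms could still cancel. What you need is that, for Boolean $f$, every nonzero difference $f(x^{ke_i})-f(x)$ equals $-2f(x)$, so all terms have the same sign.

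\item Once you have that one-sided pointwise bound, you can finish (2) as the paper does. The map $(s,t)\mapsto s^2/(1+\log(s/t))$ is increasing in each argument on $0<t\le s$, so you may replace $\bigl(\norm{\abs{\delta^\star_i}f}_{2,\mu},\norm{\abs{\delta^\star_i}f}_{1,\mu}\bigr)$ by $\bigl(\norm{L_if}_{2,\mu},\norm{L_if}_{1,\mu}\bigr)$ directly. This avoids your log-perturbation step.

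\item That step is also misstated. The inequality $1+\log(cr)\gtrsim_c 1+\log r$ for $r\ge1$ fails for small $c$. The true inequality, and the one you need, is $1+\log(Cr)\le(1+\log C)(1+\log r)$ for $C,r\ge1$.
\end{enumerate}
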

\begin{proof}
    By \Cref{thm:marton_Dobrushin_to_LSI}, since
    $$
        \inner{f}{(I-G)f}_\mu = \E[\mu]{\frac 1{2n}\sum_{i=1}^n  \sum_{k=1}^{m-1} \frac{w_{k e_i}^\star(x)^2}{1 + \sum_{\ell=1}^{m-1}w_{\ell e_i}^\star(x)^2}\tp{f(x^{k e_i}) - f(x)}^2}
        \leq \frac 2{nmb^2}\E[\mu]{\Gamma(f,f)},
    $$
    the \ref{eq:LSI} holds with constant $\frac{mb^4(1-\norm{R}_2)^2}{2(m-1+b^2)}$.
    Hence, hypercontractivity holds.
    By \Cref{lem:hyper_grid_Dobrushin_to_CSGE}, the CSGE constant is independent of $n$.

    Therefore, by the results in \Cref{sec:SGraph_iso_ineq}, all the isoperimetric inequalities hold. Since the distribution is $b$-marginally bounded, the multiplicative discrepancies between $I^\star(f)$, $A^\star(f)$, and $J^\star(f)$ and their respective counterparts $\Inf_\mu(f)$, $\mathsf{SA}_\mu(f)$, and $\mathsf{SqInf}_\mu(f)$ are bounded by constants depending only on $b$. Therefore, statements 1--4 follow for Boolean functions.
    For (2), set $g_i:=\abs{\delta_i^\star}f$. Since $f$ is Boolean, all nonzero differences $f(x^u)-f(x)$ have the same sign at each fixed $x$, so
    $$
        \abs{L_i f}=\sum_{u\in U_i}\abs{\delta_u^\star f}\geq \sqrt{\sum_{u\in U_i}(\delta_u^\star f)^2}=g_i.
    $$
    The map $(s,t)\mapsto s^2/(1+\log(s/t))$ is increasing in each argument on $0<t\leq s$. Thus, replacing $g_i$ by $\abs{L_i f}$ in the norm form of~\eqref{eq:l1_l2_talagrand_2_sch} gives (2).
\end{proof}

\bibliographystyle{alpha}
\bibliography{ref.bib}

@article{marton2019logarithmic,
  title={Logarithmic Sobolev inequalities in discrete product spaces},
  author={Marton, Katalin},
  journal={Combinatorics, Probability and Computing},
  volume={28},
  number={6},
  pages={919--935},
  year={2019},
  publisher={Cambridge University Press}
}

@article{lee1998logarithmic,
  title={Logarithmic Sobolev inequality for some models of random walks},
  author={Lee, Tzong-Yow and Yau, Horng-Tzer},
  journal={The Annals of Probability},
  volume={26},
  number={4},
  pages={1855--1873},
  year={1998},
  publisher={Institute of Mathematical Statistics}
}

@article{BHLLMY15,
  title={{L}i--{Y}au inequality on graphs},
  author={Bauer, Frank and Horn, Paul and Lin, Yong and Lippner, Gabor and Mangoubi, Dan and Yau, Shing-Tung},
  journal={Journal of Differential Geometry},
  volume={99},
  number={3},
  pages={359--405},
  year={2015},
  doi={10.4310/jdg/1424880980}
}

@article{FS18,
  title={Curvature and transport inequalities for {M}arkov chains in discrete spaces},
  author={Fathi, Max and Shu, Yan},
  journal={Bernoulli},
  volume={24},
  number={1},
  pages={672--698},
  year={2018}
}

@inproceedings{KKL88,
  author    = {Jeff Kahn and Gil Kalai and Nathan Linial},
  title     = {The Influence of Variables on {Boolean} Functions},
  booktitle = {Proceedings of the 29th Annual Symposium on Foundations of Computer Science},
  pages     = {68--80},
  publisher = {IEEE},
  year      = {1988},
  doi       = {10.1109/SFCS.1988.21923}
}

@article{Talagrand93,
  author  = {Michel Talagrand},
  title   = {Isoperimetry, Logarithmic {Sobolev} Inequalities on the Discrete Cube, and {Margulis}' Graph Connectivity Theorem},
  journal = {Geometric and Functional Analysis},
  volume  = {3},
  number  = {3},
  pages   = {295--314},
  year    = {1993},
  doi     = {10.1007/BF01895691}
}

@article{Talagrand94,
  author  = {Michel Talagrand},
  title   = {On {Russo}'s Approximate Zero-One Law},
  journal = {The Annals of Probability},
  volume  = {22},
  number  = {3},
  pages   = {1576--1587},
  year    = {1994},
  doi     = {10.1214/aop/1176988612}
}

@article{Talagrand97,
  author  = {Michel Talagrand},
  title   = {On Boundaries and Influences},
  journal = {Combinatorica},
  volume  = {17},
  number  = {2},
  pages   = {275--285},
  year    = {1997}
}

@article{EldanGross22,
  author  = {Ronen Eldan and Renan Gross},
  title   = {Concentration on the {Boolean} Hypercube via Pathwise Stochastic Analysis},
  journal = {Inventiones Mathematicae},
  volume  = {230},
  number  = {3},
  pages   = {935--994},
  year    = {2022},
  doi     = {10.1007/s00222-022-01135-8},
  url     = {https://arxiv.org/abs/1909.12067}
}

@article{EldanKindlerLifshitzMinzer25,
  author  = {Ronen Eldan and Guy Kindler and Noam Lifshitz and Dor Minzer},
  title   = {Isoperimetric Inequalities Made Simpler},
  journal = {Discrete Analysis},
  volume  = {2025},
  number  = {7},
  pages   = {1--23},
  year    = {2025},
  doi     = {10.19086/da.142095},
  url     = {https://arxiv.org/abs/2204.06686}
}

@article{IvanisviliZhang26,
  author  = {Paata Ivanisvili and Haonan Zhang},
  title   = {On the {Eldan--Gross} Inequality},
  journal = {Journal of Functional Analysis},
  volume  = {290},
  number  = {4},
  pages   = {111255},
  year    = {2026},
  doi     = {10.1016/j.jfa.2025.111255},
  eprint  = {2407.17864},
  url     = {https://arxiv.org/abs/2407.17864}
}

@book{ODonnell14,
  author    = {Ryan O'Donnell},
  title     = {Analysis of {Boolean} Functions},
  publisher = {Cambridge University Press},
  year      = {2014},
  doi       = {10.1017/CBO9781139814782}
}

@incollection{BenOrLinial90,
  author    = {Michael Ben-Or and Nathan Linial},
  title     = {Collective Coin Flipping},
  booktitle = {Randomness and Computation},
  editor    = {Silvio Micali},
  publisher = {Academic Press},
  year      = {1990}
}

@article{BKKKL92,
  author  = {Jean Bourgain and Jeff Kahn and Gil Kalai and Yitzhak Katznelson and Nathan Linial},
  title   = {The Influence of Variables in Product Spaces},
  journal = {Israel Journal of Mathematics},
  volume  = {77},
  number  = {1--2},
  pages   = {55--64},
  year    = {1992},
  doi     = {10.1007/BF02808010}
}

@article{FriedgutKalai96,
  author  = {Ehud Friedgut and Gil Kalai},
  title   = {Every Monotone Graph Property Has a Sharp Threshold},
  journal = {Proceedings of the American Mathematical Society},
  volume  = {124},
  number  = {10},
  pages   = {2993--3002},
  year    = {1996},
  doi     = {10.1090/S0002-9939-96-03732-X}
}

@article{KellerMosselSen12,
  author  = {Nathan Keller and Elchanan Mossel and Arnab Sen},
  title   = {Geometric Influences},
  journal = {The Annals of Probability},
  volume  = {40},
  number  = {3},
  pages   = {1135--1166},
  year    = {2012},
  doi     = {10.1214/11-AOP643}
}

@article{BobkovGotze99,
  author  = {Sergey G. Bobkov and Friedrich G{\"o}tze},
  title   = {Discrete Isoperimetric and {P}oincar{\'e}-Type Inequalities},
  journal = {Probability Theory and Related Fields},
  volume  = {114},
  number  = {2},
  pages   = {245--277},
  year    = {1999},
  doi     = {10.1007/s004400050225}
}

@incollection{CorderoErausquinLedoux12,
  author    = {Dario {Cordero-Erausquin} and Michel Ledoux},
  title     = {Hypercontractive Measures, {Talagrand}'s Inequality, and Influences},
  booktitle = {Geometric Aspects of Functional Analysis},
  series    = {Lecture Notes in Mathematics},
  volume    = {2050},
  pages     = {169--189},
  publisher = {Springer},
  year      = {2012},
  doi       = {10.1007/978-3-642-29849-3_10},
  url       = {https://arxiv.org/abs/1105.4533}
}

@article{ODonnellWimmer13,
  author  = {Ryan O'Donnell and Karl Wimmer},
  title   = {{KKL}, {Kruskal--Katona}, and Monotone Nets},
  journal = {SIAM Journal on Computing},
  volume  = {42},
  number  = {6},
  pages   = {2375--2399},
  year    = {2013},
  doi     = {10.1137/100787325},
  url     = {https://www.cs.cmu.edu/~odonnell/papers/kkl-kk.pdf}
}

@article{ODonnellWimmerSharp13,
  author  = {Ryan O'Donnell and Karl Wimmer},
  title   = {Sharpness of {KKL} on {Schreier} Graphs},
  journal = {Electronic Communications in Probability},
  volume  = {18},
  pages   = {1--12},
  year    = {2013},
  note    = {Paper no. 18},
  doi     = {10.1214/ECP.v18-1961},
  url     = {https://www.cs.cmu.edu/~odonnell/papers/kkl-sharp.pdf}
}

@article{GrahamGrimmett06,
  author  = {Benjamin T. Graham and Geoffrey R. Grimmett},
  title   = {Influence and Sharp-Threshold Theorems for Monotonic Measures},
  journal = {The Annals of Probability},
  volume  = {34},
  number  = {5},
  pages   = {1726--1745},
  year    = {2006},
  doi     = {10.1214/009117906000000278}
}

@article{DuminilCopinRaoufiTassion19,
  author  = {Hugo Duminil-Copin and Aran Raoufi and Vincent Tassion},
  title   = {Sharp Phase Transition for the Random-Cluster and {Potts} Models via Decision Trees},
  journal = {Annals of Mathematics},
  volume  = {189},
  number  = {1},
  pages   = {75--99},
  year    = {2019},
  doi     = {10.4007/annals.2019.189.1.2}
}

@inproceedings{KoehlerLifshitzMinzerMossel23,
  author    = {Frederic Koehler and Noam Lifshitz and Dor Minzer and Elchanan Mossel},
  title     = {Influences in Mixing Measures},
  booktitle = {Proceedings of the 56th Annual {ACM} Symposium on Theory of Computing},
  pages     = {527--536},
  publisher = {Association for Computing Machinery},
  year      = {2024},
  doi       = {10.1145/3618260.3649731},
  url       = {https://arxiv.org/abs/2307.07625}
}

@article{DGJ09,
  title   = {Matrix norms and rapid mixing for spin systems},
  author  = {Dyer, Martin and Goldberg, Leslie Ann and Jerrum, Mark},
  journal = {The Annals of Applied Probability},
  volume  = {19},
  number  = {1},
  pages   = {71--107},
  year    = {2009},
  doi     = {10.1214/08-AAP532}
}

@inproceedings{Hayes06,
  title={A simple condition implying rapid mixing of single-site dynamics on spin systems},
  author={Hayes, Thomas P},
  booktitle={2006 47th Annual IEEE Symposium on Foundations of Computer Science},
  pages={39--46},
  year={2006},
  organization={IEEE}
}

@article{SS20,
  title   = {Logarithmic {Sobolev} inequalities for finite spin systems and applications},
  author  = {Sambale, Holger and Sinulis, Arthur},
  journal = {Bernoulli},
  volume  = {26},
  number  = {3},
  pages   = {1863--1890},
  year    = {2020},
  doi     = {10.3150/19-BEJ1172}
}

@article{CGMV26,
  title={Learning {AC}$^0$ Under Graphical Models},
  author={Chandrasekaran, Gautam and Gaitonde, Jason and Moitra, Ankur and Vasilyan, Arsen},
  journal={arXiv preprint arXiv:2604.06109},
  year={2026}
}

@article{FYYZ26,
  title={Learning {AC}$^0$ under Locally Sampleable Graphical Models},
  author={Feng, Weiming and Yang, Xiongxin and Yu, Yixiao and Zhang, Yiyao},
  journal={arXiv preprint arXiv:2607.08303},
  year={2026}
}

@incollection{BakryEmery85,
  author    = {Bakry, Dominique and {\'E}mery, Michel},
  title     = {Diffusions hypercontractives},
  booktitle = {S\'eminaire de Probabilit\'es XIX, 1983/84},
  editor    = {Az\'ema, Jacques and Yor, Marc},
  series    = {Lecture Notes in Mathematics},
  volume    = {1123},
  pages     = {177--206},
  publisher = {Springer},
  address   = {Berlin},
  year      = {1985},
  doi       = {10.1007/BFb0075847}
}

@article{BakryLedoux96,
  author  = {Dominique Bakry and Michel Ledoux},
  title   = {{L\'evy--Gromov}'s Isoperimetric Inequality for an Infinite Dimensional Diffusion Generator},
  journal = {Inventiones Mathematicae},
  volume  = {123},
  number  = {2},
  pages   = {259--281},
  year    = {1996},
  doi     = {10.1007/s002220050026}
}

@book{BGL-book,
  author    = {Bakry, Dominique and Gentil, Ivan and Ledoux, Michel},
  title     = {Analysis and Geometry of {Markov} Diffusion Operators},
  series    = {Grundlehren der Mathematischen Wissenschaften},
  volume    = {348},
  publisher = {Springer},
  address   = {Cham},
  year      = {2014},
  doi       = {10.1007/978-3-319-00227-9}
}

@misc{Salez-survey,
  author        = {Salez, Justin},
  title         = {Modern Aspects of {Markov} Chains: Entropy, Curvature and the Cutoff Phenomenon},
  year          = {2025},
  eprint        = {2508.21055},
  archiveprefix = {arXiv},
  primaryclass  = {math.PR},
  note          = {arXiv:2508.21055},
  doi           = {10.48550/arXiv.2508.21055},
  url           = {https://arxiv.org/abs/2508.21055}
}

@book{NR-book,
  editor    = {Najman, Laurent and Romon, Pascal},
  title     = {Modern Approaches to Discrete Curvature},
  series    = {Lecture Notes in Mathematics},
  volume    = {2184},
  publisher = {Springer},
  address   = {Cham},
  year      = {2017},
  doi       = {10.1007/978-3-319-58002-9}
}

@article{Ollivier09,
  author  = {Ollivier, Yann},
  title   = {Ricci Curvature of {Markov} Chains on Metric Spaces},
  journal = {Journal of Functional Analysis},
  volume  = {256},
  number  = {3},
  pages   = {810--864},
  year    = {2009},
  doi     = {10.1016/j.jfa.2008.11.001},
  url     = {https://arxiv.org/abs/math/0701886}
}

@article{ErbarMaas12,
  author  = {Erbar, Matthias and Maas, Jan},
  title   = {Ricci Curvature of Finite {Markov} Chains via Convexity of the Entropy},
  journal = {Archive for Rational Mechanics and Analysis},
  volume  = {206},
  number  = {3},
  pages   = {997--1038},
  year    = {2012},
  doi     = {10.1007/s00205-012-0554-z},
  url     = {https://arxiv.org/abs/1111.2687}
}

@phdthesis{Kamtue-thesis,
  title={Discrete curvatures motivated from Riemannian geometry and optimal transport: Bonnet-Myers-type diameter bounds and rigidity},
  author={Kamtue, Supanat},
  year={2021},
  school={Durham University},
  type={Doctoral thesis},
  url={https://etheses.dur.ac.uk/14124/}
}

@article{CKLP22,
  title={{{Bakry--\'Emery}} Curvature on Graphs as an Eigenvalue Problem},
  author={Cushing, David and Kamtue, Supanat and Liu, Shiping and Peyerimhoff, Norbert},
  journal={Calculus of Variations and Partial Differential Equations},
  volume={61},
  number={2},
  pages={62},
  year={2022},
  publisher={Springer},
  doi={10.1007/s00526-021-02179-z}
}

@article{GG11,
  title={Sharp thresholds for the random-cluster and {Ising} models},
  author={Graham, Benjamin T. and Grimmett, Geoffrey R.},
  journal={The Annals of Applied Probability},
  volume={21},
  number={1},
  pages={240--265},
  year={2011},
  doi={10.1214/10-AAP693}
}

@article{RvH-remarks,
  title={Ramon van {H}andel’s remarks on the discrete cube},
  author={Rosenthal, Gregory},
  journal={Notes available at https://www. cs. toronto. edu/rosenthal/RvH\_discrete\_cube. pdf},
  volume={8},
  year={2020}
}

@article{BIM23,
  title={On sharp isoperimetric inequalities on the hypercube},
  author={Beltran, David and Ivanisvili, Paata and Madrid, Jos{\'e}},
  journal={arXiv preprint arXiv:2303.06738},
  year={2023}
}

@article{KLMP23,
  title={{B}akry--{\'E}mery calculus for entropic curvature, new diameter estimates, and spectral gaps},
  author={Kamtue, Supanat and Liu, Shiping and M{\"u}nch, Florentin and Peyerimhoff, Norbert},
  journal={arXiv preprint arXiv:2312.09686},
  year={2023}
}

@article{KLMP24,
  title={Entropic curvature not comparable to other curvatures--or is it?},
  author={Kamtue, Supanat and Liu, Shiping and M{\"u}nch, Florentin and Peyerimhoff, Norbert},
  journal={arXiv preprint arXiv:2404.04581},
  year={2024}
}

@article{GrimmettJansonNorris16,
  author  = {Geoffrey R. Grimmett and Svante Janson and James R. Norris},
  title   = {Influence in Product Spaces},
  journal = {Advances in Applied Probability},
  volume  = {48},
  number  = {A},
  pages   = {145--152},
  year    = {2016},
  doi     = {10.1017/apr.2016.46}
}

@article{DurcikIvanisviliRoosXie26,
  author        = {Polona Durcik and Paata Ivanisvili and Joris Roos and Xinyuan Xie},
  title         = {Sharp Isoperimetric Inequalities on the {Hamming} Cube {II}: The Critical Exponent},
  journal       = {arXiv preprint arXiv:2602.20462},
  year          = {2026},
  eprint        = {2602.20462},
  archivePrefix = {arXiv},
  primaryClass  = {math.CA},
  url           = {https://arxiv.org/abs/2602.20462}
}

@article{EHMT17,
  author  = {Matthias Erbar and Christopher Henderson and Georg Menz and Prasad Tetali},
  title   = {Ricci Curvature Bounds for Weakly Interacting {Markov} Chains},
  journal = {Electronic Journal of Probability},
  volume  = {22},
  pages   = {Paper No. 40, 1--23},
  year    = {2017},
  doi     = {10.1214/17-EJP49}
}

@article{Pedrotti25,
  author  = {Francesco Pedrotti},
  title   = {Contractive Coupling Rates and Curvature Lower Bounds for {Markov} Chains},
  journal = {The Annals of Applied Probability},
  volume  = {35},
  number  = {1},
  pages   = {196--250},
  year    = {2025},
  doi     = {10.1214/24-AAP2113}
}

@article{IvanisviliXieZhang26,
  author        = {Paata Ivanisvili and Xinyuan Xie and Haonan Zhang},
  title         = {A {Beckmann} Boundary Form of {Talagrand}'s Conjecture on the Discrete Cube},
  journal       = {arXiv preprint arXiv:2606.31961},
  year          = {2026},
  eprint        = {2606.31961},
  archivePrefix = {arXiv},
  primaryClass  = {math.CA},
  url           = {https://arxiv.org/abs/2606.31961}
}

@article{Dobrushin68,
  author  = {Dobrushin, Roland L'vovich},
  title   = {The Description of a Random Field by Means of
             Conditional Probabilities and Conditions of Its Regularity},
  journal = {Theory of Probability \& Its Applications},
  volume  = {13},
  number  = {2},
  pages   = {197--224},
  year    = {1968},
  doi     = {10.1137/1113026}
}

@article{Hop26,
  title={Toward a {KKL} Theorem for any {HDX}},
  author={Hopkins, Max},
  journal={arXiv preprint arXiv:2606.29449},
  year={2026}
}

@article{Bob97,
  title={An isoperimetric inequality on the discrete cube, and an elementary proof of the isoperimetric inequality in {G}auss space},
  author={Bobkov, Sergey G},
  journal={The Annals of Probability},
  volume={25},
  number={1},
  pages={206--214},
  year={1997},
  publisher={Institute of Mathematical Statistics}
}

\appendix

\section{Proofs of Isoperimetric Inequalities on Schreier Graphs}
\label{app:schreier-proofs}

\begin{proof}[Proof of~\Cref{lem:local_bobkov-sch}]
    The assertion is trivial for constant functions, so we assume throughout that \(f\) is nonconstant.
    Fix $f : X \to [0,1]$ and $t \ge 0$.
    By applying the argument to $f_\varepsilon:=\varepsilon+(1-2\varepsilon)f$, where $0<\varepsilon<1/2$, and then letting $\varepsilon\downarrow0$, we may assume that $f$ takes values in $(0,1)$; in particular, $F>0$ below.
    Let $\Psi(s) = P_s \tp{\sqrt{\+I(P_{t-s} f)^2 + \gamma(s) \Gamma_w(P_{t-s} f)}}$. It suffices to prove that $\Psi$ is monotone increasing. By a standard calculation, it holds
    \begin{align}\label{eq:dPsi-1-sch-app}
    \frac{\dif}{\dif s} \Psi(s) &= P_s L\tp{\sqrt{\+I(P_{t-s} f)^2 + \gamma(s) \Gamma_w(P_{t-s} f)}} + P_s \frac{\dif}{\dif s}\sqrt{\+I(P_{t-s} f)^2 + \gamma(s) \Gamma_w(P_{t-s} f)}
    \end{align}
    Let $F = \+I(P_{t-s} f)^2 + \gamma(s) \Gamma_w(P_{t-s} f)$. By chain rule, it holds
    \begin{align}\label{eq:dPsi-2-sch-app}
    \frac{\dif}{\dif s} \sqrt{F} &= \frac{1}{2\sqrt{F}}\cdot\tp{- 2 \+I(P_{t-s} f) \+I'(P_{t-s} f) \cdot P_{t-s} L f + \gamma'(s) \Gamma_w(P_{t-s} f) - 2\gamma(s) \Gamma_w(P_{t-s} f, L P_{t-s} f)}
    \end{align}
    By~\Cref{lem:chain-rule}, it holds
    \begin{align}\label{eq:dPsi-3-sch-app}
        2\sqrt{F} L \sqrt{F} =  L F - 2\Gamma(\sqrt{F}).
    \end{align}
    Furthermore, the infinitesimal form of~\ref{eq:SGE-2} reads
    \begin{align}\label{eq:dPsi-4-sch-app}
        \gamma(s) L \Gamma_w(P_{t-s} f) - 2 \gamma(s) \Gamma_w(P_{t-s} f, L P_{t-s} f) - 2\rho \gamma(s) \Gamma_w(P_{t-s} f) \ge 2 \gamma(s)\Gamma\tp{\sqrt{\Gamma_w(P_{t-s} f)}}.
    \end{align}
    Combining~\eqref{eq:dPsi-1-sch-app},~\eqref{eq:dPsi-2-sch-app},~\eqref{eq:dPsi-3-sch-app},~\eqref{eq:dPsi-4-sch-app} and $\gamma'(s) + 2\rho \gamma(s) = \frac{4}{b}$, it holds
    \begin{align}\label{eq:dPsi-5-sch-app}
    \frac{\dif }{\dif s} \Psi(s) \ge P_s \frac{R}{2\sqrt{F}},
    \end{align}
    where
    \begin{align}\label{eq:dPsi-6-sch-app}
    R = L \+I^2 (P_{t-s} f) - 2 \Gamma(\sqrt{F}) - 2 \+I(P_{t-s} f) \+I'(P_{t-s} f) P_{t-s} L f+ \frac{4}{b}\Gamma_w(P_{t-s} f) + 2\gamma(s) \Gamma(\sqrt{\Gamma_w(P_{t-s}f)})
    \end{align}
    Note that by Minkowski's inequality, it holds
    \begin{align*}
        & 2\Gamma(\sqrt{F})(x) \\
        ={}& \sum_{i\in\mathcal A}\sum_{u\in U_i} q_u(x) \tp{\sqrt{\+I^2(P_{t-s} f)(x) + \gamma(s) \Gamma_w(P_{t-s} f)(x)} - \sqrt{\+I^2(P_{t-s} f)(x^u) + \gamma(s) \Gamma_w(P_{t-s} f)(x^u)}}^2\\
        \le{}& \sum_{i\in\mathcal A}\sum_{u\in U_i} q_u(x) \tp{\tp{\+I(P_{t-s} f)(x) - \+I(P_{t-s} f)(x^u)}^2 + \gamma(s) \tp{\sqrt{\Gamma_w(P_{t-s} f)(x)} - \sqrt{\Gamma_w(P_{t-s} f)(x^u)}}^2}\\
        ={}& 2\Gamma(\+I(P_{t-s}f))(x) + 2\gamma(s) \Gamma(\sqrt{\Gamma_w(P_{t-s} f)})(x).
    \end{align*}
    Combining~\eqref{eq:dPsi-5-sch-app} and~\eqref{eq:dPsi-6-sch-app}, it only remains to show
    \begin{align}\label{eq:dPsi-7-sch-app}
        L \+I^2(P_{t-s} f) - 2 \+I(P_{t-s} f) \+I'(P_{t-s} f) P_{t-s} L f+ \frac{4}{b}\Gamma_w(P_{t-s} f) - 2 \Gamma(\+I(P_{t-s} f)) \ge 0.
    \end{align}
    By definition of $L$ and $\Gamma_w$, the left-hand side of the above inequality satisfies
    \begin{align*}
    \text{LHS of~\eqref{eq:dPsi-7-sch-app}}(x) &= \sum_{i\in\mathcal A} \sum_{u \in U_i} q_u(x) \tp{\+I^2(P_{t-s} f)(x^u) - \+I^2(P_{t-s} f)(x)}\\
    &- 2\sum_{i\in\mathcal A} \sum_{u \in U_i} q_u(x) \+I(P_{t-s} f)(x) \+I'(P_{t-s} f)(x) ((P_{t-s} f)(x^u) - P_{t-s}(f)(x))\\
    &+ \frac{2}{b}\sum_{i\in\mathcal A} \sum_{u \in U_i} q_u(x) w_u(x) \tp{P_{t-s} f(x^u) - P_{t-s} f(x)}^2\\
    &-\sum_{i\in\mathcal A} \sum_{u \in U_i} q_u(x) (\+I(P_{t-s} f)(x^u) - \+I(P_{t-s} f)(x))^2,
    \end{align*}
    which is non-negative by $w_u(x) \ge b$ and
    \begin{align*}
        \+I(x)\tp{\+I(y) - \+I(x) - \+I'(x)(y-x)} + (y-x)^2 \ge 0,
    \end{align*}
    for all $x,y \in (0,1)$.
\end{proof}

\begin{proof}[Proof of~\Cref{lem:improved_talagrand_sch}]
    The assertion is trivial for constant functions, so we assume throughout that \(f\) is nonconstant.
    Fix a function $f: X \to \{0,1\}$. Without loss of generality, we may assume $v := \E[\mu]{f} \le \frac{1}{2}$.
    By~\Cref{lem:local_bobkov-sch}, it holds
    \begin{align}\label{eq:Bobkov-01-sch-app}
        \E[\mu]{\+I(P_t f)} \le \sqrt{\gamma(t)} A_w(f).
    \end{align}
    The Gaussian isoperimetric profile $\+I(P_t f) \gtrsim (P_t f)(1-P_t f)$. Therefore, the expectation of $\+I(P_t f)$ can be lower bounded by
    \begin{align}\label{eq:isoperimetric-bound-sch-app}
        \E[\mu]{\+I(P_t f)} \gtrsim \tp{v - \norm{P_t f}_{2,\mu}^2}.
    \end{align}
    By hypercontractivity, $\norm{P_t f}_{2,\mu}^2 \le \tp{v}^{1+\tanh(\rho_{\LSI} t)}$. Combining~\eqref{eq:Bobkov-01-sch-app} and~\eqref{eq:isoperimetric-bound-sch-app}, the surface area $A_w(f)$ then satisfies
    \begin{align*}
        A_w(f) \gtrsim \frac{v(1-v^{\tanh(\rho_{\LSI} t)})}{\sqrt{\gamma(t)}}.
    \end{align*}
    Take $t = \frac{1}{100(\kappa+\rho_{\LSI} \log 1/v)}$. By estimates $1-v^{\tanh(\rho_{\LSI} t)} \gtrsim \rho_{\LSI} t \log 1/v$ and $\gamma(t) \lesssim \frac{t}{b}$, it holds
    \begin{align*}
        A_w(f) \gtrsim \frac{\sqrt{b} \rho_{\LSI}}{\sqrt{\kappa + \rho_{\LSI} \log 1/v}} v \log 1/v \gtrsim \frac{\sqrt{b}\rho_{\LSI}}{\sqrt{\kappa + \rho_{\LSI}}} v \sqrt{\log 1/v} \gtrsim \frac{\sqrt{b}\rho_{\LSI}}{\sqrt{\kappa + \rho_{\LSI}}} \Var[\mu]{f} \sqrt{\log \frac{\e}{\Var[\mu]{f}}}.
    \end{align*}
    where the inequalities follow from $v \le \frac{1}{2}$.
\end{proof}

\begin{proof}[Proof of~\Cref{lem:l1_l2_talagrand_sch}]
    The assertion is trivial for constant functions, so we assume throughout that \(f\) is nonconstant.
As in the proof of~\Cref{lem:l1_l2_talagrand}, we set $T = \frac{1}{\rho_{\LSI} + \kappa_0}$. The variance $\Var[\mu]{f}$ can be bounded by
\begin{align*}
(1-\e^{-2\rho_{\LSI} T})\Var[\mu]{f} \le \frac{2}{b} \int_0^{T} \E[\mu]{\norm{\abs{\nabla_w} P_t f}_2^2} \dif t \le \frac{2}{b}\int_0^{T} \e^{-2 \rho_{\CSGE} t}\E[\mu]{ \norm{P_t \abs{\nabla_w} f}_2^2} \dif t.
\end{align*}
Let $g_i = \abs{\delta_{w,i}} f$. It holds
\begin{align}\label{eq:var-bound-2-sch-app}
(1-\e^{-2\rho_{\LSI} T}) \Var[\mu]{f} \le \frac{2}{b} \sum_{i\in\mathcal A} \int_0^{T} \e^{- 2 \rho_{\CSGE} t} \norm{P_t g_i}_{2,\mu}^2 \dif t.
\end{align}
Following exactly the same argument as in the proof of~\Cref{lem:l1_l2_talagrand}, the integral can be bounded by
\begin{align*}
\int_0^{T} \e^{-2 \rho_{\CSGE} t} \norm{P_t g_i}_{2,\mu}^2 \dif t &\le \norm{g_i}_{2,\mu}^2 \int_0^{T}  \e^{-2 \rho_{\CSGE} t} \tp{\frac{\norm{g_i}_{1,\mu}^2} {\norm{g_i}_{2,\mu}^2}}^{\theta_t} \dif t\\
 &\lesssim \frac{1}{\rho_{\LSI}} \cdot \frac{\norm{g_i}_{2,\mu}^2}{1+\log \frac{\norm{g_i}_{2,\mu}}{\norm{g_i}_{1,\mu}}}.
\end{align*}
The proof of~\eqref{eq:l1_l2_talagrand_2_sch} then follows from~\eqref{eq:var-bound-2-sch-app}, $\norm{g_i}_{1,\mu} = \E[\mu]{\abs{\delta_{w,i}} f}$ and $\norm{g_i}_{2,\mu} = \sqrt{I_{w,i}(f)}$.

To prove~\eqref{eq:l1_l2_talagrand_3_sch}, we observe
    \begin{align*}
    \sum_{i\in\mathcal A} \norm{P_t g_i}_{2,\mu}^2 \le \sum_{i\in\mathcal A} \norm{g_i}_{1,\mu}^{2\theta_t} \norm{g_i}_{2,\mu}^{2(1-\theta_t)} \le \tp{\sum_{i\in\mathcal A} \norm{g_i}_{1,\mu}^2}^{\theta_t} \tp{\sum_{i\in\mathcal A} \norm{g_i}_{2,\mu}^2}^{1-\theta_t},
    \end{align*}
where $\theta_t = \tanh\tp{\rho_{\LSI} t}$ and the last inequality follows from H\"older's inequality. By $J_w(f) = \sum_{i\in\mathcal A} \norm{g_i}_{1,\mu}^2$ and $I_w(f) = \sum_{i\in\mathcal A} \norm{g_i}_{2,\mu}^2$, we have
    \begin{align*}
    \Var[\mu]{f} \lesssim \frac{\rho_{\LSI} + \kappa_0}{b \rho_{\LSI}^2}\frac{I_w(f)}{1+\log \frac{I_w(f)}{J_w(f)}} \lesssim  \frac{\rho_{\LSI} + \kappa_0}{b^2 \rho_{\LSI}^2}\frac{I(f)}{1+\log \frac{I_w(f)}{J_w(f)}} .
    \end{align*}
This completes the proof of~\eqref{eq:l1_l2_talagrand_3_sch}.
\end{proof}

\begin{proof}[Proof of~\Cref{cor:KKL_sch}]
    The assertion is trivial for constant functions, so we assume throughout that \(f\) is nonconstant.
    Since $f$ is a Boolean function, by marginal boundedness, we have
    $$
        \Gamma_{w,i}(f)(x) > 0 \implies \Gamma_{w,i}(f)(x) \geq 2b^2\quad \text{ and } \quad \tp{\E[\mu]{\abs{\delta_{w,i}}f}}^2 \leq \frac{1}{2b^2} I_{w,i}(f)^2.
    $$
    Since $\+M_w(f) \geq I_{w,i}(f)$ for any $i\in\mathcal A$, we conclude that
    \begin{align}\label{eq:IM-to-J-sch-app}
         I_w(f) \+M_w(f) \geq \sum_{i\in\mathcal A} I_{w,i}(f)^2 \geq 2b^2 J_w(f).
    \end{align}
    By \eqref{eq:l1_l2_talagrand_3_sch} in \Cref{lem:l1_l2_talagrand_sch}, the first KKL inequality immediately follows from \eqref{eq:IM-to-J-sch-app}.
    The second follows from $I_w(f) \leq d\+M_w(f)$ and the elementary implication $x/\log(1/x)\geq y\implies x\gtrsim y\log(1/y)$.
\end{proof}

\begin{proof}[Proof of~\Cref{thm:eldan_gross_sch}]
The assertion is trivial for constant functions, so we assume throughout that \(f\) is nonconstant.
Fix a Boolean function $f:X \to \{\pm 1\}$ and let $h=(1+f)/2$. When $\Var[\mu]{f} \le \sqrt{J_w(f)}$,
the improved Talagrand inequality (\Cref{lem:improved_talagrand_sch}), applied to $h$ together with $A_w(h)=A_w(f)/2$ and $\Var[\mu]{h}=\Var[\mu]{f}/4$, implies the Eldan--Gross inequality:
\begin{align*}
A_w(f) & \gtrsim_{b,\rho_{\CSGE}} \sqrt{\rho_{\LSI}} \min\set{1,\rho_{\LSI}} \cdot \Var[\mu]{f} \sqrt{\log \frac{\e}{\Var[\mu]{f}}}\\
&\gtrsim_{b,\rho_{\CSGE}} \sqrt{\rho_{\LSI}} \min\set{1,\rho_{\LSI}} \cdot \Var[\mu]{f} \sqrt{\log \tp{1+\frac{\e}{\Var[\mu]{f}^2}}}\\
&\gtrsim_{b,\rho_{\CSGE}} \sqrt{\rho_{\LSI}} \min\set{1,\rho_{\LSI}}  \Var[\mu]{f} \sqrt{\log \tp{1+\frac{\e}{J_w(f)}}}.
\end{align*}
Therefore, we may assume $\Var[\mu]{f} > \sqrt{J_w(f)}$ in the following.

By Poincar\'e inequality,~\Cref{lem:l1_l2_talagrand_sch} and~\ref{eq:infty-CSGE-2}, it holds for any $s \ge 0$ that
\begin{align*}
I(P_s f) &\gtrsim_{b,\rho_{\CSGE}} \Var[\mu]{P_s f}\left[\rho_{\LSI}+\rho_{\LSI}\min\set{1,\rho_{\LSI}}\tp{1+\log\frac{I_w(P_s f)}{J_w(P_s f)}}\right]\\
&\gtrsim_{b,\rho_{\CSGE}} \Var[\mu]{P_s f}\left[\rho_{\LSI}+\frac{\rho_{\LSI}\min\set{1,\rho_{\LSI}}}{1+\log_+\tp{1/\rho_{\LSI}}}
\tp{1+\tp{\log\frac{\Var[\mu]{P_s f}}{J_w(f)}-2\kappa s}_+}\right],
\end{align*}
where $(x)_+ = \max \set{x,0}$ and $\kappa = (-\rho_{\CSGE})_+$.
For simplicity, let $C = \frac{\rho_{\LSI} \min \set{1,\rho_{\LSI}} }{1+\log_+\tp{1/\rho_{\LSI}}}$, and further
let $T = \min\{s \ge 0 \mid \Var[\mu]{P_s f} \le \frac{\Var[\mu]{f}}{2}\}$.

We first consider the case where $\kappa T \le 1$. It holds
\begin{align*}
\frac{\Var[\mu]{f}}{2} = \Var[\mu]{f} - \Var[\mu]{P_T f} &= 2\int_0^{T} I(P_s f) \dif s\\
 &\gtrsim_{b,\rho_{\CSGE}} C \cdot \int_0^{T} \Var[\mu]{P_s f} \tp{1 + \log_+ \frac{\Var[\mu]{P_s f}}{J_w(f)}} \dif s\\
&\gtrsim_{b,\rho_{\CSGE}} C\cdot \Var[\mu]{f} \tp{1 + \log_+ \frac{\Var[\mu]{f}}{J_w(f)}} T.
\end{align*}
This implies $CT \tp{1 + \log_+ \frac{\Var[\mu]{f}}{J_w(f)}} \lesssim_{b,\rho_{\CSGE}} 1$.
By~\Cref{lem:local_bobkov-sch}, it holds for any $s \ge 0$ that
\begin{align*}
P_s h - (P_s h)^2 \le \+I(P_s h) \lesssim \sqrt{\gamma(s)} P_s \sqrt{\Gamma_w(h)}
\end{align*}
By taking expectation over $\mu$ on both sides, the following holds when $\kappa s \le 1$:
\begin{align}\label{eq:bobkov-exp-sch-app}
\Var[\mu]{f} - \Var[\mu]{P_s f} \lesssim \sqrt{\gamma(s)} A_w(f) \lesssim_b \sqrt{s} A_w(f).
\end{align}
By taking $s = T$, it holds
    \begin{align*}
A_w(f) \gtrsim_b \frac{\Var[\mu]{f}}{\sqrt{T}} &\gtrsim_{b,\rho_{\CSGE}} \sqrt{C} \Var[\mu]{f} \sqrt{1+\log_+\frac{\Var[\mu]{f}}{J_w(f)}}\\
&\gtrsim_{b,\rho_{\CSGE}} \frac{\sqrt{\rho_{\LSI}} \min \set{1,\rho_{\LSI}}}{\sqrt{1+\log_+\tp{1/\rho_{\LSI}}}} \cdot \Var[\mu]{f} \sqrt{\log \tp{1+\frac{\e}{J_w(f)}}},
\end{align*}
where the last inequality follows from the assumption $\Var[\mu]{f} \gtrsim \sqrt{J_w(f)}$ and the definition of $C$.

We now consider the remaining case $\kappa T > 1$. Let $T^\star = \frac{1}{\kappa} < T$.
Since $T^\star<T$ and $\kappa T^\star=1$, a similar argument gives
\begin{align}\label{eq:EG-ineq-1-sch-app}
    \Var[\mu]{f} - \Var[\mu]{P_{T^\star} f} \gtrsim_{b,\rho_{\CSGE}}\Var[\mu]{f}\tp{\rho_{\LSI}+C\tp{1+\log_+\frac{\Var[\mu]{f}}{J_w(f)}}}T^\star.
\end{align}
By~\eqref{eq:bobkov-exp-sch-app} and $\gamma(T^\star) \lesssim_{b,\rho_{\CSGE}} 1$, we have
\begin{align}\label{eq:EG-ineq-2-sch-app}
    \Var[\mu]{f} - \Var[\mu]{P_{T^\star} f} \lesssim_{b,\rho_{\CSGE}} A_w(f).
\end{align}
Combining~\eqref{eq:EG-ineq-1-sch-app},~\eqref{eq:EG-ineq-2-sch-app} and our assumption $\Var[\mu]{f} \ge \sqrt{J_w(f)}$, it holds
\begin{align*}
    A_w(f) &\gtrsim_{b,\rho_{\CSGE}} \Var[\mu]{f}\tp{\rho_{\LSI}+C\tp{1+\log_+\frac{\Var[\mu]{f}}{J_w(f)}}}\\
    &\gtrsim_{b,\rho_{\CSGE}} \sqrt{\rho_{\LSI}C} \Var[\mu]{f}\sqrt{1+\log_+\frac{\Var[\mu]{f}}{J_w(f)}}\\
    &\gtrsim_{b,\rho_{\CSGE}}\frac{\sqrt{\rho_{\LSI}}\min\set{1,\rho_{\LSI}}}{\sqrt{1+\log_+\tp{1/\rho_{\LSI}}}}\Var[\mu]{f}\sqrt{\log\tp{1+\frac{\e}{J_w(f)}}}.
\end{align*}
This completes the proof.
\end{proof}

\section{Proofs of Gradient Estimates for the One-edge Ising Model}
\label{app:one-edge-Ising}
\begin{proof}[Proof of \Cref{tab:one-edge-Ising}]
Let $0<\beta<1$ and $\beta_c:=\frac12\log2$.
The canonical transition rates are
\[
q_i^\star(x)=\frac12e^{-\beta x_1x_2}.
\]
Thus, transition rates equal $\frac12e^{-\beta}$ at aligned states
and $\frac12e^\beta$ at anti-aligned states.

We omit the subscript $w$ for simplicity and write $\Gamma$ and
$\Gamma^\star$ for the choices $w=1$ and $w^\star=2q^\star$,
respectively. Set
\[
\widetilde\Gamma_2^\star(f)
:=
\sqrt{\Gamma^\star(f)}\,L\sqrt{\Gamma^\star(f)}
-\Gamma^\star(f,Lf)
=
\Gamma_2^\star(f)
-\Gamma\left(\sqrt{\Gamma^\star(f)}\right),
\]
with the analogous definition for $\widetilde\Gamma_2$.

Fix a state $x$, and denote the transition rates at $x$ and its
neighbors by $\eta_0$ and $\eta_1$, respectively. Thus,
\[
(\eta_0,\eta_1)=
\begin{cases}
\left(\frac12e^{-\beta},\frac12e^\beta\right),
& x_1x_2=1,\\[1mm]
\left(\frac12e^\beta,\frac12e^{-\beta}\right),
& x_1x_2=-1.
\end{cases}
\]
By subtracting a constant from $f$, write
\[
f(x)=0,\qquad f(x^1)=u,\qquad f(x^2)=v,\qquad f(x^{12})=z.
\]
Set
\[
R^2:=u^2+v^2,\qquad S:=u+v,\qquad D:=u-v,
\]
so that $S^2+D^2=2R^2$.
Without loss of generality, assume $R>0$, since the curvature
inequalities hold automatically when $R=0$ and impose no restriction
on $\rho$.

\paragraph{BE and WGE.}
We have
\[
\Gamma(f)(x)=\frac{\eta_0}{2}R^2,
\qquad
\Gamma^\star(f)(x)=\eta_0^2R^2.
\]
Direct expansion gives
\[
\Gamma_2(f)(x)
=
\frac{\eta_0}{2}
\left[
\eta_1(z-S)^2+2\eta_1R^2+2(\eta_0-\eta_1)uv
\right],
\]
and
\[
\begin{aligned}
\Gamma_2^\star(f)(x)
={}&
\eta_0\tp{
\left(\eta_1 z-\frac{\eta_0+\eta_1}{2}S\right)^2
+
\frac{(3\eta_0+\eta_1)(\eta_0-\eta_1)}4S^2
+(\eta_1^2-\eta_0^2+2\eta_0\eta_1)R^2
}.
\end{aligned}
\]
Minimizing these quadratic forms yields the pointwise constants
\[
\kappa_{\mathrm{BE}}(\eta_0,\eta_1)
=
\min_{u,v,z: R>0}\frac{\Gamma_2(f)(x)}{\Gamma(f)(x)}
=
\min\{\eta_0+\eta_1,3\eta_1-\eta_0\},
\]
and
\[
\kappa_{\mathrm{BE}}^\star(\eta_0,\eta_1)
=
\min_{u,v,z: R>0}\frac{\Gamma_2^\star(f)(x)}{\Gamma^\star(f)(x)}
=
\min\left\{
\frac{(\eta_0+\eta_1)^2}{2\eta_0},
\frac{\eta_1^2-\eta_0^2+2\eta_0\eta_1}{\eta_0}
\right\}.
\]
Substituting the two possible pairs $(\eta_0,\eta_1)$ shows that both
global minima are attained at anti-aligned states. Therefore,
\[
\rho_{\mathrm{BE}}=\rho_{\mathrm{WGE}}
=
\frac12(3e^{-\beta}-e^\beta),
\]
and
\[
\rho_{\mathrm{BE}}^\star=\rho_{\mathrm{WGE}}^\star
=
e^{-\beta}+\frac12e^{-3\beta}-\frac12e^\beta.
\]

\paragraph{Unweighted rBE and SGE.}
Set
\[
N_1:=\sqrt{u^2+(z-u)^2},
\qquad
N_2:=\sqrt{v^2+(z-v)^2}.
\]
A direct calculation gives
\[
\mathcal R(\eta_0,\eta_1)
:=
\frac{\widetilde\Gamma_2(f)(x)}{\Gamma(f)(x)}
=
2\eta_1+
\frac{
\sqrt{\eta_0\eta_1}\,R(N_1+N_2)-\eta_1zS-\eta_0D^2
}{R^2}.
\]
Cauchy--Schwarz, the triangle inequalities and $2R^2 - D^2 = S^2 \geq 0$ imply
\[
R(N_1+N_2)\ge u^2 + v(z-u)+v^2+u(z-v) = D^2 + zS,  
\]
\[
R(N_1+N_2)\ge R\norm{(u,z-u) - (v,z-v)}_2 =
\sqrt2\,R|D|\ge D^2.
\]
We now do case analysis as follows.
\begin{itemize}
    \item [1.] $\eta_0\ge\eta_1$. These inequalities give
        \begin{align}
        \mathcal R(\eta_0,\eta_1)
        &=
        2\eta_1+
        \frac{
        \eta_1[R(N_1+N_2)-zS]
        +(\sqrt{\eta_0\eta_1}-\eta_1)R(N_1+N_2)
        -\eta_0D^2 
        }{R^2}\nonumber\\
        &\ge
        2\eta_1+
        (\sqrt{\eta_0\eta_1}-\eta_0)\frac{D^2}{R^2}\nonumber\\
        &\ge
        2(\eta_1+\sqrt{\eta_0\eta_1}-\eta_0). \label{eq:R_lower_case_1}
        \end{align}
        Equality holds when $u=-v\ne0$ and $z=0$.
        Therefore, $\mathcal R(\eta_0,\eta_1) = 1 + e^{-\beta} - e^\beta$.
    \item [2.] $\eta_0<\eta_1\le2\eta_0$. It suffices to prove
        $\mathcal R(\eta_0,\eta_1)\ge2\eta_0 \geq 1 + e^{-\beta} - e^\beta$.
        For fixed $\eta_0,u,v,z$, the expression $\mathcal R(\eta_0,\eta_1)$
        is concave in $\eta_1$.
        At $\eta_1=\eta_0$, \eqref{eq:R_lower_case_1} gives
        $\mathcal R(\eta_0,\eta_0)\ge2\eta_0$.
        At $\eta_1=2\eta_0$, we have
        \[
        \mathcal R(\eta_0,2\eta_0)-2\eta_0
        =
        \frac{\eta_0}{R^2}
        \left[
        \sqrt2\,R(N_1+N_2)-S(2z-S)
        \right]
        \ge0,
        \]
        since $N_1+N_2\ge|2z-S|$ and $|S|\le\sqrt2\,R$.
        Concavity therefore gives
        $\mathcal R(\eta_0,\eta_1)\ge2\eta_0$ throughout
        $\eta_1\in[\eta_0,2\eta_0]$.
        Note that for this interpolation argument, we regard $\eta_0$ and $\eta_1$ as independent positive parameters.

    \item [3.] $\eta_1 > 2\eta_0$. 
        Then $\eta_1 = \frac 12 e^{\beta} > e^{-\beta} = 2\eta_0$, that is, $\beta > \beta_c$.
        Take an aligned state and choose
        $u=v=\varepsilon>0$, $z=1$. Then
        \[
        \mathcal R\left(\frac12e^{-\beta},\frac12e^\beta\right)
        =
        \frac{
        \frac1{\sqrt2}\sqrt{\varepsilon^2+(1-\varepsilon)^2}
        -\frac12e^\beta
        }{\varepsilon}
        +e^\beta.
        \]
\end{itemize}
Therefore,
\[
\rho_{\mathrm{rBE}}=\rho_{\mathrm{SGE}}
=
\begin{cases}
1+e^{-\beta}-e^\beta, & 0<\beta\le\beta_c,\\
-\infty, & \beta_c<\beta<1.
\end{cases}
\]

\paragraph{Unweighted CSGE.}
If $0$-CSGE holds with a finite constant $\rho$, differentiating its
squared form at $t=0$ gives the necessary condition
\[
\begin{gathered}
\left.
\frac{ d}{ dt}
\sum_{i=1}^{2}\Gamma_i(P_t f)(x)
\right|_{t=0^+}
\le
\left.
\frac{ d}{ dt}
\left[
e^{-2\rho t}
\sum_{i=1}^{2}
\left(P_t\sqrt{\Gamma_i(f)}(x)\right)^2
\right]
\right|_{t=0^+}
\\[0.1em]
\Downarrow
\\[-0.5em]
\sum_{i=1}^{2}
\sqrt{\Gamma_i(f)(x)}\,L\sqrt{\Gamma_i(f)}(x)
-\Gamma(f,Lf)(x)
\ge
\rho\,\Gamma(f)(x).
\end{gathered}
\]

At an aligned state $x$, take
\[
u = v = \eps, z = 1,
\]
where $\eps \in (0,1)$.
Here $\eta_0=\frac12e^{-\beta}$ and $\eta_1=\frac12e^\beta$.
Direct calculation gives
\[
\frac{
\sum_{i=1}^2\sqrt{\Gamma_i(f)} L\sqrt{\Gamma_i(f)}-\Gamma(f,Lf)
}{
\Gamma(f)
}(x)
=
\frac{\sqrt{\eta_0\eta_1}-\eta_1}{\varepsilon}+2\eta_1
=
\frac{1-e^\beta}{2\varepsilon}+e^\beta.
\]
For every $\beta>0$, this tends to $-\infty$ as
$\varepsilon\downarrow0$, contradicting the necessary condition
for any finite $\rho$. 
Thus \(0\)-CSGE admits no finite uniform constant. Consequently, no finite constant can make \(\tau\)-CSGE hold simultaneously for all \(\tau\ge0\). 
\paragraph{Canonical weighted rBE, SGE, and CSGE.}
By \Cref{thm:CSGE_boolean_hypercube_w=q},
the lower bound of optimal coordinate-wise strong gradient estimate is as follows,
\[
\rho_{\mathrm{CSGE}}^\star\ge\lambda_{\min}\tp{
\frac12
\begin{pmatrix}
3e^{-\beta}-e^\beta & -(e^\beta-e^{-\beta})\\
-(e^\beta-e^{-\beta}) & 3e^{-\beta}-e^\beta
\end{pmatrix}} = 2e^{-\beta}-e^\beta,
\]
For a matching upper bound, take an anti-aligned state and choose
\[
u = 1, v = -1, z = 0.
\]
Then
\[
\Gamma^\star(f)(x)=\frac12e^{2\beta},
\qquad
\widetilde\Gamma_2^\star(f)(x)
=
e^\beta-\frac12e^{3\beta}.
\]
Consequently,
\[
\frac{
\widetilde\Gamma_2^\star(f)(x)
}{
\Gamma^\star(f)(x)
}
=
2e^{-\beta}-e^\beta,
\]
and hence $\rho_{\mathrm{rBE}}^\star\le2e^{-\beta}-e^\beta$.

Since CSGE implies SGE and SGE is equivalent to rBE,
\[
2e^{-\beta}-e^\beta
\le\rho_{\mathrm{CSGE}}^\star
\le\rho_{\mathrm{SGE}}^\star
=\rho_{\mathrm{rBE}}^\star
\le2e^{-\beta}-e^\beta.
\]
Therefore,
\[
\rho_{\mathrm{CSGE}}^\star
=\rho_{\mathrm{SGE}}^\star
=\rho_{\mathrm{rBE}}^\star
=
2e^{-\beta}-e^\beta.
\]
This proves all the entries in the table.
\end{proof}

\end{document}